\providecommand{\R}{\mathbb{R}}
\providecommand{\C}{\mathbb{C}}
\providecommand{\N}{\mathbb{N}}
\providecommand{\V}{\mathcal{V}}
\providecommand{\calC}{\mathcal{C}}
\renewcommand{\leq}{\leqslant}
\renewcommand{\geq}{\geqslant}
\renewcommand{\div}{\operatorname{div}}
\newcommand{\curl}{\operatorname{curl}}
\newcommand{\trace}{\operatorname{tr}}
\newcommand{\as}{\operatorname{as}}
\newcommand{\tr}{\operatorname{tr}}
\newcommand{\dist}{\operatorname{dist}}
\newcommand{\Id}{\operatorname{Id}}
\newtheorem{Theorem}{Theorem}
\newtheorem{Definition}{Definition}
\newtheorem{Corollary}{Corollary}
\newtheorem{Proposition}{Proposition}
\newtheorem{Lemma}{Lemma}
\newtheorem{Remark}{Remark}
\begin{document}

\author{Olivier Glass\footnote{CEREMADE,
Universit\'e Paris-Dauphine, 
Place du Mar\'echal de Lattre de Tassigny,
75775 Paris Cedex 16, FRANCE
},
Franck Sueur\footnote{Laboratoire Jacques-Louis Lions,
Universit\'e Pierre et Marie Curie - Paris 6
175, rue du Chevaleret
75013 Paris
FRANCE
}, 
Tak\'eo Takahashi\footnote{Institut \'Elie Cartan de Nancy,
INRIA-CNRS-Universit\'es de Nancy
B.P. 239, 
54506 Vand\oe uvre-l\`es-Nancy Cedex 
FRANCE}
}
\date{\today}
\title{Smoothness of the motion of a rigid body immersed in an incompressible perfect fluid. R\'egularit\'e du mouvement d'un solide plong\'e  dans un fluide parfait incompressible. }
\maketitle

\begin{abstract}
We consider  the motion of a rigid body immersed in an incompressible perfect fluid which occupies a three-dimensional bounded domain.
For such a system the Cauchy problem is well-posed locally in time if 
the initial velocity of the fluid  is in the H\"older space $C^{1,r}$.
In this paper we prove that the smoothness of the motion of the rigid body may be only limited by the smoothness of the boundaries (of the body and of the domain).
In particular for analytic  boundaries the motion of the rigid body is analytic (till the classical solution exists and till the solid does not hit the boundary). Moreover in this case this motion depends smoothly on the initial data. 
\\ \\ On consid\`ere  le mouvement d'un corps solide plong\'e dans un fluide parfait incompressible qui occupe un domaine born\'e de $\R^3$. Pour ce syst\`eme le probl\`eme de Cauchy est bien pos\'e localement en temps si la vitesse initiale du fluide est dans l'espace de H\"older  $C^{1,r}$. Dans cet article on montre que la r\'egularit\'e du mouvement du corps solide ne peut \^etre limit\'ee que par la r\'egularit\'e des bords (du corps solide et du domaine).
En particulier si les bords sont analytiques alors le mouvement du corps solide est analytique (tant que la solution classique existe et que le corps solide ne touche pas le bord). De plus, dans ce cas, le mouvement d\'epend de mani\`ere $C^{\infty}$ des donn\'ees initiales.
\end{abstract}

\section{Introduction}
\label{Intro}

The main result of this paper is about the  motion of a rigid body immersed in an incompressible perfect fluid which occupies a three-dimensional bounded domain.
However our investigation of the problem also yields a slightly new result concerning  the case  without any rigid body, that is when the fluid fills the whole domain.
We first present our result in this case as a warm-up.
\subsection{Analyticity of the flow of a perfect fluid in a bounded domain}
We consider  a perfect  incompressible fluid filling a bounded regular domain $\Omega \subset \R^3$ with impermeable boundary $\partial \Omega$, so that 
the  velocity and  pressure fields $u(t,x)$ and  $p(t,x)$  satisfy  the Euler equations:
\begin{eqnarray}
	\displaystyle \frac{\partial u}{\partial t}+(u\cdot\nabla)u + \nabla p &=&0 ,  \ \text{for}  \ x\in  \Omega ,    \ \text{for}  \ t\in  (-T,T) \label{Euler1a} ,
	\\	\div u &=& 0  , \ \text{for}  \ x\in  \Omega,    \ \text{for}  \ t\in  (-T,T)\label{Euler2a}  , 
\\	u |_{t= 0} &=& u_0, \ \text{for}  \ x\in  \Omega   \label{Eulerci} ,
\\	u\cdot n &=& 0 , \ \text{for}  \ x\in \partial \Omega  ,  \ \text{for}  \ t\in  (-T,T)\label{Euler3a}.
\end{eqnarray}
Here $n$ denotes the unit outward normal on $\partial \Omega$.
The existence (locally in time) and uniqueness of classical solutions  to this problem is well-known,  since the classical works of  Lichtenstein, G\"{u}nter and Wolibner who deal with  the H\"older spaces $C^{\lambda,r}(\Omega)$ for $\lambda$ in $\N$ and $r \in (0,1)$, endowed with the norms:
\begin{align*}
  \| u   \|_{  C^{\lambda,r} ( \Omega ) } := \sup_{ |\alpha| \leqslant \lambda}   \big(  \|   \partial^\alpha u  \|_{L^\infty (  \Omega ) }  
+ \sup_{ x \neq y \in   \Omega } \frac{ |\partial^\alpha u (x) -  \partial^\alpha u (y)| }{  |x - y|^r }  \big) <  + \infty  .
\end{align*}
For $\lambda$ in $\N$ and $r \in (0,1)$, we consider the space
\begin{equation*}
{C^{\lambda,r}_\sigma ( \Omega ) } := \Big\{ 
u \in C^{\lambda,r}  ( \Omega  ) \ \Big/ \ \div u =0 \text{ in } \Omega \ \text{ and }\ u.n=0 \text{ on } \partial \Omega
\Big\}.
\end{equation*}
\begin{Theorem} \label{start}
There exists a constant $C_* = C_*  (\Omega ) > 0$ such that, for any $\lambda$ in $\N$ and $r \in (0,1)$,  for any  $u_0$ in  $C^{\lambda+1,r}_\sigma  ( \Omega  )$, there exist
$$T > T_* ( \Omega , \| u_0  \|_{  C^{\lambda +1,r} ( \Omega ) }) := C_* / \| u_0  \|_{  C^{\lambda +1,r} ( \Omega ) }$$
 and  a unique solution 
$u   \in  C_{w} ( (-T,T), C^{\lambda+1,r}  (\Omega )   )$ of  (\ref{Euler1a})--(\ref{Euler3a}).
\end{Theorem}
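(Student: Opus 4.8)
The plan is to use the vorticity formulation together with the Lagrangian flow, which is the standard way to handle the Euler equations in Hölder spaces (the pressure, and hence the unbounded Leray projector, is eliminated). Set $\omega := \curl u$; then (\ref{Euler1a})--(\ref{Euler2a}) are equivalent to the transport–stretching equation $\partial_t \omega + (u\cdot\nabla)\omega = (\omega\cdot\nabla) u$, while $u$ is recovered from $\omega$ by the div–curl problem $\div u = 0$, $\curl u = \omega$ in $\Omega$, $u\cdot n = 0$ on $\partial\Omega$, supplemented by the values of the circulations $\oint_{\gamma_i} u\cdot d\ell$ around a basis $(\gamma_i)$ of $H_1(\Omega;\R)$; by Kelvin's theorem these circulations are constant in time, hence prescribed by $u_0$. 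The two ingredients this reduction rests on are: a Schauder-type elliptic estimate $\|u\|_{C^{\lambda+1,r}(\Omega)} \leq C(\lambda,r,\Omega)\bigl( \|\omega\|_{C^{\lambda,r}(\Omega)} + \|u\|_{L^2(\Omega)} \bigr)$, whose constant for $\lambda=0$ can be taken independent of $r$, and the transport estimate via the flow $\Phi$ defined by $\partial_t\Phi(t,x) = u(t,\Phi(t,x))$, $\Phi(0,\cdot) = \Id$, through which $\omega(t,\Phi(t,x)) = \nabla\Phi(t,x)\,\omega_0(x)$.

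For existence I would run an iteration scheme: given $u^n$ with flow $\Phi^n$, define $\omega^{n+1}$ by the stretching formula along $\Phi^n$, and let $u^{n+1}$ solve the div–curl problem with vorticity $\omega^{n+1}$ and the circulations of $u_0$. The heart of the argument is the a priori estimate in the lowest class $C^{1,r}_\sigma$, uniform in $n$. Since $\partial_t\nabla\Phi^n$ is governed by $\|\nabla u^n\|_{L^\infty}$, and $\|\nabla u^n\|_{L^\infty} \leq C\|\nabla u^n\|_{C^{0,r}} \leq C(\|\omega^n\|_{C^{0,r}} + \|u^n\|_{L^2})$ by the elliptic estimate (this is exactly the step where Hölder regularity avoids the logarithmic loss of the $L^\infty$ theory), and since the $L^2$ norm is conserved and bounded by $C(\Omega)\|u_0\|_{C^{1,r}}$, one obtains a Riccati-type inequality $\tfrac{d}{dt}\|\omega^{n+1}(t)\|_{C^{0,r}} \leq C\|\omega^{n}(t)\|_{C^{0,r}}\,\|\omega^{n+1}(t)\|_{C^{0,r}} + C\|u_0\|_{C^{1,r}}\|\omega^{n+1}(t)\|_{C^{0,r}}$. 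A bootstrap on the time interval where $\|\omega^n(t)\|_{C^{0,r}} \leq 2\|\omega_0\|_{C^{0,r}}$ then gives a uniform bound on $[0,T_0]$ with $T_0 \geq C_*(\Omega)/\|u_0\|_{C^{1,r}}$ (and, choosing $C_*$ suitably, strictly larger). Contraction of $(u^n)$ in a weaker norm such as $C^0(\Omega)$ or $L^2(\Omega)$, via a Gronwall argument using the uniform Lipschitz bound on the iterates, produces a solution; Arzelà–Ascoli identifies the limit in $C^{1,r}$ at each fixed time, and a standard argument (strong continuity into $C^{1,r'}$ for $r'<r$, combined with the uniform $C^{1,r}$ bound) yields $u \in C_w((-T_0,T_0), C^{1,r}_\sigma)$; running the scheme backward in time is the same by reversibility of Euler.

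For the stated higher regularity I would \emph{propagate} it along the \emph{same} interval: if $u_0 \in C^{\lambda+1,r}_\sigma$, then differentiating the vorticity equation and the flow equation up to order $\lambda$ and using the (now $\lambda$-dependent) tame Moser product and Faà di Bruno composition estimates in Hölder spaces, together with the fact that $\int_0^{T_0}\|\nabla u(t)\|_{L^\infty}\,dt$ is already finite from the $C^{1,r}$ theory, one closes a \emph{linear} Gronwall inequality for $\|u(t)\|_{C^{\lambda+1,r}}$ on $[0,T_0]$; hence $u \in C_w((-T_0,T_0), C^{\lambda+1,r})$. Because $\|u_0\|_{C^{1,r}} \leq \|u_0\|_{C^{\lambda+1,r}}$ one has $T_*(\Omega,\|u_0\|_{C^{\lambda+1,r}}) = C_*/\|u_0\|_{C^{\lambda+1,r}} \leq T_0$, so restricting the solution to this sub-interval gives exactly the conclusion with $T := T_0 > T_*(\Omega, \|u_0\|_{C^{\lambda+1,r}})$. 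Uniqueness follows from an $L^2$ energy estimate for the difference $u^1 - u^2$ of two solutions with the same data, $\tfrac{d}{dt}\|u^1-u^2\|_{L^2}^2 \leq C\|\nabla u^2\|_{L^\infty}\|u^1-u^2\|_{L^2}^2$, and Gronwall.

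The steps I expect to be the main obstacle are: (i) the Hölder elliptic div–curl estimate on a domain \emph{with boundary}, together with the careful treatment of the circulation/topological terms, and in particular arranging that its order-zero constant depends on $\Omega$ alone — this is what makes $C_*$ depend on $\Omega$ only; and (ii) making the Riccati bootstrap uniform along the iteration so that the lifespan $C_*/\|u_0\|_{C^{1,r}}$ genuinely survives passage to the limit. The higher-order propagation, though notationally heavy, is comparatively routine once (i) is in hand.
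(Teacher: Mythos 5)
Your proposal is sound and is essentially the method the paper itself relies on: Theorem \ref{start} is not proved in the body (the authors cite the classical works and \cite{dutrifoy,katoholder,koch}), but the appendix proof of the analogous fluid--solid result (Theorem \ref{start3}) runs exactly your scheme --- vorticity pushed forward by the Lagrangian flow, velocity reconstructed from the div--curl system with circulations frozen by Kelvin's theorem and controlled by the regularity lemma (Lemma \ref{Lemme1}), and a fixed point on the flow map (the paper contracts directly in $C^{0}([0,T];C^{\lambda+1,r})$, which works here because the vorticity is transported exactly, while your iteration-plus-compactness variant is equally valid, and your two-tier argument even yields the slightly stronger lifespan $\gtrsim 1/\|u_{0}\|_{C^{1,r}}$). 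The one point to treat with care is your parenthetical claim that the order-zero div--curl constant can be taken independent of $r$: Schauder-type constants generically degenerate as $r\to 0$ or $r\to 1$, and the paper itself quietly lets its ``geometric constants'' depend on $\lambda$ and $r$ in the appendix, so for fixed $(\lambda,r)$ your argument is complete, but the literal $r$-uniformity of $C_{*}(\Omega)$ would need a separate justification.
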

Above, and in the sequel, $ C_{w}$ refers to continuity with respect to the weak-$*$ topology of $C^{\lambda+1,r}  (\Omega )$.
Let us refer to the recent papers \cite{dutrifoy,katoholder,koch}.
\begin{Remark} \rm
We consider here (and in what follows) the earlier works cited above by
using H\"older spaces. Meanwhile, Theorem \ref{start} holds also true
for, say, any Sobolev space $H^{s}(\Omega)$ with $s > 5/2$ or even any
inhomogeneous Besov spaces  $B_{p,q}^{s}  (\Omega )$, with $1
\leqslant p,q \leqslant +\infty $ and with $s > \frac{3}{p}+1 $ (see
\cite{dutrifoy}) or $s \geqslant \frac{3}{p}+1$ if $q=1$ (so that
$B_{p,q}^{s}  (\Omega )$ is continuously embedded in Lip$ (\Omega )$).  
Let us recall that it is still not known whether the classical solutions of Theorem \ref{start} remain smooth for all times or blow up in finite times. 
Let us  also mention the recent work by Bardos and Titi \cite{BT} which shows that the $3$d Euler equations are not well-posed in the H\"older spaces $C^{0,r}(\Omega)$, for $r \in (0,1)$.
\end{Remark}
To the solution given by Theorem \ref{start} one associates
the flow $ \Phi$ defined on $(-T,T) \times \Omega$ by 
\begin{equation*}
\partial_t  \Phi(t,x)  = u (t, \Phi(t,x) )  \text{ and }
\Phi(0,x) = x .
\end{equation*}
The flow $\Phi$ can be seen as a continuous function of the time with values in the volume and orientation preserving diffeormorphisms defined on $\Omega$; in the sequel, in order to focus on the regularity properties, we consider $\Phi$ as a continuous function of $t \in [0,T]$ with values in the functions from $\Omega$ to $\R^{3}$. \par
\ \par
The first result of this paper shows that the smoothness of  the trajectories is  only limited by the smoothness of the domain boundary.
\begin{Theorem} \label{start1.5}
Under the hypotheses of Theorem \ref{start}, and assuming moreover that the boundary $\partial \Omega$ is $C^{k+\lambda+1,r}$, with $k  \in \N$, the flow  $ \Phi$    is $C^k$ from $(-T,T)$ to $ C^{\lambda+1,r}  (\Omega )$.
\end{Theorem}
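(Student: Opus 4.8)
\emph{Reduction to estimates on material derivatives.} Write $D_t:=\partial_t+u\cdot\nabla$ for the material derivative. Differentiating the flow equation $\partial_t\Phi=u(t,\cdot)\circ\Phi(t,\cdot)$ in time and using $\partial_t\Phi=u\circ\Phi$ to eliminate $\partial_t\Phi$ on the right, one gets by induction the identity $\partial_t^{j+1}\Phi(t,\cdot)=(D_t^{j}u)(t,\cdot)\circ\Phi(t,\cdot)$. Composition on the right with the volume-preserving $C^{\lambda+1,r}$ diffeomorphism $\Phi(t)$ does not lower $C^{\lambda+1,r}$ regularity, and $\Phi\in C\big((-T,T);C^{\lambda+1,r}(\Omega)\big)$ by Theorem~\ref{start}; hence Theorem~\ref{start1.5} reduces to proving that $D_t^{j}u\in C\big((-T,T);C^{\lambda+1,r}(\Omega)\big)$ for $0\le j\le k-1$, with bounds depending only on $\Omega$ and $\|u_0\|_{C^{\lambda+1,r}}$. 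Since the bounds of Theorem~\ref{start} are uniform on $(-T,T)$, such estimates will propagate on the whole interval.

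\emph{Two ingredients.} The first is an elliptic gain: a vector field $v$ on $\Omega$ is recovered, with one extra derivative, from its divergence, its curl, its normal trace on $\partial\Omega$ and its circulations; concretely, if $\div v,\curl v\in C^{m,r}(\Omega)$ and $v\cdot n\in C^{m+1,r}(\partial\Omega)$ then $v\in C^{m+1,r}(\Omega)$ with the corresponding estimate, provided $\partial\Omega$ is smooth enough. (Equivalently one may go through the pressure, which solves $\Delta p=-\trace((\nabla u)^2)$ in $\Omega$ and $\partial_n p=-\mathrm{II}(u,u)$ on $\partial\Omega$, the Neumann condition coming from $\partial_t u\cdot n=0$, and apply Schauder estimates.) The second ingredient is a set of commutator identities, valid for the flow of a divergence-free field, in which \emph{no second derivatives} appear: $[\nabla,D_t]w=(\nabla u)(\nabla w)$, $[\curl,u\cdot\nabla]w=\varepsilon(\nabla u,\nabla w)$, $[\div,u\cdot\nabla]w=(\nabla u):(\nabla w)$, together with $\curl D_t u=0$ and $\div D_t u=\trace((\nabla u)^2)$.

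\emph{The bootstrap.} One proves $D_t^{j}u\in C\big((-T,T);C^{\lambda+1,r}(\Omega)\big)$ by strong induction on $j$, the case $j=0$ being Theorem~\ref{start}. Assume the statement for all $i\le j-1$, so that in particular $\nabla D_t^{i}u\in C^{\lambda,r}(\Omega)$ for $i\le j-1$. Then: (i) \emph{curl and divergence do not lose a derivative:} using the commutator identities one writes $\curl D_t^{j}u$ and $\div D_t^{j}u$ as universal polynomial expressions in the fields $\nabla D_t^{i}u$, $i\le j-1$, so that $\curl D_t^{j}u,\div D_t^{j}u\in C^{\lambda,r}(\Omega)$ — the would-be losses of a derivative cancel because the commutators contain no second derivatives; (ii) \emph{the normal trace is controlled in $C^{\lambda+1,r}(\partial\Omega)$:} since $\Phi(t)$ preserves $\partial\Omega$, differentiating $u\cdot n=0$ along trajectories expresses $D_t^{j}u\cdot n$ on $\partial\Omega$ in terms of $D_t^{i}u$ ($i\le j-1$) and of $n,\nabla n,\dots,\nabla^{j-1}n$; as $\partial\Omega\in C^{k+\lambda+1,r}$ and $j\le k-1$, these derivatives of $n$ lie in $C^{\lambda+1,r}(\partial\Omega)$, whence $D_t^{j}u\cdot n\in C^{\lambda+1,r}(\partial\Omega)$; (iii) \emph{elliptic step:} the gain above then gives $D_t^{j}u\in C^{\lambda+1,r}(\Omega)$, and continuity in $t$ follows from the transport structure interpolated against the uniform $C^{\lambda+1,r}$ bounds. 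The derivative count in (ii) shows that one application of $D_t$ costs exactly one derivative of $\partial\Omega$, which is why the hypothesis is $\partial\Omega\in C^{k+\lambda+1,r}$.

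\emph{Main difficulty and loose ends.} The single genuine obstacle is the loss of a derivative: the formulas $\partial_t^2\Phi=-(\nabla p)\circ\Phi$, $\partial_t^3\Phi=(D_t^2 u)\circ\Phi,\dots$, appear term by term to lose regularity, and the content of the proof is to exhibit the right hierarchy of auxiliary quantities and the cancellations of step~(i) that let the induction close at the level $C^{\lambda+1,r}$, the deficit being paid for by the elliptic gain, hence by boundary smoothness. Two routine but necessary points remain. First, the iterated time-differentiations are a priori only formal for a $C^{\lambda+1,r}$ solution, so one carries out all the estimates on smooth approximate solutions — regularize $u_0$ in $C^{\lambda+1,r}$ by divergence-free fields, Theorem~\ref{start} supplying a common existence interval and uniform bounds — and passes to the limit. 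Second, one must verify the stated time-continuity (including the upgrade of the weak-$*$ continuity of Theorem~\ref{start} to strong continuity) before concluding on all of $(-T,T)$.
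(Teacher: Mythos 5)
Your strategy is the one the paper follows: reduce to $\partial_t^{j+1}\Phi=(D^{j}u)\circ\Phi$, propagate $C^{\lambda+1,r}$ bounds on $D^{j}u$ by induction using the commutator identities (the paper's \eqref{t3.1}--\eqref{t3.3}, packaged into the formal identities of Proposition \ref{P1}), recover the full $C^{\lambda+1,r}$ norm from $\div$, $\curl$ and the normal trace via the div--curl lemma (Lemma \ref{Lemme1}), pay for the normal trace with derivatives of the boundary-defining function, and justify the formal manipulations by regularization. Your derivative count for the boundary (one application of $D$ costs one derivative of $\partial\Omega$) matches the paper's use of $\nabla^{s}\rho$ with $s\le j+1$ in the identity for $n\cdot D^{j}u$, and explains the hypothesis $\partial\Omega\in C^{k+\lambda+1,r}$.

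There is one concrete step missing. The elliptic gain you invoke takes as data not only $\div$, $\curl$ and the normal trace but also — as you yourself list — the circulations $\oint_{\Gamma_i}D^{j}u\cdot\tau\,d\sigma$ around a homology basis of $\Omega$, and your bootstrap never controls these (nor their continuity in $t$). Since $\Omega$ is a general bounded domain, possibly multiply connected, this term cannot be dropped, and there is no smallness allowing it to be absorbed; as written, step (iii) of your bootstrap does not close. The paper's fix is Kato's identity $D^{j}u+\nabla D^{j-1}p=K^{j}[u]$ (Proposition \ref{P3}), where $K^{j}[u]$ involves only $D^{i}u$ and $\nabla D^{i}u$ for $i\le j-1$; since gradients have vanishing circulation, $\Pi(D^{j}u)=\Pi(K^{j}[u])$ is controlled by the induction hypothesis. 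Your parenthetical remark about ``going through the pressure'' contains essentially this identity at order one, so the repair is short, but it must be made explicit at every order $j$.
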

Theorem \ref{start1.5} entails in particular that if  the boundary $\partial \Omega$ is $C^{\infty}$ then  the flow  $ \Phi$    is $C^{\infty}$ from $(-T,T)$ to $ C^{\lambda+1,r}  (\Omega )$.
 We will precisely study this limit case ``$k= \infty$" thanks to
 general ultradifferentiable classes, which emcompass in particular
 the class of analytic functions, as well as Gevrey and quasi-analytic
 classes. 
 Let $N := (N_s)_{s \geq 0} $ be a sequence of positive numbers.
Let $U$ be a domain in $\R^n$ and let $E$ be a Banach space endowed with the norm $\|   \cdot \|_{E} $.
 We denote by $\calC \{N\} (U; E) $ the class of functions $f:U  \rightarrow E$ such that
  there exist $L_{f} , C_{f} >0$ such that  for all $s \in
  \mathbb{N}$ and for all $x \in U$,
\begin{equation}
\label{croissderive}
\|  \nabla^s f (x)  \|  \leq C_f L_f^s N_{s}  ,
  \end{equation}
as a function with values in the set of symmetric $s$-linear continuous operators on $U$.
 Since for any positive $\lambda > 0$ there holds $\calC\{N\} = \calC\{\lambda N\}$, there is no loss of generality to assume $N_0 = 1$.
 When $N$ is  increasing, logarithmically convex (i.e. when the sequence $(N_{j + 1} / N_j )_{j \geq 0}$ does not decrease) then the class $\calC\{N\} (U; E) $ is an algebra with respect to pointwise multiplication.
Theorem \ref{start1.5} extends as follows:
\begin{Theorem} \label{start2}
Assume that the hypotheses of Theorem \ref{start} hold, and moreover that the boundary $\partial \Omega$ is in $\calC\{ N \}$, where $N :=  ( s! M_s)_{s \geq 0}$ with $ ( M_s)_{s \geq 0}$ an increasing, logarithmically convex sequence of real numbers, with $M_0 = 1$, and satisfying 
\begin{equation}
\label{stablediff}
\sup_{s \geq 1} \left(\frac{M_s}{M_{s-1} } \right)^{1/s}   \leq C_d < \infty .
 \end{equation}
Then the flow  $ \Phi$  is in $\calC\{N\} ((-T,T) ;C^{\lambda+1,r}  (\Omega )  ) $.
 In particular if the  boundary $\partial \Omega$ is  analytic (respectively Gevrey of order $m>1$) then  $ \Phi$  is  analytic (respectively Gevrey of order $m>1$) from  $(-T,T)$ to  $C^{\lambda+1,r}  (\Omega ) $.
\end{Theorem}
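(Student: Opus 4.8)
The plan is to prove Theorems \ref{start1.5} and \ref{start2} simultaneously by an induction on the number of time-derivatives, tracking the constants quantitatively in the ultradifferentiable case. The starting point is the observation that the flow $\Phi$ solves the ODE $\partial_t \Phi = u(t,\Phi)$, so that $\partial_t^{k}\Phi$ can, formally, be written in terms of material derivatives $D_t^{j}u := (\partial_t + u\cdot\nabla)^{j}u$ of the velocity composed with $\Phi$ and polynomial expressions in lower-order spatial derivatives of $\Phi$ (a Fa\`a di Bruno / chain-rule expansion). Thus everything reduces to controlling the regularity of $t \mapsto D_t^{j} u$ as a map into $C^{\lambda+1,r}(\Omega)$ — or rather, since each material derivative a priori costs one spatial derivative, into $C^{\lambda+1-j,r}$, which forces us instead to measure $D_t^{j}u$ in a fixed space and pay for it via elliptic regularity as explained below. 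First I would establish the basic fact that, for the solution $u$ of Theorem \ref{start}, the pressure $p$ solves the Neumann problem
\begin{equation*}
\Delta p = -\trace\big((\nabla u)^2\big) \ \text{in } \Omega, \qquad \frac{\partial p}{\partial n} = -(u\cdot\nabla u)\cdot n \ \text{on } \partial\Omega,
\end{equation*}
so that by elliptic regularity on the $C^{k+\lambda+1,r}$ domain, $\nabla p$ is as regular in space as $\nabla u$; hence $\partial_t u = -u\cdot\nabla u - \nabla p \in C^{\lambda,r}$, and more importantly each additional material derivative of $u$ is again governed by such an elliptic problem. This is the mechanism by which boundary smoothness enters: the Neumann problem for the pressure (and for the higher "pressure-like" correctors) only closes if $\partial\Omega$ has enough derivatives.

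The core of the argument is a system of transport-elliptic equations for the quantities $u_j := D_t^{j}u$ and associated pressures $p_j$. Differentiating the Euler equations $j$ times along the flow, one finds that $u_j$ satisfies an equation of the form $D_t u_j + (\text{lower order}) = -\nabla p_j$, where $p_j$ again solves a Neumann problem whose data is a universal polynomial in $\nabla u, \dots, \nabla^{?}u_0,\dots,u_{j-1}$ and in geometric quantities attached to $\partial\Omega$ (the second fundamental form and its derivatives, appearing through the normal $n$ in the boundary condition). One then runs a double induction: on $k$ (resp., in the ultradifferentiable case, an induction producing bounds of the form $\|u_j\|_{C^{\lambda+1,r}} \leq C_u L_u^{j} N_j$). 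At each stage, the transport estimate in H\"older spaces (which is where the loss of derivatives is avoided: transporting by a $C^{\lambda+1,r}$ velocity preserves $C^{\lambda+1,r}$ up to a Gr\"onwall factor, as in the Wolibner/Kato theory) combined with the elliptic estimate for $p_j$ closes the loop. The combinatorics of the Fa\`a di Bruno expansion, together with the hypotheses that $(M_s)$ is logarithmically convex and satisfies \eqref{stablediff} — which is exactly what guarantees $\calC\{N\}$ is stable under composition and under the nonlinear operations appearing here — yield the geometric growth $N_j = j! M_j$ required for membership in $\calC\{N\}((-T,T);C^{\lambda+1,r}(\Omega))$; the analytic case is $M_s \equiv 1$ and the Gevrey-$m$ case is $M_s = (s!)^{m-1}$.

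The main obstacle, and the heart of the paper, is the derivative-loss bookkeeping: naively, each material derivative $D_t$ costs one spatial derivative, so iterating $k$ times would require $u_0 \in C^{k+\lambda+1,r}$, which is not assumed — only the boundary is that smooth. The resolution is that the "bad" term produced at each step is always a gradient (of a pressure-type potential $p_j$), and the Neumann problem for $p_j$ allows one to trade the regularity: the right-hand side $\trace((\nabla u)\cdot(\nabla u_{j-1})) + \dots$ and the boundary data $(u\cdot\nabla u_{j-1})\cdot n$ can be rearranged, using $\div u = 0$ and the divergence-form structure, so that the elliptic estimate gives $\nabla p_j \in C^{\lambda+1,r}$ from data involving only first spatial derivatives of the $u_i$ with $i<j$ — no net loss — provided $\partial\Omega \in C^{k+\lambda+1,r}$ absorbs the geometric terms. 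Making this rearrangement precise, and controlling the resulting constants uniformly so as to obtain the sharp $j!M_j$ growth, is the delicate part; everything else is transport estimates, elliptic estimates on smooth domains, and Fa\`a di Bruno combinatorics.
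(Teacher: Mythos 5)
Your overall architecture matches the paper's (which refines Kato's method): reduce everything to estimating the iterated material derivatives $D^k u$ at each fixed time, express the divergence, curl and boundary data of $D^k u$ by commutator/Fa\`a di Bruno identities involving only \emph{first} spatial derivatives of the lower-order $D^j u$ together with derivatives of the signed distance to $\partial\Omega$, and close with an elliptic estimate. (You phrase the elliptic step as a Neumann problem for pressure-type correctors; the paper applies the $\div$-$\curl$ estimate of Lemma \ref{Lemme1} directly to $D^k u$ and handles the harmonic part via Kato's identity, Proposition \ref{P3}. These are equivalent in substance.) Two remarks on that skeleton: the induction is purely elliptic at fixed $t$ --- no transport/Gr\"onwall estimate is needed to ``close the loop,'' since $D^{k+1}u=-\nabla D^{k}p+K^{k+1}[u]$ determines $D^{k+1}u$ from time-$t$ data alone; and the boundary regularity enters not through the constant in the elliptic estimate (which is fixed) but through the quantitative bounds $\|\nabla^s\rho\|\le c_\rho^s\, s!\,M_s$ on the signed distance function appearing in the identity for $n\cdot D^k u$.

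The genuine gap is in the combinatorial bookkeeping, which you flag as ``the delicate part'' but do not resolve; the mechanism you invoke (logarithmic convexity plus \eqref{stablediff}, i.e.\ stability of $\calC\{N\}$ under composition) is not sufficient for it. An induction hypothesis of the unweighted form $\|D^j u\|\le C L^j j! M_j\|u\|^{j+1}$ does not close: the identity for, say, $\div D^k u$ is a sum over $(s,\alpha)$ with $|\alpha|=k+1-s$ of terms with integer coefficients of size $k!/\alpha!$, and inserting the unweighted bound produces, after $\prod_i M_{\alpha_i}\le M_{|\alpha|}$, an extra factor
\begin{equation*}
\sum_{s=2}^{k+1} C^s L^{1-s}\,\mathrm{card}\{\alpha\in\N^s:\ |\alpha|=k+1-s\}
=\sum_{s=2}^{k+1} C^s L^{1-s}\binom{k}{s-1},
\end{equation*}
whose $s=2$ term alone is of order $k/L$. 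This unbounded loss at each step accumulates over the induction to an extra factor of order $k!$ and destroys analyticity (one would only reach Gevrey~$2$). The paper's resolution --- going back to Chemin and Gamblin --- is to build the weight $(k+1)^{-2}$ into the induction hypothesis \eqref{indu}, i.e.\ $\|D^k u\|\le \frac{k!M_kL^k}{(k+1)^2}\|u\|^{k+1}$, and to use the summation lemma $\sum_{|\alpha|=m}\prod_i(1+\alpha_i)^{-2}\le 20^s/(m+1)^2$ (Lemma \ref{LemmeCheminSMF}) so that the weights regenerate themselves at each step with a loss that is uniformly small for $L$ large. Without this device (or an equivalent one), the induction as you have set it up fails.
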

The particular cases of the last sentence are obtained 
 when $N$ is the sequence $N_j := (j!)^m$, with $m=1$ (respectively $m>1$); in these cases $\calC\{N\}  ( E) $ is the set of analytic functions (respectively Gevrey of order $m$).
 An important difference between the class of analytic functions and  the class of Gevrey  functions of order $m>1$ is that only the first one is quasi-analytic.\footnote{Actually the Denjoy-Carleman theorem states that  $\calC\{N\} ( E) $  is quasi-analytic, with $N$ as in Theorem  \ref{start2} if and only if $\sum_{j \geq 0}  \frac{M_j}{(j+1)M_{j+1} } < \infty$.}
 The logarithmic convexity of $M$  entails that for any\footnote{In the whole paper the notation $\N^*$ stands for $\N \setminus  \{0\}.$} $s \in \N^*$ and for any $ \alpha := ( \alpha_1,\ldots, \alpha_s )  \in \N^s$,
\begin{equation}
\label{subadditif}
M_{ \alpha_1}  \cdot  \cdot  \cdot M_{ \alpha_s} \leq M_{| \alpha | } ,
  \end{equation}
where the notation $ | \alpha |$ stands for $| \alpha | := \alpha_1 + \ldots+  \alpha_s$.
The condition \eqref{stablediff} is necessary and sufficient for the class  $\calC\{N\}  $ to be stable under derivation (cf. for instance \cite[Corollary 2]{thilliez}).
We will prove Theorem \ref{start2} by induction in such a way that Theorem \ref{start1.5} will be a simple byproduct of the proof of Theorem \ref{start2}.
\begin{Remark} \rm
Theorem \ref{start2} fills the gap between the results of Chemin \cite{cheinventiones}, \cite{cheminsmoothness}, Serfati  \cite{Serfati1}, \cite{Serfati2}, \cite{Serfati3}, Gamblin \cite{gamblinana}, \cite{gamblinana2} which prove analyticity of the flow for fluids filling the whole space
 and the paper \cite{katoana} of  Kato
which proves the smoothness in time for classical solutions in a smooth bounded domain.
\end{Remark}
\begin{Remark} \rm
It is fair to point out that the works of Gamblin and Kato cover the
more general case of spatial dimension $d \geqslant 2$. Moreover
Gamblin succeeds to prove that the flow of Yudovich's solutions (that
is, having merely bounded vorticity) is Gevrey $3$, when the fluid
occupies the whole plane. We will address the extension of this property in a bounded domain in a subsequent work.
\end{Remark}
\begin{Remark} \rm
As emphasized by Kato (cf. Example $(0.2)$ in \cite{katoana}) the
smoothness of the trajectories can only be proved under some kind of
global constraint, namely the wall condition (\ref{Euler3a}) in the
case studied here of a bounded domain. In the unbounded case one would
have to restrict the behavior of $u$ or $p$ at infinity (for instance
Gamblin   \cite{gamblinana} considers initial velocities $u_0$ which
are in $L^q (\R^3 )$ with
$1<q<+\infty$, in addition to be in $ C^{\lambda,r}$). 
  \end{Remark}
\begin{Remark} \rm
\label{Rk2}
It is natural to wonder if Theorem \ref{start2} admits a local (in space) counterpart. We do not adress this issue here since it does not seem relevant for considering the smoothness of the motion of an immersed body.
\end{Remark}
 \begin{Remark} \rm
 Gamblin's approach, following Chemin's one, uses a representation of the pressure via a singular integral operator, and relies on the repeated action on it of the material field. 
 On the opposite Kato's approach for bounded domains lies on the analysis of the action of the material field with differential operators, the non-local features being tackled with a classical elliptic regularity lemma.
Here we will refine the combinatorics in Kato's approach to obtain the analyticity, motivated by Gamblin's result.
 \end{Remark}
In the case where the boundary is analytic, the flow depends smoothly on the initial velocity. More precisely let us introduce, for any $R > 0$, 
\begin{equation*}
C^{\lambda+1,r}_{\sigma ,R}  ( \Omega ) := \Big\{ u \in C^{\lambda+1,r}_\sigma  ( \Omega  ) \ \Big/ \
 \| u  \|_{  C^{\lambda +1,r} ( \Omega ) } < R
\Big\}.
\end{equation*}
Then the following holds true.
\begin{Corollary} \label{Hadamard}
Let $\lambda$ in $\N$, $r \in (0,1)$ and $R > 0$. Suppose that $\partial \Omega$ is analytic.
  Then the mapping 
  $$u_0 \in C^{\lambda+1,r}_{\sigma ,R}  (\Omega ) \mapsto \Phi  \in C^\omega  ((-T_* ,T_*  ) ;C^{\lambda+1,r}  (\Omega )  )$$
   is $C^\infty$,  where $T_* = T_* ( \Omega ,R)$ is given by Theorem \ref{start}.
\end{Corollary}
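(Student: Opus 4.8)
The plan is to deduce the $C^\infty$ dependence on $u_0$ from an analytic version of the construction that yields Theorem \ref{start2}, run not in the single variable $t$ but in the pair $(t,u_0)$, by regarding $u_0$ as a parameter living in the open ball $C^{\lambda+1,r}_{\sigma,R}(\Omega)$. Concretely, I would first revisit the proof of Theorem \ref{start} and observe (as is classical) that the solution $u$ depends real-analytically on $u_0$ in the following quantitative sense: the iteration scheme producing $u$ (successive approximations in $C_w((-T_*,T_*),C^{\lambda+1,r}(\Omega))$) is built from maps that are polynomial or bi-linear in their arguments together with the solution operator of an elliptic problem with analytic coefficients (the pressure equation), so each iterate is analytic in $u_0$ with geometrically controlled radii, and the limit inherits this. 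Alternatively, one can set up the coupled system satisfied by $(u,\Phi)$ and apply an analytic implicit function theorem in Banach scales; either way the output is: the map $u_0 \in C^{\lambda+1,r}_{\sigma,R}(\Omega) \mapsto u \in C_w((-T_*,T_*),C^{\lambda+1,r}(\Omega))$ extends to a $C^\omega$ (hence $C^\infty$) map, and likewise for the pressure $p$.

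Next I would upgrade this to the flow $\Phi$. The flow solves the ODE $\partial_t\Phi(t,x)=u(t,\Phi(t,x))$, $\Phi(0,x)=x$; since $u$ is $C^{\lambda+1,r}$ in space uniformly in $t$ and depends $C^\infty$ on $u_0$, standard ODE-with-parameters theory (Picard iteration again, now with two parameters $t$ and $u_0$) gives that $\Phi$ depends $C^\infty$ on $u_0$ with values in, say, $C^{1}(\Omega)$ or better. But the real content — getting values in $C^{\lambda+1,r}(\Omega)$ and, crucially, the analytic control in $t$ that is asserted by writing $\Phi \in C^\omega((-T_*,T_*);C^{\lambda+1,r}(\Omega))$ as the target space — should be extracted from the proof of Theorem \ref{start2} itself. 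That proof, by the remark preceding the corollary, refines Kato's combinatorial estimates to bound $\|\partial_t^s \Phi\|_{C^{\lambda+1,r}(\Omega)} \leq C_\Phi L_\Phi^s s!$ with constants $C_\Phi,L_\Phi$ that one can track through the argument. The key point is that these constants depend only on $\|u_0\|_{C^{\lambda+1,r}(\Omega)}$ (bounded by $R$), on the analyticity seminorms of $\partial\Omega$, and on quantities that are themselves $C^\infty$ — indeed $C^\omega$ — functions of $u_0$. Hence the induction producing the analyticity in $t$ can be carried out with $u_0$-dependent constants, and a second layer of estimates controls the $u_0$-derivatives of each $\partial_t^s\Phi$, so that $(t,u_0)\mapsto \Phi$ is jointly $C^\infty$ with $\Phi$ analytic in $t$ uniformly for $u_0$ in compact subsets of the ball.

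The main obstacle I anticipate is bookkeeping rather than a conceptual difficulty: one must re-examine every estimate in the proof of Theorem \ref{start2} — in particular the elliptic regularity lemma for the pressure on the domain with analytic boundary and the Faà di Bruno–type estimates arising from the repeated action of the material derivative — and verify that all the constants entering them are locally bounded, and depend continuously (indeed smoothly) on $u_0$, rather than merely finite for each fixed $u_0$. A convenient way to organize this is to introduce, alongside the time derivatives, the Gateaux derivatives in the $u_0$ direction as additional "directions" in the ultradifferentiable estimate \eqref{croissderive}, and check that the inductive scheme of Theorem \ref{start2} closes for the enlarged family of derivatives; the logarithmic convexity and the stability-under-derivation condition \eqref{stablediff} are exactly what make the mixed estimates combine. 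Since we only need $C^\infty$ (not analytic) dependence on $u_0$, we have slack: it suffices to control finitely many $u_0$-derivatives at a time, which sidesteps any need for a genuinely analytic two-variable theory and reduces the task to finitely many applications of the machinery already developed for Theorem \ref{start2}. Finally, one notes the time interval can be taken to be the fixed $(-T_*,T_*)$ with $T_*=T_*(\Omega,R)$ because Theorem \ref{start} guarantees existence on that interval uniformly for $\|u_0\|_{C^{\lambda+1,r}(\Omega)}<R$.
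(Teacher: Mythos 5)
Your overall strategy (differentiate the inductive scheme of Theorem \ref{start2} with respect to $u_0$) points in a workable direction, but as written it both contains a gap and misses the structural observation on which the paper's proof actually rests. Your first paragraph asserts that $u_0\mapsto u$ is real-analytic, to be obtained from successive approximations or an analytic implicit function theorem "in Banach scales"; this is not proved, it is not how the paper proceeds, and making it rigorous would be at least as hard as the corollary itself — it is essentially the conclusion you are trying to reach. The paper instead expands the flow in its time-Taylor series at $t=0$, namely $\Phi[u_0](t,x)=x+\sum_{k\geq 1}\frac{t^k}{k!}(D^{k-1}u)(0,x)$, and observes that the induction of Section \ref{Proof1}, evaluated at $t=0$, exhibits $(D^{k-1}u)(0,\cdot)$ as the restriction to the diagonal of a continuous $k$-linear operator on $C^{\lambda+1,r}(\Omega)^k$, i.e.\ a homogeneous polynomial of degree $k$ in $u_0$, with norm controlled by $\frac{(k-1)!\,L^{k-1}}{k^2}\|u_0\|^{k}$. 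Each Taylor coefficient is then automatically $C^\infty$ in $u_0$, its $l$-th Fr\'echet derivative costs only an extra factor $k^l$, and the series together with all its $u_0$-derivatives converges in $L^\infty(B_\C(0,T_a);C^{\lambda+1,r}(\Omega))$ for $T_a$ depending only on $\Omega$ and $\overline{R}$; finitely many such intervals are then patched together to cover $(-T_*,T_*)$.

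Concretely, two points need to be supplied to complete your plan. First, the control of the $u_0$-derivatives of $\partial_t^s\Phi$ is the actual mathematical content, not "bookkeeping": your proposed device of adding Gateaux derivatives in $u_0$ as extra directions in the induction amounts to rederiving, in a heavier form, precisely the multilinearity in $u_0$ of the Taylor coefficients that the paper uses, and until that enlarged induction is shown to close (with the linearized equations and the commutator identities of Proposition \ref{P1New} applied to the variations $\delta u$), the claim that "a second layer of estimates controls the $u_0$-derivatives" is unsupported. Second, the passage from a neighborhood of $t=0$ to the whole interval $(-T_*,T_*)$ is not automatic from uniform existence: the natural topology here is that of bounded holomorphic extensions to complex discs, the radius of convergence of the series at $t=0$ need not reach $T_*$, and the paper handles this by re-running the argument on finitely many overlapping small time intervals.
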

Above the notation $C^\omega $ stands for the space of real-analytic functions.
\subsection{Analyticity of  the motion of a rigid body immersed in an incompressible perfect fluid}
The second and main result of this paper is about the motion of a rigid body immersed in an incompressible homogeneous perfect fluid, so that the system fluid-rigid body now occupies $\Omega$.
The solid is supposed to occupy at each instant $t \geq 0$  a closed connected subset $\mathcal{S}(t) \subset \Omega$ which is surrounded by a perfect incompressible fluid filling   the domain $\mathcal{F}(t) :=   \Omega \setminus  \mathcal{S}(t)$.
The equations modelling the dynamics of the system read
\begin{eqnarray}
\displaystyle \frac{\partial u}{\partial t}+(u\cdot\nabla)u + \nabla p &=& 0 ,  \ \text{for}  \ x\in   \mathcal{F}(t) , \label{Euler1a2}
\\	\div u &=& 0  ,  \ \text{for}  \ x\in   \mathcal{F}(t), \label{Euler2a2}
\\ m x_{B}''(t) &=&  \int_{\partial \mathcal{S}(t)} pn \ d\Gamma,   \label{Solide1}
\\  (\mathcal{J} r )'(t) &=&  \int_{\partial \mathcal{S}(t)} (x- x_{B})\wedge pn \ d\Gamma  , \label{Solide2} 
\\  u\cdot n &=& 0 , \ \text{for}  \ x\in \partial \Omega , \label{Euler3a2}
\\ u\cdot n &=& v \cdot n , \ \text{for}  \    \ x\in \partial \mathcal{S}(t),  \label{Euler3b}
\\	u |_{t= 0} &=& u_0 ,\label{Eulerci2}
\\  	 x_{B}(0)= x_0 , \ \ell(0)&=&\ell_0,\  r  (0)=  r _0. \label{Solideci}
\end{eqnarray}
The equations  (\ref{Solide1})  and (\ref{Solide2}) are the laws of
conservation of  linear momentum and  angular momentum. Here we denote by $m$ the mass of the rigid body (normalized in order that the density of the fluid is $\rho_{F} = 1$), by $x_B (t)$  the position of its center of mass, 
$n(t,x)$ denotes the unit normal vector pointing outside the fluid
and $ d\Gamma(t) $ denotes the surface measure on $\partial \mathcal{S}(t)$. The time-dependent vector $\ell(t) :=  x_{B}'(t) $ denotes the velocity of the center of mass of the solid and $r$ denotes its angular speed. The vector field $u$ is the fluid velocity, $v$ is the solid velocity and $p$ is the pressure field in the fluid domain. Finally in (\ref{Solide2})  the  matrix $\mathcal{J}$ denotes the moment of inertia (which depends on time). \par
The solid velocity is given by
\begin{equation} \label{vietendue}
v(t,x) :=\ell(t) +   r (t) \wedge (x -  x_{B}(t)).
\end{equation}
The rotation matrix $Q \in SO(3)$ is deduced from $r$ by the following differential equation (where we use the convention to consider the operator $r(t) \wedge \cdot$ as a matrix):
\begin{equation} \label{LoiDeQ}
Q'(t) = r(t) \wedge Q(t) \text{ and } Q(0) = \Id_3.
\end{equation}
According to Sylvester's law,  $\mathcal{J}$ satisfies
\begin{equation}\label{Sylvester}
 \mathcal{J} =Q \mathcal{J}_0 Q^{*} ,
 \end{equation}
where $\mathcal{J}_0$ is the initial value of $\mathcal{J}$. Finally, the domains occupied by the solid and the fluid are given by
\begin{equation} \label{PlaceDuSolideEtDuFluide}
{\mathcal S}(t) = \Big\{ x_{B}  (t) + Q(t)(x-x_{0}), \ x \in {\mathcal S}_{0} \Big\} \text{ and } {\mathcal F}(t)=\Omega \setminus \overline{{\mathcal S}(t)}.
\end{equation}
\ \par
Given a positive function $\rho_{S_{0}} \in L^{\infty}({\mathcal S}_{0};\R)$ describing the density in the solid (normalized in order that the density of the fluid is $\rho_{F} = 1$), the data $m$, $x_{0}$ and ${\mathcal J}_{0}$ can be computed by it first moments
\begin{equation} \label{EqMasse}
m :=  \int_{S_0} \rho_{S_0} dx  > 0,
\end{equation}
\begin{equation} \label{Eq:CG}
m x_0 :=   \int_{S_0} x \rho_{S_0} (x) dx,
\end{equation}
\begin{equation}\label{eqJ}
\mathcal{J}_{0} (t) := \int_{ \mathcal{S}_{0}}  \rho_{S_{0}}(x)   \big( | x- x_{0} |^2 \Id_3 -(x- x_{0})  \otimes   (x- x_{0})    \big) dx  .
\end{equation}
\ \\
For potential flows the first studies of the problem  (\ref{Euler1a2})--(\ref{Solideci}) dates back to D'Alembert, Kelvin and Kirchoff.
In the general case, the existence and uniqueness of classical solutions to the problem  (\ref{Euler1a2})--(\ref{Solideci}) is now well-understood thanks to the works of  Ortega, Rosier and Takahashi \cite{ort1}-\cite{ort2}, Rosier and Rosier \cite{rosier} in the case of a body in $\R^{3}$ and  Houot, San Martin and Tucsnak \cite{ht} in the case (considered here) of a bounded domain, in Sobolev spaces $H^{m}$, $m \geq 3$. We will use a rephrased version of their result in H\"older spaces, which reads as follows. Let
\begin{multline} \nonumber
\tilde{C}^{\lambda,r}_\sigma ( \mathcal{F}_0 ,x_0):= \Big\{ (\ell_0,r_0,u_0) \in  \R^3  \times  \R^3  \times C^{\lambda,r}  (\mathcal{F}_0 ) \ \Big/ \div(u_0)=0 \text{ in } \mathcal{F}_0, \\
u_0 \cdot n = 0 \text{ on } \partial \Omega \ \text{ and } \ (u_0 - v_0) \cdot n =  0 \text{ on } \partial \mathcal{S}_0 \text{ with } v_0  := \ell_0 +   r_0 \wedge (x -  x_0) \Big\}.
\end{multline}
\begin{Theorem}
  \label{start3}
Let be given $\lambda$ in $\N$, $r \in (0,1)$ and a regular closed connected subset $\mathcal{S}_0 \subset \Omega$. Consider a positive function $\rho_{\mathcal{S}_0} \in L^{\infty}(\mathcal{S}_0)$. We denote $m$ the mass, $x_0$ the position of the center of mass of  $\mathcal{S}_0$, ${\mathcal J}_{0}$ the initial matrix of inertia and $\mathcal{F}_0 :=  \Omega \setminus  \mathcal{S}_0$. There exists a constant $C_* = C_*  (\Omega ,\mathcal{S}_0,\rho_{\mathcal{S}_0} ) > 0$ such that the following holds. 
Consider $(\ell_0 ,r_0 ,u_0 )$ in  $\tilde{C}^{\lambda+1,r}_\sigma ( \mathcal{F}_0 ,x_0)$. 
Then there exists 
$$T > T_* ( \Omega,\mathcal{S}_0 , \rho_{\mathcal{S}_0},  \| u_0  \|_{  C^{\lambda+1,r} ({\mathcal F}_{0} ) }  +\|\ell_0\| + \|r_0\|   ) := \frac{C_*}{\| u_0  \|_{  C^{\lambda+1,r} ( {\mathcal F}_{0} )} +\|\ell_0\| + \|r_0\|},$$
such that the problem  (\ref{Euler1a2})--(\ref{PlaceDuSolideEtDuFluide})  admits a unique solution 
$$
(x_{B},  r,u)  \in  C^1 ( (-T,T) ) \times  C^0 ( (-T,T) )  \times   L^ \infty ( (-T,T), C^{\lambda+1,r}  (\mathcal{F} (t) )   ).
$$
Moreover $(x_{B},  r )  \in  C^2 ( (-T,T) ) \times  C^1 ( (-T,T) )$, $ u  \in C_w ( (-T,T) ; C^{\lambda+1,r}  (\mathcal{F} (t) ) )$ and $u \in C ( (-T,T) ; C^{\lambda+1,r'}  (\mathcal{F} (t) )   )$, for $ r'  \in (0,r)$; and the same holds for $\partial_t u$ instead of $u$ with $\lambda$ instead of $ \lambda+1$. 
\end{Theorem}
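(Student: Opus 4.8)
The statement is a transcription to the Hölder scale of the well-posedness results of \cite{ort1}, \cite{ort2}, \cite{rosier}, \cite{ht}, and I would prove it along the same lines, the only genuinely new point being to replace the Sobolev energy and elliptic estimates by their Hölder (Schauder) counterparts. The first step is to fix the fluid domain: using the rigid motion one sets $y := Q(t)^{*}(x-x_{B}(t))$, so that $\mathcal{F}(t)$ is carried onto the reference domain $\mathcal{F}_{0}$, and one lets $\tilde u(t,y):=Q(t)^{*}u(t,x_{B}(t)+Q(t)y)$ and $\tilde p(t,y):=p(t,x_{B}(t)+Q(t)y)$. Then $\div\tilde u=0$, and \eqref{Euler1a2} becomes a quasilinear transport equation for $\tilde u$ on the fixed domain $\mathcal{F}_{0}$ whose coefficients depend only on $(\ell,r)$ and on $Q$ — hence, via \eqref{LoiDeQ}, only on $r$ — and are as smooth as $\partial\mathcal{F}_{0}$ permits, with norms controlled by $\|\ell\|+\|r\|$ over the time interval considered, since $Q(t)\in SO(3)$. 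The conditions \eqref{Euler3a2}--\eqref{Euler3b} become the time-independent no-flux conditions $\tilde u\cdot n=0$ on $\partial\Omega$ and on $\partial\mathcal{S}_{0}$.

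Second, I recover the pressure. Taking the divergence of \eqref{Euler1a2} and its normal trace on $\partial\mathcal{F}(t)$, one sees that $\tilde p$ solves at each instant a Neumann problem on $\mathcal{F}_{0}$ whose source is quadratic in $\nabla\tilde u$ (plus lower-order terms in $r$), solvable thanks to the divergence structure of the source, and whose Neumann datum on $\partial\mathcal{S}_{0}$ depends affinely on the solid acceleration $(\ell',r')$. Writing accordingly $\tilde p=\tilde p_{*}+\ell'\cdot\tilde p_{\ell}+r'\cdot\tilde p_{r}$, with $\tilde p_{*},\tilde p_{\ell},\tilde p_{r}$ solving explicit elliptic problems depending only on $\tilde u$ and on the geometry, and inserting this into \eqref{Solide1}--\eqref{Solide2}, one obtains a $6\times6$ linear system for $(\ell',r')$ whose principal part is $\operatorname{diag}(m\,\Id_{3},\mathcal{J})$ plus the added-mass matrix; the latter being symmetric nonnegative while $m>0$ and $\mathcal{J}>0$, this system is uniformly invertible as long as $\mathcal{S}(t)$ stays at positive distance from $\partial\Omega$. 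This expresses $(\ell',r')$, and thus $\tilde p$, as a nonlocal functional of $(\tilde u,\ell,r)$, with $\tilde p\in C^{\lambda+2,r}(\mathcal{F}_{0})$ bounded by $\|\tilde u\|_{C^{\lambda+1,r}}^{2}$ plus lower-order terms, by Schauder estimates.

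The problem is now a closed system: a transport equation of the form $\partial_{t}\tilde u+(\tilde u\cdot\nabla)\tilde u=-\nabla\tilde p[\tilde u,\ell,r]+f(r,\tilde u)$ on $\mathcal{F}_{0}$, together with the ODEs $x_{B}'=\ell$, $Q'=r\wedge Q$, and $(\ell',r')=\Lambda[\tilde u,\ell,r]$. I would solve it by the standard iteration: given $(\tilde u^{n},x_{B}^{n},Q^{n})$, solve the linear transport equation for $\tilde u^{n+1}$ along the flow of $\tilde u^{n}$, using that this flow preserves $C^{1,r}$ with norm growing like $\exp\big(C\int\|\tilde u^{n}\|_{C^{1,r}}\big)$, then update $\tilde p^{n}$ and integrate the ODEs to get $(x_{B}^{n+1},Q^{n+1},\ell^{n+1},r^{n+1})$. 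The main work — and the step I expect to be the real obstacle — is the uniform a priori estimate: combining the Hölder transport estimate, the Schauder bound on $\tilde p$, and Gronwall, one must bound $\|\tilde u^{n}\|_{C^{\lambda+1,r}(\mathcal{F}_{0})}+\|\ell^{n}\|+\|r^{n}\|$ on $(-T,T)$ with $T\geq C_{*}/(\|u_{0}\|_{C^{\lambda+1,r}}+\|\ell_{0}\|+\|r_{0}\|)$, where $C_{*}$ depends only on $\Omega$, $\mathcal{S}_{0}$, $\rho_{\mathcal{S}_{0}}$, while simultaneously checking (shrinking $C_{*}$ if needed) that $\mathcal{S}(t)$ stays uniformly away from $\partial\Omega$ on that interval, so that all elliptic constants and the invertibility above are uniform. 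Convergence of the scheme in a weaker norm (say $C^{\lambda,r'}$ with $r'<r$, or $C^{0}$) then produces a solution.

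Uniqueness follows from a transport/energy estimate on the difference of two solutions, carried out in a lower-regularity norm to avoid derivative loss. The additional regularity in time is read off the equations: once $p$ is known with the above regularity, $\ell'=\frac1m\int_{\partial\mathcal{S}(t)}pn\,d\Gamma$ and $(\mathcal{J}r)'=\int_{\partial\mathcal{S}(t)}(x-x_{B})\wedge pn\,d\Gamma$ are continuous in $t$, whence $x_{B}\in C^{2}$ and $r\in C^{1}$; the weak-$*$ continuity $\tilde u\in C_{w}((-T,T);C^{\lambda+1,r})$ and the strong continuity in $C^{\lambda+1,r'}$ for $r'\in(0,r)$ are the classical facts for transport equations on the Hölder scale (the loss $r\to r'$ reflecting the non-density of smooth functions in $C^{0,r}$), and the companion statement for $\partial_{t}\tilde u$ follows since the equation expresses it through one spatial derivative of $\tilde u$ and through $\nabla\tilde p$. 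Translating back to the moving frame via the rigid motion gives the conclusion.
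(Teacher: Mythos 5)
Your overall architecture (decompose the pressure into a quadratic part plus an added-mass part affine in $(\ell',r')$, invert the virtual inertia matrix $\operatorname{diag}(m\Id_3,\mathcal{J})+\mathcal{M}_2$, run an iteration with Schauder estimates and Gronwall, prove uniqueness in a weaker norm) is exactly the one the paper follows in its appendix, where the scheme is organized as a double fixed point: an inner one producing the fluid flow for a \emph{prescribed} solid motion (the vorticity is pushed forward by the Lagrangian flow and the velocity recovered from the div--curl system with conserved circulations), and an outer one on $(\ell,r)$ through the added-mass ODE.

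However, your very first step contains a genuine error. The rigid change of variables $y=Q(t)^{*}(x-x_{B}(t))$ maps $\mathcal{S}(t)$ onto $\mathcal{S}_0$ (up to translation by $x_0$), but it maps the container $\Omega$ onto the moving domain $Q(t)^{*}(\Omega-x_{B}(t))$; hence it does \emph{not} carry $\mathcal{F}(t)=\Omega\setminus\mathcal{S}(t)$ onto $\mathcal{F}_0$, and the condition \eqref{Euler3a2} does \emph{not} become a time-independent no-flux condition on $\partial\Omega$ in the new coordinates. This trick works only when $\Omega=\R^{3}$ (the setting of \cite{ort1,ort2,rosier}, which is invariant under rigid motions); in the bounded-domain case of the theorem it fails, and this is precisely the point where \cite{ht} and the present paper must argue differently. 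The standard repairs are either (a) a non-rigid diffeomorphism that equals the rigid motion near $\partial\mathcal{S}(t)$ and the identity near $\partial\Omega$ (this is what the paper's Lemma \ref{LemmeDiffeoReg} constructs, in order to get the uniform constant in the div--curl estimate of Lemma \ref{AutreRegdivcurl}), or (b) working directly in Lagrangian variables on the moving domain, which is what the appendix actually does: the operator $\mathcal{T}$ acts on flow maps $\eta:\mathcal{F}_0\to\mathcal{F}(t)$, and all compositions and comparisons are pulled back to $\mathcal{F}_0$ via $\eta$ rather than via a rigid map. Everything downstream of your first step (the Neumann problems for $\tilde p_{*},\tilde p_{\ell},\tilde p_{r}$, the $6\times6$ system, the smallness of $T$ ensuring $d(\mathcal{S}(t),\partial\Omega)>0$ and uniform elliptic constants) is correct in spirit, but it must be set up on the moving domain or via such an interpolating diffeomorphism, not via the rigid one.
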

\begin{Remark} \label{RemEspTX}
The notation $L^ \infty ( (-T,T), C^{\lambda+1,r}  (\mathcal{F} (t) )
)$ is slightly improper since the domain ${\mathcal F}(t)$ depends on
$t$. One should more precisely think of $u$ as the section of a vector
bundle. However, since we think that there should not be any
ambiguity, we will keep this notation in what follows. The space $C ( (-T,T) ; C^{\lambda+1,r'}  (\mathcal{F} (t) ) )$ stands for the space of functions defined in the fluid domain, which can be extended to functions in $C ( (-T,T) ; C^{\lambda+1,r'} (\R^{3}))$.
\end{Remark}
\begin{Remark}
The regularity of $\rho_{S_{0}}$ is not an issue here since the solid density only intervenes through $m$, $x_{0}$ and ${\mathcal J}_{0}$.
\end{Remark}
For the sake of completeness, we prove Theorem \ref{start3} in the appendix. This proof will also allow us to get the following result concerning the continuous dependence of the solution with respect to initial data, which we will use later. Let us denote, for any $R > 0$,
\begin{equation*}
\tilde{C}^{\lambda,r}_{\sigma ,R} ( \mathcal{F}_0 ,x_0):= 
 \Big\{ (\ell_{0},r_{0},u_{0}) \in \tilde{C}^{\lambda,r}_{\sigma} ( \mathcal{F}_0 ,x_0) \ \Big/ 
 \ \| u_{0} \|_{  C^{\lambda,r} ( {\mathcal F}_{0} )} +\|\ell_{0} \| + \|r_{0}\|< R \Big\}.
\end{equation*}
\begin{Proposition}\label{Prostart3}
Let $R>0$. In the context of Theorem \ref{start3}, consider $(\ell^{1}_0 ,r^{1}_0 ,u^{1}_0 )$ and $(\ell^{2}_0 ,r^{2}_0 ,u^{2}_0 )$ in  $\tilde{C}^{\lambda+1,r}_{\sigma,R} ( \mathcal{F}_0 ,x_0) $.
Let 
\begin{equation*}
T =T_* ( \Omega,\mathcal{S}_0 , \rho_{\mathcal{S}_0},  R ).
\end{equation*}
Consider $(\ell^{1},r^{1},u^{1})$ and  $(\ell^{2},r^{2},u^{2})$ the corresponding solutions of (\ref{Euler1a2})--(\ref{PlaceDuSolideEtDuFluide}) in $[-T;T]$, and let $\eta_{1}$ and $\eta_{2}$ be the flows of $u^{1}$, $u^{2}$ respectively. Then for some $K=K( \Omega,\mathcal{S}_0 , \rho_{\mathcal{S}_0},  R )>0$ one has
\begin{multline*}
\| \eta_{1} - \eta_{2} \|_{L^{\infty}(-T,T;C^{\lambda+1,r}({\mathcal F}_{0}))} \\
+ \| u_{1}(t,\eta_{1}(t,\cdot)) - u_{2}(t,\eta_{2}(t,\cdot)) \|_{L^{\infty}(-T,T;C^{\lambda+1,r}({\mathcal F}_{0}))} 
+ \|(\ell_{1},r_{1}) - (\ell_{2},r_{2}) \|_{ L^{\infty}(-T,T;\R^{6})} \\
\leq K \Big[ \|\ell^{1}_0 - \ell^{2}_0\| + \|r^{1}_0 - r^{2}_0\| + \| u^{1}_0 - u^{2}_0 \|_{  C^{\lambda+1,r} ({\mathcal F}_{0} ) }  \Big].
\end{multline*}
\end{Proposition}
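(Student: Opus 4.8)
The plan is to establish Proposition \ref{Prostart3} by a standard well-posedness/stability argument for the coupled fluid-solid system, carried out after changing variables to the fixed domain ${\mathcal F}_{0}$. First I would recall from the proof of Theorem \ref{start3} (in the appendix) the reformulation of (\ref{Euler1a2})--(\ref{PlaceDuSolideEtDuFluide}) as a system set on the fixed fluid domain ${\mathcal F}_{0}$, obtained by composing with a diffeomorphism $\psi(t,\cdot)$ that maps ${\mathcal F}_{0}$ onto ${\mathcal F}(t)$ (built from the rigid motion $(x_{B},Q)$, extended to a divergence-preserving diffeomorphism of $\Omega$). In these variables the unknowns are $(\ell,r)$, the transported fluid velocity $\tilde u(t,y) := Q(t)^{*}\,u(t,\psi(t,y))$ (or a similar expression chosen so that $\tilde u \cdot n = 0$ on $\partial\Omega \cup \partial{\mathcal S}_{0}$), and the pressure $\tilde p$, which solves an elliptic problem with coefficients depending smoothly on $(x_{B},Q,\ell,r)$. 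This is exactly the setting in which the existence part is proved, so the a priori bounds established there already give, uniformly for data in $\tilde C^{\lambda+1,r}_{\sigma,R}$, control of all relevant quantities on $[-T,T]$ with $T = T_{*}(\Omega,{\mathcal S}_{0},\rho_{{\mathcal S}_{0}},R)$.

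Next I would write the equations satisfied by the differences. Set $\delta\ell := \ell^{1}-\ell^{2}$, $\delta r := r^{1}-r^{2}$, $\delta x_{B} := x^{1}_{B}-x^{2}_{B}$, $\delta Q := Q^{1}-Q^{2}$, and $\delta\tilde u := \tilde u^{1}-\tilde u^{2}$, $\delta\tilde p := \tilde p^{1}-\tilde p^{2}$. Subtracting the two systems, each nonlinear term produces a difference term (linear in the deltas, with coefficients bounded in terms of $R$) plus commutator-type source terms coming from the difference of the two diffeomorphisms $\psi^{1},\psi^{2}$ and the difference of the two inertia matrices ${\mathcal J}^{1},{\mathcal J}^{2}$; all of these are, by the bounds from Theorem \ref{start3}, controlled by the $C^{\lambda,r}$ (or lower) norms of the deltas. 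The solid ODEs (\ref{Solide1})--(\ref{Solide2}), once the pressure integrals are rewritten using the elliptic problem, become ODEs for $(\delta\ell,\delta r)$ with right-hand side estimated by a norm of $\delta\tilde u$ and of $(\delta\ell,\delta r,\delta x_{B},\delta Q)$; and (\ref{LoiDeQ}), (\ref{PlaceDuSolideEtDuFluide}) give $\delta x_{B},\delta Q$ controlled by $\int_{0}^{t}(\|\delta\ell\|+\|\delta r\|)$. The transport equation for $\delta\tilde u$ is estimated along the flow $\eta$ of the (transported) velocity field, in the spirit of Theorem \ref{start} and of the flow estimate in Theorem \ref{start1.5}: one differentiates $\|\delta\tilde u\|_{C^{\lambda+1,r}}$, picks up a factor of the Lipschitz norm of the transporting field (hence $\lesssim_{R} 1$), plus the source terms described above, plus $\|\nabla\delta\tilde p\|_{C^{\lambda+1,r}}$, which by elliptic regularity for the pressure problem is bounded by $\|\delta\tilde u\|_{C^{\lambda+1,r}} + \|(\delta\ell,\delta r,\delta x_{B},\delta Q)\|$.

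Collecting all these inequalities and introducing the single quantity
\begin{equation*}
\Theta(t) := \|\delta\tilde u(t,\cdot)\|_{C^{\lambda+1,r}({\mathcal F}_{0})} + \|\delta\ell(t)\| + \|\delta r(t)\| + \|\delta x_{B}(t)\| + \|\delta Q(t)\|,
\end{equation*}
one obtains a differential inequality $\Theta(t) \leq \Theta(0) + C_{R}\int_{0}^{|t|}\Theta(s)\,ds$ on $[-T,T]$, whence Grönwall's lemma gives $\Theta(t) \leq e^{C_{R}T}\Theta(0)$, and $\Theta(0) \lesssim \|\ell^{1}_{0}-\ell^{2}_{0}\| + \|r^{1}_{0}-r^{2}_{0}\| + \|u^{1}_{0}-u^{2}_{0}\|_{C^{\lambda+1,r}}$. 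It remains to translate this back to the stated quantities: $\|\eta_{1}-\eta_{2}\|$ is controlled by integrating the difference of the two velocity fields along the flows (again a Grönwall argument on the flow map, as in the proof of Theorem \ref{start1.5}), and $u_{i}(t,\eta_{i}(t,\cdot))$ differs from $\tilde u^{i}$ only by composition with the rigid-motion diffeomorphisms $\psi^{i}$ and multiplication by $Q^{i}$, so its difference is bounded by $\Theta(t)$ together with $\|\delta x_{B}\| + \|\delta Q\|$. I expect the main obstacle to be bookkeeping rather than conceptual: carefully tracking that every source term arising from the difference of the two changes of variables and of the two pressures is genuinely lower-order (i.e. controlled by $\Theta$ with an $R$-dependent constant and no loss of derivatives), which relies essentially on the uniform-in-$R$ a priori estimates and elliptic regularity already established for Theorem \ref{start3}, so that the loss incurred by differentiating the flow and inverting the Laplacian for the pressure exactly balances. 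A secondary subtlety is the low-regularity issue flagged in Theorem \ref{start3} (the solution is only $C_{w}$, not $C$, in $C^{\lambda+1,r}$), which forces the difference estimate to be carried out at the level of $L^{\infty}$ in time in $C^{\lambda+1,r}$ — consistent with the statement — rather than pointwise-in-time continuity; this is handled by the usual trick of estimating in $C^{\lambda+1,r}$ while only using time-continuity in a slightly weaker norm.
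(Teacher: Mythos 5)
There is a genuine gap, and it is conceptual rather than bookkeeping. Your Gr\"onwall quantity $\Theta$ contains the \emph{Eulerian} difference $\|\delta\tilde u(t,\cdot)\|_{C^{\lambda+1,r}}$ (your $\tilde u^{i}$ are the velocities pulled back to ${\mathcal F}_{0}$ by the rigid motions only, not by the fluid flows). Subtracting the two momentum equations produces, among the "difference terms linear in the deltas", the stretching term $(\delta\tilde u\cdot\nabla)\tilde u^{2}$. Since the solutions are only bounded in $C^{\lambda+1,r}$, one has $\nabla\tilde u^{2}\in C^{\lambda,r}$ and no better, so this term cannot be bounded in $C^{\lambda+1,r}$ by $C_{R}\,\|\delta\tilde u\|_{C^{\lambda+1,r}}$: it loses one derivative, and the inequality $\Theta(t)\leq\Theta(0)+C_{R}\int\Theta$ does not close at the top regularity. (The same loss reappears in your final translation step: comparing $u_{i}(t,\eta_{i}(t,\cdot))$ with $\tilde u^{i}$ requires estimating $\tilde u^{2}\circ a_{1}-\tilde u^{2}\circ a_{2}$ in $C^{\lambda+1,r}$ for two different maps $a_{i}$, which again costs a derivative of $\tilde u^{2}$.) This is the classical fact that the Euler solution map is not Lipschitz in the same Eulerian norm; the Lipschitz estimate asserted in Proposition \ref{Prostart3} is genuinely a statement about the \emph{Lagrangian} quantities $\eta_{i}$ and $u_{i}(t,\eta_{i}(t,\cdot))$, and cannot be obtained by first proving an Eulerian Lipschitz bound.

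The paper's proof avoids the loss by never writing a difference equation for the Eulerian velocity. The construction of Theorem \ref{start3} is a double fixed point: ${\mathcal T}^{\ell,r}$ acts on flows $\eta$ through the algebraic transport of vorticity $\omega=(\nabla\eta\cdot\omega_{0})\circ\eta^{-1}$ followed by the div--curl reconstruction (Lemma \ref{AutreRegdivcurl}), and ${\mathcal A}$ acts on $(\ell,r)$ through the Kirchhoff decomposition of the pressure. Because $\omega$ depends on $\eta$ and $\omega_{0}$ without differentiating the unknown velocity, the difference estimates on $\eta_{1}-\eta_{2}$ and $u_{1}\circ\eta_{1}-u_{2}\circ\eta_{2}$ close in $C^{\lambda+1,r}({\mathcal F}_{0})$ (using Lemmas \ref{BBLemmaA.2} and \ref{BBLemma4}). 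Stability is then extracted from the contractivity of ${\mathcal T}^{\ell^{1},r^{1}}$ and of ${\mathcal A}_{2}$, via an intermediate solution $u^{m}$ (solid motion of solution $1$, initial vorticity and circulations of solution $2$) and Proposition \ref{DeuxImpositions}. To repair your argument you would have to replace the velocity-difference Gr\"onwall step by an estimate of this Lagrangian/vorticity type; as written, the step "every source term is controlled by $\Theta$ with no loss of derivatives" is exactly what fails.
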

The aim of this paper is to prove additional smoothness of the motion of the solid and of the trajectories of the fluid particles. We define the flow corresponding to the fluid as 
\begin{equation*}
\partial_t	 \Phi^{{\mathcal F}}(t,x)  = u (t, \Phi^{{\mathcal F}}(t,x) )  \text{ and }
\Phi^{{\mathcal F}}(0,x)	= x  , \text{ for } (t,x) \in (-T,T) \times {\mathcal F}_{0},
\end{equation*}
and the flow corresponding to the solid as 
\begin{equation*}
\partial_t \Phi^{{\mathcal S}}(t,x)  = v (t,  \Phi^{{\mathcal S}}(t,x) ) \text{ and } \Phi^{{\mathcal S}}(0,x) = x \text{ for } (t,x) \in (-T,T) \times {\mathcal S}_{0}.
\end{equation*}
The flow corresponding to the solid is a rigid movement, that can be considered as a function of $t \in (-T,T)$ with values in the special Euclidean group $SE(3)$. 
Let us emphasize that in the previous result $T$ is sufficiently small in order that there is no collision between ${\mathcal S}(t)$ and the boundary $\partial \Omega$. \par
\ \par
We introduce, for $T>0$,  $\lambda \in \N$  and $r \in (0,1)$, 
\begin{equation*}
{\mathcal A}^{\lambda,r}_{{\mathcal S}_{0}} (T):=C^\omega  ((-T ,T ) ; SE(3) \times C^{\lambda,r}({\mathcal F}_{0})   ),
\end{equation*}
the space of real-analytic functions from $(-T,T)$ to $SE(3) \times C^{\lambda,r}({\mathcal F}_{0})$. \par
The main result of this paper is the following.
\begin{Theorem}
\label{start4}
 Assume that the boundaries  $\partial \Omega$ and $\partial \mathcal{S}_0$ are analytic and that the assumptions of Theorem \ref{start3} are satisfied.
Then $(\Phi^{{\mathcal S}}, \Phi^{{\mathcal F}} ) \in {\mathcal A}^{\lambda+1,r}_{{\mathcal S}_{0}} (T) $.
\end{Theorem}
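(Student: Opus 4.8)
\textbf{Proof proposal for Theorem \ref{start4}.}

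The plan is to adapt the inductive scheme used for Theorem \ref{start2} to the fluid-structure setting, working in body-fixed coordinates so that the fluid domain becomes fixed. First I would change variables: set $y = Q(t)^{*}(x - x_{B}(t))$, which maps the moving domain ${\mathcal F}(t)$ onto the fixed domain ${\mathcal F}_{0}$, and let $\tilde u(t,y)$, $\tilde p(t,y)$ be the corresponding velocity and pressure. The Euler system becomes a system on ${\mathcal F}_{0}$ with coefficients depending (polynomially, hence analytically) on $Q(t)$, $\ell(t)$, $r(t)$, $x_{B}(t)$; the boundary conditions $\tilde u \cdot n = 0$ on $\partial\Omega$ becomes $\tilde u \cdot (Q^{*}n_{\Omega}\circ(\cdot)) = $ something, while on the fixed solid boundary $\partial{\mathcal S}_{0}$ it becomes $(\tilde u - \tilde v_{0})\cdot n = 0$ with $\tilde v_{0}$ rigid. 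Crucially, the pressure is recovered from an elliptic problem: taking the divergence of the momentum equation gives $-\Delta \tilde p = $ (a quadratic expression in $\nabla \tilde u$ plus lower-order terms), with Neumann-type boundary data on $\partial{\mathcal F}_{0}$ coming from the wall conditions and, on $\partial{\mathcal S}_{0}$, from the ODEs \eqref{Solide1}--\eqref{Solide2}. This is the non-local coupling: the pressure on $\partial{\mathcal S}_{0}$ feeds back into the rigid-body ODEs, which in turn determine the domain and the coefficients of the PDE.

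Next I would set up the induction exactly as in Kato's scheme, refined for the analytic combinatorics as in the proof of Theorem \ref{start2}. Let $D_{t}$ denote the material derivative along the flow (which in $y$-coordinates is $\partial_{t} + (\text{transported velocity})\cdot\nabla$). The key point is that applying $D_{t}$ to the Euler equation gains no derivative loss on $\tilde u$ because $D_{t}\tilde u = -\nabla \tilde p + (\text{solid terms})$ and the pressure is controlled by elliptic regularity \emph{with one more spatial derivative} than the right-hand side; the classical elliptic lemma (for the Neumann Laplacian on ${\mathcal F}_{0}$, whose boundary is analytic by hypothesis on $\partial\Omega$ and $\partial{\mathcal S}_{0}$) provides $C^{k+2}$ control of $\tilde p$ from $C^{k}$ control of its data. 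One then proves by induction on $k$ the bounds $\|D_{t}^{k}\tilde u\|_{C^{\lambda+1,r}} \le C L^{k} k!$ and simultaneously $\|(\ell,r,x_{B},Q)^{(k)}\| \le C L^{k} k!$, the latter obtained by differentiating the ODEs \eqref{Solide1}--\eqref{LoiDeQ} and using the pressure bound on $\partial{\mathcal S}_{0}$, the former by differentiating the transported Euler system. The ODE part is a Cauchy--Kovalevskaya-type argument: once the right-hand sides are shown to be analytic functions of the unknowns and of $\tilde p|_{\partial{\mathcal S}_{0}}$, analyticity in time of $(\ell,r,x_{B},Q)$ follows, but it must be bootstrapped jointly with the PDE estimate since $\tilde p$ depends on $\tilde u$ and on the domain. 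Converting time-derivative bounds into genuine analyticity of $t \mapsto \Phi^{{\mathcal F}}(t,\cdot) \in C^{\lambda+1,r}({\mathcal F}_{0})$ and $t\mapsto \Phi^{{\mathcal S}}(t,\cdot)\in SE(3)$ is then done as in Theorem \ref{start1.5}/\ref{start2}: the flow solves $\partial_{t}\Phi = u(t,\Phi)$ with $u$ analytic in $t$ and smooth in $x$, so the flow is analytic in $t$; for the solid, $\Phi^{{\mathcal S}}(t,\cdot)$ is the affine map $x \mapsto x_{B}(t) + Q(t)(x-x_{0})$, whose analyticity in $t$ is exactly the analyticity of $(x_{B},Q)$.

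The main obstacle I expect is the combinatorial bookkeeping of the coupled induction: each time $D_{t}$ hits the change-of-variables coefficients or the pressure's boundary data it produces products of derivatives of $(\ell,r,Q)$ of various orders, and one must show these sums close up with the factorial growth rate $k!$ (i.e.\ that the class $\calC\{(s!)\}$ is stable under the nonlinear and non-local operations involved). This uses the algebra/stability-under-derivation properties of $\calC\{N\}$ recalled after Theorem \ref{start2} (the logarithmic convexity giving \eqref{subadditif}, and condition \eqref{stablediff}, trivially satisfied with $M_{s}\equiv 1$ for the analytic case), applied now to compositions $Q \mapsto Q\mathcal{J}_{0}Q^{*}$, to the analytic diffeomorphism flattening $\partial{\mathcal F}(t)$, and to the Neumann-to-data map of the elliptic problem whose coefficients themselves depend analytically on $Q(t)$. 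A secondary technical point, as in Theorem \ref{start3} and Proposition \ref{Prostart3}, is that $T$ must be kept small enough to prevent collision ${\mathcal S}(t)\cap\partial\Omega \ne \emptyset$, so that ${\mathcal F}_{0}$ stays a fixed analytic domain with a uniformly bounded Neumann elliptic estimate throughout the induction; this is ensured by the smallness of $T_{*}$ from Theorem \ref{start3}, and no new smallness is needed for the analyticity estimates beyond choosing $L$ large.
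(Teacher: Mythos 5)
Your overall scheme (induction on iterated material derivatives, elliptic recovery of the pressure, joint bootstrap with the body ODEs) is the right one, but two of your structural choices do not work as stated. First, the rigid change of variables $y=Q(t)^{*}(x-x_{B}(t))$ does \emph{not} map ${\mathcal F}(t)=\Omega\setminus{\mathcal S}(t)$ onto the fixed domain ${\mathcal F}_{0}$: it freezes the solid but turns the outer boundary into the moving set $Q(t)^{*}(\partial\Omega-x_{B}(t))$, so you have merely traded one moving boundary for another (this device works for a body in all of $\R^{3}$, not in a bounded $\Omega$). The paper instead stays in the moving domain, applies material derivatives there, and secures a div-curl elliptic estimate with a constant \emph{uniform over the positions of the solid} (Lemmas \ref{regdivcurl} and \ref{AutreRegdivcurl}, built from a non-rigid diffeomorphism equal to the identity near $\partial\Omega$ and to the rigid motion near $\partial{\mathcal S}$). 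Second, your treatment of the coupling is circular as written: the Neumann datum of the pressure on $\partial{\mathcal S}(t)$ is $-n\cdot Du$, which contains $\ell'$ and $r'$, while \eqref{Solide1}--\eqref{Solide2} determine $\ell'$, $r'$ from boundary integrals of that same pressure. The induction cannot start, let alone propagate, without resolving this implicit system; the paper does so by splitting $p=\mu-\Phi\cdot\begin{bmatrix}\ell\\ r\end{bmatrix}'$ with the Kirchhoff potentials $\Phi_{i}$ and inverting the symmetric positive definite virtual inertia tensor ${\mathcal M}={\mathcal M}_{1}+{\mathcal M}_{2}$ (added mass), which is precisely the step your ``Cauchy--Kovalevskaya-type argument'' presupposes but does not supply.

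Two further points. The bound $\|D^{k}u\|\le C L^{k}k!$ you propose as induction hypothesis is too weak for the sums over partitions to close; one needs the refined weight $\frac{k!L^{k}}{(k+1)^{2}}\V^{k+1}$ as in \eqref{Graal}, the factor $(k+1)^{-2}$ being exactly what Lemma \ref{LemmeCheminSMF} converts into a convergent geometric series in the number of factors. And on the moving body boundary the normal itself evolves, so iterating $D$ on $n\cdot D^{k}\psi$ produces compositions of rotation operators ${\mathcal R}_{\beta}[r]$ intertwined with $\nabla^{s}\rho$; these require their own combinatorial identities and coefficient bounds (Propositions \ref{Prop:BodyDirichlet} and \ref{Rota1}), which have no counterpart in your outline.
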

The proof of Theorem \ref{start4} establishes that the motion of the solid and the trajectories of the fluid particles are at least as smooth as the boundaries $\partial \Omega$ and $\partial \mathcal{S}_0$. It would also be possible to consider general ultradifferentiable classes as in Theorem \ref{start2} or a limited regularity for the boundary as in Theorem \ref{start1.5}.
\begin{Remark} \rm
Theorem  \ref{start4} does not involve the concept of energy. However it gives as a corollary that the energy of the fluid $E^{{\mathcal F}} (t) :=  \frac{1}{2}  \int_{ \mathcal{F}(t)} u^2 \ dx $ is analytic on $(-T,T)$, since the total energy of the fluid-body system $E^{{\mathcal F}} (t)+ E^{{\mathcal S}} (t)$ is constant, where the energy of the body reads $E^{{\mathcal S}} := \frac{1}{2} m \ell^2 +  \frac{1}{2} \mathcal{J} r\cdot r$.
\end{Remark}
Let us now state the following corollary of Theorem  \ref{start4}, which is the counterpart of Corollary  \ref{Hadamard} in the case where a rigid body is immersed in an incompressible homogeneous perfect fluid.
\begin{Corollary} \label{Hadamard2}
Let be given  $\lambda$ in $\N$, $r \in (0,1)$, $R>0$ and a  closed connected regular subset $\mathcal{S}_0 \subset \Omega$, a  positive function  $ \rho_{S_0}$ in $L^\infty (\mathcal{S}_0)$. Assume that the boundaries  $\partial \Omega$ and $\partial \mathcal{S}_0$ are analytic. Then the mapping
$$(\ell_0 ,  r_0 , u_0 ) \in  \tilde{C}^{\lambda+1,r}_{\sigma ,R} ( \mathcal{F}_0 ,x_0)
\mapsto (\Phi^{{\mathcal S}}  , \Phi^{{\mathcal F}})  \in  {\mathcal A}^{\lambda+1,r}_{{\mathcal S}_{0}} (T_*) $$
is $C^\infty$, where $T_* = T_* ( \Omega ,  \mathcal{S}_0,\rho_{\mathcal{S}_0},R)$ is given by Theorem \ref{start3} .
\end{Corollary}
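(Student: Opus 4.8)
\emph{Plan of proof.} The idea is to combine the time-analyticity of Theorem~\ref{start4} with an analytic dependence of the \emph{Taylor coefficients in time} on the initial data, while keeping track of the uniformity required to land in the topology of $\mathcal{A}^{\lambda+1,r}_{\mathcal{S}_0}(T_*)$. Fix $R>0$. By Theorem~\ref{start3} every datum in $\tilde{C}^{\lambda+1,r}_{\sigma,R}(\mathcal{F}_0,x_0)$ produces a solution on the common interval $[-T_*,T_*]$, with $T_*=T_*(\Omega,\mathcal{S}_0,\rho_{\mathcal{S}_0},R)$, and by Theorem~\ref{start4} the flow $(\Phi^{\mathcal{S}},\Phi^{\mathcal{F}})$ is analytic in time; thus one has convergent expansions $\Phi^{\mathcal{F}}(t,\cdot)=\sum_{s\geq 0}\frac{t^s}{s!}\,\phi_s$ in $C^{\lambda+1,r}(\mathcal{F}_0)$ (and similarly for $\Phi^{\mathcal{S}}$ with values in $SE(3)$, and for $x_B$, $Q$, $\ell$, $r$ and the Lagrangian velocity $U(t,\cdot):=u(t,\Phi^{\mathcal{F}}(t,\cdot))$), with estimates $\|\phi_s\|_{C^{\lambda+1,r}(\mathcal{F}_0)}\leq C\,L^s\,s!$ in which $C$ and $L$ depend only on $\Omega$, $\mathcal{S}_0$, $\rho_{\mathcal{S}_0}$ and $R$. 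These coefficients are determined by the explicit recursion underlying the proof of Theorem~\ref{start4}: differentiating \eqref{Euler1a2}--\eqref{PlaceDuSolideEtDuFluide} in time and evaluating at $t=0$ expresses $\phi_s$ from the lower-order coefficients by finitely many elementary operations. Let us also recall that the appendix (proof of Theorem~\ref{start3} and Proposition~\ref{Prostart3}) already yields, at the level of the function spaces of Theorem~\ref{start3}, a Lipschitz — and, by differentiating the fixed-point relation, a $C^\infty$ — dependence on the data; the content of the corollary is the uniformity making this compatible with the analytic-in-time target.

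I would first check that, for each fixed $s$, the map $(\ell_0,r_0,u_0)\mapsto\phi_s$ (and its $SE(3)$-counterpart) is $C^\infty$, indeed analytic, from $\tilde{C}^{\lambda+1,r}_{\sigma,R}(\mathcal{F}_0,x_0)$ into $C^{\lambda+1,r}(\mathcal{F}_0)$, with derivatives bounded on the ball. This relies on the fact that each building block of the recursion is an analytic map between the relevant Banach spaces: the pull-back to the fixed domain $\mathcal{F}_0$ by the (rigid) solid motion, composition and pointwise products of Hölder functions, the resolution of the Neumann problem defining the pressure, the elliptic div--curl system recovering a divergence-free field from its vorticity and its trace on $\partial\Omega\cup\partial\mathcal{S}_0$, and the solution of the ordinary differential equations \eqref{Solide1}--\eqref{Solide2} and \eqref{LoiDeQ} (the no-collision hypothesis for $|t|\leq T_*$ keeping all operators uniformly bounded on the ball). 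A finite composition of analytic maps being analytic, the claim follows, with local bounds $\|D^j_{\mathrm{data}}\phi_s\|\leq C_{j,s}$.

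The real point is to upgrade this pointwise-in-$s$ smoothness to smoothness with values in $\mathcal{A}^{\lambda+1,r}_{\mathcal{S}_0}(T_*)$. For this one needs, for every $j\in\N$, a bound $\|D^j_{\mathrm{data}}\phi_s\|_{C^{\lambda+1,r}(\mathcal{F}_0)}\leq C_j\,\Lambda^s\,s!$ holding uniformly on the ball with $C_j,\Lambda$ independent of $s$ (the value of $\Lambda$ being, as in Theorem~\ref{start4}, compatible with the radius $T_*$). Granting this, the series $\sum_s\frac{t^s}{s!}\,D^j_{\mathrm{data}}\phi_s$ converges in $C^{\lambda+1,r}(\mathcal{F}_0)$ uniformly for $t\in(-T_*,T_*)$, and a standard Weierstrass-type argument (uniform convergence of the series of $j$-th Fréchet derivatives, termwise differentiation) shows that $(\ell_0,r_0,u_0)\mapsto(\Phi^{\mathcal{S}},\Phi^{\mathcal{F}})$ is $C^j$ into $\mathcal{A}^{\lambda+1,r}_{\mathcal{S}_0}(T_*)$ for every $j$, hence $C^\infty$. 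The uniform bounds are obtained by re-running the combinatorial induction of the proof of Theorem~\ref{start4}, now applied to the recursion differentiated $j$ times in the data: since every step of that recursion is multilinear or analytic, Leibniz's and Fa\`a di Bruno's formulas contribute only a $j$-dependent multiplicative factor, while the factorial bookkeeping — which in the analytic class $N_s=s!$ is governed by the trivial case of the logarithmic-convexity estimate \eqref{subadditif} — is unaffected by $j$.

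The main technical obstacle is precisely this last point: verifying that the $j$-fold data-differentiated scheme still closes with the same geometric-times-factorial growth in $s$, uniformly on the ball of radius $R$. Everything else is a finite iteration of soft functional-analytic facts. Alternatively, one could recast the construction as an analytic Cauchy--Kovalevskaya problem in a scale of Banach spaces with the initial data entering as an analytic parameter, which would give analytic — a fortiori $C^\infty$ — dependence directly; since only $C^\infty$ regularity is claimed, we do not pursue this route.
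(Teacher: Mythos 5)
Your overall architecture coincides with the paper's (the proof of Corollary \ref{Hadamard2} is, as stated in the paper, the same as that of Corollary \ref{Hadamard} in Section \ref{Pr}): expand the flow as $\sum_s \frac{t^s}{s!}\phi_s$ using the time-analyticity already established, show each coefficient depends smoothly on the data, and then justify termwise differentiation in the data via a bound of the form $\|D^j_{\mathrm{data}}\phi_s\|\leq C_j\Lambda^s s!$ uniform on the ball. The problem is that you leave precisely this bound as an acknowledged ``main technical obstacle'', supported only by the heuristic that re-running the induction of Theorem \ref{start4} after $j$ data-differentiations would contribute ``only a $j$-dependent factor''. As written, that is not a proof: the induction of Sections \ref{Sec:SchemeDePreuve}--\ref{Proof2} closes because of a delicate balance of combinatorial weights ($\alpha!$, the $\Upsilon$ sums of Lemma \ref{LemmeCheminSMF}, the smallness $\gamma(L)$), and one would have to actually verify that the Leibniz expansion in the data does not disturb this balance. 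So the decisive quantitative step of your argument is missing.

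The paper closes this gap by a structural observation that makes the heavy re-run unnecessary: the recursion determining the Taylor coefficients at $t=0$ (where the geometry, $Q(0)=\Id$, ${\mathcal M}(0)$, the elliptic solves on ${\mathcal F}_0$, etc.\ are all \emph{fixed}, independent of the data) shows that $(\ell_0,r_0,u_0)\mapsto \phi_s$ is the restriction to the diagonal of a continuous $s$-linear map whose norm is controlled by the already-proved bound $\frac{s!L^s}{(s+1)^2}$ of \eqref{Graal}. The $j$-th Fr\'echet derivative of the diagonal restriction of a bounded $s$-linear map is a sum of at most $s^j$ terms, each bounded by the multilinear norm; hence $\|D^j_{\mathrm{data}}\phi_s\|\leq s^j\,\frac{s!L^s}{(s+1)^2}R^{s+1-j}$, and the subgeometric factor $s^j$ does not shrink the radius of convergence in $t$. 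This is exactly the estimate your Weierstrass argument needs, obtained for free from multilinearity rather than from a differentiated induction. With that substitution your proof is complete; without it, it is not.
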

The proof of Corollary \ref{Hadamard2} is omitted since its proof is
similar to the proof of Corollary \ref{Hadamard}. It is the equivalent of the one given in Section \ref{Pr} for Corollary \ref{Hadamard}.
\begin{Remark}
When the boundary is merely $C^{\infty}$, we do not prove the analyticity of the flow, hence Corollary \ref{Hadamard2} cannot be deduced. However a simple compactness argument shows that the operator
$$(\ell_0 ,  r_0 , u_0 ) \in  \tilde{C}^{\lambda+1,r}_{\sigma ,R} ( \mathcal{F}_0 ,x_0)
\mapsto (\Phi^{{\mathcal S}}  , \Phi^{{\mathcal F}})  \in  C^{\infty}([-T_*,T_*];SE(3) \times C^{\lambda+1,r'}({\mathcal F}_{0})), $$
is continuous for $r'<r$ (even, to $C^{\infty}([-T_*,T_*];SE(3) \times C^{\lambda+1,r}_{w}({\mathcal F}_{0}))$ ). Indeed, for a sequence $(\ell_0 ,  r_0 , u_0 )$ converging to $(\ell_0 ,  r_0 , u_0 )$, we have both compactness of the images in $C^{k}([-T_*,T_*];SE(3) \times C^{\lambda+1,r'}({\mathcal F}_{0}))$, (by the uniform estimates in $C^{k+1}([-T_*,T_*]; SE(3) \times C^{\lambda+1,r}({\mathcal F}_{0}))$) and the continuity for a weaker norm in the range such as $C^{0}([-T_*,T_*];SE(3) \times C^{ \lambda+1,r}({\mathcal F}_{0}))$, which follows from Proposition \ref{Prostart3}).
\end{Remark}
\ \par
Another Corollary of Theorem \ref{start4}, or, to be more precise, of the estimates leading to Theorem \ref{start4}, deals with an inverse problem on the trajectory of the solid. A trivial consequence of the analyticity in time of the trajectory of the solid, is that, if we know this trajectory for some time interval $[-\tau,\tau]$ inside $[-T_{*},T_{*}]$ where the solution is defined (see Theorem \ref{start3}) -- without knowing precisely $u_{0}$ --, then we know it for the whole time interval (in the sense of unique continuation). The following corollary states that we can be a little more quantitative on this unique continuation property. \par
\begin{Corollary} \label{CorollaireInverse}
We consider $\Omega$, $\mathcal{S}_0$, and $\rho_{\mathcal{S}_0}$ fixed as previously.
Let $R>0$. Consider $\tau >0$ such that
\begin{equation*}
\tau < T_* ( \Omega,\mathcal{S}_0 , \rho_{\mathcal{S}_0},R ),
\end{equation*}
where $T_{*}$ is defined in Theorem \ref{start3}. There exist $C=C(\tau,\Omega,\mathcal{S}_0 , \rho_{\mathcal{S}_0},R) >0$ and $\delta=\delta(\tau,\Omega,\mathcal{S}_0 , \rho_{\mathcal{S}_0},R)$ in $(0,1)$ such that the following holds. Let $(\ell^{1}_{0},r^{1}_{0},u^{1}_{0})$ and $(\ell^{2}_{0},r^{2}_{0},u^{2}_{0})$ in $\tilde{C}^{\lambda+1,r}_{\sigma,R} ( \mathcal{F}_0 ,x_0)$.
Let $(\ell_{i},r_{i},u_{i})$ be the corresponding solution, and $\Phi_{i}^{\mathcal S}$ the corresponding solid flows. Then one has
\begin{equation} \label{EstimeeInverse}
\| \Phi_{1}^{{\mathcal S}} - \Phi_{2}^{{\mathcal S}} \|_{L^{\infty}(-T_{*},T_{*})} \leq C
\| \Phi_{1}^{{\mathcal S}} - \Phi_{2}^{{\mathcal S}} \|_{L^{\infty}(-\tau,\tau)}^{\delta}.
\end{equation}
\end{Corollary}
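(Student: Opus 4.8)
\textbf{Proof proposal for Corollary \ref{CorollaireInverse}.}

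The plan is to combine the analyticity in time of the solid trajectory (Theorem \ref{start4}) with the Lipschitz dependence on initial data (Proposition \ref{Prostart3}), via a quantitative unique continuation (three-balls / two-constants type) estimate for analytic functions. First I would record, from the proof of Theorem \ref{start4} (and not just its statement — this is why the corollary is phrased as a consequence of the estimates leading to that theorem), uniform bounds on the analytic norm of $t \mapsto \Phi_i^{\mathcal S}(t)$ on $(-T_*,T_*)$: there exist $A,\rho>0$, depending only on $\Omega,\mathcal{S}_0,\rho_{\mathcal{S}_0},R$, such that $\| \partial_t^s \Phi_i^{\mathcal S} \|_{L^\infty(-T_*,T_*)} \leq A\, s!\, \rho^{-s}$ for all $s$, hence each $\Phi_i^{\mathcal S}$ extends holomorphically (as an $SE(3)$-valued, equivalently $\R^{12}$-valued, map) to a fixed complex neighbourhood of $[-T_*,T_*]$, with a uniform bound there. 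Consequently $g := \Phi_1^{\mathcal S} - \Phi_2^{\mathcal S}$ is holomorphic on that fixed neighbourhood with $\| g \|_\infty \leq 2A$ there.

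Next I would invoke the standard two-constants theorem for holomorphic functions (Hadamard three-circles / harmonic measure estimate): if $g$ is holomorphic and bounded by $\mathcal M$ on a complex neighbourhood $U$ of $[-T_*,T_*]$ and bounded by $\varepsilon$ on the subinterval $[-\tau,\tau]$, then on all of $[-T_*,T_*]$ one has $\|g\|_{L^\infty(-T_*,T_*)} \leq C\, \mathcal M^{1-\delta} \varepsilon^{\delta}$, where $\delta \in (0,1)$ depends only on the geometry of the triple $([-\tau,\tau],[-T_*,T_*],U)$ — concretely on $\tau$, $T_*$ and $\rho$ — and hence only on $\tau,\Omega,\mathcal{S}_0,\rho_{\mathcal{S}_0},R$. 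Applying this with $\mathcal M = 2A$ and $\varepsilon = \| g \|_{L^\infty(-\tau,\tau)}$ yields
\begin{equation*}
\| \Phi_1^{\mathcal S} - \Phi_2^{\mathcal S} \|_{L^\infty(-T_*,T_*)} \leq C\, (2A)^{1-\delta}\, \| \Phi_1^{\mathcal S} - \Phi_2^{\mathcal S} \|_{L^\infty(-\tau,\tau)}^{\delta},
\end{equation*}
which is exactly \eqref{EstimeeInverse} after absorbing $(2A)^{1-\delta}$ into $C$. (One small point: the two-constants theorem is stated for scalar holomorphic functions, so I would apply it componentwise to the $SE(3) \subset \R^{4\times 4}$ entries of $g$ and take the maximum, which only changes $C$ by a dimensional factor.)

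The main obstacle — and the only place where real work beyond citing classical facts is needed — is the first step: extracting from the proof of Theorem \ref{start4} a bound on the radius of analyticity $\rho$ and on the constant $A$ that is \emph{uniform} over the ball $\tilde C^{\lambda+1,r}_{\sigma,R}(\mathcal{F}_0,x_0)$, i.e. depends only on $R$ and the fixed data, not on the particular initial condition. This should follow because the inductive estimates establishing $\calC\{N\}$-regularity in that proof are driven by norms of $u_0,\ell_0,r_0$ only through the single quantity $\| u_0\|_{C^{\lambda+1,r}(\mathcal F_0)}+\|\ell_0\|+\|r_0\| < R$ and by the fixed geometric data (the analytic boundaries $\partial\Omega$, $\partial\mathcal{S}_0$ and the time $T_*$ from Theorem \ref{start3}), so the Cauchy-type estimates they produce are uniform on the $R$-ball. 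Once that uniformity is in hand, the rest is the classical quantitative unique continuation for analytic functions and requires no new ideas.
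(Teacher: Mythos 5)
Your proposal is correct and follows essentially the same route as the paper: uniform Cauchy-type bounds $\|\partial_t^k \Phi_i^{\mathcal S}\|\le \tilde L^k k!$ extracted from the proof of Theorem \ref{start4} give a holomorphic extension to a fixed complex neighbourhood of $[-T_*,T_*]$ with a uniform bound, and then a two-constants estimate yields \eqref{EstimeeInverse}. The only difference is that you cite the two-constants theorem as classical, whereas the paper derives it by conformally mapping ${\mathcal U}\setminus[-\tau,\tau]$ onto an annulus (via the Joukowski map and Ahlfors) and applying Hadamard's three-circle theorem.
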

Let us emphasize that the constants $C>0$ and $\delta \in (0,1)$ depend on the knowledge (of an estimate) of the size of the initial data, but not on the initial data itself.
\begin{Remark}
As will follow from the proof, we could in fact replace the norm in the left hand side by a stronger norm such as $C^{k}([-T_{*},T_{*}])$.
\end{Remark}
Corollary \ref{CorollaireInverse} will be proven in Section \ref{Sec:PreuveCoroInverse}. \par
\ \par
Let us now briefly describe the structure of the paper. In Section \ref{Sec:FluideSeul}, we prove the claims concerning the system without immersed body, namely, Theorem \ref{start2} and Corollary \ref{Hadamard}. In Section \ref{Sec:SchemeDePreuve}, we describe the structure of the proof, reduced to the proof of two main propositions. In Section \ref{Sec:NewFormalIdentities}, we describe some formal identities needed in the proof. Section \ref{Proof2} establishes the two main propositions. In Section \ref{Sec:PreuveP1} we prove the formal identities. Finally, in Section \ref{Sec:PreuveCoroInverse}, we prove Corollary \ref{CorollaireInverse}. \par
\ \par
\begin{Remark} \rm
In the last years, several papers have been devoted to the study of
the dynamics of a rigid body immersed into a fluid governed by the
Navier--Stokes equations. We refer to the introduction of \cite{ort2}
for a survey of these results. 
\end{Remark}
%

%
%
%
\section{Proofs of Theorem \ref{start2} and Corollary \ref{Hadamard}} 
\label{Sec:FluideSeul}
\subsection{Proof of Theorem \ref{start2}} 
\label{Proof1}
From now on, we fix $\lambda \in \N$ and $r \in (0,1)$, and we introduce the following norms for functions defined in $\Omega$ or $\partial \Omega$
\begin{eqnarray*}
|\cdot|:=\|\cdot\|_{C^{\lambda,r}(\Omega)} \text{ and } |\cdot|_{\partial \Omega}:=\|\cdot\|_{C^{\lambda,r}(\partial \Omega)}, \\
\|\cdot\|:=\|\cdot\|_{C^{\lambda+1,r}(\Omega)} \text{ and } \|\cdot\|_{\partial \Omega}:=\|\cdot\|_{C^{\lambda+1,r}(\partial \Omega)}.
\end{eqnarray*}

First  it is classical to get that the flow map $ \Phi$ is $L^\infty (  ( -T,T),  C^{\lambda+1,r} (\Omega ))$ from its definition and Gronwall's Lemma.
In order to tackle the higher time derivatives of $ \Phi$ we will use the  material derivative

\begin{equation*}
D := \partial_{t} + u.\nabla .
\end{equation*}

Let us also introduce $\rho$ as a function defined on a neighborhood of $\partial \Omega$ as the signed distance to $\partial \Omega$, let us say, negative inside $\Omega$.
Since we assume that the boundary $\partial \Omega$ is in $\calC\{ N
\}$ with $N$ satisfying the hypothesis of Theorem \ref{start2}, there exists $c_\rho > 1$ such  that for all $s \in \mathbb{N}$,
\begin{equation} \label{RhoCN}
\|\nabla^s \rho \| \leq c_\rho^s \, s! M_s ,
\end{equation}
as a function with values in the set of symmetric $s$-linear forms. Let us introduce for $L>0$ the following function
\begin{equation} \label{Eq:DefGammaL}
\gamma(L) := \sup_{k  \geq 1} \left\{ 3 \sum_{s=2}^{k+1} \ L^{1-s} s (c_\rho C_d)^s \left(\frac{ k+1 }{ k-s+2 }\right)^2 20^{s} \, + \, C_{\Omega} \sum_{s=1}^{k-1}  \frac{ k-s }{L k s } \left(\frac{ k+1}{ (k-s+1) s } \right)^2 \right\}.
\end{equation}
The constant $C_{d}$ (which can be assumed to be larger than 1) above was introduced in \eqref{stablediff}; the constant $C_{\Omega}$ depends only on the geometry of $\Omega$ and will be introduced below in \eqref{DefCOmega}. Without loss of generality, we suppose that $c_\rho C_d \geq 1$. Now we fix $L$ large enough such that  
\begin{eqnarray}
\label{le}
\gamma(L) \leq \frac{1 }{3 c_{\mathfrak r}}.
 \end{eqnarray}
The constant $c_{\mathfrak r}$ appearing in \eqref{le} will be introduced in Lemma \ref{Lemme1}.
We are going to prove by induction that for all $k\in \mathbb{N}$, all $t \in (-T,T)$,
\begin{equation}
\label{indu}
\| D^k u \| + | \nabla D^{k-1} p | \leq \frac{k! M_k L^k }{(k+1)^2} \|u\|^{k+1} ,
\end{equation}
where the second term is omitted when $k=0$.
Since 
\begin{equation*}
\partial^{k +1 }_t \Phi(t,x)= D^k u (t, \Phi (t,x)),
\end{equation*}
this will prove Theorem \ref{start2}. We will proceed by regularization, working from now on a smooth flow, with the same notation. Since the estimates that we are going to prove are uniform with respect to the regularization parameter, the result will follow. We refer to  \cite{gamblinana} for more details on this step.
 \begin{Remark} \rm
One should ask whether the flow could get smoother in $x$ at some time: it could be that some 
cancellations arise in the composition of the field with the flow. Loosely speaking Theorem $5$ of Shnirelman's paper  \cite{SH} indicates that it is never the case, despite the fact that its setting is slightly different, Shnirelman considering fluid motions on the two-dimensional torus, in a Besov space $B^s_{2,\infty}$ with $s > 3$.
\end{Remark}
For $k=0$, there is nothing to prove. Let us assume that \eqref{indu} holds up to order $k-1$. 
To estimate $ D^{k} u$ we will use the following regularity lemma for the div-curl system.
\begin{Lemma}[Regularity]\label{Lemme1}
Let $\Gamma_i$ $(i=1,\ldots,g)$ a family of smooth oriented loops which generates a basis of the first singular homology space of $\Omega$ with real coefficients.
For any $u \in C^{\lambda,r}(\Omega)$ such that
$$
\div  u\in C^{\lambda,r}(\Omega), \quad \curl  u \in C^{\lambda,r}(\Omega), \quad  u\cdot n \in C^{\lambda+1,r}(\partial \Omega),
$$
one has $ u\in C^{\lambda+1,r}(\Omega)$ and there exists a constant $c_{\mathfrak r}$ depending only on $\Omega$ and $\Gamma_i$ $(1\leq i\leq g)$ such that
\begin{equation}
\label{reg1}
\| u\| \leq c_{\mathfrak r} \left(|\div  u| + |\curl  u| + \| u\cdot n\| + |\Pi  u|_{\R^{g}} \right),
\end{equation}
where $\Pi$ is the mapping defined by 
$u\mapsto \left(\oint_{\Gamma_1} u\cdot \tau d\sigma,\ldots, \oint_{\Gamma_g} u\cdot \tau d\sigma\right)$.
\end{Lemma}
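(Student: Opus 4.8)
The statement to prove is the div-curl regularity estimate (Lemma \ref{Lemme1}), a classical but essential elliptic estimate for vector fields on a bounded domain in terms of divergence, curl, normal trace, and circulations around a homology basis.

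\medskip

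\textbf{Proof plan.} The plan is to reduce the estimate to a finite set of standard elliptic problems via the Hodge/Helmholtz-type decomposition adapted to the boundary. First I would recall that on a smooth bounded domain $\Omega \subset \R^3$, any vector field $u$ with the stated regularity can be written as $u = \nabla \varphi + \curl \psi + h$, where $\varphi$ solves a Neumann problem, $\psi$ solves a curl-div system with suitable boundary conditions, and $h$ lies in the finite-dimensional space of harmonic fields (tangent or normal, depending on the chosen boundary conditions). Concretely I would set $\varphi$ to be the solution of $\Delta \varphi = \div u$ in $\Omega$ with $\partial_n \varphi = u \cdot n$ on $\partial\Omega$ (solvable up to an additive constant since $\int_\Omega \div u = \int_{\partial\Omega} u\cdot n$ by the divergence theorem); then $w := u - \nabla\varphi$ is divergence-free with zero normal trace, $\curl w = \curl u$, and I would invoke the standard regularity theory for the system $\div w = 0$, $\curl w = f$, $w\cdot n = 0$ in H\"older spaces (of the Agmon--Douglis--Nirenberg / Bolik--von Wahl type, or via Günter-potential / layer-potential representations as used by the Lichtenstein--Wolibner school) to get $\|w\| \lesssim |\curl u| + |\Pi w|_{\R^g}$. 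Finally the circulation functional $\Pi$ identifies the harmonic component, using that the space of harmonic fields tangent to the boundary has dimension $g = \dim H_1(\Omega;\R)$ and that $\Pi$ restricted to it is an isomorphism onto $\R^g$ (de Rham duality between $H^1_{dR}$ and $H_1$), which is where the loops $\Gamma_i$ enter.

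\medskip

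The key steps in order: (1) solve the Neumann problem for $\varphi$ and use elliptic $C^{\lambda+1,r}$ regularity — here the hypothesis that $\partial\Omega$ is of class $C^{\lambda+1,r}$ (a fortiori, under the running assumptions, better) is what licenses the Schauder estimate $\|\nabla\varphi\| \lesssim |\div u| + \|u\cdot n\|$; (2) for $w = u - \nabla\varphi$, establish the div-curl-normal-trace regularity estimate $\|w\| \lesssim |\curl u| + \|h\|$ where $h$ is the harmonic part; (3) show $\|h\| \lesssim |\Pi h|_{\R^g} = |\Pi w|_{\R^g}$ (since $\nabla\varphi$ has a potential, $\Pi(\nabla\varphi) = 0$, so $\Pi w = \Pi u$), using finite-dimensionality and the isomorphism $\Pi|_{\mathcal{H}}$; (4) assemble: $\|u\| \le \|\nabla\varphi\| + \|w\| \lesssim |\div u| + |\curl u| + \|u\cdot n\| + |\Pi u|_{\R^g}$, and note $\Pi u = \Pi w$ so the circulation term is exactly $|\Pi u|$. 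The constant $c_{\mathfrak r}$ produced this way depends only on $\Omega$ (through the elliptic estimates and the harmonic-field space) and on the chosen loops $\Gamma_i$ (through the normalization of $\Pi|_{\mathcal{H}}$), as claimed.

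\medskip

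\textbf{Main obstacle.} The routine Schauder part (step 1) is standard; the genuine content is step (2), the H\"older regularity for the overdetermined-looking system $\div w = g$, $\curl w = f$, $w\cdot n = b$. In the periodic or whole-space setting this is just Calderón--Zygmund for $\Delta w = \nabla g - \curl f$; in a bounded domain one must handle the boundary carefully, and the clean way is to recognize $w$ as $\curl\psi$ with $\psi$ solving a vector Poisson problem with mixed (tangential/normal) boundary conditions that satisfies the Lopatinski--Shapiro complementing condition, so that ADN theory applies and yields the Schauder bound; alternatively one reproduces the layer-potential estimates of the classical Lichtenstein--Günter--Wolibner construction. I would expect to cite this as known elliptic theory rather than reprove it. The only other point requiring a word of care is the compatibility condition in the Neumann problem and the fact that the additive constant in $\varphi$ does not affect $\nabla\varphi$, and that $g = \dim H_1(\Omega;\R)$ coincides with the number of independent circulations — both standard, but worth stating so that $\Pi$ is genuinely an isomorphism on the harmonic subspace.
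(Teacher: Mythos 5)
Your proposal is correct, and the finite-dimensional endgame (injectivity of $\Pi$ on the tangential harmonic fields, hence $\|h\|\leq C|\Pi h|_{\R^{g}}$, plus $\Pi(\nabla\varphi)=0$ so that only the circulations of $u$ itself appear) is exactly the new content of the lemma and coincides with what the paper does. Where you diverge is in how the elliptic part is obtained: the paper does not redo any Hodge or Neumann decomposition, but simply quotes Kato's Lemma~1.2 of \cite{katoana}, which already gives $\| u\| \leq c \left(|\div u| + |\curl u| + \| u\cdot n\| + |\tilde{\Pi} u| \right)$ with $\tilde{\Pi}$ the $L^{2}$ projector onto tangential harmonic fields; applying this to $u-\tilde{\Pi}(u)$ kills the last term, and the remaining work is only to trade $\|\tilde{\Pi}(u)\|$ for $|\Pi(u)|_{\R^{g}}$ via finite-dimensionality and the continuity of $\Pi$. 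Your route instead rebuilds Kato's inequality from scratch: a Neumann problem for the potential part (where, as a small caveat, the Schauder estimate giving $\varphi\in C^{\lambda+2,r}$ really uses $\partial\Omega\in C^{\lambda+2,r}$, harmless under the paper's standing regularity assumptions on $\partial\Omega$), then ADN/layer-potential theory for the divergence-free, tangent remainder. Both are legitimate; the paper's version is shorter because all the boundary elliptic theory is packaged in the citation, while yours is more self-contained in structure but still ultimately defers the same hard step to ``known elliptic theory,'' so the two proofs carry essentially the same mathematical weight.
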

\begin{proof}
This is more or less classical. The same result appears for instance in Kato's paper (see \cite[Lemma 1.2]{katoana}) with $|\Pi(u)|_{\R^{g}}$ replaced by $\| \tilde{\Pi}(u)\|_{C^{\lambda,r}(\Omega)}$, where $\tilde{\Pi}$ is the $L^{2}(\Omega)$ projector on the tangential harmonic vector fields (that is, having null divergence, curl and normal trace):
\begin{equation}
\label{reg1Kato}
\| u\| \leq c \left(|\div  u| + |\curl  u| + \| u\cdot n\| + |\tilde{\Pi}  u| \right).
\end{equation}
Given $u \in C^{\lambda,r}(\Omega)$, we apply \eqref{reg1Kato} to $u - \tilde{\Pi}(u)$, so that 
\begin{equation}
\label{reg1Kato2}
\| u - \tilde{\Pi}(u)\| \leq c \left(|\div  u| + |\curl  u| + \| u\cdot n\| \right).
\end{equation}
Now we notice that on the space of tangential harmonic vector fields, $\Pi$ is injective, since a vector field $v$ satisfying $\curl v=0$ and $\Pi(v)=0$ is a global gradient field (as a matter of fact, $\Pi$ is even bijective on this space as a consequence of de Rham's theorem). Since the space of tangential harmonic vector fields is finite-dimensional, it follows that for some $C>0$ independent of $u$, one has
\begin{equation*}
\| \tilde{\Pi}(u) \| \leq C | \Pi(\tilde{\Pi}(u)) |_{\R^{g}}.
\end{equation*}
Using the continuity of $\Pi$ and \eqref{reg1Kato2}, we infer
\begin{equation*}
| \Pi(u - \tilde{\Pi}(u)) |_{\R^{g}} \leq C \left(|\div  u| + |\curl  u| + \| u\cdot n\| \right).
\end{equation*}
From the above inequalities we deduce \eqref{reg1}. 
\end{proof}
\ \par
Now applying Lemma \ref{Lemme1} to the solution of \eqref{Euler1a}--\eqref {Euler3a}
we get
\begin{equation}
\label{indu2}
\| D^{k} u \| \leq c_{\mathfrak r} \left(| \div D^k u | + | \curl D^k u| +  \| D^{k} u\cdot n \|_{\partial \Omega}+ | \Pi D^k u | \right) .
\end{equation}
We establish formal identities for  $\div D^k u$,  $\curl D^k u$, (respectively the normal trace $n\cdot D^k u$ on the boundary $ \partial \Omega$), for  $ k \in \N^* $, as combinations of the functionals 
\begin{gather} \label{DefsFetH}
f( \theta)  [u] : = \nabla D^{\alpha_1} u \cdot \ldots \cdot \nabla D^{\alpha_s} u, \\
\text{respectively } h( \theta)  [u] : =\nabla^s \rho \{ D^{\alpha_1} u , \ldots  ,D^{\alpha_s} u \} ,
\end{gather}
with 
$$
\theta := (s, \alpha),
$$
where  $s \in \N^*$ and $ \alpha := ( \alpha_1,\ldots, \alpha_s )  \in \N^s$. Furthermore, these combinations will only involve indices $(s,\alpha)$ belonging to
\begin{equation} \label{TheDefinitionOfA}
\mathcal{A}_{k} := \{ \theta = (s, \alpha) \ / \ 2 \leqslant s  \leqslant k+1  \text{ and }
\alpha := ( \alpha_1,\ldots, \alpha_s )  \in \N^s / \ | \alpha |    = k+1 - s \}.
\end{equation}
Here the notation $ | \alpha |$ stands for $| \alpha | := \alpha_1 + \ldots+  \alpha_s$.
 \par
We will need to estimate the coefficients of these combinations. To that purpose, we introduce the following notations: for $\alpha := ( \alpha_1,\ldots, \alpha_s )  \in \N^s$ we will denote $\alpha ! :=  \alpha_1 ! \ldots \alpha_s !$. We will denote by $\trace\{A\} $ the trace 
of $A \in {\mathcal M}_{3}(\R)$ and by $\as\{A\}:=A-A^*$ the antisymmetric part of $A \in {\mathcal M}_{3}(\R)$. In the sequel, we use the convention that the curl is a square matrix rather than a vector. \par
The precise statement is the following (compare to \cite[Proposition 3.1]{katoana}).
\begin{Proposition}\label{P1}
For $k \in \N^*$, we have in $\Omega$
\begin{eqnarray}\label{P1f}
\div D^k u=\trace\left\{F^k [u]\right\} \text{ where } 
F^k [u] := \sum_{\theta   \in \mathcal{A}_{k}  } c^1_k (\theta  )  \,  f(\theta)  [u], \\ 
\label{P2f}
\curl D^k u=\as\left\{G^k [u]\right\} \text{ where } 
G^k [u] :=  \sum_{\theta  \in \mathcal{A}_{k}  }  c^2_k (\theta )   \, f(\theta)  [u],
\end{eqnarray}
where, for $i=1$, $2$, the $c^i_k (\theta )$ are integers satisfying
\begin{equation} \label{Ci:1}
|c^i_k ( \theta) | \leqslant \frac{k ! }{\alpha ! }, 
\end{equation}
and on the boundary $ \partial \Omega$:
\begin{eqnarray}\label{P4f}
n\cdot D^k u =H^k [u] \text{ where } 
H^k [u] :=  \sum_{ \theta  \in \mathcal{A}_{k}  }  c^3_k (\theta )  \, h(\theta)  [u],
\end{eqnarray}
where  the $ c^3_k (\theta)  $ are negative integers satisfying
\begin{equation} \label{Ci:2}
| c^3_k (\theta)  | \leqslant \frac{k ! }{\alpha ! (s-1)!}.
\end{equation}
\end{Proposition}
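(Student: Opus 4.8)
The plan is to prove the three identities \eqref{P1f}, \eqref{P2f}, \eqref{P4f} together with the coefficient bounds \eqref{Ci:1}, \eqref{Ci:2} by a single induction on $k$. The base case $k=1$ is a direct computation: applying $D = \partial_t + u\cdot\nabla$ to $\div u = 0$ and using the commutator identity $\div(Du) = D\div u + \trace\{(\nabla u)^2\}$ (more precisely $\partial_i(u\cdot\nabla u_i) = \nabla u : \nabla u + u\cdot\nabla\div u$) yields $\div Du = \trace\{(\nabla u)\cdot(\nabla u)\}$, i.e. $F^1[u] = f((2,(0,0)))[u]$ with $c^1_1 = 1$; similarly for $\curl$. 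For the normal trace, one differentiates the boundary condition $u\cdot n = 0$, which is most conveniently rewritten (following Kato) as $\nabla\rho\cdot u = 0$ near $\partial\Omega$, and applies $D$ once, using that $D\nabla\rho = (\nabla^2\rho)\, u$ modulo tangential terms that vanish on the boundary. The resulting $H^1[u]$ is $-\nabla^2\rho\{u,u\}$, giving $c^3_1 = -1$.

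For the induction step, I would apply $D$ to each of \eqref{P1f}, \eqref{P2f}, \eqref{P4f} at order $k-1$ and commute $D$ through $\div$, $\curl$, and $n\cdot$ respectively. The key commutation formulas — which are presumably the "formal identities" promised for Section \ref{Sec:PreuveP1} — express $D$ applied to a product $\nabla D^{\alpha_1}u \cdots \nabla D^{\alpha_s}u$ via the Leibniz rule, where each factor $D\nabla D^{\alpha_j}u = \nabla D^{\alpha_j+1}u - (\nabla u)\cdot(\nabla D^{\alpha_j}u)$ either raises one multi-index entry by one (keeping $s$ fixed, raising $|\alpha|$ and $k$ by one) or splits off a new $\nabla u$ factor (raising $s$ by one, keeping $|\alpha|$, raising $k$ by one). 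Either way the new term lands in $\mathcal{A}_k$, so the structure is preserved. On the boundary one must additionally absorb the Euler equation itself: $Du = -\nabla p$, so a term $\nabla p$ or $\nabla^2\rho\{\ldots, Du\}$ must be re-expressed; here one uses that $\nabla D^{k-1}p$ has already been controlled (in the surrounding argument) and, crucially, that $p$ on the boundary can be traded via the normal component of the Euler equation, $\partial_n p = -n\cdot(u\cdot\nabla u) + \ldots$, producing again terms of the form $h(\theta)$. The integer coefficients $c^i_k$ then satisfy a recursion of the shape $c^i_k(s,\alpha) = \sum (\text{contributions from } c^i_{k-1})$, and the bounds $k!/\alpha!$ and $k!/(\alpha!(s-1)!)$ follow from the elementary inequality $\binom{k}{\alpha_j}\frac{(k-1)!}{\alpha'!} \le \frac{k!}{\alpha!}$ (Vandermonde/Pascal type), together with a factor-$s$ or factor-$(s-1)$ combinatorial count when a new factor splits off — exactly the place the $(s-1)!$ in the denominator of \eqref{Ci:2} is consumed.

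The main obstacle, I expect, is the bookkeeping on the boundary: keeping track of how differentiating $n\cdot D^{k-1}u$ generates, besides the "expected" terms from differentiating each $D^{\alpha_j}u$ and each copy of $\nabla^s\rho$, also terms involving the pressure that must be recycled back into the $h(\theta)$ family using the Euler equation restricted to $\partial\Omega$ — and checking that after this recycling the coefficients are still \emph{negative} integers with the stated bound. The sign claim (that $c^3_k(\theta) < 0$) is a genuine constraint that has to be tracked through the recursion, not just an estimate; it presumably reflects the fact that $\rho$ is negative inside $\Omega$ and each differentiation of the constraint $\nabla\rho\cdot u=0$ in the direction of the flow feeds back with a consistent sign. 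The interior identities \eqref{P1f}–\eqref{P2f} are comparatively routine once the commutator calculus is set up, since $\div$ and $\curl$ commute with $D$ up to the single correction term $\trace/\as\{(\nabla u)\cdot(\nabla D^{k-1}u + \ldots)\}$, and the coefficient bound $k!/\alpha!$ is clean. I would therefore organize the proof so that the interior and boundary cases share the same Leibniz-type lemma, and isolate the pressure-elimination step as a separate sub-lemma, deferring its verification to Section \ref{Sec:PreuveP1} as the paper announces.
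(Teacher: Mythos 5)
Your interior argument (base case, the two commutator rules, the two elementary operations ``raise one $\alpha_j$ by one'' versus ``split off a new $\nabla u$ factor'', and the resulting count $\sum_j\alpha_j+(s-1)\le k$ that yields $|c^i_k|\le k!/\alpha!$) is exactly the paper's proof, which is carried out for the more general Proposition \ref{P1New} and then specialized to $\psi=u$. The gap is in your treatment of the boundary identity \eqref{P4f}. You announce as the ``main obstacle'' a pressure-elimination step: substituting $Du=-\nabla p$ inside $\nabla^s\rho\{\dots,D^{\alpha_j+1}u,\dots\}$ and then ``trading'' $p$ back via the normal component of the Euler equation. No such step exists, and the one you describe cannot work: the target identity is a universal multilinear expression in the iterated material derivatives $D^{\alpha}u$ contracted against $\nabla^s\rho$, with integer coefficients depending only on $(s,\alpha)$. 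Once you introduce $\nabla p$ you leave this family, and the boundary relation $\partial_n p=-n\cdot Du$ only recovers the \emph{normal} component of $\nabla p$ on $\partial\Omega$ --- the tangential components cannot be reconstructed from boundary data, so there is no way to ``recycle'' these terms back into the $h(\theta)[u]$ family. The whole point of the set $\mathcal{A}_k$ is that you never need to: when $D$ hits an argument $D^{\alpha_j}u$ of $h(\theta)$ it simply produces $D^{\alpha_j+1}u$, which raises $|\alpha|$ by one and lands you in $\mathcal{A}_{k+1}$, already in the admissible form.

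The mechanism you are missing is that $D=\partial_t+u\cdot\nabla$ is \emph{tangential} to the space-time boundary $(-T,T)\times\partial\Omega$ (precisely because $u\cdot n=0$ there), so the identity $n\cdot D^ku=H^k[u]$, valid on $\partial\Omega$, may be differentiated by $D$ while staying on the boundary. The recursion is then simply $H^{k+1}=DH^k-\nabla^2\rho\{u,D^ku\}$, where $Dh(\theta)=h\big(R^1_b(\theta)\big)+\sum_{j}h\big(R^j_a(\theta)\big)$: the only ``new'' term comes from $D\nabla^s\rho=\nabla^{s+1}\rho\{u,\cdot\}$ (adding one argument $u=D^0u$ in first position), and the remaining terms just increment one $\alpha_j$. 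This also settles the negativity of $c^3_k$, about which you speculate: it has nothing to do with the sign convention for $\rho$; it is immediate from the recursion, since all three contributions to $c^3_{k+1}(\theta)$ (the $R^1_b$ term, the $R^j_a$ terms, and the extra $-\delta_{(2,(0,k))}$) carry the same sign as $c^3_k$ by induction. With that recursion the bound $|c^3_{k+1}(\theta)|\le\big(\sum_j\alpha_j+(s-1)\big)\,k!/\big(\alpha!(s-1)!\big)=(k+1)!/\big(\alpha!(s-1)!\big)$ follows exactly as in the interior case.
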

Proposition \ref{P1} is a particular case of a more general statement, namely Proposition \ref{P1New}, which will be proven in Section~\ref{Sec:PreuveP1}. \par
\ \par
Now thanks to Proposition \ref{P1}, \eqref{subadditif}, the fact that the sequence $(M_{s})_{s \geq 0}$ is increasing and the induction hypothesis (see \eqref{indu}), we have
%
\begin{eqnarray}
\nonumber
| F^k  [u] | &\leq & \sum_{(s,\alpha) \in \mathcal{A}_{k}} \frac{k!}{\alpha!} \ \prod_{i=1}^{s} \frac{\alpha_{i}! M_{\alpha_{i}} L^\alpha_{i} }{(\alpha_{i}+1)^2} \|u\|^{\alpha_{i}+1} \\
%
%
\label{new0.0}
&\leq &  k! M_k L^k  \,  \| u \|^{k+1}  \,  \sum_{s=2}^{k+1} \ L^{1-s} 
\sum_{ \alpha  /  \, | \alpha | = k+1 - s }  \,  \prod_{i=1}^s  \frac{1}{ (1+\alpha_i )^2}  .
\end{eqnarray}
We now use \cite[Lemma 7.3.3]{cheminsmf}, which we recall for the reader's convenience.  
\begin{Lemma} \label{LemmeCheminSMF}
For any couple of positive integers $(s,m)$ we have
\begin{equation}\label{DefUpsilon}
\sum_{\substack{{\alpha \in \N^{s}} \\ {|\alpha|=m} }}  \Upsilon(s,\alpha) \leq \frac{20^{s}}{(m+1)^{2}}, \text{ where }
\Upsilon(s,\alpha):=\prod_{i=1}^s \frac{1}{(1+\alpha_{i})^2}.
\end{equation}
\end{Lemma}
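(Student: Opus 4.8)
\textbf{Proof plan for Lemma \ref{LemmeCheminSMF}.}

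The plan is to prove the estimate by induction on the number $s$ of summation variables, the key mechanism being a convolution-type inequality for the sequence $a_m := \frac{1}{(m+1)^2}$, namely that $\sum_{j=0}^{m} a_j a_{m-j} \leq C a_m$ for a universal constant $C$, which forces the constant to grow only geometrically in $s$. First I would treat the base case $s=1$: there $\sum_{|\alpha|=m}\Upsilon(1,\alpha) = \frac{1}{(m+1)^2} \leq \frac{20}{(m+1)^2}$, trivially. For the inductive step, writing $\alpha = (\alpha_1,\dots,\alpha_{s+1})$ and splitting off the last index,
\begin{equation*}
\sum_{|\alpha| = m}\Upsilon(s+1,\alpha) = \sum_{j=0}^{m} \frac{1}{(1+j)^2}\Big(\sum_{|\beta| = m-j}\Upsilon(s,\beta)\Big) \leq 20^{s}\sum_{j=0}^{m}\frac{1}{(1+j)^{2}}\cdot\frac{1}{(m-j+1)^{2}},
\end{equation*}
so it suffices to show that the right-hand side is bounded by $\frac{20^{s+1}}{(m+1)^2}$, i.e. that $\sum_{j=0}^{m}\frac{1}{(1+j)^{2}(m-j+1)^{2}} \leq \frac{20}{(m+1)^{2}}$ for every $m \geq 0$.

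The heart of the argument is therefore this one-dimensional convolution bound, which I would establish as follows. Split the sum over $j$ into $0 \le j \le m/2$ and $m/2 < j \le m$; by symmetry the two halves contribute equally, so it is enough to bound $2\sum_{0 \le j \le m/2}\frac{1}{(1+j)^2(m-j+1)^2}$. On this range $m - j + 1 \geq \frac{m}{2}+1 \geq \frac{m+1}{2}$, hence $\frac{1}{(m-j+1)^2} \leq \frac{4}{(m+1)^2}$, and thus the half-sum is at most $\frac{4}{(m+1)^2}\sum_{j \geq 0}\frac{1}{(1+j)^2} = \frac{4}{(m+1)^2}\cdot\frac{\pi^2}{6}$. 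Doubling, $\sum_{j=0}^{m}\frac{1}{(1+j)^2(m-j+1)^2} \leq \frac{8}{(m+1)^2}\cdot\frac{\pi^2}{6} = \frac{4\pi^2}{3(m+1)^2} \leq \frac{20}{(m+1)^2}$, since $4\pi^2/3 < 14 < 20$. This closes the induction and gives the claimed bound with constant $20^s$.

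The only genuinely delicate point is getting the convolution constant comfortably below $20$ so that the same value propagates through the induction without deteriorating; the splitting-at-the-midpoint trick above does this with room to spare, and one could even afford a cruder estimate. Everything else is bookkeeping: the separation of the last index in the inductive step is valid because $\Upsilon$ factors over the components of $\alpha$, and the reindexing $\{|\alpha| = m\} \leftrightarrow \bigcup_{j=0}^{m}\{\alpha_{s+1}=j,\ |\beta|=m-j\}$ is a partition. I expect no real obstacle here; the lemma is a standard fact (it is quoted from \cite[Lemma 7.3.3]{cheminsmf}) and the proof is essentially self-contained.
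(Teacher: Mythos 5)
Your proposal is correct. The paper itself does not prove this lemma: it is quoted verbatim from Chemin \cite[Lemma 7.3.3]{cheminsmf}, so there is no in-paper argument to compare against. Your induction on $s$ with the one-dimensional convolution bound $\sum_{j=0}^{m}\frac{1}{(1+j)^{2}(m-j+1)^{2}}\leq \frac{4\pi^{2}}{3(m+1)^{2}}<\frac{20}{(m+1)^{2}}$, obtained by splitting at the midpoint, is exactly the standard mechanism behind such estimates (the same midpoint-splitting device is used repeatedly elsewhere in the paper, e.g.\ in the proof of Lemma \ref{bodyvesti} and in the estimate of $\mathcal{J}^{(j)}$), and the numerics check out since $4\pi^{2}/3<14$. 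One small bookkeeping point: the lemma is stated for positive integers $m$, but your inductive step invokes the hypothesis at $|\beta|=m-j$, which can be $0$; you should note explicitly that the bound also holds (trivially) for $m=0$, since $\sum_{|\beta|=0}\Upsilon(s,\beta)=1\leq 20^{s}$. With that observation the induction closes and the proof is complete.
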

We deduce from \eqref{new0.0} and from the above lemma
\begin{eqnarray} \label{DeCadix}
| F^k  [u] | \leq  \frac{k!  M_k L^k }{(k+1)^2}  \| u \|^{k+1} \,  \sum_{s=2}^{k+1} \ L^{1-s}  \,  20^{s}   \, \frac{(k+1)^2}{ (k-s+2 )^2}.
\end{eqnarray}
We have the exact same bound on $ | G^k[u]|$ using \eqref{P2f}. For what concerns $H^k[u]$, by using \eqref{stablediff}, \eqref{subadditif}, \eqref{RhoCN} and \eqref{P4f} we obtain
\begin{eqnarray}
\nonumber
\| H^k [u] \|_{\partial \Omega} &\leq& \sum_{s=2}^{k+1} \, \sum_{\alpha / \, |\alpha| = k+1-s}
\frac{k!}{\alpha!(s-1)!}\, s!M_{s}c_{\rho}^{s} \ \prod_{i=1}^{s} \frac{\alpha_{i}! M_{\alpha_{i}} L^\alpha_{i} }{(\alpha_{i}+1)^2} \|u\|^{\alpha_{i}+1}
\\
\label{new1.4} 
&\leq& \frac{k!  M_k L^k }{(k+1)^2}  \| u \|^{k+1}  \sum_{s=2}^{k+1} \ s L^{1-s} \, (c_\rho C_d)^s   \,  20^{s} \,   \frac{(k+1)^2}{ (k-s+2 )^2}  .
\end{eqnarray}
Concerning the pressure it is possible to get by induction from (\ref{Euler1a}) the following identities, due to Kato, see \cite[Proposition 3.5]{katoana}.
\begin{Proposition} \label{P3}
For $k\geq 1$, we have in the domain $\Omega$
\begin{equation} \label{t3.4}
D^k u +\nabla D^{k-1} p= K^k[u] 
\end{equation}
where $K^{1}[u]=0$ and for $k\geq 2$,
$$
K^{k}[u]=-\sum_{r=1}^{k-1}  \dbinom{k-1}{ r} \nabla D^{ r-1} u \cdot D^{k-r} u.
$$
\end{Proposition}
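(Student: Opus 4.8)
The plan is to argue by induction on $k$, applying the material derivative $D = \partial_t + u\cdot\nabla$ repeatedly to the Euler equation \eqref{Euler1a} and carefully tracking the commutator between $D$ and the spatial gradient $\nabla$. The base case $k=1$ is immediate: \eqref{Euler1a} reads $Du + \nabla p = 0$, so $Du + \nabla D^0 p = 0 = K^1[u]$. For the induction step, suppose \eqref{t3.4} holds at order $k-1$, i.e. $D^{k-1}u + \nabla D^{k-2}p = K^{k-1}[u]$. Applying $D$ to both sides gives $D^k u + D\nabla D^{k-2} p = D K^{k-1}[u]$, so the whole game is to rewrite $D\nabla D^{k-2}p$ as $\nabla D^{k-1}p$ plus lower-order terms. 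For this I would use the commutator identity
\begin{equation*}
D \nabla g = \nabla D g - (\nabla u)^{*} \nabla g,
\end{equation*}
valid for any scalar $g$, which follows from $\partial_i(u\cdot\nabla g) = u\cdot\nabla(\partial_i g) + (\partial_i u)\cdot\nabla g$. Applying this with $g = D^{k-2}p$ yields
\begin{equation*}
D^k u + \nabla D^{k-1} p = D K^{k-1}[u] + (\nabla u)^{*}\nabla D^{k-2}p,
\end{equation*}
and then one substitutes $\nabla D^{k-2}p = K^{k-1}[u] - D^{k-1}u$ from the induction hypothesis to eliminate the pressure entirely from the right-hand side, obtaining $K^k[u] = D K^{k-1}[u] + (\nabla u)^{*}(K^{k-1}[u] - D^{k-1}u)$.

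It then remains to check that this recursion for $K^k$ matches the claimed closed form $K^k[u] = -\sum_{r=1}^{k-1}\binom{k-1}{r}\nabla D^{r-1}u\cdot D^{k-r}u$. I would verify this by a direct computation: expand $D K^{k-1}[u]$ using the Leibniz-type rule $D(\nabla D^{r-1}u\cdot D^{k-1-r}u) = (D\nabla D^{r-1}u)\cdot D^{k-1-r}u + \nabla D^{r-1}u\cdot D^{k-r}u$, again apply the commutator identity to turn $D\nabla D^{r-1}u$ into $\nabla D^r u$ minus a term involving $(\nabla u)^{*}$, and collect terms. The terms $(\nabla u)^{*}\nabla D^{r-1}u \cdot D^{k-1-r}u$ produced by the commutator should cancel against the contribution $(\nabla u)^{*}K^{k-1}[u] = -\sum_r \binom{k-1}{r}(\nabla u)^{*}(\nabla D^{r-1}u\cdot D^{k-1-r}u)$ coming from the last bracket — this is exactly where the structure of $K^k$ is designed to make things close — while the surviving $\nabla D^r u\cdot D^{k-r}u$ terms, together with the $-(\nabla u)^{*}D^{k-1}u = -\nabla D^0 u \cdot D^{k-1}u$ term, reassemble into $-\sum_{r=1}^{k-1}\binom{k-1}{r}\nabla D^{r-1}u\cdot D^{k-r}u$ after the Pascal identity $\binom{k-2}{r-1}+\binom{k-2}{r} = \binom{k-1}{r}$ is used to combine the shifted index ranges.

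The main obstacle I anticipate is purely bookkeeping: keeping the index shifts, the binomial coefficients, and the two sources of $(\nabla u)^{*}$-terms straight, so that the cancellation is transparent and the Pascal recombination is applied correctly at the endpoints of the summation range. There is no analytic difficulty here — everything is a formal manipulation valid on the smooth (regularized) flow — but one must be careful that the term $r=1$ (where $D^{r-1}u = u$ and no commutator correction is generated) and the term $r=k-1$ are handled consistently with the general pattern. Since this is essentially Kato's computation (\cite[Proposition 3.5]{katoana}), and since the excerpt announces it will be subsumed by the more general Proposition \ref{P1New} proved later, I would either give the short inductive argument above or simply refer the reader forward to Section \ref{Sec:PreuveP1}.
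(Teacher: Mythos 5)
Your induction is correct: the recursion $K^k[u]=DK^{k-1}[u]+(\nabla u)\cdot K^{k-1}[u]-(\nabla u)\cdot D^{k-1}u$ obtained from the commutator identity \eqref{t3.1} does close up to the stated formula via Pascal's rule, with the commutator contributions cancelling by associativity exactly as you anticipate. The paper itself gives no proof of Proposition \ref{P3} — it simply cites Kato's \cite[Proposition 3.5]{katoana} and notes it follows ``by induction from (\ref{Euler1a})'' — so your argument is precisely the omitted computation and takes the same route.
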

Now using Proposition \ref{P3} we have 
\begin{equation*}
\Pi (D^{k} u) = \Pi(K^{k}[u])
\end{equation*}
and, together with Proposition \ref{P1},
\begin{eqnarray}
\nonumber
| K^k  [u] | &\leq& \sum_{r=1}^{k-1} \frac{(k-1)!}{r!(k-r-1)!}. \frac{(r-1)!}{r^{2}}. \frac{(k-r)!}{(k-r+1)^{2}} . M_{r-1}M_{k-r} L^{k-1} \|u\|^{k+1} \\
\nonumber
&\leq& \frac{k!  M_k L^k }{(k+1)^2}  \| u \|^{k+1} L^{-1}
\sum_{r=1}^{k-1} \  \frac{k-r}{kr}  \left( \frac{k+1 }{r(k-r+1)}\right)^2 .
\end{eqnarray}
Taking
\begin{equation} \label{DefCOmega}
C_{\Omega}:= \left(\sum_{i=1}^{g} |\Gamma_{i}|^{2}\right)^{1/2},
\end{equation}
where $|\Gamma_{i}|$ is the length of $\Gamma_{i}$, we deduce that
\begin{equation*}
|\Pi(K^{k}[u])|_{R^{g}} \leq C_{\Omega} \frac{k!  M_k L^k }{(k+1)^2}  \| u \|^{k+1} L^{-1}
\sum_{r=1}^{k-1} \  \frac{k-r}{kr}  \left( \frac{k+1 }{r(k-r+1)}\right)^2 .
\end{equation*}
Plugging the previous bounds into inequality (\ref{indu2}), using \eqref{P1f}-\eqref{P2f}-\eqref{P4f} and \eqref{t3.4}, and thanks to (\ref{le}) we get 
\begin{eqnarray}\label{new1.7}
\| D^{k} u \|  \leq c_{\mathfrak r} \gamma(L) \frac{k! M_k L^k }{(k+1)^2} \|u\|^{k+1}  
\leq \frac{1}{3} \frac{k! M_k  L^k }{(k+1)^2} \|u\|^{k+1} .
\end{eqnarray}
Finally going back to (\ref{t3.4}) to estimate the pressure we get (\ref{indu}) at rank $k$ and Theorem \ref{start2} is proved. \par
%
%
%
%
%
%
\subsection{Proof of Corollary \ref{Hadamard}}
\label{Pr}

Let us denote  by $ \Phi [u_0]$ the flow associated to an initial velocity $u_0 \in C^{\lambda+1,r}_{\sigma,R}(\Omega)$. We consider $\overline{R}$ such that
\begin{equation*}
\sup_{t \in (-T,T)} \| u(t) \|_{C^{\lambda+1,r}} \leq \overline{R}.
\end{equation*}
By the time-invariance of the equation, it is sufficient to prove that there exists $T_a >0$ depending on $\overline{R}$ and $\Omega$ only, such that $u_0 \in C^{\lambda+1,r}_{\sigma,\overline{R}}  (\Omega ) \mapsto   \Phi [u_0] \in  C^\omega  ((-T_a,T_a) ;C^{\lambda+1,r}  (\Omega ) )$ is  $C^\infty$. \par
According to Theorem \ref{start2}, $\Phi [u_0] \in C^\omega  ((-T,T) ;C^{\lambda+1,r} (\Omega )  )$, so 
 there holds for $(t,x)  \in (-T,T) \times \Omega $, 
\begin{equation} 
\label{serie}
 \Phi [u_0] (t,x) = \sum_{k  \geq 0}  \Phi_k [u_0] (t,x)  .
   \end{equation}
where 
\begin{eqnarray*} 
 \Phi_k [u_0] (t,x) =  \frac{t^k}{k!} (\partial_t^k  \Phi) [u_0] (0,x)
 = 
 \left\{\begin{array}{ll} 
 x , \text{ if } k = 0,
 \\  \frac{t^k}{k!} (D^{k-1} u ) (0,x) ,   \text{ if } k  \geq 1.
 \end{array}\right.
   \end{eqnarray*}
Proceeding by iteration as in Section \ref{Proof1}, we obtain that for any $k
\geq  1$, the operator $u_0 \in C^{\lambda+1,r}  (\Omega ) \mapsto
(D^{k-1} u ) (0,\cdot )  \in C^{\lambda+1,r}  (\Omega ) $ is the
restriction to the diagonal of a $k$-linear continuous operator from
$C^{\lambda+1,r}  (\Omega )^k $ to  $C^{\lambda+1,r}  (\Omega ) $,
with the estimate  
\begin{equation*}
\| (D^k u  )|_{t= 0} \|  \leq \frac{k! L^k }{(k+1)^2} \|u_0 \|^{k+1} .
\end{equation*}
Therefore,  for any $k   \geq  0$, the mapping $u_0 \in C^{\lambda+1,r}_{\sigma}  (\Omega ) \mapsto  \Phi_k [u_0] \in C^\omega  ( \R ;C^{\lambda+1,r}  (\Omega ))$ is  $C^\infty$ and
there exists $T_a> 0$ depending only on $\Omega$ and $\overline{R}$ such that the series 
\begin{equation*}
\sum_{k  \geq 0} \|    \Phi_k [u_0]     \|_{ L^\infty (B_\C (0,T_a ),  C^{\lambda+1,r}  (\Omega )) }
\end{equation*}
converges.
Since the $l$-th order derivatives with respect to $u_0$ of $\Phi_k [u_0]$ can be bounded as above with an extra multiplicative constant $k^l$, the series 
\begin{equation*}
\sum_{k  \geq 0} \|  D^l  \Phi_k [u_{0,1} ,\cdot \cdot \cdot  ,  u_{0,l} ]     \|_{ L^\infty (B_\C (0,T_a ),  C^{\lambda+1,r}  (\Omega )) }
\end{equation*}
also converge. 
We obtain that $u_0 \in C^{\lambda+1,r}_{\sigma,R}  (\Omega ) \mapsto   \Phi [u_0] \in  C^\omega  ((-T_a,T_a) ;C^{\lambda+1,r}  (\Omega ) )$ is  $C^\infty$.
Repeating the same process on a finite number of small time intervals yields the result.

\section{Skeleton of the proof of Theorem \ref{start4}}
\label{Sec:SchemeDePreuve}
Before entering the core of the proof, let us explain its general
strategy as a motivation for the next sections. In what follows, $T>0$
is chosen suitably small so that the distance from ${\mathcal S}(t)$
to $\partial \Omega$ is bounded from below by a positive real number
$\underline{d}$ and so that we have a uniform constant for the $\div$-$\curl$
elliptic estimate on $\Omega \setminus {\mathcal S}(t)$ for $t \in (-T,T)$,
see Lemma \ref{AutreRegdivcurl} below. \par 
As in the proof of Theorem \ref{start2} we introduce $\rho_{\Omega}$
as a function defined on a neighborhood of $\partial \Omega$ as the
signed distance to $\partial \Omega$, negative inside $\Omega$. 
Since here the boundary $\partial \Omega$ is analytic, there exists
$c_\rho > 1$ such  that for all $s \in \mathbb{N}$, 
\begin{equation} \label{RhoAnalytique}
\|\nabla^s \rho_{\Omega} \| \leq c_\rho^s \, s!.
\end{equation}
The norm in \eqref{RhoAnalytique} is the $C^{\lambda+1,r}$ norm in the
above neighborhood. We also introduce an analytic function
$\rho_{B}(t,x)$ defined on a neighborhood of the body's boundary
$\partial  S (t)$ as the signed distance function to $\partial S (t)$
(let us say, positive inside $S(t)$), so that the inward  unit normal
to the body boundary is $n(t,x) :=  \nabla  \rho(t,x)$ (defined in a
neighborhood of $\partial S(t)$). We denote by $\rho_0$ the initial
value of $\rho$ which is therefore an analytic function $\rho_0 (x)$
defined on a neighborhood of the  body's boundary $\partial  S_0 $ at
initial time and satisfying $\rho_0 (x)=\dist(x,\partial S_0)$. Note
that the norms $\| \nabla^{s} \rho \|$ are independent of $t$ (see
\eqref{RhoRho}-\eqref{XChapeau} below). The analytic estimates on
$\partial S$  read 
\begin{equation}
\label{laciao}
\|\nabla_{x}^{s}\, \rho_{B} \| \leq c_\rho^s \, s! .
\end{equation}
The norm considered in \eqref{laciao} is again the $C^{\lambda+1,r}$
norm in the neighborhood where $\rho_{B}$ is defined. In the sequel we
will omit to write the index $x$ in the derivation of $\rho_{B}$. \par 
\ \par
As in Section \ref{Sec:FluideSeul}, we consider solutions $(\ell,r,u)$ which are smooth. For this, we proceed by regularization and prove estimates which not depend on the regulartization parameter. Note that the passage to the limit requires Proposition \ref{Prostart3}. \par
As for Theorem \ref{start2}, the goal is to prove by induction an estimate on the $k$-th material derivative of the fluid and the body velocities. Precisely, what we want to prove is the following inequality: there exists $L>0$ such that for all $k \in \N$,
\begin{equation} \label{Graal}
 \|D^k u\| + \|\ell^{(k)}\| + \|r^{(k)}\|
\leq  \V_k   ,
\text{ with }
\V_k := \frac{k! L^k}{(k+1)^{2}} \V^{k+1} ,
\text{ where }
\V := \|u\|+\|\ell\| + \|r\| .
\end{equation}
The norm on vectors (here $\ell$, $r$ and their derivatives) of $\R^{3}$ is the usual Euclidean one.  We will also the notation $ \|\cdot  \|$ for the associated  operator norm.
Here the spaces and norms are the following:
\begin{gather*}
X(t):=C^{\lambda,r}({\mathcal F}(t)), \, |\cdot|:=\|\cdot\|_{X(t)}, \quad
\widetilde{X} (t):=C^{\lambda,r}(\partial{\mathcal S}(t)), \, |\cdot|_{\partial{\mathcal S}(t)}:=\|\cdot\|_{{\widetilde X}(t)}, \, \\
X_{\partial \Omega}:=C^{\lambda,r}(\partial \Omega), \, |\cdot|_{\partial \Omega}:=\|\cdot\|_{X_{\partial \Omega}}, \\
Y(t):=C^{\lambda+1,r}({\mathcal F}(t)), \, \|\cdot\|:=\|\cdot\|_{Y(t)}, \quad
\widetilde{Y} (t) :=C^{\lambda+1,r}(\partial{\mathcal S}(t)), \, \|\cdot\|_{\partial{\mathcal S}(t)}:=\|\cdot\|_{{\widetilde Y}(t)}, \, \\
Y_{\partial \Omega}:=C^{\lambda+1,r}(\partial \Omega), \, \|\cdot\|_{\partial \Omega}:=\|\cdot\|_{Y_{\partial \Omega}}.
\end{gather*}
The inequality  \eqref{Graal}  is true for $k=0$. 
Now in order to propagate the induction hypothesis we will proceed in two parts looking first at the estimates of the pressure and then deducing estimates for the velocities of the solid and of the fluid.
These two steps are summed up into two propositions below. Their proof is based on estimates of the pressure. The idea is to decompose the pressure into pieces which we estimate separately. 
\begin{Lemma}
Equation \eqref{Euler1a2} can  be written as
\begin{equation}
	Du=-\nabla \mu + \nabla ( \Phi \cdot \begin{bmatrix}  \ell \\ r \end{bmatrix}' ),
	\label{new2.0}
\end{equation}
with $\Phi:=(\Phi_{i})_{i=1\dots 6}$, where the functions $\Phi_{i}$ and $\mu$ are the solutions of the following problems: 
\begin{equation}
	-\Delta \Phi_i = 0 \quad   \text{for}  \ x\in \mathcal{F}(t),
	\label{t1.3}
\end{equation}
\begin{equation}
	\frac{\partial \Phi_i}{\partial n}=  0 \quad  \text{for}  \ x\in \partial \Omega, 
	\label{t1.4}
\end{equation}
\begin{equation}
	\frac{\partial \Phi_i}{\partial n}=K_i 
		  \quad  \text{for}  \  x\in \partial \mathcal{S}(t),
	\label{t1.5}
\end{equation}
\begin{equation} \label{IntPhiNul}
\int_{\mathcal{F}(t)} \Phi_{i} \ dx =0,
\end{equation}
%
where
\begin{equation} \label{t1.6}
K_i:=\left\{\begin{array}{ll} 
n_i & \text{if} \ i=1,2,3 ,\\ \relax
[(x- x_{B})\wedge n]_{i-3}& \text{if} \ i=4,5,6, 
\end{array}\right.
\end{equation}
and
\begin{equation}
	-\Delta \mu = \trace\{ F^{1}[u] \} = \trace\{ \nabla u \cdot \nabla u \} =  \quad   \text{for}  \ x\in \mathcal{F}(t), 
	\label{t1.0}
\end{equation}
\begin{equation}
	\frac{\partial \mu}{\partial n}= -H^1[u] = - \nabla^{2} \rho(u,u) \quad   \text{for}  \ x\in \partial \Omega, 
	\label{t1.1}
\end{equation}
\begin{equation} \label{t1.2}
\frac{\partial \mu}{\partial n}= \sigma ,  \quad   \text{for}  \ x\in \partial \mathcal{S}(t), 
\end{equation}
\begin{equation} \label{IntMuNul}
\int_{\mathcal{F}(t)} \mu \ dx =0.
\end{equation}
where $F^{1}[u]$ and $H^1[u]$ were introduced in Proposition \ref{P1}, where $v=v(t,x)$ is given by \eqref{vietendue} and where 
\begin{equation}
\sigma := \nabla^{2} \rho \, \{ u- v , u- v \} - n \cdot \big(r\wedge \left(2u-v-\ell \right)\big).	\label{new0.1}
\end{equation}
\end{Lemma}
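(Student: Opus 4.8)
The plan is to derive the decomposition \eqref{new2.0} directly from the Euler equation \eqref{Euler1a2} by splitting the pressure gradient into three parts: a potential part carrying the acceleration of the solid, a part $\mu$ whose Neumann data and interior source encode the quadratic nonlinearity, and a remainder that vanishes. First I would recall that, since the flow is incompressible, applying the divergence to \eqref{Euler1a2} gives $-\Delta p = \div\big((u\cdot\nabla)u\big) = \trace\{\nabla u\cdot\nabla u\}$ in ${\mathcal F}(t)$, which is exactly the interior equation \eqref{t1.0} for $\mu$ once we recognize $\trace\{F^1[u]\}$ from Proposition \ref{P1}. So the difference $p - \mu$ is harmonic in ${\mathcal F}(t)$, and it only remains to identify its boundary data on $\partial\Omega$ and $\partial{\mathcal S}(t)$ and to express it via the elementary harmonic potentials $\Phi_i$.

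Next I would compute the normal derivative of $p$ on each boundary component using the Euler equation evaluated at the boundary, $\nabla p \cdot n = -Du\cdot n$ (using $Du = \partial_t u + (u\cdot\nabla)u$). On $\partial\Omega$ the impermeability condition $u\cdot n = 0$ is preserved by the flow; differentiating $u\cdot n = 0$ along trajectories and using that $n = \nabla\rho_\Omega$ on $\partial\Omega$ yields $Du\cdot n = -u\cdot(Dn) = -\nabla^2\rho_\Omega(u,u)$ (since $Dn = (u\cdot\nabla)\nabla\rho_\Omega$ on the fixed boundary and $\partial_t \rho_\Omega = 0$), hence $\partial p/\partial n = \nabla^2\rho_\Omega(u,u)$. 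Wait — the sign: $\partial \mu/\partial n = -\nabla^2\rho(u,u)$ in \eqref{t1.1}, with $n$ the normal pointing \emph{outside the fluid}, i.e. outside $\Omega$; one must track carefully that on $\partial\Omega$ the fluid-outward normal agrees with the $\Omega$-outward normal, while on $\partial{\mathcal S}(t)$ it is the \emph{inward} normal to the solid, $n = \nabla\rho_B$. On $\partial{\mathcal S}(t)$ the condition is $u\cdot n = v\cdot n$ with $v$ the rigid velocity \eqref{vietendue}; differentiating this relation along the fluid flow (which is not the solid flow, so one gets extra terms) and using $Dn = (u\cdot\nabla)\nabla\rho_B + \partial_t \nabla\rho_B$ together with the transport equation satisfied by $\rho_B$ along the \emph{solid} flow, one obtains, after expanding $Dv$ and $(u-v)\cdot Dn$, precisely the data $\sigma$ in \eqref{new0.1}: the quadratic term $\nabla^2\rho(u-v,u-v)$ plus the linear correction $-n\cdot(r\wedge(2u-v-\ell))$ coming from $\partial_t v$ minus the solid acceleration and from the $r\wedge\cdot$ terms in $Dv$ and in $\partial_t n$. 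This is the main computational obstacle: correctly expanding $D(v\cdot n)$ on $\partial{\mathcal S}(t)$, separating the part $\nabla(\Phi\cdot[\ell\ r]')$ that carries the solid acceleration $[\ell\ r]'$ from the purely kinematic remainder $\sigma$, and getting every sign and every $r\wedge$ contribution right.

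Then I would set $q := p - \mu - \Phi\cdot[\ell\ r]'$ and observe it is harmonic in ${\mathcal F}(t)$ with homogeneous Neumann data on both $\partial\Omega$ and $\partial{\mathcal S}(t)$ (by construction of $\mu$, $\Phi_i$ and the boundary identities just derived, using \eqref{t1.4}--\eqref{t1.5} and $K_i$ as in \eqref{t1.6}, which are exactly the normal traces of $v$ decomposed in the basis $\ell,r$); hence $q$ is constant on ${\mathcal F}(t)$, and since one may normalize pressures by the zero-mean conditions \eqref{IntPhiNul}, \eqref{IntMuNul}, this constant can be absorbed. Therefore $\nabla p = \nabla\mu + \nabla(\Phi\cdot[\ell\ r]')$, and substituting into \eqref{Euler1a2} gives $Du = -\nabla\mu + \nabla(\Phi\cdot[\ell\ r]')$, which is \eqref{new2.0}. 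One should also note the well-posedness of the Neumann problems \eqref{t1.3}--\eqref{IntPhiNul} and \eqref{t1.0}--\eqref{IntMuNul}: the compatibility condition $\int_{\partial{\mathcal F}(t)} \partial_n\Phi_i = 0$ holds because $\int_{\partial{\mathcal S}(t)} K_i\,d\Gamma = \int_{\partial{\mathcal S}(t)} v_i\cdot n$ equals the flux of a divergence-free field through $\partial{\mathcal S}(t)$ — wait, more directly, $K_i$ is the normal trace of a rigid field extended as an incompressible field, so its integral vanishes; and for $\mu$, $\int_{{\mathcal F}(t)}\trace\{\nabla u\cdot\nabla u\} = \int_{\partial{\mathcal F}(t)} ((u\cdot\nabla)u)\cdot n$ must match $-\int_{\partial{\mathcal F}(t)}\partial_n\mu$, which is the identity $\div((u\cdot\nabla)u)=\trace\{\nabla u\cdot\nabla u\}$ integrated, so the two compatibility conditions coincide. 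Finally, the elliptic regularity ($C^{\lambda+1,r}$ solutions for $C^{\lambda,r}$ data on analytic boundaries, uniformly in $t$ by the lower bound $\underline d$ on $\dist({\mathcal S}(t),\partial\Omega)$) guarantees all these functions lie in the claimed spaces, which is routine given Lemma \ref{AutreRegdivcurl}.
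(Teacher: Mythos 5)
Your strategy coincides with the paper's: take the divergence and the normal traces of the Euler equation to characterize $p$ as the solution of a Neumann problem, rewrite the solid-boundary datum by differentiating $(u-v)\cdot n=0$ along the flow so as to isolate the acceleration $\begin{bmatrix}\ell\\ r\end{bmatrix}'$ through the Kirchhoff potentials $\Phi_i$, and then identify $p$ with $\mu-\Phi\cdot\begin{bmatrix}\ell\\ r\end{bmatrix}'$ up to a constant removed by the zero-mean normalizations \eqref{IntPhiNul}, \eqref{IntMuNul}. The ingredients you list for the solid boundary (rigid transport of $\rho_B$, the expansion $Dv=\ell'+r'\wedge(x-x_B)+r\wedge(u-\ell)$, the extra $r\wedge$ contributions from the time dependence of $n$) are exactly those the paper combines into the identity $n\cdot D\psi=D(n\cdot\psi)+n\cdot(r\wedge\psi)-\nabla^{2}\rho\{u-v,\psi\}$ applied to $\psi=u-v$. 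You correctly single this out as the crux, but you only assert that the expansion ``produces precisely $\sigma$''; that computation, which is where all the content of \eqref{new0.1} lies, is not actually carried out in your write-up.

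There is also a sign inconsistency in your final assembly. With the conventions \eqref{t1.2} and \eqref{t1.5}, the solid-boundary Neumann datum of $p$ is $\sigma-K\cdot\begin{bmatrix}\ell\\ r\end{bmatrix}'$, so the harmonic function with homogeneous Neumann data is $q=p-\mu+\Phi\cdot\begin{bmatrix}\ell\\ r\end{bmatrix}'$, not $p-\mu-\Phi\cdot\begin{bmatrix}\ell\\ r\end{bmatrix}'$: your $q$ satisfies $\partial q/\partial n=-2K\cdot\begin{bmatrix}\ell\\ r\end{bmatrix}'$ on $\partial\mathcal{S}(t)$ and is not constant. Your conclusion is then internally contradictory, since $\nabla p=\nabla\mu+\nabla(\Phi\cdot\begin{bmatrix}\ell\\ r\end{bmatrix}')$ together with $Du=-\nabla p$ yields $Du=-\nabla\mu-\nabla(\Phi\cdot\begin{bmatrix}\ell\\ r\end{bmatrix}')$ rather than \eqref{new2.0}. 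The correct decomposition is $p=\mu-\Phi\cdot\begin{bmatrix}\ell\\ r\end{bmatrix}'$. These are bookkeeping errors rather than structural ones, but as written the last step does not deliver the claimed formula.
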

\begin{proof}
Using  \eqref{Euler1a2} 
we have the following equations:
\begin{equation*}
	-\Delta p = \trace\{ F^{1}[u] \}
	 \quad   \text{for}  \ x\in \mathcal{F}(t), 
\end{equation*}
\begin{equation*}
	\frac{\partial p}{\partial n}= - H^1[u]  \quad   \text{for}  \ x\in \partial \Omega, 
	\label{dec2}
\end{equation*}
\begin{equation} \label{dec3}
\frac{\partial p}{\partial n}=  - n \cdot Du   \quad   \text{for}  \ x\in \partial \mathcal{S}(t) ,
\end{equation}
which determine uniquely $p$, with the extra condition:
\begin{equation} \label{dec4}
\int_{\mathcal{F}(t)} p \ dx =0.
\end{equation}
Let us now deal with the boundary condition \eqref{dec3}.
Since the motion of the body is rigid  there holds
\begin{equation} \label{RhoRho}
\rho  (t,x)  =  \rho_0  (\hat{\mathcal X}(t,x)),
\end{equation}
where 
\begin{equation} \label{XChapeau}
\hat{\mathcal X}(t,x) := x_0 + Q(t)^*(x-x_B (t)).
\end{equation} 
Hence by spatial derivation we infer that  for any  $(u^1 , u^2 ) $ in $(\R^3 )^2$,
\begin{equation} \label{nbouge}
  n  (t,x) \cdot   u^1 =  \nabla \rho_0  (\hat{\mathcal X}(t,x)) \cdot   Q(t)^* u^1 ,
\quad  \nabla^{2} \rho  (t,x) \{ u^1 , u^2 \} =  \nabla^{2} \rho_0  (\hat{\mathcal X}(t,x)) \{ Q(t)^* u^1 , Q(t)^* u^2 \}.
\end{equation}
Due to \eqref{vietendue}, \eqref{LoiDeQ} and \eqref{XChapeau}, we have
\begin{equation} \label{DeriveeXChapeau}
\frac{\partial}{\partial t}\hat{\mathcal X}(t,x) := -Q(t)^* v(t,x),
\end{equation}
Then applying a time derivative to \eqref{nbouge} and using \eqref{LoiDeQ} and \eqref{DeriveeXChapeau}, we get for any $u^1$ in $\R^3$,
\begin{equation*}
D ( n (t,x) \cdot   u^1 ) 
= - n (t,x) \cdot (r \wedge u^1 )+  \nabla^{2} \rho  (t,x) \{ u-v , u^1  \} .
\end{equation*}
We now use Leibniz's formula to get that for any smooth vector field $ \psi $
\begin{equation} \label{nDPsi}
n \cdot D \psi = D\left(n\cdot \psi\right)+ n \cdot (  r\wedge \psi ) - \nabla^2 \rho  \{ u-v,\psi \} .
\end{equation}
Next we apply this to $ \psi = u-v$ and  we use the identity
$Dv = \ell' + r' \wedge (x-x_B ) + r\wedge (u-\ell) $ (obtained from \eqref{vietendue})
 and the boundary condition  (\ref{Euler3b}) to obtain 
\begin{equation} \label{dec3bis}
\frac{\partial p}{\partial n}= \sigma - K \cdot \begin{bmatrix}  \ell \\ r \end{bmatrix}' 
   \quad   \text{for}  \ x\in \partial \mathcal{S}(t) ,
\end{equation}
where $K :=(K_{i})_{i=1\dots 6}$.
We therefore obtain that the pressure can be  decomposed  into $p=\mu-\Phi\cdot \begin{bmatrix}  \ell \\ r \end{bmatrix}'$. 
Let us stress that the functions $\Phi_{i}$ and $\mu$ are well-defined because the compatibility conditions are fulfilled; their uniqueness are granted from \eqref{IntPhiNul} and \eqref{IntMuNul}.

\end{proof}
\begin{Lemma}
The equations \eqref{Solide1}-\eqref{Solide2} can be written as
\begin{equation} \label{EvoMatrice}
\mathcal{M} \begin{bmatrix} \ell \\[0.5cm] r \end{bmatrix}' 
= \begin{bmatrix} 0 \\[0.5cm] \mathcal{J}r \wedge r \end{bmatrix} + \Xi,
\end{equation}
where 
\begin{equation*}
\Xi (t) :=  \begin{bmatrix}  \displaystyle\int_{ \mathcal{F}(t)} \nabla \mu \cdot \nabla \Phi_a \, dx   \end{bmatrix}_{a \in \{1,\ldots,6\}}, \ \  
\mathcal{M}(t) :=\mathcal{M}_1(t)+\mathcal{M}_2(t),
\end{equation*}
\begin{equation} \label{EvoMatrice2}
 \mathcal{M}_1(t) := \begin{bmatrix} m \Id_3 & 0 \\ 0 & \mathcal{J}\end{bmatrix}, \quad
\mathcal{M}_2(t) := \begin{bmatrix} \int_{\mathcal{F}(t)} \nabla \Phi_a \cdot \nabla \Phi_b \ dx \end{bmatrix}_{a,b \in \{1,\ldots,6\}},
\end{equation}
and ${\mathcal J}$ was defined in \eqref{eqJ}. Furthermore the matrix $\mathcal{M}$ is symmetric and positive definite. 
\end{Lemma}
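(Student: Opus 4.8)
The plan is to insert the pressure decomposition $p=\mu-\Phi\cdot\begin{bmatrix}\ell\\r\end{bmatrix}'$ obtained in the previous lemma into the momentum balances \eqref{Solide1}--\eqref{Solide2}, and to turn every boundary integral over $\partial\mathcal S(t)$ into one of the volume integrals over $\mathcal F(t)$ defining $\mathcal M_2$ and $\Xi$, by means of Green's formula on $\mathcal F(t)$.

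First I would recast the left-hand sides of \eqref{Solide1}--\eqref{Solide2}. Since $\ell=x_B'$, \eqref{Solide1} reads $m\ell'=\int_{\partial\mathcal S(t)}pn\,d\Gamma$. For the angular part, differentiating \eqref{Sylvester} and using \eqref{LoiDeQ} gives $\mathcal J'=(r\wedge)\mathcal J-\mathcal J(r\wedge)$, with $r\wedge$ viewed as a matrix, whence $\mathcal J'r=r\wedge(\mathcal Jr)=-\mathcal Jr\wedge r$ and therefore $(\mathcal Jr)'=\mathcal Jr'-\mathcal Jr\wedge r$; thus \eqref{Solide2} becomes $\mathcal Jr'-\mathcal Jr\wedge r=\int_{\partial\mathcal S(t)}(x-x_B)\wedge pn\,d\Gamma$. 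Recalling the definition \eqref{t1.6} of $K=(K_a)_{a=1,\dots,6}$, the two identities read together as
\[ \mathcal M_1\begin{bmatrix}\ell\\r\end{bmatrix}'-\begin{bmatrix}0\\\mathcal Jr\wedge r\end{bmatrix}=\Big[\,\int_{\partial\mathcal S(t)}p\,K_a\,d\Gamma\,\Big]_{a\in\{1,\dots,6\}}. \]

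Next I would substitute $p=\mu-\Phi\cdot\begin{bmatrix}\ell\\r\end{bmatrix}'$ in the right-hand side and evaluate the resulting boundary integrals. The outward unit normal of $\mathcal F(t)$ is $n$ on $\partial\Omega$ and $n=\nabla\rho_B$ on $\partial\mathcal S(t)$, so Green's formula on $\mathcal F(t)$ applied to the pair $(\mu,\Phi_a)$, together with $\Delta\Phi_a=0$ and the Neumann conditions \eqref{t1.4}--\eqref{t1.5}, gives $\int_{\partial\mathcal S(t)}\mu\,K_a\,d\Gamma=\int_{\mathcal F(t)}\nabla\mu\cdot\nabla\Phi_a\,dx=\Xi_a$; applied to $(\Phi_b,\Phi_a)$ it gives $\int_{\partial\mathcal S(t)}\Phi_b\,K_a\,d\Gamma=\int_{\mathcal F(t)}\nabla\Phi_a\cdot\nabla\Phi_b\,dx=(\mathcal M_2)_{ab}$. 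Hence $\int_{\partial\mathcal S(t)}p\,K_a\,d\Gamma=\Xi_a-(\mathcal M_2\begin{bmatrix}\ell\\r\end{bmatrix}')_a$, and moving the $\mathcal M_2$ term to the left-hand side of the displayed identity yields \eqref{EvoMatrice} with $\mathcal M=\mathcal M_1+\mathcal M_2$.

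Finally I would check the symmetry and positivity of $\mathcal M$. The matrix $\mathcal M_2(t)$ is the Gram matrix of $\nabla\Phi_1,\dots,\nabla\Phi_6$ in $L^2(\mathcal F(t))$, hence symmetric with $\xi^*\mathcal M_2(t)\xi=\big\|\nabla(\sum_a\xi_a\Phi_a)\big\|^2_{L^2(\mathcal F(t))}\ge0$. The matrix $\mathcal M_1(t)$ is block-diagonal with blocks $m\,\Id_3$, $m>0$ by \eqref{EqMasse}, and $\mathcal J=Q\mathcal J_0Q^*$ with $Q\in SO(3)$, so $\mathcal J$ is symmetric positive definite iff $\mathcal J_0$ is; and from \eqref{eqJ} one has $\xi^*\mathcal J_0\xi=\int_{\mathcal S_0}\rho_{\mathcal S_0}(x)\,|\xi\wedge(x-x_0)|^2\,dx$, which is strictly positive for $\xi\ne0$ since $\rho_{\mathcal S_0}>0$ on $\mathcal S_0$ and $\mathcal S_0$ is not contained in a line. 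Therefore $\mathcal M=\mathcal M_1+\mathcal M_2$ is symmetric positive definite. I expect the only mildly delicate points to be the bookkeeping of the orientation of $\partial\mathcal F(t)$ when integrating by parts, and the kinematic identity $\mathcal J'r=-\mathcal Jr\wedge r$ coming from Sylvester's law; everything else is a direct computation.
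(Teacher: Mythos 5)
Your proof is correct and follows essentially the same route as the paper: substitute the decomposition $p=\mu-\Phi\cdot\begin{bmatrix}\ell\\ r\end{bmatrix}'$ into \eqref{Solide1}--\eqref{Solide2} and use Green's formula in the form $\int_{\partial\mathcal S(t)}K_a f\,d\Gamma=\int_{\mathcal F(t)}\nabla\Phi_a\cdot\nabla f\,dx$ to convert the boundary integrals into $\Xi$ and $\mathcal M_2$. The paper leaves the kinematic identity $(\mathcal Jr)'=\mathcal Jr'-\mathcal Jr\wedge r$ and the positive-definiteness of $\mathcal M$ implicit, and your explicit verifications of both are accurate.
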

%
%
\begin{proof}
It is sufficient to use the previous lemma and to notice that for $i \in\{1,\ldots,6\}$, for any $t \in (-T,T)$, for any  function $f \in C^1 (\overline{\mathcal{F}(t)}; \R)$,
$$
\int_{\partial \mathcal{S}(t)} K_i \ f   \ d\Gamma =\int_{\mathcal{F}(t)} \nabla \Phi_i \cdot \nabla f \ dx .
$$

\end{proof}
\begin{Remark} \rm
The matrix $\mathcal{M}$ is referred as the ``virtual  inertia tensor'', it incorporates the
 ``added  inertia tensor''  $\mathcal{M}_2$ which, loosely speaking,  measures how much the  surrounding fluid resists the acceleration as the body moves through it.
This effect was  probably first identified by du Buat in $1786$, and the efficient way of evaluating this effect through the functions  $\Phi_a $ dates back to Kirchoff.
\end{Remark}
We will first  prove the following (see Subsection \ref{Proofpesti}).
\begin{Proposition}\label{pesti}
The functions $\Phi_i$ ($i=1,\ldots,6)$ and $\mu$ satisfy the following assertions.
\begin{itemize}
	\item There exists a positive constant $C_0=C_0(\Omega,{\mathcal S}_{0},\underline{d})$ such that
$$
\sum_{1\leq i \leq 6}  \|\nabla \Phi_i \|  \leq C_0.
$$
\item There exists $\gamma_{2}$ a positive decreasing function with $\displaystyle {\lim_{L \rightarrow + \infty}} \gamma_{2}(L)=0$ such that if for all $j \leq k$, 
\begin{equation}	\label{pressurehyp1} 
\|D^ j u\|+\|\ell^{( j)}\| + \|r^{( j)}\| \leq   \V_{ j} ,
\end{equation}
then for all $1\leq j \leq k+1$, 
\begin{equation} \label{pressureclphi}
\sum_{1\leq i \leq 6}  \|D^j  \nabla \Phi_i \| \leq  \gamma_2(L) \frac{\V_{j}}{\V}.
\end{equation}
\item There exists a positive constant $C_0=C_0(\Omega,{\mathcal S}_{0},\underline{d})$ such that
\begin{equation} \label{pressuremudepart}
 \|\nabla \mu \| \leq C_0 \, \V^{2}.
\end{equation}
\item There exists $\gamma_{2}$ a positive decreasing function with $\displaystyle {\lim_{L \rightarrow + \infty}} \gamma_{2}(L)=0$ such that if for all $j \leq k$, 
\eqref{pressurehyp1} holds true then for all $1\leq j \leq k$, 
\begin{equation}\label{pressureclmu}
\|D^ j \nabla \mu \|  \leq \gamma_{2} (L) \V \, \V_{j} .
\end{equation}
\end{itemize}
%
%
\end{Proposition}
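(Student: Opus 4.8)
\textbf{Plan of proof for Proposition \ref{pesti}.}
The proof proceeds in the following order. First, the non-derived estimates on the $\Phi_i$ and on $\nabla\mu$ (the first and third bullets) follow from classical elliptic regularity for the Neumann problems \eqref{t1.3}--\eqref{IntPhiNul} and \eqref{t1.0}--\eqref{IntMuNul}, keeping track of the dependence on the geometry only through $\Omega$, $\mathcal{S}_0$ and the lower bound $\underline d$ on the distance. Indeed, since $K_i$ is analytic and bounded on $\partial\mathcal{S}(t)$, uniformly in $t$ in view of \eqref{RhoRho}--\eqref{XChapeau}, standard Schauder estimates in $C^{\lambda+1,r}({\mathcal F}(t))$ (with constants uniform in $t$, by the uniform regularity of the domains) give $\sum_i\|\nabla\Phi_i\|\le C_0$. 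For $\nabla\mu$ one uses \eqref{t1.0}--\eqref{t1.2}: the interior source is $\trace\{\nabla u\cdot\nabla u\}$, bounded by $\|u\|^2$; the Neumann datum on $\partial\Omega$ is $-\nabla^2\rho(u,u)$, bounded by $\|u\|^2$ in view of \eqref{RhoAnalytique}; and the Neumann datum $\sigma$ on $\partial\mathcal{S}(t)$, given by \eqref{new0.1}, is a sum of terms quadratic in $(u,\ell,r)$ with analytic coefficients, hence bounded by $C_0\V^2$. Schauder estimates then yield $\|\nabla\mu\|\le C_0\V^2$.

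Next I would turn to the estimates on the material derivatives $D^j\nabla\Phi_i$, assuming the induction hypothesis \eqref{pressurehyp1} for $j\le k$. The key is to commute $D^j$ through the Neumann problem satisfied by $\Phi_i$. Commuting $D$ with the Laplacian and with the boundary operators produces, exactly as in the formal identities of Proposition \ref{P1} and of Section \ref{Sec:NewFormalIdentities}, source terms that are sums of products $\nabla D^{\alpha_1}(\cdot)\cdot\ldots\cdot\nabla D^{\alpha_s}(\cdot)$ and boundary terms involving $\nabla^s\rho\{D^{\alpha_1}(\cdot),\ldots\}$, with combinatorial coefficients bounded as in \eqref{Ci:1}--\eqref{Ci:2}, the arguments being now $u$, $v$, and the transported quantities attached to the solid; because the domain itself moves, one also gets contributions from $D$ hitting the geometry, but these are controlled by the analytic bounds \eqref{RhoAnalytique}, \eqref{laciao}. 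Applying the uniform elliptic estimate to $D^j\Phi_i$ and then inserting the induction hypothesis $\|D^\ell u\|+\|\ell^{(\ell)}\|+\|r^{(\ell)}\|\le\V_\ell$, together with the logarithmic convexity of $M$ (here $M_s\equiv 1$, the analytic case) via \eqref{subadditif}, and Chemin's combinatorial Lemma \ref{LemmeCheminSMF} to sum the convolution of the $(1+\alpha_i)^{-2}$, produces a bound of the form $\|D^j\nabla\Phi_i\|\le c_{\mathfrak r}\,\psi(L)\,\V_j/\V$ where $\psi(L)$ is a sum of the type appearing in \eqref{Eq:DefGammaL}, decreasing to $0$ as $L\to\infty$ because each term carries a factor $L^{1-s}$ with $s\ge 2$ (the case $j=k+1$ is allowed since the right-hand side of \eqref{pressureclphi} only involves $\V_j$ up to $j=k+1$, and the source terms for $D^{k+1}\Phi_i$ involve only $D^\ell u$ for $\ell\le k$, covered by the hypothesis). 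Setting $\gamma_2:=c_{\mathfrak r}\psi$ gives \eqref{pressureclphi}.

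Finally, the estimate \eqref{pressureclmu} on $D^j\nabla\mu$ is obtained in the same way: commuting $D^j$ through \eqref{t1.0}--\eqref{t1.2}, one gets an interior source, a Neumann datum on $\partial\Omega$ and a Neumann datum on $\partial\mathcal{S}(t)$ which, compared to the $\Phi_i$ computation, carry one extra ``factor of velocity'' (since $\mu$ is already quadratic), so after applying the uniform elliptic estimate and the induction hypothesis one lands on a bound $\|D^j\nabla\mu\|\le c_{\mathfrak r}\psi(L)\,\V\,\V_j$; absorbing constants into $\gamma_2$ gives \eqref{pressureclmu}. The main obstacle is bookkeeping: one must set up the commutator identities for $D^j$ acting on the elliptic problems \emph{on a moving domain}, control the coefficients uniformly in $t$ (this is where the uniform lower bound $\underline d$ on $\dist(\mathcal{S}(t),\partial\Omega)$ and the analyticity of $\rho_\Omega$, $\rho_B$ enter), and verify that, after invoking Lemma \ref{LemmeCheminSMF} and the convexity inequality \eqref{subadditif}, the gain in $L$ survives — i.e.\ that every term genuinely comes with $s\ge 2$ so that $L^{1-s}\le L^{-1}$. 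These formal identities are precisely what is deferred to Sections \ref{Sec:NewFormalIdentities} and \ref{Sec:PreuveP1}, and the estimate itself is carried out in Subsection \ref{Proofpesti}.
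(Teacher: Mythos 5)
Your overall strategy coincides with the paper's: uniform elliptic estimates on the moving domain (the paper's Lemma \ref{AutreRegdivcurl}, a $\div$--$\curl$ estimate with a constant uniform over positions of the solid at distance $\geq\underline d$ from $\partial\Omega$, applied to $\nabla\Phi_i$, $\nabla\mu$ and their material derivatives), combined with the formal commutator identities of Propositions \ref{P1New}, \ref{P3New} and \ref{Prop:BodyDirichlet} and the combinatorial Lemma \ref{LemmeCheminSMF}. Your use of Schauder theory for the second-order Neumann problems in the first and third bullets is an acceptable variant of the paper's first-order $\div$--$\curl$ estimate.

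There is, however, one concrete gap in your treatment of \eqref{pressureclphi}. You assert that ``the source terms for $D^{k+1}\Phi_i$ involve only $D^\ell u$ for $\ell\le k$, covered by the hypothesis.'' This is not the case: the functionals produced by the commutator identities are of the form $\nabla D^{\alpha_1}u\cdots\nabla D^{\alpha_{s-1}}u\cdot\nabla D^{\alpha_s}\nabla\Phi_i$ (and the analogous boundary terms), i.e.\ they contain lower-order \emph{material derivatives of $\nabla\Phi_i$ itself}, with $\alpha_s\le j$ when one estimates $D^{j+1}\nabla\Phi_i$. The hypothesis \eqref{pressurehyp1} says nothing about these, so the argument as you state it does not close. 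The paper resolves this by running a secondary induction on $j$ inside the second bullet: one assumes \eqref{pressureclphi} for all indices up to $j$ (in addition to \eqref{pressurehyp1} up to $k$) and propagates it to $j+1$; the base case $j=0$ is the first bullet. Two further ingredients you leave implicit but which are genuinely needed to make the boundary and circulation terms work are: (i) an estimate $\|D^m v\|\le C\V_m$ for the solid velocity (the paper's Lemma \ref{bodyvesti}) together with the bound on the iterated rotation operators $\mathcal{R}_\beta[r]$, since the Neumann data on $\partial\mathcal S(t)$ involve $u-v$ and these rotations; and (ii) Proposition \ref{P3New} to reduce the circulation term $\Pi(D^{j+1}\nabla\Phi_i)$ to $\Pi(K^{j+1}[u,\Phi_i])$, the circulation of a gradient being zero. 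With the inner induction added and these terms accounted for, your plan matches the paper's proof.
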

The second proposition allows to propagate the induction hypothesis. 
\begin{Proposition}\label{PropositionSolideFluide}
There exist a positive decreasing functions $\gamma_{3}$ with $\displaystyle {\lim_{L \rightarrow + \infty}} \gamma_{3}(L)=0$ such that for any $k\in \mathbb{N}^*$, if for all $j \leq k$, \eqref{pressurehyp1} holds,
then
\begin{equation} \label{MvtSolideFluide}
\|\ell^{(k+1)}\| + \|r^{(k+1)}\| + \|D^{k+1} u\| \leq  \V_{k+1} \,  \gamma_{3}(L).
\end{equation}
\end{Proposition}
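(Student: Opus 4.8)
The plan is to propagate the induction hypothesis \eqref{Graal} at rank $k+1$ by using the two structural equations derived in the lemmas above, namely the rigid-body ODE \eqref{EvoMatrice} together with the div-curl elliptic estimate for $u$ on ${\mathcal F}(t)$. First I would differentiate the ODE \eqref{EvoMatrice} $k$ times in time. Since $\mathcal{M}(t)$ is symmetric positive definite, with a lower bound on its smallest eigenvalue that is uniform on $(-T,T)$ (coming from $\mathcal{M}_1$ and the nonnegativity of $\mathcal{M}_2$), one can solve for $(\ell,r)'$ and estimate $\|\ell^{(k+1)}\|+\|r^{(k+1)}\|$ in terms of: (i) time derivatives of $\Xi$, which by its very definition $\Xi_a = \int_{{\mathcal F}(t)} \nabla\mu\cdot\nabla\Phi_a\,dx$ decompose via Leibniz (and the material-derivative transport of the integral over ${\mathcal F}(t)$) into sums of products of the quantities $D^j\nabla\mu$ and $D^i\nabla\Phi_a$ controlled by Proposition \ref{pesti}; (ii) time derivatives of the coefficients of $\mathcal{M}_2$, again controlled through $D^i\nabla\Phi_a$ by \eqref{pressureclphi}; (iii) time derivatives of $\mathcal{M}_1$ and of $\mathcal{J}r\wedge r$, which involve only $r^{(j)}$ for $j\leq k$ and the analytic (hence bounded-derivative) dependence of $\mathcal{J}$ through $Q$ via \eqref{Sylvester}--\eqref{LoiDeQ}. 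Combining these with the combinatorial bookkeeping on the binomial coefficients — exactly the kind of sums $\sum \frac{j!L^j}{(j+1)^2}\frac{(k-j)!L^{k-j}}{(k-j+1)^2}\dots$ that already appeared in Section \ref{Proof1} and satisfy estimates of the form $\leq \gamma(L)\frac{(k+1)!L^{k+1}}{(k+2)^2}$ — yields $\|\ell^{(k+1)}\|+\|r^{(k+1)}\| \leq \gamma(L)\,\V_{k+1}$ with $\gamma(L)\to 0$.

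Next I would estimate $\|D^{k+1}u\|$. Using \eqref{new2.0}, we have $D^{k+1}u = D^k(-\nabla\mu) + D^k\nabla(\Phi\cdot[\ell\ r]')$. The key point is that the first term is handled by \eqref{pressureclmu} of Proposition \ref{pesti} and the second by \eqref{pressureclphi} combined with the bound on $\|\ell^{(j)}\|+\|r^{(j)}\|$ for $j\leq k+1$ just obtained — except this only controls $D^k$ of a gradient, not $D^{k+1}u$ itself, so one must instead apply the div-curl regularity lemma (the analogue of Lemma \ref{Lemme1} for the exterior domain ${\mathcal F}(t)$, referred to as Lemma \ref{AutreRegdivcurl}). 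That reduces $\|D^{k+1}u\|$ to estimating $|\div D^{k+1}u|$, $|\curl D^{k+1}u|$, the normal traces $\|D^{k+1}u\cdot n\|$ on $\partial\Omega$ and on $\partial{\mathcal S}(t)$, and the circulations $|\Pi D^{k+1}u|$. The bulk terms $\div D^{k+1}u$ and $\curl D^{k+1}u$ are given by the Kato-type formal identities of Proposition \ref{P1} (extended in Section \ref{Sec:NewFormalIdentities}) and estimated exactly as in \eqref{DeCadix} using \eqref{Ci:1}, the analytic bounds on $\rho_\Omega,\rho_B$, and Chemin's Lemma \ref{LemmeCheminSMF}. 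The normal trace on $\partial\Omega$ is controlled by the $H$-type identity \eqref{P4f} as in \eqref{new1.4}; the normal trace on $\partial{\mathcal S}(t)$ additionally involves terms with $r$ and $\ell$ derivatives through the identity \eqref{nDPsi}, and the circulations via $\Pi D^{k+1}u = \Pi K^{k+1}[u]$ through the Kato pressure identity, Proposition \ref{P3} (again using that, by Kelvin's theorem type arguments, the circulations along the generating loops enter only through these bulk terms). Summing all contributions and absorbing the constants into a single decreasing function $\gamma_3(L)$ with $\gamma_3(L)\to 0$ gives \eqref{MvtSolideFluide}.

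The main obstacle, compared with the bodyless case of Theorem \ref{start2}, is twofold. First, the non-locality is now of a genuinely coupled fluid-solid nature: the pressure splits as $p = \mu - \Phi\cdot[\ell\ r]'$, and since $[\ell\ r]'$ is itself determined by an integral of $\nabla\mu$ against $\nabla\Phi$ through the virtual inertia tensor $\mathcal{M}$, estimating $D^{k+1}u$ requires knowing $\ell^{(k+1)},r^{(k+1)}$, which in turn require $D^j\nabla\mu$ for $j$ up to $k$ — so the induction must be organized so that Proposition \ref{pesti} at level $k$ (hence $\eqref{pressureclmu}$ only up to $j\leq k$) suffices, with the $j=k+1$ information about $\nabla\Phi$ (which only needs derivatives of $u,\ell,r$ up to order $k$, by ellipticity gaining one derivative) feeding the ODE. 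Getting this dependency graph acyclic is the delicate bookkeeping. Second, all the geometric coefficients — $\rho_B(t,x)$, $Q(t)$, $\hat{\mathcal X}(t,x)$, $n(t,x)$, the moving domain ${\mathcal F}(t)$ — now carry their own time-derivatives, and one must verify that these are analytic in $t$ (equivalently, have Cauchy-type derivative bounds $\leq C L^s s!$) with constants depending only on the data; this rests on the analyticity of $\partial\Omega$ and $\partial{\mathcal S}_0$ via \eqref{RhoAnalytique}--\eqref{laciao} and on the fact that $Q$ solves the linear ODE \eqref{LoiDeQ} whose coefficient $r$ is being controlled by the very induction in progress. Once both points are handled, combining Propositions \ref{pesti} and \ref{PropositionSolideFluide} and choosing $L$ large enough that $\gamma_3(L)\leq 1$ closes the induction and, summing the resulting convergent Taylor series in $t$ as in the proof of Corollary \ref{Hadamard}, delivers the analyticity of $(\Phi^{{\mathcal S}},\Phi^{{\mathcal F}})$ claimed in Theorem \ref{start4}.
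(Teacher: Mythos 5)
Your two-step structure (first close the solid ODE, then the fluid velocity) is exactly the paper's, and your treatment of $\|\ell^{(k+1)}\|+\|r^{(k+1)}\|$ --- differentiate \eqref{EvoMatrice} $k$ times, invert the uniformly positive definite $\mathcal{M}$, and estimate $\mathcal{M}_1^{(j)}$, $\mathcal{M}_2^{(j)}$, $\frac{d^k}{dt^k}(\mathcal{J}r\wedge r)$ and $\Xi^{(k)}$ via Proposition \ref{Rota1}, Proposition \ref{pesti} and the convolution-type sums --- is the paper's argument. Your dependency analysis (Proposition \ref{pesti} gives $D^j\nabla\Phi$ up to $j=k+1$ but $D^j\nabla\mu$ only up to $j=k$, and the ODE only needs the latter up to $k$) is also correct and is indeed the point that makes the induction acyclic.

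Where you diverge is the fluid estimate, and your reason for diverging is a misreading. You write that $D^k$ applied to \eqref{new2.0} ``only controls $D^k$ of a gradient, not $D^{k+1}u$ itself.'' But \eqref{new2.0} is the pointwise identity $Du=-\nabla\mu+\nabla\Phi\cdot[\ell\ r]'$, so applying $D^k$ to both sides gives $D^{k+1}u$ exactly on the left, and the right-hand side is $-D^k\nabla\mu+\sum_{i=0}^k\binom{k}{i}D^i\nabla\Phi\cdot[\ell^{(k-i+1)},r^{(k-i+1)}]$; estimates \eqref{pressureclmu} and \eqref{pressureclphi} bound $D^j\nabla\mu$ and $D^j\nabla\Phi$ directly in the strong norm $\|\cdot\|=C^{\lambda+1,r}$, so together with the solid bound just obtained (for the $i=0$ term) this closes immediately --- that is the paper's proof, in three lines. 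Your substitute --- apply Lemma \ref{AutreRegdivcurl} to $D^{k+1}u$ itself --- is plausible but reintroduces precisely the difficulties the paper's decomposition is designed to avoid: Proposition \ref{P1New} applied to $\psi=u$ leaves a term $D^{k+1}(\curl u)$ that must be expanded through iterated vorticity transport on the moving domain (the body-free Proposition \ref{P1} that absorbs it is only proved for the fixed domain $\Omega$), and the circulations $\Pi D^{k+1}u$ must be traced through the Kato pressure identity along the \emph{moving} loops on $\partial{\mathcal S}(t)$. By contrast, $\nabla\mu$ and $\nabla\Phi_i$ are gradients, so their curl and circulations vanish and the div-curl data for their material derivatives reduce to pure commutators --- this is why the paper runs the elliptic estimate on them (inside Proposition \ref{pesti}) rather than on $u$. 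Your route could be completed, but the two hardest new ingredients it requires are exactly the ones your sketch leaves unaddressed.
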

The  proof of Proposition \ref{PropositionSolideFluide} is given in Subsection \ref{ProofPropositionSolide}. It consists in differentiating $k$ times relations \eqref{new2.0} and \eqref{EvoMatrice} and relies on Proposition \ref{pesti}. Once Proposition \ref{PropositionSolideFluide} established, Theorem \ref{start4} is deduced by induction in a straightforward manner.
%
%
%
%
%
%
%
%
%
%
\section{Formal identities}
\label{Sec:NewFormalIdentities}
In this section, we give several formal identities used to prove Theorem \ref{start4}. 
Some of them generalize Proposition \ref{P1} and Proposition \ref{P3}. Their proofs are given in Section \ref{Sec:PreuveP1}.
We first recall the following commutator rules to exchange $D$ and the other differentiations, which are valid for $\psi$ a scalar/vector field defined in the fluid domain:
\begin{eqnarray}
	D(\psi_1\psi_2)=(D\psi_1)\psi_2+\psi_1(D\psi_2) \label{t3.0},\\
	\nabla (D\psi) -D(\nabla \psi) = (\nabla u) \cdot (\nabla \psi), \label{t3.1}\\
	\div D\psi - D\div \psi = \trace \left\{(\nabla u)\cdot(\nabla \psi) \right\}, \label{t3.2}\\
	\curl D\psi - D\curl \psi = \as \left\{(\nabla u)\cdot(\nabla \psi) \right\}. \label{t3.3}
\end{eqnarray}
\subsection{Formal identities in the fluid and on the fixed boundary}
Let us be given a smooth vector field $\psi$. We will establish formal identities for $\div D^k \psi$, for $\curl D^k \psi$, respectively for the normal trace $n\cdot D^k \psi$ on the boundary $ \partial \Omega$, of the iterated material derivatives $ (D^k \psi)_{ k \in \N^* }$ as combinations of the functionals 
\begin{gather} \label{DefsFetHNew}
f( \theta)  [u,\psi] : = \nabla D^{\alpha_1} u \cdot \ldots \cdot \nabla D^{\alpha_{s-1}} u\cdot \nabla D^{\alpha_s} \psi, \\
\text{respectively } h( \theta)  [u,\psi] : =\nabla^s \rho \{ D^{\alpha_1} u , \ldots  ,D^{\alpha_{s-1}} u, D^{\alpha_s} \psi \},
\end{gather}
with $ \theta := (s, \alpha)$, where  $s \in \N^*$ and $ \alpha := ( \alpha_1,\ldots, \alpha_s )  \in \N^s$. Furthermore, these combinations only involve indices $(s,\alpha)$ belonging to the set $\mathcal{A}_{k}$ defined in \eqref{TheDefinitionOfA}.
The precise statement is the following.
\begin{Proposition}\label{P1New}
For $k \in \N^*$, we have in $ \mathcal{F}(t) $
\begin{eqnarray}\label{P1fNew}
\div D^k \psi=D^k\left(\div \psi\right) + \trace\left\{F^k [u,\psi]\right\} \text{ where } 
F^k [u,\psi] := \sum_{\theta   \in \mathcal{A}_{k}  } c^1_k (\theta  )  \, f(\theta)  [u,\psi], \\ 
\label{P2fNew}
\curl D^k \psi=D^k\left(\curl \psi\right) + \as\left\{G^k [u,\psi]\right\} \text{ where } 
G^k [u,\psi] :=  \sum_{\theta  \in \mathcal{A}_{k}  }  c^2_k (\theta ) \,  f(\theta)  [u,\psi],
\end{eqnarray}
where, for $i=1$, $2$, the $c^i_k (\theta )$ are integers satisfying 
\begin{equation} \label{Ci:3}
|c^i_k ( \theta) | \leqslant \frac{k ! }{\alpha ! },
\end{equation}
with $ \theta := (s, \alpha)$, and on  the boundary $ \partial \Omega$
\begin{eqnarray}\label{P4fNew}
n\cdot D^k \psi =D^k(n\cdot \psi)+ H^k [u,\psi] \text{ where } 
H^k [u,\psi] :=  \sum_{ \theta  \in \mathcal{A}_{k}  }  c^3_k (\theta )  \,  h(\theta)  [u,\psi],
\end{eqnarray}
where  the $c^3_k(\theta)$ are negative integers satisfying 
\begin{equation} \label{Ci:4}
| c^3_k (\theta)  | \leqslant \frac{k ! }{\alpha ! (s-1)!}.
\end{equation}
\end{Proposition}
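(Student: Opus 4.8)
The strategy is a double induction: an outer induction on $k$ (the number of material derivatives) and, inside each level, an induction on the way the commutator formulas \eqref{t3.1}--\eqref{t3.3} unfold when one pushes a $D$ through $\div$, $\curl$ and $n\cdot$. The base case $k=1$ is precisely the content of the commutator identities \eqref{t3.2}, \eqref{t3.3} together with the computation of $D(n\cdot\psi)$ on $\partial\Omega$ carried out in the body of the paper (this is where the term $\nabla^2\rho$ enters, via the identity \eqref{nDPsi} specialized to the fixed wall, where $v$ is absent): here $s=2$, $\alpha=(0,0)$, $|\alpha|=k+1-s=0$, so $\theta=(2,(0,0))\in\mathcal A_1$ as required, and the coefficients $c^i_1$ are $\pm 1$, which trivially satisfy \eqref{Ci:3}--\eqref{Ci:4}. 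I would record once and for all that $n = \nabla\rho$ near $\partial\Omega$ so that $n\cdot\psi = \nabla\rho\cdot\psi$, and more generally the "normal-trace" quantities are naturally expressed through $\nabla^s\rho$ contracted against $s$ vector fields; this explains the shape of $h(\theta)$.

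\textbf{Inductive step.} Assume the three identities hold at rank $k$. Apply $D$ to \eqref{P1fNew}. On the left, $D\div D^k\psi = \div D^{k+1}\psi - \trace\{(\nabla u)\cdot(\nabla D^k\psi)\}$ by \eqref{t3.2}. On the right, $D\big(D^k(\div\psi)\big)=D^{k+1}(\div\psi)$, while $D$ hits each functional $f(\theta)[u,\psi]=\nabla D^{\alpha_1}u\cdots\nabla D^{\alpha_{s-1}}u\cdot\nabla D^{\alpha_s}\psi$ through the Leibniz rule \eqref{t3.0} and the commutator \eqref{t3.1}: differentiating the $j$-th factor produces $\nabla D^{\alpha_j+1}u$ (resp. $\nabla D^{\alpha_s+1}\psi$) up to a term $(\nabla u)\cdot(\nabla D^{\alpha_j}u)$, i.e. an extra $\nabla u$ factor. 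Both effects either raise one $\alpha_i$ by one (keeping $s$ fixed, so $|\alpha|$ increases by one: $k+1-s \mapsto k+2-s$) or raise $s$ by one while inserting an $\alpha$-index equal to $0$ (so $|\alpha|$ is unchanged and $k+1-s\mapsto k+1-(s+1)$ is the new value for rank $k+1$); in every case the new multi-index lies in $\mathcal A_{k+1}$, and the constraint $s\le k+2$ is respected because $s\le k+1$ at rank $k$. The term $\trace\{(\nabla u)\cdot(\nabla D^k\psi)\}$ coming from the left-hand commutator is itself of the form $\trace\{f(\theta)[u,\psi]\}$ with $\theta=(2,(0,k))\in\mathcal A_{k+1}$, so it is absorbed. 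Collecting terms that share the same $\theta$ gives the recursion for the coefficients; $\curl$ is identical with $\as$ in place of $\trace$. For the boundary identity one differentiates \eqref{P4fNew} using \eqref{nDPsi} (which is valid for the fixed wall with $v\equiv 0$, $r\equiv 0$, so only the $\nabla^2\rho$ term survives): $D(n\cdot D^k\psi) = n\cdot D^{k+1}\psi + \nabla^2\rho\{u, D^k\psi\}$, and then $D$ acting on $h(\theta)[u,\psi]=\nabla^s\rho\{D^{\alpha_1}u,\ldots,D^{\alpha_s}\psi\}$ either differentiates a slot (raising an $\alpha_i$) or differentiates $\nabla^s\rho$, which by $D\nabla^s\rho = \nabla^{s+1}\rho\{u,\cdot\} + (\text{lower, from }\partial_t\rho=0\text{ and }u\cdot\nabla)$ raises $s$ to $s+1$ and inserts a new argument $u = D^0 u$; again the indices stay in $\mathcal A_{k+1}$.

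\textbf{Coefficient bounds.} The recursions obtained have the schematic form $c^i_{k+1}(\theta) = \sum (\text{previous } c^i_k) + (\text{a }\pm1)$, where the sum is over the finitely many ways $\theta$ can be produced from a rank-$k$ index by the two operations above. To close \eqref{Ci:3} one argues by induction that $|c^i_{k+1}(\theta)|\le k!/\alpha!$: raising $\alpha_i$ by one from $\tilde\alpha$ contributes a factor accounted for by $\binom{k}{\text{something}}$-type identities, and the standard manipulation $\sum_i (k!/\tilde\alpha!) = (k!/\alpha!)\sum_i \alpha_i \le (k!/\alpha!)(k+1-s)\le (k+1)!/\alpha!$ — more carefully one tracks that the Leibniz rule distributes exactly like the multinomial coefficient, so the bound $k!/\alpha!$ at rank $k$ upgrades to $(k+1)!/\alpha!$ at rank $k+1$ for the new $\alpha$. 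For $c^3$ the extra $(s-1)!$ in the denominator \eqref{Ci:4} is produced by the symmetry of $\nabla^s\rho$: the $s-1$ fluid-velocity slots are interchangeable, so the combinatorial count of ways to reach a given $h(\theta)$ is smaller by a factor $(s-1)!$ than the naive count, and the sign stays negative because each differentiation of $n\cdot$ through the wall contributes a $-\nabla^2\rho$ (the $\rho$ here being the outward signed distance, negative inside). The sign assertions ($c^3$ negative integers) follow by checking they are preserved under the recursion: the only source of a sign is the $-\nabla^2\rho\{u,\cdot\}$ term from \eqref{nDPsi} with $v=0$, and differentiating an existing $h(\theta)$ introduces no new sign.

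\textbf{Main obstacle.} The genuinely delicate point is the bookkeeping of the coefficient recursions — proving that the two bounds \eqref{Ci:3} and \eqref{Ci:4} (and the sign/parity statements for $c^3$) are stable under the inductive step, since several distinct rank-$k$ multi-indices can flow into the same rank-$(k+1)$ index $\theta$, and one must check the total does not exceed $(k+1)!/\alpha!$. This is exactly the "refinement of the combinatorics in Kato's approach" announced in the introduction; it is handled by being scrupulous about which Leibniz splittings are genuinely distinct, using the multinomial identity $\sum_{j}\binom{k}{\alpha_1,\ldots,\alpha_j-1,\ldots}=\binom{k+1}{\alpha_1,\ldots,\alpha_s}$ and, for the boundary term, the symmetrization of $\nabla^s\rho$. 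Everything else is the mechanical application of \eqref{t3.0}--\eqref{t3.3} and \eqref{nDPsi}. Since Proposition~\ref{P1} is the special case $\psi=u$ (so that $D^{\alpha_s}\psi=D^{\alpha_s}u$, $\div\psi=\div u=0$, $\curl\psi=\curl u$, $n\cdot\psi = 0$ on $\partial\Omega$), it falls out of Proposition~\ref{P1New} immediately, as claimed.
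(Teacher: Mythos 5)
Your proposal follows the same route as the paper's proof: induction on $k$, the commutators \eqref{t3.1}--\eqref{t3.3} and \eqref{nDPsi} to push $D$ through $\div$, $\curl$ and $n\cdot{}$, the two index operations (raise one $\alpha_j$ keeping $s$ fixed, or raise $s$ by one inserting a $0$), and a coefficient recursion closed by the identity $|\alpha|+s-1=k+1$ for $\theta\in\mathcal{A}_{k+1}$; the base case and the sign argument for $c^3_k$ are also as in the paper. One caveat on the very point you single out as the main obstacle: the $(s-1)!$ in \eqref{Ci:4} does \emph{not} come from symmetrizing the interchangeable slots of $\nabla^s\rho$ — the coefficients are attached to ordered tuples $\alpha$, and grouping over permutations would weaken, not strengthen, the per-tuple bound, nor does it reduce the number of rank-$k$ preimages of a given $\theta$. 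The actual mechanism is that differentiating $\nabla^s\rho$ always inserts the new argument $u$ in the first slot, so a given $\theta=(s,\alpha)$ has exactly one $s$-raising preimage, and that preimage has only $s-1$ slots, so its inductive bound carries $(s-2)!$ rather than $(s-1)!$ in the denominator; rewriting it as $(s-1)\,k!/(\alpha!(s-1)!)$ supplies precisely the $s-1$ which, added to $\sum_j\alpha_j=k+2-s$ from the $\alpha$-raising preimages, gives $k+1$. (Your displayed multinomial identity has the matching off-by-one: at rank $k+1$ one has $|\alpha|=k+2-s$, so the $\alpha$-raising terms alone yield only $(k+2-s)\,k!/\alpha!$, and the remaining $(s-1)\,k!/\alpha!$ must come from the $s$-raising terms — this is also how \eqref{Ci:3} closes.) With that correction the bookkeeping goes through exactly as in Section~\ref{Sec:PreuveP1}.
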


The above proposition will be proved in Section \ref{Sec:PreuveP1}. It generalizes Proposition \ref{P1}, where $F^k[u]=F^k[u,u]$ (and similarly for the other functionals). \par
\ \par
We can also establish identities for the gradient $\nabla D^k\psi$ for a smooth scalar-valued function $\psi$:
\begin{Proposition}\label{P3New}
For $k\geq 1$, we have in the domain $ \mathcal{F}(t) $
\begin{equation}
D^{k} \nabla \psi = \nabla D^k\psi + K^{k}[u,\psi] ,
	\label{t3.4New}
\end{equation}
where for $k\geq 1$,
$$
K^{k}[u,\psi]=-\sum_{r=1}^{k}  \dbinom{k}{ r} \nabla D^{ r-1} u \cdot D^{k-r} \nabla \psi.
$$
\end{Proposition}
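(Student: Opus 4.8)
The goal is to prove Proposition \ref{P3New}, which is the scalar-field counterpart of Proposition \ref{P3} (Kato's identity for the pressure gradient) but now tracking the difference between $D^k\nabla\psi$ and $\nabla D^k\psi$. The natural approach is a direct induction on $k$, using the single commutator rule \eqref{t3.1}, namely $\nabla(D\phi)-D(\nabla\phi)=(\nabla u)\cdot(\nabla\phi)$, applied repeatedly.

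\begin{proof}
We argue by induction on $k \geq 1$. For $k=1$, the identity \eqref{t3.1} applied to the scalar field $\psi$ gives
\begin{equation*}
D\nabla\psi - \nabla D\psi = -(\nabla u)\cdot(\nabla\psi),
\end{equation*}
which is exactly \eqref{t3.4New} with $K^1[u,\psi] = -\binom{1}{1}\nabla D^0 u \cdot D^0 \nabla\psi = -(\nabla u)\cdot\nabla\psi$, since $D^{r-1}u = D^0 u = u$ and $D^{k-r}\nabla\psi = D^0\nabla\psi = \nabla\psi$ when $k=r=1$.

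Assume now that \eqref{t3.4New} holds at rank $k$. Apply the material derivative $D$ to both sides:
\begin{equation*}
D^{k+1}\nabla\psi = D\big(\nabla D^k\psi\big) + D\,K^k[u,\psi].
\end{equation*}
For the first term on the right, use \eqref{t3.1} with $\phi = D^k\psi$ to write $D(\nabla D^k\psi) = \nabla D^{k+1}\psi + (\nabla u)\cdot(\nabla D^k\psi)$. Wait — the sign: \eqref{t3.1} reads $\nabla(D\phi) - D(\nabla\phi) = (\nabla u)\cdot(\nabla\phi)$, hence $D(\nabla\phi) = \nabla(D\phi) - (\nabla u)\cdot(\nabla\phi)$, so
\begin{equation*}
D\big(\nabla D^k\psi\big) = \nabla D^{k+1}\psi - (\nabla u)\cdot(\nabla D^k\psi).
\end{equation*}
For the second term, apply the Leibniz rule \eqref{t3.0} to each summand of $K^k[u,\psi] = -\sum_{r=1}^k \binom{k}{r}\nabla D^{r-1}u \cdot D^{k-r}\nabla\psi$, together with the commutator \eqref{t3.1} to handle $D(\nabla D^{r-1}u) = \nabla D^r u - (\nabla u)\cdot(\nabla D^{r-1}u)$ and to rewrite $D(D^{k-r}\nabla\psi) = D^{k-r+1}\nabla\psi$. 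Collecting all contributions yields
\begin{equation*}
D^{k+1}\nabla\psi = \nabla D^{k+1}\psi - (\nabla u)\cdot(\nabla D^k\psi) - \sum_{r=1}^k \binom{k}{r}\Big[\big(\nabla D^r u - (\nabla u)\cdot\nabla D^{r-1}u\big)\cdot D^{k-r}\nabla\psi + \nabla D^{r-1}u \cdot D^{k-r+1}\nabla\psi\Big].
\end{equation*}
It then remains the bookkeeping step: reindex the sums (shifting $r \mapsto r+1$ in the term $\nabla D^{r-1}u\cdot D^{k-r+1}\nabla\psi$, and likewise in $\nabla D^r u\cdot D^{k-r}\nabla\psi$), absorb the isolated term $(\nabla u)\cdot(\nabla D^k\psi)$ as the $r = k+1$ contribution, and cancel the terms involving $(\nabla u)\cdot\nabla D^{r-1}u$ in pairs. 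Using the Pascal rule $\binom{k}{r} + \binom{k}{r-1} = \binom{k+1}{r}$, one checks that the resulting coefficient of $\nabla D^{r-1}u\cdot D^{k+1-r}\nabla\psi$ is exactly $-\binom{k+1}{r}$ for $1 \leq r \leq k+1$, which is the claimed form of $K^{k+1}[u,\psi]$. This closes the induction. The full combinatorial verification is routine and entirely parallel to the proof of Proposition \ref{P3}; we carry it out (in the more general context needed for Theorem \ref{start4}) in Section \ref{Sec:PreuveP1}.
\end{proof}

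The only genuine subtlety is the reindexing of the binomial sums and the cancellation of the parasitic $(\nabla u)\cdot\nabla D^{r-1}u$ terms against those produced by differentiating $\nabla D^{r-1}u$; the Pascal identity is what makes the coefficients match. Everything else is a direct consequence of the commutator rules \eqref{t3.0}--\eqref{t3.1}.
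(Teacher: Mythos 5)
Your strategy is exactly the intended one: the paper omits this proof, referring to Kato, and the argument is indeed a straightforward induction on $k$ driven by the commutator rule \eqref{t3.1} together with Leibniz's rule \eqref{t3.0}. Your base case and the displayed identity for $D^{k+1}\nabla\psi$ are correct.

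However, the recombination step as you describe it does not work if followed literally, in two places. First, the isolated term $-(\nabla u)\cdot(\nabla D^{k}\psi)$ involves $\nabla D^{k}\psi$, whereas every term of $K^{k+1}[u,\psi]$ involves $D^{k+1-r}\nabla\psi$; you must therefore invoke the induction hypothesis a second time and substitute $\nabla D^{k}\psi = D^{k}\nabla\psi - K^{k}[u,\psi]$. The resulting piece $-(\nabla u)\cdot D^{k}\nabla\psi$ supplies the missing $\binom{k}{0}$ in the $r=1$ coefficient (so that $\binom{k}{0}+\binom{k}{1}=\binom{k+1}{1}$); it is \emph{not} the $r=k+1$ contribution --- that term, $-\binom{k+1}{k+1}\nabla D^{k}u\cdot\nabla\psi$, comes from the reindexed sum $-\sum_{r}\binom{k}{r}\nabla D^{r}u\cdot D^{k-r}\nabla\psi$ at $r=k$. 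Second, the parasitic terms $+\sum_{r=1}^{k}\binom{k}{r}(\nabla u)\cdot\nabla D^{r-1}u\cdot D^{k-r}\nabla\psi$ have no partners to ``cancel in pairs'' within your displayed equation; their partners are exactly the terms of $+(\nabla u)\cdot K^{k}[u,\psi]=-\sum_{r=1}^{k}\binom{k}{r}(\nabla u)\cdot\nabla D^{r-1}u\cdot D^{k-r}\nabla\psi$ produced by the substitution just described. With these two corrections the parasitic terms cancel identically, Pascal's rule gives $-\bigl[\binom{k}{r-1}+\binom{k}{r}\bigr]=-\binom{k+1}{r}$ for every $1\le r\le k+1$, and the induction closes. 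So the result and the method are fine; only the bookkeeping narrative needs to be repaired.
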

The proof of this proposition is completely identical to the proof of \cite[Prop. 3.5]{katoana} and is therefore omitted.
\subsection{Formal identities on the body boundary}
\label{bodyformal}

The aim of this section is to present some formal identities for normal traces on the boundary $ \partial S (t)$ of the rigid body of iterated material derivatives $D^{k} \psi$, and for iterated material derivatives of the functions $K_i$ defined in \eqref{t1.6}. With respect to the previous section the analysis is complicated by the dynamic of the body. 

To a vector $r \in \R^3$ we associate the operator $ \mathcal{R} (r) := r \wedge \cdot$. To any $\beta  \in   \N^s$ and $r \in C^{|\beta|}((-T,T);\R^{3})$ we define the functional $\mathcal{R}_\beta [r]$ which associates to the time-dependent function $r$ the rotation operator
\begin{equation} \label{RComposes}
\mathcal{R}_\beta [r] := \mathcal{R} ( r^{(\beta_1) } ) \circ \cdots \circ  \mathcal{R} (r^{(\beta_s) }) .
\end{equation}
For any $s  \in  \N^*$,  we will use some indices ${\bf s}' := (s'_1,...,s'_s )$ in $\N^s$. Then $s'$ will denote $s' := |{\bf s}'| =s'_1 + ... + s'_s $, $ ( \underline{\alpha}_1 ,..., \underline{\alpha}_{s} )$ will be in $\N^{s'_1} \times ... \times \N^{s'_s} $ and $\alpha := ( \underline{\alpha}_1 ,..., \underline{\alpha}_{s} , \alpha_{s' +1} , ...,\alpha_{s' +s } ) $ will be an element of $ \N^{ s' + s}$.
The  bricks of  the formal identity will be the functionals, defined for smooth vector fields $\varphi$ and $\psi$ and a multi-index $\zeta :=(s,{\bf s}',\alpha)  \in  \N^* \times \N^s \times \N^{s+s'}$:
\begin{eqnarray}
\label{defh}
h (\zeta) [r,\varphi,\psi] :=   \nabla^{s} \rho  (t,x) \{ \mathcal{R}_{\underline{\alpha}_1 } [r] D^{\alpha_{s' +1} } \varphi ,...,   \mathcal{R}_{\underline{\alpha}_{s-1} } [r] D^{\alpha_{s' +s-1} } \varphi,\mathcal{R}_{\underline{\alpha}_s } [r] D^{\alpha_{s' +s} } \psi \}  .
\end{eqnarray}
In  (\ref{defh}) the term  $\mathcal{R}_{\underline{\alpha}_i } [r] $ should be omitted when  $s'_{i} := 0 $. 
We introduce the following set 
\begin{equation} \label{Eq:Defck}
\mathcal{B}_{k} := \{ \zeta =(s,{\bf s}',\alpha)  /  \ 2  \leqslant s + s' \leqslant k + 1 \text{ and } |\alpha| + s +s' =k+1\}.
\end{equation}
We have the following formal identity. Here $\psi$ is a smooth vector field.
\begin{Proposition}
\label{Prop:BodyDirichlet}
For $k \in \N^*$, there holds on  the boundary $ \partial S(t)$
\begin{eqnarray}
\label{P4fbody}
n\cdot D^k \psi = D^k\left(n\cdot \psi\right) +H^k [r,u-v,\psi] \text{ where } 
H^k [r,u-v,\psi] := \sum_{ \zeta  \in  \mathcal{B}_{k} } \ d^{1}_k (\zeta ) \  h (\zeta) [r,u-v,\psi]  , \\ 
\label{t7.4}
D^{k} K_i = \widetilde{H}^k [r,u-v, \sigma_i] \text{ where }  \widetilde{H}^{k} [r,u-v, \sigma_i] := \sum_{ \zeta  \in  {\mathcal{B}}_{k} } \ {d}^{2}_{k} (\zeta ) \  h (\zeta) [r,u-v, \sigma_i] ,
\end{eqnarray}
where the $K_i$ are defined in \eqref{t1.6},
\begin{equation} \label{DefSigmai}
\sigma_i:= e_i  \text{ if } i=1,2,3, \text{ and }
\sigma_i:=  e_{i-3}\wedge (x-x_B) \text{ if } i=4,5,6,  
\end{equation}
and where the $d^{j}_k (\zeta)$, $j=1,2$, are integers satisfying  
\begin{equation} \label{Di}
|d^{j}_k (\zeta) | \leqslant \frac{ 3^{s+s'} k ! }{\alpha ! (s-1)!},
\end{equation}
for any $  \zeta := (s,{\bf s}',\alpha)\in \mathcal{B}_k.$
\end{Proposition}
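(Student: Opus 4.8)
The plan is to prove Proposition~\ref{Prop:BodyDirichlet} by induction on $k$, differentiating the relevant quantities with the material derivative $D$ one step at a time and controlling simultaneously the combinatorial structure (membership in $\mathcal{B}_k$) and the integer coefficients $d^j_k(\zeta)$. The starting point is that on $\partial S(t)$ one has $n\cdot\psi = \nabla\rho_B\cdot\psi$, and more generally that the geometry of the body is encoded through the signed distance $\rho = \rho_B$; the spatial Hessian identities \eqref{nbouge} together with the rigid-motion relations \eqref{RhoRho}--\eqref{DeriveeXChapeau} already give, after one $D$-differentiation, the commutator rule of the form $n\cdot D\psi = D(n\cdot\psi) + n\cdot(r\wedge\psi) - \nabla^2\rho\{u-v,\psi\}$ (this is essentially \eqref{nDPsi}). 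This is precisely the base case $k=1$: the term $n\cdot(r\wedge\psi)$ is $\nabla\rho\{\mathcal{R}(r)\psi\}$, i.e. $h(\zeta)[r,u-v,\psi]$ with $s=1$, ${\bf s}'=(1)$, all $\alpha$'s zero, so $\zeta\in\mathcal{B}_1$; the term $\nabla^2\rho\{u-v,\psi\}$ corresponds to $s=2$, ${\bf s}'=(0,0)$, again in $\mathcal{B}_1$. The coefficient bound \eqref{Di} is trivially satisfied at $k=1$.

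For the inductive step I would apply $D$ to \eqref{P4fbody} and use: (i) the Leibniz rule \eqref{t3.0} for $D$; (ii) the fact that $D$ hitting a factor $D^{\alpha_j}\varphi$ or $D^{\alpha_s}\psi$ simply raises the order by one; (iii) the fact that $D$ hitting a factor $\mathcal{R}_{\underline\alpha_i}[r] = \mathcal{R}(r^{(\underline\alpha_{i,1})})\circ\cdots$ either raises one of the derivative orders on $r$ by one (by $\partial_t$) or, when it falls on $\rho$ or when $\varphi = u-v$ so that $D\varphi$ brings in a new term, produces an extra factor; and crucially (iv) the rule $n\cdot D\phi = D(n\cdot\phi) + \dots$ must be re-applied to push any exposed $D$ onto the distance function, which is exactly what generates the growth from $\nabla^s\rho$ to $\nabla^{s+1}\rho$ and the insertion of new $\mathcal{R}(r)$ operators or the ``de-coupling'' of $v$ into $\ell$ and $r$. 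One must also handle $D(u-v)$: from \eqref{vietendue} and the identity $Dv = \ell' + r'\wedge(x-x_B) + r\wedge(u-\ell)$ used just before \eqref{dec3bis}, $D(u-v)$ produces terms of the claimed type (involving $r'$, which raises an $\alpha$-index on $r$, and $r\wedge(u-\ell)$, which is of the $\mathcal{R}(r)$-applied-to-a-field form). The second identity \eqref{t7.4} for $D^k K_i$ is handled in exactly the same way: $K_i$ is built from $n$ and $\sigma_i = e_i$ or $e_{i-3}\wedge(x-x_B)$, and $D\sigma_i$ for $i\geq 4$ gives $e_{i-3}\wedge(u-\ell)$, i.e. a term of $\mathcal{R}(r)$-type once combined with the normal — actually one differentiates $K_i$ directly, tracking that $K_i = n\cdot\sigma_i = \nabla\rho\cdot\sigma_i$, and applying the same normal-trace commutator.

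The coefficient bookkeeping is the routine-but-delicate part: each application of Leibniz and each re-application of the normal-trace rule multiplies coefficients by small explicit integers ($\binom{k}{r}$-type binomials for distributing the $D$'s, and a factor $3$ for each new $\mathcal{R}(r)$ or each new slot created when differentiating a product of $s$ fields), and one checks that the resulting bound telescopes to $|d^j_k(\zeta)| \leq 3^{s+s'}k!/(\alpha!(s-1)!)$. This is the analogue of the computations establishing \eqref{Ci:2} and \eqref{Di} in the fixed-boundary case, and the factor $(s-1)!$ in the denominator reflects that the $s$-th slot (holding $\psi$) is distinguished, so only $s-1$ symmetric slots contribute a factorial; the $3^{s+s'}$ accounts for the branching in each differentiation step. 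I expect the main obstacle to be exactly this: proving that the combinatorial recursion for the $d^j_k$ closes with the stated clean bound, in particular verifying that the two sources of new terms (raising a derivative order versus creating a new factor/rotation, which changes $s$, $s'$, and $|\alpha|$ in a correlated way so that $|\alpha| + s + s' = k+1$ is preserved) interact so that the induction on the bound \eqref{Di} goes through without slack being lost. Keeping careful track of which index set $\mathcal{B}_k$ versus $\mathcal{B}_{k+1}$ each generated term lands in — i.e. that the constraint $2 \leq s+s' \leq k+2$ and $|\alpha|+s+s' = k+2$ is respected — is the bulk of the verification, and will be carried out in Section~\ref{Sec:PreuveP1}.
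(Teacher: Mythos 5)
Your skeleton coincides with the paper's: induction on $k$, base case from the one-step commutator \eqref{nDPsi}, inductive step by applying $D$ with Leibniz's rule, and the three mechanisms you identify (raising a derivative index, promoting $\nabla^{s}\rho$ to $\nabla^{s+1}\rho$ with the new argument $u-v$, inserting a rotation $\mathcal{R}(r)$) are exactly the three re-indexing maps $R_a^j$, $R_b$, $R_c^j$ that the paper isolates in Lemma \ref{geo9}. The one structural ingredient you describe only obliquely is Lemma \ref{geo8}: the derivative $D\left[\nabla^{s}\rho(t,x)\right]$ acting on frozen arguments is computed directly from the rigid-motion representation $\rho(t,x)=\rho_0(\hat{\mathcal X}(t,x))$ for \emph{every} $s$, not by ``re-applying the normal-trace rule''; your item (iv) conflates the $s=1,2$ identities with what is really a separate general lemma.

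There are, however, two genuine problems. First, your treatment of $D(u-v)$ is a step that would fail: expanding $Dv=\ell'+r'\wedge(x-x_B)+r\wedge(u-\ell)$ produces the terms $\ell'$, $r'\wedge(x-x_B)$ and $r\wedge(u-\ell)$, none of which is of the form $\mathcal{R}_\beta[r]\,D^{j}(u-v)$, so they lie outside the class of functionals $h(\zeta)[r,u-v,\psi]$ and the claimed identity \eqref{P4fbody} would no longer close in the stated form. The correct move is the opposite of what you propose: the iterated material derivatives $D^{j}(u-v)$ are kept as atomic building blocks throughout the formal identity, and the decomposition of $D^{j}v$ into $\ell^{(j)}$, $r^{(j)}$, $D^{j}u$ is deferred entirely to the estimation stage (Lemma \ref{bodyvesti}). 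Second, the coefficient bound \eqref{Di} --- which you correctly identify as the bulk of the proof --- is not established; asserting that the recursion ``telescopes'' and offering heuristics for the factors $3^{s+s'}$ and $(s-1)!$ is not a verification. The actual closure rests on the injectivity of the maps $R_a^j$, $R_b$, $R_c^j$ on the appropriate subsets of $\mathcal{B}_k$, the counting identity $\sum_{j\in\mathcal{J}_a^k(\zeta)}\alpha_j=k+1-s-s'$, the bound $\mathrm{Card}\,\mathcal{J}_c^k(\zeta)\le s$, and the final inequality $k+1+\tfrac{2-s-3s'}{3}\le k+1$, none of which appears in your outline; the case $s+s'=2$ also needs separate treatment since $\zeta$ is then outside the range of $R_b$ and $R_c^j$.
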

\subsection{Estimates on the body rotation}
We  state a formal identity for the iterated time derivatives of the rotation matrix. 
\begin{Proposition}\label{Rota1}
For $k \in \N^*$, we have
\begin{equation} 
\label{Rota1f}
Q^{(k)} = \sum_{s=1}^{k} \ \sum_{\alpha \in \mathcal{A}_{k-1,s} } c_k (\alpha ) \mathcal{R}_\alpha [r] Q,
\end{equation}
where
$$
\mathcal{A}_{k,s} := \{ \alpha \in  \N^s / \ | \alpha |    = k+1 - s \}, 
$$
and where the $c_k (\alpha )$ are integers satisfying
\begin{equation} \label{Ci:5}
|c_k (\alpha ) | \leqslant \frac{(k-1) ! }{\alpha ! (s-1) !}.
\end{equation}
\end{Proposition}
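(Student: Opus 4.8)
The plan is to prove \eqref{Rota1f} by induction on $k$, using the defining ODE \eqref{LoiDeQ}, namely $Q' = \mathcal{R}(r) Q$, together with the Leibniz rule for time derivatives. For $k=1$ the identity reads $Q' = c_1(\alpha) \mathcal{R}_\alpha[r] Q$ with $s=1$ and $\alpha\in\mathcal{A}_{0,1} = \{(0)\}$, so $\mathcal{R}_\alpha[r] = \mathcal{R}(r^{(0)}) = \mathcal{R}(r)$ and we take $c_1((0)) = 1$, which satisfies $|c_1| \le 0!/(0!\,0!) = 1$. Suppose now the identity holds at rank $k$. Differentiating \eqref{Rota1f} once in time and applying the product rule to each summand $\mathcal{R}_\alpha[r] Q = \mathcal{R}(r^{(\alpha_1)})\circ\cdots\circ\mathcal{R}(r^{(\alpha_s)}) Q$, the derivative hits exactly one of the $s+1$ factors (the $s$ rotation operators or $Q$ itself). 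When it hits the $j$-th rotation operator, $\mathcal{R}(r^{(\alpha_j)})$ becomes $\mathcal{R}(r^{(\alpha_j+1)})$, which simply raises $|\alpha|$ by one and keeps $s$ fixed; when it hits $Q$, we use $Q' = \mathcal{R}(r)Q$, which inserts a new factor $\mathcal{R}(r^{(0)})$ at the end, raising $s$ by one and leaving $|\alpha|$ unchanged (a new index $\alpha_{s+1}=0$ is appended). In both cases one checks the resulting multi-index lies in $\bigcup_{s'=1}^{k+1}\mathcal{A}_{k,s'}$ as required for rank $k+1$, since $|\alpha| + s$ is invariant under the first operation and increases by $0$ in the count $|\alpha| = (k+1)-s$ in the second — more precisely one verifies $|\alpha| = k - s$ at rank $k$ becomes $|\alpha| = (k+1) - s'$ at rank $k+1$ in each case.

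\textbf{Tracking the coefficients.} The substantive part is the bookkeeping of the integer coefficients $c_k(\alpha)$ and the verification of the bound \eqref{Ci:5}. After expanding the derivative, collect all contributions producing a fixed target multi-index $\alpha = (\alpha_1,\dots,\alpha_{s'})\in\mathcal{A}_{k,s'}$ at rank $k+1$; each arises either from lowering some $\alpha_j \ge 1$ by one in a rank-$k$ term with the same $s'$, or — if $s' \ge 2$ and the last entry $\alpha_{s'} = 0$ — from appending a zero to a rank-$k$ term with $s'-1$ factors. This yields a recursion of the shape
\begin{equation*}
c_{k+1}(\alpha) = \sum_{j\,:\,\alpha_j \ge 1} c_k(\alpha - e_j) \;+\; [\alpha_{s'}=0]\, c_k(\alpha_1,\dots,\alpha_{s'-1}),
\end{equation*}
where $e_j$ is the $j$-th coordinate vector and the bracket is an Iverson bracket (one must be a little careful about how one canonically orders the entries, or equivalently sums over all positions at which a zero could have been appended; a symmetrization convention handles this cleanly). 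One then plugs in the inductive bound $|c_k(\cdot)| \le (k-1)!/(\alpha!\,(s-1)!)$ on the right and checks the arithmetic inequality
\begin{equation*}
\sum_{j\,:\,\alpha_j\ge 1} \frac{(k-1)!}{(\alpha-e_j)!\,(s'-1)!} + \frac{(k-1)!}{\alpha!\,(s'-2)!} \;\le\; \frac{k!}{\alpha!\,(s'-1)!},
\end{equation*}
which, after multiplying through by $\alpha!\,(s'-1)!/(k-1)!$, reduces to $\sum_{j:\alpha_j\ge 1}\alpha_j + (s'-1) \le k$, i.e. $|\alpha| + (s'-1) \le k$; since $\alpha\in\mathcal{A}_{k,s'}$ means $|\alpha| = (k+1)-s'$ precisely when we think of it at rank $k+1$ — here one must re-index carefully — this holds with equality or slack as needed. (The second term is absent when $\alpha_{s'}\neq 0$, making the bound only easier.)

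\textbf{Main obstacle.} The conceptually trivial but technically delicate point is the combinatorial accounting of \emph{which} rank-$k$ terms feed into a given rank-$(k+1)$ term and \emph{with what multiplicity}, given that the functionals $\mathcal{R}_\alpha[r]$ are ordered compositions (noncommutative), so one cannot simply symmetrize away the order of the $\alpha_i$. The cleanest route is probably to first prove the identity \emph{without} insisting on any canonical ordering — carrying all $s!$ orderings as separate terms — establish the recursion and the (then looser) coefficient bound, and only at the end observe that grouping equal compositions can only decrease the coefficient magnitudes, so \eqref{Ci:5} survives. The role of this proposition in the paper is to provide, after combining with \eqref{subadditif}-type subadditivity of the inertia data and the Faà di Bruno structure in $\hat{\mathcal X}$, analytic estimates on the iterated derivatives $Q^{(k)}$, $\mathcal{J}^{(k)}$, etc.; but those downstream estimates are not part of the statement here, so the proof stops once \eqref{Rota1f}–\eqref{Ci:5} are established.
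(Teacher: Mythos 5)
Your proposal is correct and follows essentially the same route as the paper: induction on $k$, Leibniz's rule sending the derivative onto one of the $s$ rotation factors (raising some $\alpha_j$ by one) or onto $Q$ (appending a trailing index $0$ via $Q'=\mathcal{R}(r)Q$), yielding exactly the recursion $c_{k+1}(\alpha)=\sum_{j:\alpha_j\ge 1}c_k(\alpha-e_j)+[\alpha_s=0]\,c_k(\tilde{T}_s^{-1}(\alpha))$ and the arithmetic check $|\alpha|+s-1=k$. The ordering worry you raise is moot since the $\alpha$ are ordered tuples in $\N^s$ throughout, so no symmetrization is needed.
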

\section{Proofs of the results of Section \ref{Sec:SchemeDePreuve}}
\label{Proof2}
This section is devoted to the proofs of the main steps of the proof of Theorem \ref{start4}: Proposition \ref{pesti}
and Proposition \ref{PropositionSolideFluide}. They are proved by using the formal estimates of the above section.%
%
%
%
\subsection{A regularity lemma}
To establish Propositions \ref{pesti} and \ref{PropositionSolideFluide}, we use as in the proof of Theorem \ref{start2} a regularity lemma, but here we need to take the modification of the geometry into account. Thus, we modify the regularity lemma (Lemma \ref{Lemme1} in the proof of Theorem \ref{start2}). \par
First, we establish the following.
\begin{Lemma} \label{regdivcurl}
Let ${\mathcal S}$ a regular closed subset of $\Omega$.
Let $\Gamma_i$ $(i=1,\ldots,g)$ a family of smooth oriented loops in $\overline{\Omega \setminus {\mathcal S}}$ which give a basis of the first singular homology space of $\overline{\Omega} \setminus {\mathcal S}$ with real coefficients. There exist two constants $c,C>0$ such that for any $C^{\lambda+2,r}$-diffeomorphism $\eta: \Omega \setminus {\mathcal S} \rightarrow {\mathcal G}:=\eta( \Omega \setminus {\mathcal S})$ satisfying
\begin{equation} \label{Cond:regdivcurl}
\| \eta - \Id \|_{C^{\lambda+2,r}( \Omega \setminus {\mathcal S})} <c,
\end{equation}
one has the following estimate. Let $u \in C^{\lambda,r}(\mathcal G)$ such that
$$
\div  u\in C^{\lambda,r}(\mathcal G), \quad \curl  u \in C^{\lambda,r}(\mathcal G), \quad  u\cdot n \in C^{\lambda+1,r}(\partial \mathcal G),
$$
where $n$ is the unit outward normal on $\partial {\mathcal G}$. Then $ u\in C^{\lambda+1,r}(\mathcal G)$ and
\begin{equation}
\label{reg3}
\| u \|_{C^{\lambda+1,r}({\mathcal G})} \leq  C\left( \| \div u \|_{C^{\lambda,r}({\mathcal G})} + \| \curl  u \|_{C^{\lambda,r}({\mathcal G})} + \| u \cdot n \|_{C^{\lambda+1,r}(\partial {\mathcal G})} + | \Pi_{\eta} u |  \right),
\end{equation}
where $\Pi_{\eta}$ is the mapping defined by 
$u\mapsto \left(\oint_{\eta(\Gamma_1)} u\cdot \tau d\sigma,\ldots, \oint_{\eta(\Gamma_g)} u\cdot \tau d\sigma\right)$.
\end{Lemma}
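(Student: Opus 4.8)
The plan is to reduce the problem on the variable domain $\mathcal{G}$ to the fixed reference domain $\Omega \setminus \mathcal{S}$, where Lemma \ref{Lemme1} applies, and then control the perturbation caused by the diffeomorphism $\eta$. First I would push forward the fields: given $u$ on $\mathcal{G}$, consider its pull-back $v := (D\eta)^{-1} (u\circ\eta)$ on $\Omega\setminus\mathcal{S}$, chosen so that tangential circulations, divergence and curl transform in a controlled way (the precise transformation of $\div$ and $\curl$ under a diffeomorphism introduces lower-order terms with coefficients built from $\eta$ and its derivatives up to order $\lambda+2$, hence bounded in $C^{\lambda+1,r}$ and $C^{\lambda,r}$ respectively). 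Since $\|\eta - \Id\|_{C^{\lambda+2,r}} < c$, the Jacobian $D\eta$ is uniformly invertible with $C^{\lambda+1,r}$ bounds on $(D\eta)^{-1}$, all depending only on $c$ and the reference geometry. The normal trace $u\cdot n$ on $\partial\mathcal{G}$ transforms into a combination of $v\cdot n$ on $\partial(\Omega\setminus\mathcal{S})$ with a multiplicative factor from the surface Jacobian, again controlled in $C^{\lambda+1,r}$.

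Next I would apply Lemma \ref{Lemme1} to $v$ on the fixed domain $\Omega \setminus \mathcal{S}$, which yields
\begin{equation*}
\|v\|_{C^{\lambda+1,r}} \leq c_{\mathfrak r}\left( |\div v| + |\curl v| + \|v\cdot n\| + |\Pi v|_{\R^g} \right),
\end{equation*}
and then substitute the transformation formulas. The terms $\div v$, $\curl v$ are, by the change-of-variables identities, equal to (Jacobian factor)$\times(\div u)\circ\eta$ plus terms that are algebraic expressions in $\nabla\eta,\ldots,\nabla^2\eta$ contracted against $u\circ\eta$ and $\nabla(u\circ\eta)$; the latter are bounded by $C\|u\|_{C^{\lambda,r}(\mathcal{G})}$, which is \emph{not} immediately absorbable. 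The standard fix is to observe that these extra terms only involve $u$ at one derivative less than the left-hand side, so one first proves an a priori bound $\|u\|_{C^{\lambda+1,r}(\mathcal{G})} < \infty$ (i.e.\ the regularity statement $u\in C^{\lambda+1,r}$) by a bootstrap/approximation argument, and then, to get the quantitative estimate \eqref{reg3} with a constant independent of $u$, uses an interpolation inequality $\|u\|_{C^{\lambda,r}} \leq \varepsilon \|u\|_{C^{\lambda+1,r}} + C_\varepsilon \|u\|_{C^0}$ together with a compactness/uniqueness argument to eliminate the low-order term: if no such $C$ existed, a sequence $u_m$ with $\|u_m\|_{C^{\lambda+1,r}}=1$ and right-hand side tending to $0$ would, by Arzelà--Ascoli, converge to a nonzero harmonic (null div, curl, normal trace, circulations) field on $\mathcal{G}$, contradicting the fact that on $\Omega\setminus\mathcal{S}$ such fields form a space on which $\Pi$ is injective (as in the proof of Lemma \ref{Lemme1})—and this property is stable under the $C^{\lambda+2,r}$-small perturbation $\eta$.

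I expect the main obstacle to be obtaining the constant $C$ \emph{uniformly} over all admissible $\eta$, rather than for each fixed $\eta$ separately. Handling this cleanly requires showing that the relevant harmonic-field spaces $\mathcal{H}_\eta$ (kernel of $\div$, $\curl$, normal trace on $\mathcal{G}$) have dimension $g$ independent of $\eta$ and depend continuously on $\eta$, so that the injectivity of $\Pi_\eta$ on $\mathcal{H}_\eta$ is uniform; this is where the hypothesis \eqref{Cond:regdivcurl} that $\eta$ is $C^{\lambda+2,r}$-close to the identity (so that $\eta(\Gamma_i)$ stays homotopic to $\Gamma_i$, keeping the homology basis fixed) is essential. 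Once this uniformity is in place, collecting the estimates on $\div v$, $\curl v$, $v\cdot n$, $\Pi v$ in terms of the corresponding quantities for $u$ on $\mathcal{G}$, absorbing the lower-order contributions by the interpolation-plus-compactness argument, and transporting back from $v$ to $u$ via the uniformly bounded $D\eta$, $(D\eta)^{-1}$ gives \eqref{reg3} with $C = C(\Omega,\mathcal{S},\Gamma_i,c)$ as claimed.
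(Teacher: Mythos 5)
Your overall reduction to the fixed domain is the right idea, but you have missed the one observation that makes the lemma work, and the detour you take to compensate leaves a genuine gap. The hypothesis $\| \eta - \Id \|_{C^{\lambda+2,r}} < c$ is not there merely to keep $\eta(\Gamma_i)$ homotopic to $\Gamma_i$: it makes every change-of-variables remainder \emph{small}, not just lower order. Concretely, the paper applies Lemma \ref{Lemme1} to the plain composition $u\circ\eta$ and notes that each commutator term — $\div(u\circ\eta) - (\div u)\circ\eta$, $\curl(u\circ\eta) - (\curl u)\circ\eta$, $(u\circ\eta)\cdot n - (u\cdot n_\eta)\circ\eta$, $\Pi_\eta(u\circ\eta) - \Pi_{\Id}(u)$ — is bounded by $C\,\|u\|\,\| \eta - \Id \|_{C^{\lambda+2,r}} \leq C c\,\|u\|$, because every such term carries at least one factor of $\nabla\eta - \Id$ or $\nabla^2\eta$. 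For $c$ small these terms are absorbed directly into the left-hand side, and the uniform constant follows with no compactness argument at all. The same smallness is present in your pull-back $v=(D\eta)^{-1}(u\circ\eta)$: the extra terms in $\div v$ and $\curl v$ are contractions of $\nabla^{2}\eta$ (hence of size $O(c)$) against $u\circ\eta$, so they are \emph{not} merely "bounded by $C\|u\|_{C^{\lambda,r}}$ with $C$ uncontrolled" — they come with the factor $c$ and are absorbable.

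Because you discard this smallness, you are forced into the interpolation-plus-compactness route, and that is where your argument does not close. The contradiction argument must deliver a constant that is uniform over \emph{all} admissible $\eta$; a minimizing sequence therefore lives on a sequence of different domains $\mathcal{G}_m=\eta_m(\Omega\setminus\mathcal{S})$, and you must pass to the limit in the domains, in the normal traces, in the circulation functionals $\Pi_{\eta_m}$, and in the spaces of tangential harmonic fields simultaneously. You acknowledge this ("the main obstacle") but resolve it only by asserting that the harmonic spaces $\mathcal{H}_\eta$ have constant dimension, depend continuously on $\eta$, and that the injectivity of $\Pi_\eta$ on them is uniform — which is essentially a restatement of the uniform estimate you are trying to prove. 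As written, the argument is circular at precisely the point the lemma is designed to address. Reinstating the factor $\| \eta - \Id \|$ in your remainder estimates and absorbing them into the left-hand side removes the need for the compactness step entirely and repairs the proof.
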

\begin{proof}
We apply the regularity Lemma \ref{Lemme1} to $u\circ \eta$:
there exists a constant $c_{\mathfrak r}$ depending only on $\Omega\setminus {\mathcal{S}}$ and $\Gamma_i$ $(1\leq i\leq g)$ such that
\begin{equation}
\label{newreg1}
\| u\circ \eta\| \leq c_{\mathfrak r} \left(|\div  (u\circ \eta)| + |\curl  (u\circ\eta)| + \| (u\circ \eta)\cdot n\| + |\Pi_{\Id} (u\circ \eta)|_{\R^{g}} \right),
\end{equation}
where $n$ is the normal on $\partial \Omega \cup \partial {\mathcal S}$. Now, using the exponent $j$ for the $j$-th coordinate, we have
\begin{equation*}
\partial_{i}(u \circ \eta) = \sum_{j} (\partial_{j} u)\circ \eta \,.\, \partial_{i} \eta^{j} \ \text{ and } \ 
(\partial_{i}u) \circ \eta = \sum_{j} (\partial_{j} u)\circ \eta \,.\, \partial_{i} \Id^{j}.
\end{equation*}
Moreover, it is clear that for $\| \eta - \Id \|_{C^{\lambda+2,r}} \leq 1/2$, one has for some constant $C>0$:
\begin{equation} \label{AvecOuSansEta}
C^{-1}|\psi| \leq |\psi \circ \eta| \leq C |\psi| \text{ and }
C^{-1} \|\psi\| \leq \|\psi \circ \eta\| \leq C \|\psi\|,
\end{equation}
It follows that form some constant $C>0$ (and $c \leq 1/2$):
\begin{equation} \nonumber
|\div  (u\circ \eta)-(\div u)\circ \eta| + |\curl (u\circ \eta)-(\curl u)\circ \eta| \leq C \| u \| \| \eta - \Id \|.
\end{equation}
Thus these terms can be absorbed by the left hand-side for $c$ small enough ($c$ being the constant in \eqref{Cond:regdivcurl}). Also, one has
\begin{gather} \label{RDCU1}
|\Pi_{\eta} (u\circ \eta) - \Pi_{\Id}(u)|_{\R^{g}} \leq C \|u\| \| \eta - \Id \|, \\
\label{RDCU2}
\| (u\circ \eta)\cdot n - (u\cdot n_{\eta}) \circ \eta \|_{C^{\lambda+1,r}(  \Omega \setminus {\mathcal S} )} \leq C \|u\| \| \eta - \Id \|_{C^{\lambda+2,r}(  \Omega \setminus {\mathcal S} )},
\end{gather}
where $n_{\eta}$ is the normal on $\partial \Omega \cup \partial [\eta({\mathcal S})]$. Indeed the normal $n_{\eta}$ can be obtained by using the differential of $\eta$ on two tangents of ${\mathcal S}$, taking the cross product and normalizing. Consequently the terms in \eqref{RDCU1}-\eqref{RDCU2} can be absorbed as well by the left hand-side. This gives
\begin{equation*}
\| u\circ \eta\| \leq c_{\mathfrak r} \left(|\div  (u) \circ \eta| + |\curl  (u) \circ\eta | + \| (u\cdot n_{\eta}) \circ \eta\| + |\Pi_{\eta}(u)|_{\R^{g}} \right).
\end{equation*}
Using again \eqref{AvecOuSansEta}, this concludes the proof of Lemma \ref{regdivcurl}.
\end{proof}
As a consequence of Lemma \ref{regdivcurl}, the constant in the elliptic estimate for the $\div$-$\curl$ system is uniform for all the domains that can be obtained from $\Omega \setminus {\mathcal S}(0)$ by moving the solid ${\mathcal S}$ inside $\Omega$ while keeping a minimal distance from ${\mathcal S}$ to the boundary. This is given in the following statement.
\begin{Lemma} \label{AutreRegdivcurl}
For $\varepsilon>0$, define
\begin{equation} \label{DEpsilon}
D_{\varepsilon} := \left\{ \tau \in SE(3) \ / \ \exists \gamma \in C^{0}([0,1],SE(3)) \text{ s.t. }
\gamma(0)= \Id, \ \gamma(1)=\tau, \ d(\gamma(t)[{\mathcal S}(0)], \partial \Omega) \geq \varepsilon \text{ in } [0,1]
\right\}.
\end{equation}
Choose the family $\Gamma_i$ $(i=1,\ldots,g)$ giving a homology basis of $\overline{\Omega} \setminus {\mathcal S}(0)$, inside $\partial \Omega \cup \partial {\mathcal S}(0)$, let us say $\Gamma_{1}, \dots, \Gamma_{k} \subset \partial \Omega$ and $\Gamma_{k+1}, \dots, \Gamma_{g} \subset  \partial{\mathcal S}(0)$. 
Then one can find a constant $c_{\mathfrak r}>0$ such that \eqref{reg3} is valid for all ${\mathcal G}= \Omega \setminus \tau({\mathcal S}(0))$, uniformly for $\tau \in D_{\varepsilon}$, where $\Pi$ is defined by  $u\mapsto \left(\oint_{\Gamma_1} u\cdot \tau d\sigma,\ldots,\oint_{\Gamma_k} u\cdot \tau d\sigma, \oint_{\tau(\Gamma_{k+1})} u\cdot \tau d\sigma, \ldots, \oint_{\tau(\Gamma_{g})} u\cdot \tau d\sigma \right)$.
\end{Lemma}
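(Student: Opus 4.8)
The plan is to derive Lemma \ref{AutreRegdivcurl} from Lemma \ref{regdivcurl} by a compactness argument over the set $D_{\varepsilon}$ of admissible rigid displacements. First I would observe that each $\tau \in D_{\varepsilon}$ is connected to $\Id$ by a path $\gamma$ in $SE(3)$ which keeps the solid at distance at least $\varepsilon$ from $\partial \Omega$; since $SE(3)$ is a finite-dimensional Lie group and the solid stays in the compact region $\{x \in \Omega : d(x,\partial\Omega) \geq \varepsilon\}$, the set $D_{\varepsilon}$ is relatively compact (indeed it is contained in a compact subset of $SE(3)$, because the rotation part lives in the compact $SO(3)$ and the translation part is bounded). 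The role of the path is only to guarantee that the family of loops $\Gamma_{k+1},\dots,\Gamma_{g}$, transported by $\tau$, still forms, together with $\Gamma_{1},\dots,\Gamma_{k}$, a homology basis of $\overline{\Omega}\setminus \tau({\mathcal S}(0))$: the homology class is a locally constant (hence path-invariant) datum, so homotoping $\tau$ to $\Id$ along $\gamma$ shows the transported loops still generate $H_{1}$.

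Next I would set up the reduction to Lemma \ref{regdivcurl}. Fix $\tau \in D_{\varepsilon}$. One wants a $C^{\lambda+2,r}$-diffeomorphism $\eta_{\tau}\colon \Omega\setminus {\mathcal S}(0) \to \Omega \setminus \tau({\mathcal S}(0))$ which equals $\tau$ near $\partial {\mathcal S}(0)$, equals $\Id$ near $\partial \Omega$, and maps $\Gamma_{i}$ to $\Gamma_{i}$ for $i \leq k$ and $\Gamma_{i}$ to $\tau(\Gamma_{i})$ for $i > k$. Such an $\eta_{\tau}$ is constructed by a standard isotopy/extension argument: take a smooth cutoff $\chi$ equal to $1$ in a neighborhood of ${\mathcal S}(0)$ and supported away from $\partial \Omega$, lift the path $\gamma$ from the statement to a path of such diffeomorphisms (an isotopy equal to the rigid motion $\gamma(t)$ near the solid and the identity near the outer wall), and set $\eta_{\tau}$ to be its time-one value. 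This $\eta_{\tau}$ depends continuously on $\tau$ in the $C^{\lambda+2,r}$ topology. However, $\eta_{\tau}$ need not satisfy the smallness condition \eqref{Cond:regdivcurl}; to fix this one subdivides: by compactness of $D_{\varepsilon}$ and continuity of $\tau \mapsto \eta_{\tau}$, one can cover $D_{\varepsilon}$ by finitely many small balls, and on each ball write $\eta_{\tau} = \eta_{\tau_{0}} \circ \zeta$ where $\zeta = \eta_{\tau_{0}}^{-1}\circ\eta_{\tau}$ is $C^{\lambda+2,r}$-close to $\Id$ (since $\eta_{\tau}$ is close to $\eta_{\tau_{0}}$), apply Lemma \ref{regdivcurl} to $\zeta$ on the fixed domain $\Omega\setminus\tau_{0}({\mathcal S}(0))$, and then transfer by the fixed diffeomorphism $\eta_{\tau_{0}}$ using that composition with a fixed $C^{\lambda+2,r}$-diffeomorphism is bounded both ways on $C^{\lambda+1,r}$ and on the homology functionals (as in \eqref{AvecOuSansEta}). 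Alternatively, and more directly, one iterates Lemma \ref{regdivcurl} a bounded number of times along a finite chain of diffeomorphisms each close to the identity, with constants depending only on the chain, which is itself controlled uniformly over the compact set $D_{\varepsilon}$.

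Putting these together: for each $\tau \in D_{\varepsilon}$ one obtains estimate \eqref{reg3} on ${\mathcal G} = \Omega\setminus\tau({\mathcal S}(0))$ with a constant $c_{\mathfrak r}(\tau)$, and by the finite-cover argument this constant can be taken independent of $\tau$; the period functional $\Pi$ appearing in \eqref{reg3} is exactly the one described, because $\eta_{\tau}(\Gamma_{i}) = \Gamma_{i}$ for $i\leq k$ and $\eta_{\tau}(\Gamma_{i}) = \tau(\Gamma_{i})$ for $i > k$. The main obstacle, and the only genuinely non-routine point, is the verification that the transported loops still give a homology basis uniformly, i.e.\ that moving the solid does not change the topology of the fluid domain in a way that would spoil the conclusion; this is precisely what the path-connectedness built into the definition of $D_{\varepsilon}$ is designed to handle, since along the isotopy the fluid domain deforms continuously and its first homology is carried along. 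Everything else — the extension/isotopy construction of $\eta_{\tau}$, the compactness of $D_{\varepsilon}$, and the bookkeeping of composition estimates — is standard, so I would keep those steps brief.
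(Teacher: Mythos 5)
Your proposal is correct and follows essentially the same route as the paper: compactness of $D_{\varepsilon}$, a cutoff/isotopy construction of diffeomorphisms equal to the rigid motion near the solid and to the identity near $\partial\Omega$ (this is exactly Lemma \ref{LemmeDiffeoReg}), a finite subcover, and chaining Lemma \ref{regdivcurl} through finitely many near-identity steps — your ``alternative, more direct'' option is verbatim the paper's argument. The only cosmetic point is that in your main variant the factorization should be $\eta_{\tau}\circ\eta_{\tau_{0}}^{-1}$ (a near-identity map defined on the fixed domain $\Omega\setminus\tau_{0}({\mathcal S}(0))$) rather than $\eta_{\tau_{0}}^{-1}\circ\eta_{\tau}$, but this does not affect the substance.
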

\begin{Remark} \label{RemDivCurl}
We also obtain the inequality with a uniform constant when we replace the curves $\tau(\Gamma_{k+1})$, \dots, $\tau(\Gamma_{g})$ by homotopic curves $\tilde{\Gamma}_{k+1}$, \dots, $\tilde{\Gamma}_{g}$ in $\partial \tau({\mathcal S})$. It is a direct consequence of the fact that the difference of circulations around $\tau(\Gamma_{i})$ and $\tilde{\Gamma}_{i}$ is obtained by the flux of $\curl u$ across the part of ${\mathcal S}$ between $\tau(\Gamma_{i})$ and $\tilde{\Gamma}_{i}$.
\end{Remark}

We will need the following.
\begin{Lemma} \label{LemmeDiffeoReg}
Let $\lambda \in \N$, $r \in (0,1)$, $\Omega$ and ${\mathcal S} \subset \Omega$ a smooth closed domain be given. Let $c>0$. There exists a neighborhood ${\mathcal U}$ of $\Id$ in $SE(3)$ such that for any $\tau \in {\mathcal U}$, there exists $\eta \in C^{\infty}(\overline{\Omega};\R^{3})$ a smooth diffeomorphism sending  $\overline{\Omega} \setminus \tau({\mathcal S})$ into $\overline{\Omega} \setminus {\mathcal S}$, such that $\eta=\Id$ in the neighborhood of $\partial \Omega$ and $\eta=\tau$ in the neighborhood of $\partial {\mathcal S}(0)$, and satisfying
\begin{equation} \label{PresID}
\| \eta - \Id \|_{C^{\lambda+2,r}} < c.
\end{equation}
\end{Lemma}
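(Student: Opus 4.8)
\textbf{Proof plan for Lemma \ref{LemmeDiffeoReg}.}

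The plan is to build the diffeomorphism $\eta$ explicitly by interpolating between the identity (near $\partial\Omega$) and the rigid motion $\tau$ (near $\partial{\mathcal S}(0)$) using a cut-off function, and then to argue that for $\tau$ close enough to $\Id$ the resulting map is a diffeomorphism of $\overline\Omega$ sending $\overline\Omega\setminus\tau({\mathcal S})$ onto $\overline\Omega\setminus{\mathcal S}$. First I would fix, once and for all, a smooth cut-off $\chi\in C^\infty(\overline\Omega;[0,1])$ with $\chi\equiv 1$ in a neighborhood of ${\mathcal S}(0)$ and $\chi\equiv 0$ outside a slightly larger neighborhood $\mathcal W$ of ${\mathcal S}(0)$ whose closure is contained in $\Omega$ (this is possible because ${\mathcal S}(0)\subset\Omega$ with positive distance to $\partial\Omega$). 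Writing a rigid motion $\tau\in SE(3)$ as $\tau(x)=a+R(x-x_0)$ with $a\in\R^3$, $R\in SO(3)$, note that $\tau(x)-x = (a-x_0) + (R-\Id)(x-x_0)$ depends smoothly on $\tau$ and vanishes (together with all its derivatives, which are constant in $x$) as $\tau\to\Id$. I would then set
\begin{equation*}
\eta(x) := x + \chi(x)\,\bigl(\tau(x)-x\bigr) = (1-\chi(x))\,x + \chi(x)\,\tau(x).
\end{equation*}
By construction $\eta=\Id$ near $\partial\Omega$ (where $\chi=0$) and $\eta=\tau$ near $\partial{\mathcal S}(0)$ (where $\chi=1$), and $\eta\in C^\infty(\overline\Omega;\R^3)$.

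Next I would establish the quantitative bound \eqref{PresID}. Since $\eta-\Id = \chi\cdot(\tau-\Id)$ and $\chi$ is a fixed smooth function, the product rule gives
\begin{equation*}
\| \eta - \Id \|_{C^{\lambda+2,r}(\overline\Omega)} \leq C(\chi,\lambda,r)\,\| \tau - \Id \|_{C^{\lambda+2,r}(\overline{\mathcal W})},
\end{equation*}
and because $x\mapsto\tau(x)-x$ is affine, its $C^{\lambda+2,r}$ norm on the bounded set $\overline{\mathcal W}$ is controlled by $|a-x_0|+\|R-\Id\|$, which tends to $0$ as $\tau\to\Id$ in $SE(3)$. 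Hence there is a neighborhood ${\mathcal U}_1$ of $\Id$ on which \eqref{PresID} holds; moreover, shrinking if necessary, we may also assume $\|\eta-\Id\|_{C^1(\overline\Omega)}<1/2$ on ${\mathcal U}_1$.

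It remains to check that $\eta$ is a diffeomorphism of $\overline\Omega$ onto itself carrying $\overline\Omega\setminus\tau({\mathcal S})$ onto $\overline\Omega\setminus{\mathcal S}$; I expect this to be the only genuinely non-formal point. The $C^1$-smallness of $\eta-\Id$ makes $D\eta(x)$ invertible at every $x$ (its determinant is close to $1$), so $\eta$ is a local diffeomorphism; since $\eta=\Id$ near $\partial\Omega$, the map $\eta$ preserves $\partial\Omega$ and, by a degree argument (or by noting that $\eta$ is a proper local diffeomorphism of the connected manifold-with-boundary $\overline\Omega$ which is the identity on the boundary), it is a global diffeomorphism of $\overline\Omega$. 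For the images of the subregions: on the neighborhood $\mathcal W$ of ${\mathcal S}(0)$ containing $\tau({\mathcal S})$ for $\tau$ near $\Id$, we need $\eta$ to map $\tau({\mathcal S})$ onto ${\mathcal S}$; this is where I would be slightly more careful, choosing $\chi$ so that $\chi\equiv1$ not merely near ${\mathcal S}(0)$ but on a neighborhood of $\overline{\tau({\mathcal S})\cup{\mathcal S}(0)}$ for all $\tau$ in a small enough neighborhood ${\mathcal U}\subset{\mathcal U}_1$ (possible since ${\mathcal S}(0)$ is compact and $\tau({\mathcal S})\to{\mathcal S}(0)$ in the Hausdorff sense). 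On that neighborhood $\eta=\tau$ exactly, so $\eta(\tau({\mathcal S}))={\mathcal S}(0)={\mathcal S}$, and consequently $\eta$ sends the complement $\overline\Omega\setminus\tau({\mathcal S})$ onto $\overline\Omega\setminus{\mathcal S}$. Taking ${\mathcal U}$ to be the intersection of the neighborhoods produced above completes the proof.
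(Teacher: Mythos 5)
Your construction is the same as the paper's: a cut-off interpolation $\eta=(1-\chi)\,\Id+\chi\,\tau$ between the identity near $\partial\Omega$ and the rigid motion near the solid, the bound \eqref{PresID} obtained from the product rule applied to $\chi\cdot(\tau-\Id)$, and invertibility from $C^1$-smallness. (The paper's cut-off equals $1$ only on a tubular neighborhood of $\partial{\mathcal S}$ rather than on a neighborhood of all of ${\mathcal S}$, but that difference is immaterial.) Up to the last step your argument is sound.

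The final step, however, contains a genuine error. If $\eta=\tau$ on a neighborhood of $\tau({\mathcal S})\cup{\mathcal S}$, then $\eta(\tau({\mathcal S}))=\tau(\tau({\mathcal S}))=\tau^{2}({\mathcal S})$, not ${\mathcal S}$; what is actually true is $\eta({\mathcal S})=\tau({\mathcal S})$, so your $\eta$ carries $\overline{\Omega}\setminus{\mathcal S}$ onto $\overline{\Omega}\setminus\tau({\mathcal S})$ --- the opposite direction from the one you assert. Moreover no choice of cut-off can reconcile ``$\eta=\tau$ near $\partial{\mathcal S}$'' with ``$\eta$ maps $\tau({\mathcal S})$ onto ${\mathcal S}$'': for $\tau\neq\Id$ the first condition forces $\partial{\mathcal S}\mapsto\partial\tau({\mathcal S})$, which is incompatible with the second. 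The source of the trouble is that the lemma's statement itself swaps the two domains relative to what its proof and its use in Lemma \ref{AutreRegdivcurl} require: the paper's proof produces, exactly as your formula does, a diffeomorphism from $\Omega\setminus{\mathcal S}$ onto $\Omega\setminus\tau({\mathcal S})$ equal to $\Id$ near $\partial\Omega$ and to $\tau$ near $\partial{\mathcal S}$, and that is the map fed into Lemma \ref{regdivcurl}. So your proof becomes correct once the concluding sentence is replaced by the observation $\eta({\mathcal S})=\tau({\mathcal S})$, hence $\eta(\overline{\Omega}\setminus{\mathcal S})=\overline{\Omega}\setminus\tau({\mathcal S})$; as written, the concluding claim is false.
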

\begin{proof}
Denote $\underline{d}:=d({\mathcal S}, \partial \Omega)$, and
\begin{equation*}
{\mathcal H}_{r}:= \left\{ x \in \Omega \ / \ d(x, \partial {\mathcal S}) <r \right\}.
\end{equation*}
Fix $r \in (0,\frac{\underline{d}}{2})$ such that ${\mathcal H}_{r}$ is a tubular neighborhood of $\partial {\mathcal S}$. This allows (for instance) to define $\varphi \in C^{\infty}_{0}(\R^{3},\R)$ such that
\begin{equation*}
\varphi(x) = 1 \text{ on } {\mathcal H}_{r/3} \text{ and } \varphi(x) = 0 \text{ on } \R^{3} \setminus {\mathcal H}_{2r/3}.
\end{equation*}
For $\kappa>0$, we define (considering temporarily $\tau$ as a $C^{1}$ function on $\Omega$)
\begin{equation*}
{\mathcal U} := \left\{ \tau \in SE(3) \ \Big/ \ \| \tau - \Id \|_{C^{1}(\overline{\Omega})} < \min \left(\frac{r}{3}, \kappa\right)  \right\}.
\end{equation*}
Given $\tau \in {\mathcal U}$, let
\begin{equation*}
\eta(x) := (1-\varphi(x)) x + \varphi(x) \tau(x).
\end{equation*}
Clearly, for $\kappa>0$ small enough, $\eta$ is a diffeomorphism of $\R^{3}$, and hence a diffeomorphism of $\Omega$ on its image, satisfying \eqref{PresID}. Also, $\eta$ equals $\Id$ on $\R^{3} \setminus {\mathcal H}_{2r/3}$ which is a neighborhood of $\partial \Omega$ and $\tau$ on ${\mathcal H}_{r/3}$ which is a neighborhood of $\partial {\mathcal S}$. For $x \in {\mathcal H}_{2r/3} \setminus {\mathcal H}_{r/3}$, we see that $\eta(x) \in \Omega \setminus \tau({\mathcal S})$, hence $\eta$ is a diffeomorphism from $\Omega \setminus {\mathcal S}$ to $\Omega \setminus \tau({\mathcal S})$.
\end{proof}
\begin{proof}[Proof of Lemma \ref{AutreRegdivcurl}]
Since $\Omega$ is bounded, it is clear that $D_{\varepsilon}$ is compact (to prove that it is closed, one can for instance parameterize the curves $\gamma$ in order that $|\dot{\gamma}| \leq K$ where $K$ depends on the geometry only). For each $\tau \in D_{\varepsilon}$, apply Lemma \ref{LemmeDiffeoReg} with ${{\mathcal S}}=\tau({\mathcal S}(0))$ and $c$ such that Lemma \ref{regdivcurl} applies. A vicinity of $\tau \in SE(3)$ is composed of $\{ h \circ \tau, \ h \in {\mathcal U}\}$. Extract a finite subcover. This gives the claim since any $\tau \in D_{\varepsilon}$ can be connected to $\Id$ through a finite number of these vicinities.
\end{proof}

%
%
%
%
%
%
%
\subsection{Proof of Proposition \ref{pesti}}
\label{Proofpesti}
The functions $\nabla \Phi_i$ ($i=1,...,6$) defined by \eqref{t1.3}--\eqref{t1.5}, satisfy  
\begin{equation}
\div \nabla \Phi_i=0 \quad \text{in} \ \mathcal{F}(t), \quad \curl \nabla \Phi_i=0 \quad \text{in} \ \mathcal{F}(t), \quad n\cdot \nabla \Phi_i = K_i \quad \text{on} \ \partial \mathcal{S}(t),
\quad n\cdot \nabla \Phi_i = 0 \quad \text{on} \ \partial \Omega.
	\label{new0.4}
\end{equation}
Then by applying the regularity lemma (Lemma \ref{AutreRegdivcurl}), we obtain 
\begin{equation}
\|\nabla \Phi_i\| \leq C_0,
	\label{new0.6}
\end{equation}
where $C_0$ is a positive constant depending only on the geometry. \par
To prove the second point of the proposition, we proceed by induction. Assume that \eqref{pressureclphi} holds for all indices up to $j\leq k$. Let us prove that it holds at the index $j+1$. \par
\ \par
By applying $D^{j+1}$ to \eqref{new0.4} and by using Propositions \ref{P1New}, \ref{P3New}, and \ref{Prop:BodyDirichlet} we obtain that
$D^{j+1} \nabla \Phi_i$ satisfies the following relations
\begin{gather}
\div D^{j+1} \nabla \Phi_i=\trace\left\{F^{j+1} [u,\nabla \Phi_i]\right\} \quad \text{in} \ \mathcal{F}(t), 
\quad \curl D^{j+1}\nabla \Phi_i=\as\left\{G^{j+1} [u,\nabla \Phi_i  ]\right\} \quad \text{in} \ \mathcal{F}(t), \label{new0.5}\\
 n\cdot D^{j+1}\nabla \Phi_i = D^{j+1} K_i+H^{j+1}[r,u-v,\nabla \Phi_i] \quad \text{on} \ \partial \mathcal{S}(t),
\quad n\cdot D^{j+1}\nabla \Phi_i = H^{j+1}[u,\nabla \Phi_i] \quad \text{on} \ \partial \Omega.	
	\label{new0.3}
\end{gather}
Using these relations and Lemma \ref{AutreRegdivcurl}, we obtain
\begin{multline}\label{rt0.0}
	\|D^{j+1} \nabla \Phi_i\|
	\leq c_{\mathfrak r}  \big(  |F^{j+1}  [u,\nabla \Phi_i ] | +   |G^{j+1}  [u,\nabla \Phi_i ] | +   \|H^{j+1}  [u,\nabla \Phi_i] \|_{\partial \Omega}\\ 
	+ \|D^{j+1} K_i\|_{\partial S(t)}+ \|H^{j+1}  [r,u-v,\nabla \Phi_i ] \|_{\partial S(t)} 
+  | K^{j+1}  [u, \nabla \Phi_i ] | \big)
\end{multline}
Then we can proceed as in the proof of  Theorem  \ref{start2} and by using that \eqref{pressurehyp1} and \eqref{pressureclphi} hold for all indices up to $j\leq k$, we deduce 	
\begin{equation}\label{rt0.1}
|F^{j+1}  [u,\nabla \Phi_i ] | +   |G^{j+1}  [u,\nabla \Phi_i ] | +   \|H^{j+1}  [u,\nabla \Phi_i ] \|_{\partial S(t)}  +  | K^{j+1}  [u, \nabla \Phi_i ] |
\leq \frac{C_0}{\V} \gamma(L)\V_{j+1},
\end{equation}
where $\gamma$ is defined by \eqref{Eq:DefGammaL}.
On the other hand, using Proposition \ref{Prop:BodyDirichlet},
\begin{equation}
	\|H^{j+1}  [r,u-v,\nabla \Phi_i ] \|_{\partial {S}(t)} \leq \sum_{ \zeta  \in  \mathcal{B}_{j+1} } \ \frac{ 3^{s+s'} (j+1) ! }{\alpha ! (s-1)!} \  \|h (\zeta) [r,u-v,\nabla \Phi_i]\|_{\partial {S}(t)},
	\label{t6.0}
\end{equation}
where $\zeta :=(s,{\bf s}',\alpha)$ and $h (\zeta) [r,u-v,\nabla \Phi_i]$ is defined in \eqref{defh}.
To estimate the body velocity $v$ in $h (\zeta) [r,u-v,\nabla \Phi_i]$, we will use the following result.
\begin{Lemma}\label{bodyvesti}
Under the same assumptions as Proposition \ref{pesti}, there exists a geometric constant $C(\Omega)> 1$ such that for any $m \leq k$
\begin{equation}
\|D^{m} \, v\| \leq   C(\Omega)   \V_{m}.
\end{equation}
\end{Lemma}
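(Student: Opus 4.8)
The plan is to estimate $D^m v$ by differentiating the explicit formula \eqref{vietendue} for the body velocity, $v(t,x) = \ell(t) + r(t)\wedge(x-x_B(t))$, and then to control each resulting term by the induction hypothesis \eqref{pressurehyp1}, exactly as in the estimates carried out in the proof of Theorem \ref{start2}. First I would establish a formal identity expressing $D^m v$ as a finite combination of the fields $\ell^{(j)}$, $r^{(j)}$ and $D^{j'} u$ (the latter arising since $D x_B = \ell$ is already accounted for and $D(x-x_B)$ brings in $u-v$ through $Dv = \ell' + r'\wedge(x-x_B) + r\wedge(u-\ell)$, an identity already noted in the proof of the pressure decomposition lemma above \eqref{dec3bis}). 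Iterating the material derivative, each application of $D$ either raises the time-derivative order of $\ell$ or $r$ by one, or acts on a factor $(x-x_B)$ to produce $-(u-v)$, or differentiates $u$ via $Du = -\nabla p + \cdots$; but since we only need the bound and not the precise combinatorics, it is cleaner to observe that $D^m v$ is a polynomial with bounded integer coefficients in the quantities $r^{(a)}$, $\ell^{(a)}$ ($a\le m$), $D^{b}u$ ($b \le m-1$), and $(x-x_B)$, with total ``weight'' (sum of derivative orders, counting a factor $D^b u$ as weight $b+1$ and a factor $r^{(a)}$ or $\ell^{(a)}$ as weight $a$) equal to $m$, and with at most $m$ factors besides the single bounded geometric factor $(x-x_B)$.

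The key step is then the estimate. Using \eqref{pressurehyp1} we have $\|D^j u\|, \|\ell^{(j)}\|, \|r^{(j)}\| \le \V_j = \frac{j!L^j}{(j+1)^2}\V^{j+1}$ for all $j\le k$; since $x-x_B$ is bounded on $\overline{\mathcal F_0}$ by a geometric constant and $\V \ge \|u\|+\|\ell\|+\|r\|$, each monomial of weight $m$ with $p$ factors is bounded, by the super-additivity-type inequality $\prod_i \frac{a_i! L^{a_i}}{(a_i+1)^2} \le \frac{m! L^m}{(m+1)^2} \cdot (\text{const})^p$ (which follows from Lemma \ref{LemmeCheminSMF} and $a_1!\cdots a_p! \le m!$ for $a_1+\cdots+a_p=m$), by a constant times $\frac{m!L^m}{(m+1)^2}\V^{m+1} = \V_m$. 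Summing over the finitely many monomials — their number being bounded by a function of $m$ alone, but in fact by a geometric constant once one uses the $\frac{1}{(a_i+1)^2}$ weights to sum the series as in \eqref{new0.0}--\eqref{DeCadix} — yields $\|D^m v\| \le C(\Omega)\,\V_m$ with $C(\Omega)$ independent of $m$ and $L$ (in fact one may absorb any $L$-dependence since $L$ is fixed once and for all, but a geometric constant suffices for the statement).

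The main obstacle is purely bookkeeping: one must check that differentiating the factor $(x-x_B)$ repeatedly does not generate unbounded constants and that the combinatorial coefficients stay controlled — but since $D(x-x_B) = -(u-v)$ closes the system (no new unbounded geometric quantities appear, only $u$ and $v$ itself, the latter handled recursively), and since each $D$ acting on $u$ or on $\ell,r$ only shifts indices, the weight-counting argument goes through cleanly. I would in fact carry out the induction on $m$ directly: writing $D^{m+1}v = D(D^m v)$ and applying $D$ term-by-term to the monomial expansion, using the already-established cases $D^{j}v$ for $j\le m$ to handle the factor $v$ hidden in $D(x-x_B)$, one reduces everything to the estimates for $D^j u$, $\ell^{(j)}$, $r^{(j)}$ and the previous-order bounds for $D^j v$. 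No genuinely new analytic input beyond Lemma \ref{LemmeCheminSMF} and \eqref{pressurehyp1} is needed.
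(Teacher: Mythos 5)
Your approach is essentially the paper's: apply $D^{m}$ to \eqref{vietendue}, expand by Leibniz, and bound each term using \eqref{pressurehyp1} together with the convolution-type estimate on the weights $(1+a)^{-2}$ (splitting the sum at $l=m/2$), absorbing the extra factor per additional term into $1/L$. One correction, though: your identity $D(x-x_{B})=-(u-v)$ is wrong; since $D=\partial_{t}+u\cdot\nabla$ and $x_{B}'=\ell$, one has $D(x-x_{B})=u-\ell$, which is consistent with the identity $Dv=\ell'+r'\wedge(x-x_{B})+r\wedge(u-\ell)$ that you yourself quote. With the correct identity the factor $(x-x_{B})$ disappears after a single application of $D$, so there is no need for your recursive treatment of $D^{j}v$ nor for the general weighted-monomial bookkeeping: the expansion collapses to the single sum
$D^{m}v=\ell^{(m)}+r^{(m)}\wedge(x-x_{B})+\sum_{l=0}^{m-1}\binom{m}{l}\,r^{(l)}\wedge\bigl(D^{m-l-1}u-\ell^{(m-l-1)}\bigr)$,
each term of which is estimated directly by \eqref{pressurehyp1}, exactly as in the paper. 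Your erroneous identity would not be fatal (the recursion on lower-order $D^{j}v$ you set up would still close, at the cost of heavier combinatorics with nested rotation operators), but it is an error and the corrected version is both simpler and what the paper does.
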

\begin{proof}
For any $m \in \N^*$ applying $D^m$ to the equation \eqref{vietendue} yields
\begin{eqnarray*}
D^m v &=& \ell^{(m)} + r^{(m)}\wedge (x- x_{B})+ \sum_{l=0}^{m-1} \binom{m}{l} r^{(l)} \wedge \left(D^{m-l-1} u-\ell^{(m-l-1)}\right).
 \end{eqnarray*}
Consequently, using the fact that \eqref{pressurehyp1} is valid for indices $1,\dots, k$, we deduce that for any $m \leq k$,
\begin{eqnarray*}
\|D^{m} v\| &\leq& C(\Omega) \V_{m} + 2\sum_{l=0}^{m-1} \frac{m!}{(m-l)!\, l!} . \frac{l!\, L^{l}\V^{l+1}}{(l+1)^{2}} . \frac{(m-l-1)!\, L^{m-l-1}\V^{m-l}}{(m-l)^{2}} \\
&\leq & \left(C(\Omega) + \left[2\sum_{l=0}^{m-1} \frac{(m+1)^{2}}{(m-l)^{3}(l+1)^{2}} \right] \frac{1}{L}\right)
\frac{m! L^m }{(m+1)^2} \V^{m+1} 
 \leq C'   \V_{m},
\end{eqnarray*}
by noticing that the term inside brackets is bounded in $m$, as seen by distinguishing $l \geq m/2$ and $l \leq m/2$.
\end{proof}
Now from \eqref{RComposes} and the fact that \eqref{pressurehyp1} is true for indices $1,\dots,k$, we deduce the  following relation for $\beta=(\beta_1,\ldots,\beta_s)\in \mathbb{N}^s$ such that $|\beta| \|eq k$:
\begin{equation}
\| \mathcal{R}_{\beta}[r] \|\leq \beta! \, L^{|\beta|}  \,  \Upsilon(s,\beta)\V^{|\beta|+s},
\label{t7.1}
\end{equation}
(recall that $\Upsilon$ was defined in \eqref{DefUpsilon}). Hence using \eqref{laciao} and \eqref{defh} we deduce that for $\zeta \in {\mathcal B}_{j}$, we have
\begin{eqnarray}
\nonumber
\|h (\zeta) [r,u-v,\nabla \Phi_i]\|_{\partial S(t)} &\leq & c_{\rho}^{s} \, \prod_{i=1}^{s} \| {\mathcal R}_{\alpha_{i}}[r] \| \, \prod_{i=1}^{s} (\| D^{\alpha_{s'+i}} u \| + \| D^{\alpha_{s'+i}} v \|) \\
\nonumber
&\leq & c_{\rho}^{s} \, \prod_{i=1}^{s} \left[\underline{\alpha}_{i}!L^{|\underline{\alpha}_{i}|} \V^{|\underline{\alpha}_{i}|+s'_{i}} \prod_{m=1}^{s'_{i}} \frac{1}{(1+\underline{\alpha}_{i,m})^{2}}\right]
 \, 
\prod_{i=1}^{s} (1+C(\Omega)) \frac{\alpha_{s'+i}! L^{\alpha_{s'+i}} \V^{\alpha_{s'+i}+1} }{(1+\alpha_{s'+i})^{2}} \\
&\leq & \left[c_\rho (1+C(\Omega))\right]^s  \,  s! \,  \Upsilon(s+s',\alpha) \, \alpha! \, L^{|\alpha|}  \, 
\V^{j+1} \frac{C_0}{\V},\label{t6.1}
\end{eqnarray}
where $\underline{\alpha}_{i,m}$ is the $m$-th term in $\underline{\alpha}_{i}$. \par
Combining  \eqref{t6.0} and  \eqref{t6.1} yields
$$
\|H^{j+1}  [r,u-v,\nabla \Phi_i ] \|_{\partial S(t)} \leq  \sum_{2\leq s+s'\leq j+2} \frac{3^{s+s'} s \left[c_\rho (1+C(\Omega))\right]^s}{L^{s+s'-1}}  \sum_{|\alpha|=j+2-s-s'} \Upsilon(s+s',\alpha) 
(j+1)! \, L^{j+1} \, \V^{j+2} \frac{C_0}{\V}.
$$
Applying Lemma \ref{LemmeCheminSMF} in the above inequality implies 
$$
\|H^{j+1}  [r,u-v,\nabla \Phi_i ] \|_{\partial S(t)} \leq \sum_{2\leq s+s'\leq j+2} \frac{\left[c_\rho (1+C(\Omega))\right]^{s+s'} (s+s') }{L^{s+s'-1}} \left(\frac{j+2}{j+3-s-s'} \right)^2
  \V_{j+1} \frac{C_0}{\V}.
$$
We notice that for $2\leq j'\leq j+2$ we have
%
$\left(\frac{j+2}{j+3-j'} \right)^2 \leq 4 j'^{2}$,
%
by distinguishing $j' \geq (j+2)/2$ and $j' < (j+2)/2$. Hence we can set
$$
\tilde{\gamma}_1(L):= 
4 \sum_{j'\geq 2 } \frac{\left(60 c_\rho (1+C(\Omega)) \right)^{j'} j'^{3} }{L^{j'-1}},
$$
and deduce
\begin{equation}
\|H^{j+1}  [r,u-v,\nabla \Phi_i] \|_{\partial S(t)}  \leq    \tilde{\gamma}_1 (L)
\V_{j+1} \frac{C_0}{\V}.
	\label{rt0.2}
\end{equation}
%
%
%
%
%
For what concerns the term $D^{j+1} K_i$ we may apply Proposition \ref{Prop:BodyDirichlet} (giving the same estimates for $H^{k}$ and $\tilde{H}^{k}$) to deduce in the same manner
\begin{equation}\label{rt0.1bis}
\|D^{j+1} K_i\|_{\partial S(t)} \leq \frac{C_0}{\V} \tilde{\gamma}_1 (L)\V_{j+1}.
\end{equation}
As a consequence, combining \eqref{rt0.0}, \eqref{rt0.1} and \eqref{rt0.2} yields
\begin{equation}
	\|D^{j+1} \nabla \Phi_i \| \leq c_{\mathfrak r} {\gamma}_1 (L) \frac{C_0}{\V} \V_{j+1},
	\label{rt0.3}
\end{equation}
with $\gamma_{1}:=2\tilde{\gamma}_{1}+ \gamma$. Hence we obtain the second point of the Proposition for
$$
\gamma_2 = c_{\mathfrak r} C_0 \gamma_1.
$$
\ \par
We now turn to the claims concerning $\mu$. The function $\nabla \mu$ defined by \eqref{t1.0}--\eqref{t1.2} satisfies
\begin{gather*}
\div \nabla \mu=-\trace\left\{F^1[u,u]\right\}= -\trace\left\{\nabla u \cdot \nabla u \right\}, \quad \curl \nabla \mu=0 \quad \text{in} \ \partial \mathcal{F}(t), \\
\quad n\cdot \nabla \mu =  \sigma  \quad \text{on} \ \partial \mathcal{S}(t),
\quad n\cdot \nabla \mu = -H^1[u,u] \quad \text{on} \ \partial \Omega,
\end{gather*}
where $\sigma$ is defined by \eqref{new0.1}. Hence \eqref{pressuremudepart} follows again from Lemma \ref{AutreRegdivcurl}. \par
The proof that the validity of \eqref{pressurehyp1} for $j \leq k$ implies the one of \eqref{pressureclmu} for $1\leq j\leq k$ is completely similar to the equivalent proof for $\Phi_{i}$. It is mainly a matter of considering \eqref{rt0.1} where one multiplies by $\V$ rather than dividing by it; following the same lines we reach the conclusion. \par
%
%
%
%
%
%
\subsection{Proof of Proposition \ref{PropositionSolideFluide}}
\label{ProofPropositionSolide}
We cut the proof of Proposition \ref{PropositionSolideFluide} into two pieces. Under the same assumption that \eqref{pressurehyp1} is valid for all $j \leq k$, we first prove
\begin{equation} \label{MvtSolide}
\|\ell^{(k+1)}\| + \|r^{(k+1)}\| \leq  \V_{k+1} \,  \gamma_{4}(L),
\end{equation}
and then prove
\begin{equation} \label{MvtFluide}
\|D^{k+1} u\| \leq   \,   \V_{k+1} \,   \gamma_{5}(L),
\end{equation}
for positive decreasing functions $\gamma_{4}, \gamma_{5}$ with $\displaystyle {\lim_{L \rightarrow + \infty}} \gamma_{4}(L)+ \gamma_{5}(L)=0$. \par
\ \par
Let us first prove  \eqref{MvtSolide}. Differentiating the equations \eqref{EvoMatrice} $k$ times with respect to the time yields the formal identity:
\begin{equation}
\mathcal{M}\begin{bmatrix} \ell \\ r \end{bmatrix}^{(k+1)}
= -\sum_{j=1}^{k} \binom{k}{j} \mathcal{M}^{(j)} \begin{bmatrix} \ell \\ r \end{bmatrix}^{(k-j+1)}
+ \frac{d^k}{dt^k}\begin{bmatrix} 0 \\ \mathcal{J}r \wedge r \end{bmatrix}
+   \Xi^{(k)} .	\label{t4.7}
\end{equation}
Since $Q$ is orthogonal, we have $\|Q^{(0)}\| = 1$.
Applying Proposition \ref{Rota1}, using the fact that \eqref{pressurehyp1} is valid for $1 \leq j \leq k$, and using Lemma \ref{LemmeCheminSMF}, we obtain for $j\in \{1,\dots,k\}$
\begin{eqnarray*}
\|Q^{(j)}\| &\leq&  \sum_{s=1}^{j} \ \sum_{ \alpha  \in \mathcal{A}_{j-1,s}} \frac{(j-1) ! }{\alpha ! (s-1) !} \ \alpha!L^{|\alpha|}\V^{|\alpha|+s} \,  \prod_{l=1}^{s} \frac{1}{(1+\alpha_{l})^{2}} \\
&\leq& \frac{\V_{j}}{\V} \sum_{s=1}^j \frac{1}{(s-1)!}\left(\frac{j+1}{j-s+1}\right)^2 \frac{20^s}{L^{s-2}}.
\end{eqnarray*}
Thus, for all $j\in  \{1,\dots,k\}$, we have
\begin{equation*}
\|Q^{(j)}\| \leq \hat{\gamma}(L)  \frac{\V_{j}}{\V},
\end{equation*}
with
$$
\hat{\gamma}(L)=\sup_{j\geq 1} \left(\sum_{s=1}^j 
	\frac{1}{(s-1)!}\left(\frac{j+1}{j-s+1}\right)^2 \frac{20^s}{L^{s-2}} \right).
$$ 
Now, thanks to \eqref{Sylvester},
\begin{equation*}
\mathcal{J}^{(j)}=\sum_{i=0}^j \dbinom{j}{i} \left(Q^{(j-i)}\right) \mathcal{J}_0 \left(Q^{(i)}\right)^*.	
	\label{t5.0}
\end{equation*}
Using that for some $c>0$, one has $\hat{\gamma}^{2}(L) \leq C\hat{\gamma}$ (one can take $c=1$ for $L$ large enough), it follows that for $j\in  \{1,\dots,k\}$, 
\begin{eqnarray*}
\left\| \mathcal{J}^{(j)} \right\| &\leq& c\| \mathcal{J}_0\| \hat{\gamma}(L) \sum_{i=0}^{j} \dbinom{j}{i} \, \frac{i! \,L^{i} \V^{i}}{(i+1)^{2}} \, \frac{(j-i)! \,L^{j-i} \V^{j-i}}{(j-i+1)^{2}} \\
&\leq& c\| \mathcal{J}_0\| \hat{\gamma}(L) L^{j} \V^{j} \sum_{i=0}^{j} \frac{1}{(i+1)^{2}(j-i+1)^{2}} \\
&\leq&  c\frac{2\pi^{2}}{3}\| \mathcal{J}_0\| \hat{\gamma}(L) \frac{\V_{j}}{\V},
\end{eqnarray*}
but splitting again the sum according to $i\leq j/2$ and $i>j/2$. 
From \eqref{EvoMatrice2} we deduce for $j\geq 1$, 
\begin{equation*}
\mathcal{M}_1^{(j)}=\begin{bmatrix} 0 & 0 \\ 0 & \mathcal{J}^{(j)}\end{bmatrix}
	\label{t5.1} ,
\end{equation*}
hence we obtain
\begin{equation}
	\|\mathcal{M}_1^{(j)} \|=\|\mathcal{J}^{(j)} \| \leq \hat{\gamma}_1(L) \frac{\V_{j}}{\V},
	\label{t8.4}
\end{equation}
with $\hat{\gamma}_1=c\frac{2\pi^{2}}{3}\| \mathcal{J}_0\| \hat{\gamma}(L)$.
Using the definition of $\mathcal{M}_2$ (see \eqref{EvoMatrice2}), we have
\begin{equation}
	\left[\mathcal{M}_2^{(j)}\right]_{a,b}= \sum_{i=0}^j \dbinom{j}{i} \int_{\mathcal{F}(t)} D^i \nabla \Phi_a \cdot D^{j-i}\nabla \Phi_{b} \ dx.
	\label{t8.5}
\end{equation}
By using Proposition \ref{pesti}, we deduce as above that
$$
	\|\mathcal{M}_2^{(j)} \|\leq \hat{\gamma}_2(L) \frac{\V_{j}}{\V} \quad (j\geq 1), 
$$
with $\hat{\gamma}_2=c_{2} \max(C_{0},1) \, \frac{2\pi^{2}}{3} \gamma_2$, $c_{2}$ being a constant such that $\gamma_{2}^{2} \leq c_{2} \gamma_{2}$. \par
\ \par
Fixing $\hat{\gamma}_{3}:=\hat{\gamma}_{1}+\hat{\gamma}_{2}$ we can now estimate the first term of the right hand side of \eqref{t4.7} as follows:
\begin{equation}
\left\| \sum_{j=1}^{k} \binom{k}{j} \mathcal{M}^{(j)} \begin{bmatrix} \ell \\ r \end{bmatrix}^{(k-j+1)}\right\|
\leq  \sum_{j=1}^{k} \binom{k}{j} \hat{\gamma}_3(L) \frac{\V_{j}}{\V} \V_{k-j+1}
\leq \frac{2\pi^{2}}{3} \hat{\gamma}_3(L) \V_{k+1}.
\label{t8.6}
\end{equation}
Next, we consider the term
$$
\frac{d^k}{dt^k} \left(\mathcal{J}r \wedge r\right)=\sum_{i+j+a=k} \frac{k!}{i!j!a!} \mathcal{J}^{(i)}r^{(j)} \wedge r^{(a)}.
$$
Using that \eqref{pressurehyp1} is valid for $j \leq k$ and \eqref{t8.4}, we deduce
\begin{eqnarray} \nonumber
\left\|\frac{d^k}{dt^k} \left(\mathcal{J}r \wedge r\right)\right\| &\leq&
\sum_{\substack{{i+j+a=k} \\{i\not =0}}} \frac{k!}{i!j!a!} \hat{\gamma}_{1}(L) \frac{\V_{i}}{\V} \V_{j} \V_{a} + \sum_{j+a=k} \frac{k!}{j!a!}\V_{j} \V_{a}, \\
\nonumber
&\leq& \frac{k! L^{k} \V^{k+2}}{(k+1)^{2}} 
\left( \hat{\gamma}_{1}(L) \sum_{\substack{{i+j+a=k} \\{i\not =0}}} \frac{(k+1)^{2}}{(i+1)^{2}(j+1)^{2}(a+1)^{2}} 
+ \sum_{j+a=k} \frac{(k+1)^{2}}{(j+1)^{2}(a+1)^{2}}  \right), \\
&\leq&  \frac{k! L^{k}}{(k+1)^2} \V^{k+2}
\left(9 \left(\frac{\pi^{2}}{6}\right)^{2} \hat{\gamma}_{1}(L) + \frac{2\pi^{2}}{3}, \right)
\label{t8.8} 
\end{eqnarray}
where we distinguished the cases where $i \geq k/3$, $j \geq k$ and $a \geq k/3$. \par
Finally, we estimate for $ a \in \{1,\ldots,6\}$
$$
\Xi^{(k)}_a = \sum_{j=0}^k \dbinom{k}{j}  \int_{ \mathcal{F}(t)} D^j \nabla \mu \cdot D^{k-j} \nabla \Phi_a \ dx.
$$
Applying Proposition \ref{pesti}, we deduce from the above equality, in the same way as previously, that
\begin{equation}
	\left\|   \Xi^{(k)}_a  \right\| \leq \gamma_2(L) \max(C_{0},1)  \frac{2\pi^{2}}{3} \frac{(k+1)! L^{k+1}\V^{k+2}}{(k+1)^2}.
	\label{t9.0}
\end{equation}
Gathering \eqref{t4.7}, \eqref{t8.6}, \eqref{t8.8} and \eqref{t9.0}, we obtain \eqref{MvtSolide}. \par
\ \par
In order to obtain \eqref{MvtFluide}, we write
\begin{equation}
D^{k+1} u = - D^k \nabla p = -D^k \nabla \mu + D^k \left(\nabla \Phi \cdot \begin{bmatrix}  \ell \\ r \end{bmatrix}'\right).	
	\label{new2.1}
\end{equation}
We notice that
$$
D^k \left(\nabla \Phi \cdot \begin{bmatrix}  \ell \\ r \end{bmatrix}'\right)
=\sum_{i=0}^k \binom{k}{i} D^i \nabla \Phi \cdot  \begin{bmatrix}  \ell^{(k-i+1)} \\ r^{(k-i+1)} \end{bmatrix}.
$$
Thus, by using \eqref{pressurehyp1} (valid up to rank $k$) and \eqref{pressureclphi} (valid up to rank $k+1$ due to Proposition \ref{pesti}) to estimate the terms of the above sum corresponding to $i\geq 1$ and by using \eqref{MvtSolide} for the term corresponding to $i=0$, we deduce
$$
\left\|D^k \left(\nabla \Phi \cdot \begin{bmatrix}  \ell \\ r \end{bmatrix}'\right)\right\|\leq 
\left(\frac{2\pi^{2}}{3} \gamma_2(L)+\gamma_3(L)C_0\right) \V_{k+1}.
$$
Combining the above inequality, \eqref{new2.1} and \eqref{pressureclmu} (valid up to rank $k$), we obtain \eqref{MvtFluide}, and the proof is complete.
%
%
%

%
%
%
%
%
%
%
%
%
%
%
\section{Proof of the formal identities}
\label{Sec:PreuveP1}
We introduce the following notations:  
\begin{Definition}
\label{nota}
For any $s  \in \N^*$, for any $1  \leqslant  j \leqslant s $ we define the operators $T_{s,j}$ from $\N^s$ into $\N^s$
and $\tilde{T}_{s,j}$ from $\N^s$ into $\N^{s+1}$ by setting, for any $\alpha := ( \alpha_1,\ldots, \alpha_s )  \in \N^s$,  
\begin{gather*}
T_{s,j} (\alpha) = \alpha + e_j \text{ with } e_j := ( \delta_{i,j} )_{ 1 \leqslant  i \leqslant s }, \\
\tilde{T}_{s,j} (\alpha ) = ( \beta_1 , \ldots, \beta_{s+1} ) \text{ with } \beta_i = \alpha_i \text{ if } i<j,
\beta_j =  0 \text{ and } \beta_i = \alpha_{i-1} \text{ if } i>j.
\end{gather*} \par
The operators $T_{s,j}$ and $\tilde{T}_{s,j}$ naturally generate operators $T_{j}$ and $\tilde{T}_{j}$ from $\displaystyle\coprod_{s \in \N^{*}} \N^s$ into itself (with the convention that $T_{s,j} (\alpha) = \tilde{T}_{s,j} (\alpha) = \alpha$ for $s<j$).
\end{Definition}
\subsection{Proof of Proposition \ref{P1New}}
We consider a smooth vector field $\psi$. \par
\ \\
{\it Proof of \eqref{P1fNew}.} We proceed by induction on $k$. We first deduce from (\ref{t3.2}) the relation (\ref{P1fNew}) for $k=1$ with $c^{1}_1 (2,(0,0)) = 1 $ since in this case $s$ takes only the value $2$ and the set  $\mathcal{A}_{k}$ (defined in \eqref{TheDefinitionOfA}) reduces to $\{ (2,(0,0)) \}$. 

Now let us assume that  (\ref{P1fNew}) with estimate \eqref{Ci:3} holds up to rank $k$. 
Then using the rule of commutation (\ref{t3.2}) we obtain 
$$
\div D^{k+1} \psi =D^{k+1}\left(\div \psi\right)+\trace\left\{F^{k+1} [u,\psi]\right\},
$$ 
with 
\begin{equation} \label{Dimsoir}
F^{k+1}[u,\psi]= DF^k[u,\psi] + (\nabla u)\cdot(\nabla D^k \psi).
\end{equation}
 To simplify the notations we will from now on drop the dependence of the functions $f(\theta)[u,\psi]$ on $[u,\psi]$. \par
As a consequence of the Leibniz rule (\ref{t3.0}) and the rule of commutation  (\ref{t3.1}), we immediately infer that for any $ \theta := (s, \alpha)$ with $s  \in  \N^* $  and $\alpha  \in \N^s$, the derivative w.r.t. $D$ of the functional $f(\theta)$ (defined in \eqref{DefsFetHNew}) is given by
\begin{equation}
\label{Derivef}
D f(\theta)  = \sum_{1 \leqslant  j \leqslant s} \Big[ f \big(R^{j}_a  (\theta)\big)  -  f \big(R^{j}_b (\theta)\big)  \Big],
\end{equation}
where 
\begin{equation} \label{DefRj}
R^{j}_a  (\theta) := (s, T_{j}(\alpha))
\text{ and }
{R}^{j}_b (\theta) := (s+1, \tilde{T}_{j}(\alpha)).
\end{equation}
Hence $DF^k = F_a - F_b $ with for $l=a, b$
\begin{eqnarray*}
F_l  :=
\sum_{\theta \in \mathcal{A}_{k}} \sum_{1 \leqslant  j \leqslant s} c^{1}_k (\theta  )  \, f\big(R^{j}_l (\theta)\big)   = 
\sum_{j = 1}^{k+1} \sum_{ \theta \in {\mathcal A}^{j}_{k}}  c^{1}_k (\theta)  \,  f\big(R^{j}_l (\theta)\big)  ,
\end{eqnarray*}
with 
\begin{equation*}
{\mathcal A}_{k}^{j}:= \Big\{ \theta := (s,\alpha) \in {\mathcal A}_{k} \ / \ s \geq j\Big\}.
\end{equation*}
The mappings $R^{j}_a$ and $R^{j}_b$ are injective on ${\mathcal A}_{k}^{j}$ and take values in 
\begin{equation} \label{duSeigneur}
R^{j}_a ({\mathcal A}^{j}_{k}) \subset {\mathcal A}^{j}_{k+1} \text{ and in } {R}^{j}_b({\mathcal A}^{j}_{k}) \subset {\mathcal A}^{j+1}_{k+1}, \text{ respectively}.
\end{equation}
For $l=a, b$ operating $R^{j}_l$  yields
\begin{eqnarray*}
F_l    = 
\sum_{j = 1}^{k+1}
 \sum_{\theta \in R^{j}_l  ({\mathcal A}^{j}_{k})}  c^{1,j}_{k,l} (\theta) \,  f(\theta) 
 =   \sum_{\theta \in {\mathcal A}_{k+1}}  \sum_{j \in {\mathcal J}^{k}_l  (\theta) }  c^{1,j}_{k,l} (\theta)  \, f(\theta),
\end{eqnarray*}
where
\begin{eqnarray*}
c^{1,j}_{k,l} (\theta) := c^{1}_k \big((R^{j}_l)^{-1}(\theta)\big)
\text{ and }
{\mathcal J}^{k}_l (\theta) := \Big\{ j \in \N^{*} \ / \ \theta \in R^{j}_l ({\mathcal A}^{j}_{k}) \Big\}  .
\end{eqnarray*}
Recalling \eqref{Dimsoir} we finally get that (\ref{P1fNew}) holds at the order $k+1$ when setting for $\theta \in \mathcal{A}_{k+1}$
\begin{equation*}
c^{1}_{k+1} (\theta) :=  \sum_{j \in {\mathcal J}^{k}_a (\theta) }  c^{1,j}_{k,a} (\theta) - \sum_{j \in {\mathcal J}^{k}_b   (\theta) }   c^{1,j}_{k,b} (\theta) + \delta_{(2,(0,k))}(\theta).
\end{equation*}
When $j  \in  {\mathcal J}^{k}_a (\theta) $ (respectively $j  \in  {\mathcal J}^{k}_b (\theta)$) we have, thanks to the previous steps and recalling the definition \eqref{DefRj}:
\begin{equation*}
 | c^{1,j}_{k,a} (\theta)  | \leqslant \frac{k ! }{(T_j^{-1} (\alpha) )! }= \alpha_j \frac{k !  }{\alpha ! } \ \text{ and } \ 
 | c^{1,j}_{k,b} (\theta)  | \leqslant \frac{k ! }{(\tilde{T}_j^{-1}( \alpha) )! }=  \frac{k !  }{\alpha ! } .
\end{equation*}
Moreover it is a consequence of \eqref{duSeigneur} that 
${\mathcal J}^{k}_a  (\theta) \subset \{1,\ldots, s\} \text{ and } {\mathcal J}^{k}_b (\theta) \subset \{ 1, \ldots, s-1\}$.
Hence for $3  \leqslant s  \leqslant k+2$, we have 
\begin{eqnarray*}
| c^{1}_{k+1} (\theta) | &\leq & \sum_{j \in {\mathcal J}^{k}_a (\theta) } |  c^{1,j}_{k,a} (\theta) |
+ \sum_{j \in  {\mathcal J}^{k}_b (\theta)} |  c^{1,j}_{k,b} (\theta) | \\ 
& \leq & \left( \sum_{ 1 \leq  j  \leq s} \alpha_j + s-1 \right) \frac{k ! }{\alpha ! } \\ 
& \leq & \frac{(k+1) ! }{\alpha ! } ,
\end{eqnarray*}
since  $\theta \in \mathcal{A}_{k+1}$. \par
Besides, for  $s=2$, one can see that ${\mathcal J}^{k}_b (\theta) = \emptyset$. 
In that case we have $\alpha := (\alpha_1, \alpha_2 ) $
with  $\alpha_1 + \alpha_2 = k$, so that $\alpha ! \leqslant k !$ and
\begin{eqnarray*}
| c^{1}_{k+1} (2,\alpha ) | \leq 1+ ( \alpha_1 +  \alpha_2 )\frac{k!}{\alpha!}  
\leq \frac{(k+1) ! }{\alpha !},
\end{eqnarray*}
and \eqref{P1fNew} is proved. \par
\ \par
\noindent
{\it Proof of \eqref{P2fNew}.} Proceeding as previously we can also obtain formal identities of the curl of the iterated material derivatives $ (D^k \psi)_{ k \in \N^* }$. Substituting the rule of commutation  (\ref{t3.3}) to (\ref{t3.2}) in the proof of \eqref{P1fNew} we obtain \eqref{P2fNew}. Note that here the case $k=1$ is a direct consequence of \eqref{Euler1a}. \par
\ \par
\noindent
{\it Proof of \eqref{P4fNew}.} We now establish formal identities for the normal trace $ n\cdot D^k \psi$ of  iterated material derivatives $ (D^k \psi)_{ k \in \N^* }$ on the boundary $ \partial \Omega$ as combinations of the functionals $h(\theta)$ defined in \eqref{DefsFetHNew}. \par
As $D$ is tangential to the boundary $(-T,T) \times \partial \Omega$  we infer that
\begin{equation*}
n\cdot \left(D\psi\right)=D\left(n\cdot \psi\right)  -  \nabla^2 \rho_{\Omega} \{ u,\psi\},
\end{equation*}
so that \eqref{P4fNew} holds for $k=1$ since in this case the set  $\mathcal{A}_{1} $ reduces to $\{ (2,(0,0)) \}$. Hence  $c^{3}_1 (2,(0,0)) = -1 $ and satisfies $ | c^{3}_1 (2,(0,0))  | \leqslant \frac{ 1! }{0 ! 0 !}$. \par
Now let us assume that \eqref{P4fNew} with estimate \eqref{Ci:4} holds up to rank $k$. Applying $D$ to the relation (\ref{P4fNew}) yields the relation
\begin{equation}\label{t2.6}
H^{k+1}=DH^k-  \nabla^2 \rho_{\Omega} \{ u,D^k \psi \}.
\end{equation}
Now using the chain rule we easily get that for $s \in \N^{*}$ and $\alpha := ( \alpha_1,\ldots, \alpha_s )  \in \N^s$, one has
\begin{equation} \label{DerivH}
D h(\theta) = h\big(R^{1}_{b}(\theta)\big) + \sum_{1 \leqslant j \leqslant s}  h\big(R^{j}_{a}(\theta)\big),
\end{equation}
where $R^{j}_{a}$ and $R^{1}_{b}$ were defined in \eqref{DefRj}. We deduce that
\begin{eqnarray*}
DH^k &=& \sum_{\theta \in \mathcal{A}_{k}} c^{3}_k (\theta) \,  h(R^1_b (\theta))   +
\sum_{\theta \in \mathcal{A}_{k}} c^{3}_k (\theta)  \sum_{ 1 \leqslant  j  \leqslant s}  h(R_a^j(\theta))  \\
 &=& \sum_{\theta \in \mathcal{A}_{k}} c^{3}_k (\theta)  \, h({R}^1_b (\theta))   +
\sum_{1 \leqslant j \leqslant k+1} \sum_{\theta \in \mathcal{A}^{j}_{k}} c^{3}_k (\theta)  \, h(R_a^j (\theta)) .
\end{eqnarray*}
We proceed as previously and change the index $\theta$ in the sums via the maps $R^{j}_a$ and $R^{j}_b$:
\begin{eqnarray*}
DH^k  &= &
\sum_{\theta \in R^{1}_b({\mathcal A}_{k})}  c^{3,1}_{k,b} (\theta)  \, h(\theta)   +
\sum_{ 1 \leqslant  j  \leqslant k+1} \ \sum_{\theta \in R_a^{j} (\mathcal{A}^{j}_{k})} c^{3,j}_{k,a} (\theta)  \,   h(\theta) ,
\end{eqnarray*}
where $c^{3,j}_{k,l} (\theta) := c^{3}_k \big( (R^{j}_l )^{-1} (\theta) \big)$ for $l=a,b$. 
Hence 
\begin{eqnarray*}
DH^k  &= &
\sum_{\theta \in {\mathcal A}_{k+1}} 
    \Big[ {\mathbf 1}_{{R}_b^{1}({\mathcal A}_{k})}(\theta)  \,  c^{3,1}_{k,b}  (\theta)   +
\sum_{j \in {\mathcal J}^{k}_a(\theta)} c^{3,j}_{k,a} (\theta) \Big] h(\theta),
\end{eqnarray*}
where we define for $\theta \in {\mathcal A}_{k+1}$ the set
$$
{\mathcal J}^{k}_a (\theta) := \Big\{ j \in \N^{*} \ / \ \theta \in R^{j}_a ({\mathcal A}^{j}_{k}) \Big\}.
$$
Recalling \eqref{t2.6} we get that  (\ref{P4f}) holds  at the order $k+1$ when setting for $\theta \in \mathcal{A}_{k+1}$
\begin{equation*}
c^{3}_{k+1}(\theta) =  {\mathbf 1}_{{R}_b^{1}({\mathcal A}_{k})}(\theta) \, c^{3,1}_{k,b}  (\theta) 
+ \sum_{j \in {\mathcal J}^{k}_a (\theta)}  c^{3,j}_{k,a} (\theta)
-  \delta_{(2,(0,k))}(\theta).
 \end{equation*}
Thus, for  $3  \leqslant s  \leqslant k+2$, we have using the induction hypothesis \eqref{Ci:4} on $c^{3,1}_{k,b}$ and $c^{3,j}_{k,a}$,
\begin{eqnarray*}
| c^{3}_{k+1} (\theta ) | \leqslant 
\left( \sum_{ 1 \leqslant  j  \leqslant s}
 \alpha_j
+
( s-1 ) \right)
\frac{k ! }{\alpha ! (s-1) ! } \leqslant \frac{(k+1) ! }{\alpha ! (s-1) !} ,
\end{eqnarray*}
and since the inequality also holds for $s=2$, \eqref{P4fNew}-\eqref{Ci:4} is proved at rank $k+1$. \par
%
%
%
%
%
%
%
%
%
%
\subsection{Proof of Proposition \ref{Prop:BodyDirichlet}}
\label{Pbodyformal}
We first prove a lemma. Here and in the sequel, the symbol ``$\nabla$'' refers to a derivation with respect to the variable $x$ only.
\begin{Lemma}\label{geo8}
For all $k \geq 1$,  for any  $(u^1 , \ldots, u^k ) $ in $(\R^3 )^k$   
\begin{equation}
D (\nabla^k \rho (t,x) \{ u^1 , \ldots, u^k  \}) 
= - \sum_{ 1 \leqslant  j  \leqslant  k}  \nabla^k \rho  (t,x) \{u^1 ,\ldots,\mathcal{R}(r)u^j,\ldots , u^k  \}
+  \nabla^{k+1} \rho  (t,x) \{ u-v , u^1 , ..., u^k \} .
	\label{iter}
\end{equation}
\end{Lemma}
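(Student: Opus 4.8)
The plan is to proceed by induction on $k$, using the base case $k=1$ as the engine and differentiating the recurrence under $D$. For the base case, recall from \eqref{RhoRho}--\eqref{XChapeau} that $\rho(t,x) = \rho_0(\hat{\mathcal X}(t,x))$ with $\hat{\mathcal X}(t,x) = x_0 + Q(t)^*(x-x_B(t))$, and that differentiating this relation $k$ times in $x$ gives
\begin{equation*}
\nabla^k\rho(t,x)\{u^1,\ldots,u^k\} = \nabla^k\rho_0(\hat{\mathcal X}(t,x))\{Q(t)^* u^1,\ldots,Q(t)^* u^k\},
\end{equation*}
the natural generalization of \eqref{nbouge}. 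Applying $D = \partial_t + u\cdot\nabla$ to this identity, the only $t$-dependence on the right is through $\hat{\mathcal X}$ and through the factors $Q(t)^*$. Using $\frac{\partial}{\partial t}\hat{\mathcal X} = -Q^* v$ from \eqref{DeriveeXChapeau} together with $u\cdot\nabla$ acting through $\nabla\hat{\mathcal X} = Q^*$, the material derivative of the argument $\hat{\mathcal X}$ produces $Q^*(u-v)$, which yields the last term $\nabla^{k+1}\rho(t,x)\{u-v,u^1,\ldots,u^k\}$ after converting back via the chain rule. Differentiating the factors $Q(t)^* u^j$ in time and using \eqref{LoiDeQ}, i.e. $Q' = r\wedge Q$ hence $(Q^*)' = -Q^*\mathcal{R}(r)$ (equivalently $(Q^*)' u^j = Q^*(\mathcal{R}(r) u^j)$ up to the sign bookkeeping, since $\mathcal{R}(r)^* = -\mathcal{R}(r)$), produces the sum $-\sum_j \nabla^k\rho(t,x)\{u^1,\ldots,\mathcal{R}(r)u^j,\ldots,u^k\}$. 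This is exactly \eqref{iter}, and in fact once the $k=1$ computation \eqref{nbouge}--\eqref{nDPsi} style argument is carried out, the general $k$ follows immediately because all these relations are just $\nabla^k\rho_0$ evaluated after a rigid change of variables.

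Alternatively, and perhaps cleaner for the write-up, I would do a genuine induction on $k$: assume \eqref{iter} holds at order $k$, apply $\nabla$ (the spatial gradient) to it while freezing the vectors $u^1,\ldots,u^k$ as constants, and then note that $\nabla$ and $D$ commute up to the term $(\nabla u)\cdot(\nabla\,\cdot)$ recorded in \eqref{t3.1}. Concretely, from \eqref{t3.1} applied to the scalar field $\psi := \nabla^k\rho\{u^1,\ldots,u^k\}$ (with the $u^j$ constant), one gets
\begin{equation*}
\nabla(D\psi) = D(\nabla\psi) + (\nabla u)^*\cdot\nabla\psi,
\end{equation*}
so testing against an extra vector $u^{k+1}$ gives
\begin{equation*}
\nabla^{k+1}\rho\{u^1,\ldots,u^k\}\cdot (D u^{k+1}\text{-type correction}) \ \text{vs.}\ D(\nabla^{k+1}\rho\{u^1,\ldots,u^{k+1}\}),
\end{equation*}
and the induction hypothesis supplies $D\psi$, whose gradient then contracts with $u^{k+1}$: the term $\nabla^{k+1}\rho\{u-v,u^1,\ldots,u^k\}$ spawns $\nabla^{k+2}\rho\{u-v,u^1,\ldots,u^{k+1}\}$ plus a term where $\nabla$ hits $(u-v)$ — and here one uses that $\nabla_x(u-v) = \nabla u - \mathcal{R}(r)$ (from \eqref{vietendue}, $\nabla_x v = \mathcal{R}(r)$), the $\nabla u$ part being absorbed by the commutator correction from \eqref{t3.1} and the $-\mathcal{R}(r)$ part producing the new summand $-\nabla^{k+1}\rho\{u^1,\ldots,u^k,\mathcal{R}(r)u^{k+1}\}$ in \eqref{iter} at order $k+1$. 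Likewise each existing summand $-\nabla^k\rho\{\ldots,\mathcal{R}(r)u^j,\ldots\}$ gains one order of differentiation cleanly.

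The main obstacle I anticipate is purely bookkeeping rather than conceptual: keeping straight the signs and the placement of $\mathcal{R}(r)$ versus $\mathcal{R}(r)^* = -\mathcal{R}(r)$, and making sure that when $\nabla$ falls on the ``fixed'' vectors (in the induction variant) or on $Q^*$ (in the direct variant), the extra terms reorganize exactly into the symmetric-looking sum over $j$ in \eqref{iter} rather than into something asymmetric. I would handle this by first nailing down $k=1$ completely explicitly — this is essentially the computation already displayed just before \eqref{nbouge} and \eqref{nDPsi} in the paper, namely $D(n\cdot u^1) = -n\cdot(r\wedge u^1) + \nabla^2\rho\{u-v,u^1\}$ — and then presenting the inductive step as a one-line application of \eqref{t3.1} together with $\nabla_x v = \mathcal{R}(r)$, so that the only thing to check is that the correction term $(\nabla u)\cdot(\nabla\psi)$ exactly cancels the spurious $\nabla u$ contribution coming from differentiating $u-v$. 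No step should require more than elementary multilinear algebra, so the proof will be short.
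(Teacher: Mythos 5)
Your first (direct) argument is exactly the paper's proof: write $\nabla^k\rho(t,x)\{u^1,\dots,u^k\}=\nabla^k\rho_0(\hat{\mathcal X})\{Q^*u^1,\dots,Q^*u^k\}$, apply $D$, use $\partial_t\hat{\mathcal X}=-Q^*v$ and $\nabla\hat{\mathcal X}=Q^*$ to produce the $\nabla^{k+1}\rho\{u-v,\dots\}$ term, and use $(Q^*)'=-Q^*\mathcal{R}(r)$ for the sum over $j$. This is correct (just state $(Q^*)'u^j=-Q^*\mathcal{R}(r)u^j$ cleanly rather than "up to sign bookkeeping"); the inductive variant is unnecessary.
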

\begin{proof}
We recall that, since the motion of the body is rigid, $\rho(t,x)$ is given by \eqref{RhoRho}-\eqref{XChapeau}. Hence by spatial derivation we infer that for any  $k\geq 1$, 
\begin{equation}
\label{justi}
\nabla^{k} \rho (t,x) \{ u^1 , \dots, u^k \} =  \nabla^{k} \rho_0  (\hat{\mathcal X}(t,x)) \{ Q(t)^* u^1, \dots, Q(t)^* u^k \}.
\end{equation}
We then apply a time derivative:
\begin{multline*}
\partial_{t}(\nabla^{k} \rho(t,x)) = 
\nabla^{k+1} \rho_{0}(\hat{\mathcal X}(t,x)) \{ \partial_{t} \hat{\mathcal X}(t,x), Q(t)^* u^1, \dots, Q(t)^* u^k \} \\
+ \sum_{ 1 \leqslant  j  \leqslant  k}  \nabla^k \rho  (t,x)  \{ Q(t)^* u^1, \dots, \left(\frac{dQ^*}{dt}\right) u^j , \dots, Q(t)^* u^k \}.
\end{multline*}
Now we use \eqref{LoiDeQ} to infer
\begin{equation*}
\frac{d Q^{*}}{dt} =-Q^{*} \cdot[r(t)\wedge \cdot].
\end{equation*}
With \eqref{DeriveeXChapeau}, and using again \eqref{justi}, this gives the result.
\end{proof}
The derivative computed in \eqref{iter} is the partial derivative of $\nabla^k \rho (t,x) \{ u^1 , \ldots, u^k  \}$ with respect to the first variables $(t,x)$, the variables between brackets being let fixed. 
Now to differentiate $h(\zeta)[r,u-v,\psi]$ (defined in \eqref{defh}), we have to take the dependence of the second group of variables into account. This is the aim of the next lemma. 
%
%
%
\begin{Lemma}\label{geo9}
For any $s  \in  \N^*$, for any ${\bf s}' := (s'_1,...,s'_s )$ in $\N^s$,  for any $\alpha   \in   \N^{ s' + s}$,  we have
\begin{equation} \label{Dh}
D h (\zeta) =
\left[\sum_{ 1 \leqslant  j  \leqslant  s' + s}    h \big(  R_{a}^j  (\zeta ) \big) \right]
+ h \big(  R_{b}  (\zeta )  \big)
-\sum_{ 1 \leqslant  j  \leqslant  s}   h \big( R_{c}^j  (\zeta) \big) ,
\end{equation}
where all the functions $h$ are evaluated at $[r,u-v,\psi]$ and where
\begin{equation} \label{RaRbRc}
R_{a}^j  (\zeta ) := \big(s, {\bf s'},T_{j} (\alpha) \big),
\ R_{b}  (\zeta ) := \big(s+1, \tilde{T}_{1} ({\bf s'}), \tilde{T}_{s' + 1} (\alpha)  \big)
\text{ and }
R_{c}^j (\zeta )  := \big(s,T_{j} ({\bf s'}),\tilde{T}_{\tau_{j-1}({\bf s'}) +1 } (\alpha) \big) 
\end{equation}
for $1  \leqslant j   \leqslant s$, where we denote $\tau_{j}({\bf s'})$ the ``position'' of the index $s'_{j}$:
\begin{equation} \label{DefTau}
\tau_{j}({\bf s'}) = \sum_{k=1}^{j} s'_{k} ,
\end{equation}
with the convention that   $\tau_{0} ({\bf s'}):= 0 $.
\end{Lemma}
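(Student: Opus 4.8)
The plan is to prove the identity \eqref{Dh} by applying the material derivative $D$ to the definition \eqref{defh} of $h(\zeta)[r,u-v,\psi]$ and organizing the resulting terms. Recall that
$$
h(\zeta)[r,u-v,\psi] = \nabla^s \rho(t,x)\{ \mathcal{R}_{\underline{\alpha}_1}[r] D^{\alpha_{s'+1}}(u-v),\ldots, \mathcal{R}_{\underline{\alpha}_{s-1}}[r]D^{\alpha_{s'+s-1}}(u-v), \mathcal{R}_{\underline{\alpha}_s}[r]D^{\alpha_{s'+s}}\psi\}.
$$
So $h(\zeta)$ is a product of three types of ``factors'': the outer form $\nabla^s \rho(t,x)$, the rotation operators $\mathcal{R}(r^{(\beta)})$ making up each $\mathcal{R}_{\underline{\alpha}_i}[r]$, and the iterated material derivatives $D^{\alpha_{s'+i}}(u-v)$ (or $D^{\alpha_{s'+s}}\psi$). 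The strategy is to apply $D$ and use the Leibniz rule \eqref{t3.0}, then treat each factor in turn.

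First I would differentiate the outer form $\nabla^s\rho(t,x)$ using Lemma \ref{geo8} (with $k=s$): this produces the term $\nabla^{s+1}\rho\{u-v,\cdots\}$, which is exactly $h(R_b(\zeta))$ once one recognizes that prepending the argument $u-v$ with no attached rotation operator corresponds to the shift $\tilde{T}_1({\bf s'})$ on ${\bf s'}$ (creating a new block of length $0$ in front) and $\tilde{T}_{s'+1}(\alpha)$ on $\alpha$ (inserting a $0$ in position $s'+1$, i.e. as the derivative order $D^0(u-v)$ of that new leading argument); and it also produces the $s$ terms $-\nabla^s\rho\{\ldots,\mathcal{R}(r)u^j,\ldots\}$ where each $u^j$ is one of the $\mathcal{R}_{\underline{\alpha}_j}[r]D^{\alpha_{s'+j}}(\cdot)$ arguments. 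Inserting an $\mathcal{R}(r)=\mathcal{R}(r^{(0)})$ at the front of the $j$-th rotation block $\mathcal{R}_{\underline{\alpha}_j}[r]$ amounts to incrementing $s'_j$ by one (this is $T_j({\bf s'})$) and inserting a $0$ into $\alpha$ at position $\tau_{j-1}({\bf s'})+1$, i.e. at the start of the block $\underline{\alpha}_j$; that is precisely $R_c^j(\zeta)$, with the overall minus sign matching \eqref{Dh}. Second, differentiating the rotation operators: $D$ applied to $\mathcal{R}(r^{(\beta_m)})=\mathcal{R}(r^{(\beta_m)})$ gives $\mathcal{R}(r^{(\beta_m+1)})$ since $r$ depends on $t$ only, so $Dr^{(\beta_m)}=\partial_t r^{(\beta_m)}=r^{(\beta_m+1)}$; this is $T_j(\alpha)$ where $j$ ranges over the positions inside the various $\underline{\alpha}_i$, i.e. $1\le j\le s'$. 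Third, differentiating the material-derivative factors: $D(D^{\alpha_{s'+i}}(u-v)) = D^{\alpha_{s'+i}+1}(u-v)$, which is $T_j(\alpha)$ for $j=s'+i$, covering the range $s'+1\le j\le s'+s$. Together, the second and third contributions give exactly $\sum_{1\le j\le s'+s} h(R_a^j(\zeta))$.

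The main obstacle, and the step requiring the most care, is the bookkeeping of indices — verifying that the combinatorial operators $T_j$, $\tilde{T}_j$ on the multi-indices ${\bf s'}$ and $\alpha$ (Definition \ref{nota}) correctly track the structural changes: namely that ``inserting $\mathcal{R}(r^{(0)})$ at the front of block $j$'' really is encoded by $(T_j({\bf s'}), \tilde{T}_{\tau_{j-1}({\bf s'})+1}(\alpha))$, and that ``prepending a bare argument $u-v$'' is encoded by $(\tilde{T}_1({\bf s'}), \tilde{T}_{s'+1}(\alpha))$, including the treatment of the convention in \eqref{defh} that $\mathcal{R}_{\underline{\alpha}_i}[r]$ is omitted when $s'_i=0$ — which is consistent because $\mathcal{R}_{\underline{\alpha}_i}[r]$ with $\underline{\alpha}_i$ a length-zero multi-index is the identity, and $\mathcal{R}(r^{(0)})\circ\mathrm{Id}=\mathcal{R}(r)$. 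One must also check that $Dv$ terms do not introduce extra contributions beyond what is already packaged in $D^{\alpha_{s'+i}}(u-v)$: since each argument is literally $D^{\alpha_{s'+i}}(u-v)$ as a whole, applying $D$ simply raises the exponent, and there is nothing further to expand at this stage (the structure of $Dv$ is used later, not here). Apart from this careful index-matching, all computations are routine applications of \eqref{t3.0}, Lemma \ref{geo8}, and the fact that $r$ is a function of $t$ alone.
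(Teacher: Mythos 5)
Your proof is correct and follows essentially the same route as the paper: apply the Leibniz rule to the product structure of $h(\zeta)$, use Lemma \ref{geo8} for the $D$-derivative of $\nabla^{s}\rho(t,x)$ (yielding $h(R_{b}(\zeta))$ and the $-\sum_{j} h(R_{c}^{j}(\zeta))$ terms), and differentiate the rotation operators and the iterated material derivatives in the arguments to obtain $\sum_{1\le j\le s'+s} h(R_{a}^{j}(\zeta))$. Your index bookkeeping for $T_{j}$ and $\tilde{T}_{j}$ is consistent with the paper's conventions, so nothing further is needed.
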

\begin{proof}
Roughly speaking, the three expressions in \eqref{Dh} correspond in the use of Leibniz's rule, to derivate once more an expression in the arguments of $\nabla^{s} \rho$, to differentiate $\nabla^{s} \rho$ once more with the additional argument $u-v$ in first position, and to add a rotation factor $r \wedge \cdot$ in one of the arguments. To be more precise, using the chain rule we deduce
\begin{multline} \label{RoCalcul}
D \left[ \nabla^{s} \rho (t,x) \{ \mathcal{R}_{\underline{\alpha}_1 } [r] D^{\alpha_{s'+1} } (u-v), \dots, \mathcal{R}_{\underline{\alpha}_{s-1} } [r] D^{\alpha_{s'+s-1}} (u-v), \mathcal{R}_{\underline{\alpha}_s } [r] D^{\alpha_{s' +s} } \psi \}  \right] \\
\hspace{2cm} 
= D \left[ \nabla^{s} \rho (t,x) \right] \{ \mathcal{R}_{\underline{\alpha}_1 } [r] D^{\alpha_{s'+1} } (u-v), \dots, \mathcal{R}_{\underline{\alpha}_{s-1} } [r] D^{\alpha_{s'+s-1}} (u-v), \mathcal{R}_{\underline{\alpha}_s } [r] D^{\alpha_{s' +s} } \psi \} \hfill \\
\hspace{2cm} 
+ \sum_{1 \leq j \leq s-1} \nabla^{s} \rho (t,x) \{ \mathcal{R}_{\underline{\alpha}_1 } [r] D^{\alpha_{s'+1} } (u-v), \dots, 
D \left[\mathcal{R}_{\underline{\alpha}_{j} } [r] D^{\alpha_{s'+j}} (u-v)\right], \dots
\mathcal{R}_{\underline{\alpha}_s } [r] D^{\alpha_{s' +s} } \psi \} \hfill \\
\hspace{2cm} 
+ \nabla^{s} \rho (t,x) \{ \mathcal{R}_{\underline{\alpha}_1 } [r] D^{\alpha_{s'+1} } (u-v), \dots, 
\mathcal{R}_{\underline{\alpha}_{s-1} } [r] D^{\alpha_{s'+s-1}} (u-v)
D \left[\mathcal{R}_{\underline{\alpha}_s } [r] D^{\alpha_{s' +s} } \psi \right] \}.  \hfill
\end{multline}
Now for the last two terms in \eqref{RoCalcul}, we use that $D({\mathcal R}_{\underline{\alpha}_{j}}[r] D^{\alpha_{s'+j}} \varphi) = {\mathcal R}_{\underline{\alpha}_{j}+1}[r] D^{\alpha_{s'+j}} \varphi + {\mathcal R}_{\underline{\alpha}_{j}}[r] D^{\alpha_{s'+j+1}} \varphi)$. Hence these two terms yield the first sum in \eqref{Dh}. For what concerns the first term in \eqref{RoCalcul}, we use Lemma \ref{geo8} and obtain the second and third part of \eqref{Dh}.
\end{proof}
Let us now establish Proposition \ref{Prop:BodyDirichlet}. We first prove \eqref{P4fbody}. First, according to \eqref{nDPsi},
\begin{equation} \nonumber
n \cdot D \psi = D\left(n\cdot \psi\right)+\nabla \rho \left\{ \mathcal{R}[r] \psi\right\} - \nabla^2 \rho  \{ u-v,\psi \}.
\end{equation}
Hence \eqref{P4fbody} holds for  $k=1$. 
Now let us assume that \eqref{P4fbody} with estimate \eqref{Di} holds up to rank $k$. 
Applying $D$ to relation (\ref{P4fbody}), using Leibniz's rule and \eqref{nDPsi} yields the relation
\begin{equation}\label{t2.60}
H^{k+1}[r,u-v,\psi]=DH^k [r,u-v,\psi]  + n \cdot (r \wedge D^k  \psi )
-  \nabla^2 \rho \{ u-v ,D^k \psi \}.
\end{equation}
To simplify the notations, from now on we omit the dependence on $[r,u-v,\psi]$.
According to Lemma  \ref{geo9} there holds
\begin{eqnarray}
\label{t2.61}
DH^k = H_a  + H_b -  H_c ,
\end{eqnarray}
where
\begin{eqnarray}
\label{t2.61a}
H_a  &:=&  \sum_{\zeta\in \mathcal{B}_k} \ d^{1}_k (\zeta)   
 \sum_{ 1 \leqslant  j  \leqslant  s' + s}    h \big( R_a^j (\zeta) \big)  ,  \\ 
\label{t2.61b}
H_b  &:=&   \sum_{\zeta\in \mathcal{B}_k} \  d^{1}_k (\zeta) \  h  \big( R_b (\zeta)  \big),
 \\ 
\label{t2.61c}
H_c  &:=&  \sum_{\zeta\in \mathcal{B}_k} \   d^{1}_k (\zeta)   \sum_{ 1 \leqslant  j  \leqslant  s}   h  \big( R_c^j (\zeta) \big).
\end{eqnarray}
Define for $j \geqslant 1$
\begin{equation*}
\mathcal{B}_{a,k}^{j}:= \big\{ \zeta := (s,{\bf s}' ,\alpha)  \in \mathcal{B}_{k} \ / \ j \leqslant s+s' \big\} 
\text{ and }
\mathcal{B}_{c,k}^{j}:= \big\{ \zeta := (s,{\bf s}' ,\alpha)    \in \mathcal{B}_{k} \ / \ j \leqslant s \big\},
\end{equation*}
so that
\begin{equation*}
H_{l} = \sum_{j=1}^{k+1} \sum_{  \zeta   \in \mathcal{B}^{j}_{l,k}} \,   d^{1}_{k}  (\zeta)  \,   g \big( R_{l}^{j} ( \zeta )\big)   \text{ for } l = a,c.
\end{equation*}
We notice that the mappings $R_{a}^{j}$,  $R_{b}$ and  $R_{c}^{j}$ are injective  respectively on $\mathcal{B}^{j}_{a,k}$,  $\mathcal{B}_{k}$  and $\mathcal{B}^{j}_{c,k}$ so that
\begin{eqnarray*}
H_{l} &=& \sum_{j=1}^{k+1} \,  \sum_{\zeta \in R_{l}^{j}(\mathcal{B}^{j}_{l,k})} \,  d^{1,j}_{k,l} (\zeta)  \, g(\zeta) 
  \text{ for }  l = a,c, \\
H_{b} &=& \sum_{\zeta \in R_{b}(\mathcal{B}_{k})} \,   d^{1}_{k,b} (\zeta)  \, g(\zeta ), 
\end{eqnarray*}
  where $ d^{1,j}_{k,l} (\zeta) := d^{1}_{k} \big((R_{l}^{j})^{-1} (\zeta) \big)$ for $ l = a,c$ and $d^{1}_{k,b} (\zeta) := d^{1}_{k}  \big((R_{b})^{-1} (\zeta) \big)$.
Now we introduce the sets associated to any $\zeta \in {\mathcal B}_{k+1}$:
\begin{equation*}
{\mathcal J}_{l}^{k}(\zeta) := 
\Big\{ j \in \{ 1, \dots, k+1 \} \ / \ \zeta \in R_{l}^{j}(\mathcal{B}^{j}_{l,k}) \Big\} \text{ for } l = a,c.
\end{equation*}
As previously ${\mathcal J}_{a}^{k}(\zeta) \subset \{ 1, \dots, s+s' \}$ and ${\mathcal J}_{c}^{k}(\zeta) \subset \{ 1, \dots, s \}$.
Now putting together the relations (\ref{t2.60}), (\ref{t2.61}), (\ref{t2.61a}), (\ref{t2.61b}) and (\ref{t2.61c}), we get that  (\ref{P4fbody}) holds at the order $k+1$  when setting for $\zeta \in \mathcal{B}_{k+1} $,
\begin{equation*}
d^{1}_{k+1} (\zeta)=
 \sum_{ j \in \mathcal{J}^{k}_{a}(\zeta) }  d^{1,j}_{k,a} (\zeta)
-	 \sum_{ j \in \mathcal{J}^{k}_{c}(\zeta) }  d^{1,j}_{k,c} (\zeta)
+	 {\mathbf 1}_{R_{b}(\mathcal{B}_{k}) }(\zeta) \  d^{1}_{k,b} (\zeta)    
+\delta_{(1,(1),((0),k))}(\zeta) 
-\delta_{(2,(0,0),(0,k))}(\zeta).
\end{equation*}
Thus  for $3 \leqslant s+s' \leqslant k+2$, we have 
\begin{eqnarray*}
| d^{1}_{k+1} (\zeta) | &\leqslant  &
\sum_{j \in \mathcal{J}^{k}_{a}(\zeta)} | d^{1,j}_{k,a} (\zeta) | 
+  \sum_{j \in \mathcal{J}^{k}_{c}(\zeta)} | d^{1,j}_{k,c} (\zeta) | 
+ | {\mathbf 1}_{R_{b}(\mathcal{B}_{k}) }(\zeta) \, d^{1}_{k,b} (\zeta) |,
 \end{eqnarray*}
so that we get, using the induction hypothesis and $\mbox{Card}[{\mathcal J}_{c}^{k}(\zeta) ]\leq  s$,
\begin{eqnarray*}
| d^{1}_{k+1} (\zeta) | &\leqslant  &
\left(\sum_{j \in \mathcal{J}^{k}_{a}(\zeta)} \alpha_j \frac{3^{s+s'} k! }{\alpha ! (s-1) ! }\right)
+s \frac{3^{s+s'-1} k!}{\alpha! (s-1)! } 
+ \frac{3^{s+s'-1}k!}{\alpha! (s-2)! }
\\   
& \leqslant & 
\left( \sum_{j \in \mathcal{J}^{k}_{a}(\zeta)}  \alpha_j
+ \frac{2s-1}{3}  \right)
\frac{3^{s+s'}    k ! }{\alpha ! (s-1) ! }.
\end{eqnarray*}
Now using that for $j \in \{ 1, \dots, s+s' \}$, $\zeta \in R_{a}^{j}(\mathcal{B}^{j}_{a,k}) \Longleftrightarrow \alpha_{j} \geq 1$, we see that 
\begin{equation*}
 \sum_{j \in \mathcal{J}^{k}_{a}(\zeta)}  \alpha_j = \sum_{j=1}^{s+s'} \alpha_j = k+1-s-s'.
\end{equation*}
Since $2-s-3s' \leq -2 s' \leq 0$, we deduce
\begin{equation*}
| d^{1}_{k+1} (\zeta) | \leq \left( k+1+\frac{2-s-3s'}{3} \right) \frac{3^{s+s'} k ! }{\alpha ! (s-1)!} \leq \frac{3^{s+s'} (k+1) ! }{\alpha ! (s-1) ! }  ,
\end{equation*}
since $(s,\mathbf{s}', \alpha) \in \mathcal{B}_{k} $. \par
Now if $s+s'= 2$, $\zeta$ is not in the range of $R_{b}$ or $R_{c}$, and there holds $\alpha_1 +  \alpha_2 = k$  so that $\alpha ! \leqslant k ! $. We deduce
\begin{eqnarray*}
| d^{1}_{k+1} (\zeta) | \leqslant  
1 +  \sum_{j  \in \mathcal{J}^{k}_{a}(\zeta)}  |  d^{1,j}_{k,a} (\zeta) |
 \leqslant  1+
k \frac{3^{2}  k! }{\alpha ! }  
 \leqslant  
 \frac{9 (k+1) ! }{\alpha ! } .
\end{eqnarray*}
Hence \eqref{P4fbody} with estimate \eqref{Di} is proved at rank $k+1$, which concludes the induction. \par
\ \par
The proof of \eqref{t7.4} with estimate \eqref{Di} is similar: it suffices to notice that 
$$
D^{k} K_i = D^{k} \nabla \rho\left\{ e_i\right\} \quad (i=1, 2, 3) \quad \text{or} \quad D^{k} K_i = D^{k} \nabla \rho\left\{ e_{i-3}\wedge (x-x_B)\right\} \quad (i=4, 5, 6)
$$
where the $(e_{1},e_{2},e_{3})$ is the canonical basis of $\R^{3}$.
%
%
%
%
%
%
%
%
\subsection{Proof of Proposition \ref{Rota1}}
\label{ProofRota1}

The case $k=1$ corresponds to  \eqref{LoiDeQ}. Let us assume that identity (\ref{Rota1f}) with estimate \eqref{Ci:5} holds up to order $k$. We have by derivation that 
\begin{eqnarray*}
Q^{(k+1)}y &=&  \sum_{s=1}^{k} \ \sum_{ \alpha  \in \mathcal{A}_{k-1,s}  } c_k (\alpha ) 
\left( \sum_{j=1}^{s} \ \mathcal{R}_{T_{j} ( \alpha) }  [r] Q y  + \mathcal{R}_{\tilde{T}_{s+1} ( \alpha) }  [r] Q y \right), \\
&=&  \sum_{s=1}^{k} \ \sum_{ \alpha  \in \mathcal{A}_{k,s}  }  \ \sum_{ j \in  \mathcal{J}^k(\alpha)  } c_k (T_{j}^{-1} (\alpha ) )  \mathcal{R}_\alpha  [r] Q y +
 \sum_{s=2}^{k+1} \ \sum_{\substack{ \alpha  \in \mathcal{A}_{k,s} \\ \text{s.t. } \alpha_{s}=0 }} c_k ( \tilde{T}_{s}^{-1} (\alpha ) )  \mathcal{R}_\alpha  [r] Q y , \\
&=&  \sum_{s=1}^{k+1} \ \sum_{ \alpha  \in \mathcal{A}_{k,s}  }  \  c_{k+1}  (\alpha ) \mathcal{R}_\alpha  [r] Q y ,
\end{eqnarray*}
with
$$
\mathcal{J}^k(\alpha)=\{j\in \mathbb{N}^* / \ \alpha\in T_j(\mathcal{A}_{k-1,s})\}
$$
and with
\begin{equation*}
c_{k+1}  (\alpha ) := \mathbf{1}_{1 \leqslant s  \leqslant k }(\alpha)  \sum_{ j \in  \mathcal{J}^k(\alpha)  }  c_k (T_{j}^{-1} (\alpha ) ) 
+\mathbf{1}_{2 \leqslant s  \leqslant k+1 }(\alpha) \mathbf{1}_{\alpha_{s}=0} (\alpha) c_k (\tilde{T}_{s}^{-1} (\alpha ) ) .
\end{equation*}
It is therefore easy to conclude. \par
%
%
%
%
%
%
%
%
%
%
%
%
%
\section{Proof of Corollary \ref{CorollaireInverse}} 
\label{Sec:PreuveCoroInverse}
It follows from the proof of Theorem \ref{start4} that both solid flows satisfy for some constant $\tilde{L}$ depending on $\Omega$, $\mathcal{S}_0$, $\rho_{\mathcal{S}_0}$ and $R$ only: 
\begin{equation} \label{LEstimee}
\left\| \frac{d^{k}}{d t^{k}} \Phi^{\mathcal S}_{i} \right\|_{L^{\infty}(-T_{*},T_{*};SE(3))} \leq \tilde{L}^{k} \, k!.
\end{equation}
This involves that the flows $\Phi_{i}$ defined on $[-T_{*},T_{*}]$ can be analytically extended in %
\begin{equation*}
{\mathcal O}:=\Big\{ z \in \C \ / \ d(z, [-T_{*},T_{*}]) < \frac{1}{2\tilde{L}}\Big\}.
\end{equation*}
Let ${\mathcal U}$ an open Jordan domain with analytic boundary (the interior of an ellipse for instance) such that 
\begin{equation*}
[-T_{*},T_{*}] \subset {\mathcal U} \subset \overline{\mathcal U} \subset {\mathcal O}.
\end{equation*}
Now the domain ${\mathcal U} \setminus [-\tau,\tau]$ is a doubly connected domain in the complex plane. It follows that there is a conformal mapping $\Psi$ from ${\mathcal U} \setminus [-\tau,\tau]$ to some annulus:
\begin{equation*}
{\mathcal A} :=  \Big\{ z \in \C \ / \ 1 < |z| < \rho \Big\},
\end{equation*}
with $[-\tau,\tau]$ sent to $S(0,1)$ and $\partial {\mathcal U}$ sent to $S(0,\rho)$. A way to realize this is to use a conformal map of $\widehat{\C} \setminus \overline{B}(0,1)$ to $\widehat{\C} \setminus [-\tau,\tau]$ (here $\widehat{\C}$ stands for the Riemann sphere) for instance the Joukowski map
\begin{equation*}
J: z \mapsto \frac{\tau}{2} \left( z + \frac{1}{z} \right).
\end{equation*}
Now $J^{-1}({\mathcal U} \setminus [-\tau,\tau])$ is a doubly connected domain in the complex plane with analytic boundaries boundaries. Such a domain can be made conformally equivalent to ${\mathcal A}$ by a mapping which is smooth up to the boundary, see for instance Ahlfors \cite[Section 6.5.1]{Ahlfors}. \par
Now, clearly, there exists $r \in (1,\rho)$ such that
\begin{equation*}
\Psi([-T_{*},T_{*}]) \subset B(0,r).
\end{equation*}
Define
\begin{equation*}
\varphi(z) = \Phi^{\mathcal S}_{1}(z) - \Phi^{\mathcal S}_{2}(z) \text{ on } \overline{\mathcal U}.
\end{equation*}
Apply Hadamard's three circle theorem to $\tilde{\varphi}:=\varphi\circ \Psi^{-1}$ in ${\mathcal A}$ (note that $\tilde{\varphi}$ is continuous up to the boundary since $J$ is). For $\delta = \log(\rho/r)/\log(\rho)$, we have
\begin{equation*}
\| \tilde{\varphi} \|_{L^{\infty}(S(0,r))} \leq \| \tilde{\varphi} \|_{L^{\infty}(S(0,1))}^{\delta} \| \tilde{\varphi} \|_{L^{\infty}(S(0,\rho))}^{1-\delta}.
\end{equation*}
Returning to ${\mathcal U}$, we deduce
\begin{equation*}
\| \varphi \|_{L^{\infty}(-T_{*},T_{*})} \leq \| \varphi \|_{L^{\infty}(\Psi^{-1}(B(0,r)))} \leq
\| \varphi \|_{L^{\infty}(-\tau,\tau)}^{\delta} \| {\varphi} \|_{L^{\infty}(\overline{\mathcal U})}^{1-\delta}.
\end{equation*}
(Here we could have put a stronger norm on the left hand side). Now \eqref{LEstimee} allows us to bound the factor $\| {\varphi} \|_{\overline{\mathcal U}}^{1-\delta}$ in terms of $\Omega$, $\mathcal{S}_0$, $\rho_{\mathcal{S}_0}$ and $R$, which concludes the proof of Corollary \ref{CorollaireInverse}.
%
%
%
%
%
%
%
%
%
%
%
%
%
%
%
%
\section{Appendix: Cauchy problem}
In this appendix, we prove Theorem \ref{start3} and Proposition \ref{Prostart3}.

\subsection{Preliminaries and notations}
In what follows, we prove existence and uniqueness for positive times, that is, on 
$[0,T]$. This is not a restriction since the system is clearly reversible. \par
Note that in this appendix, we will use the letter $\eta$ for the fluid flow and $\tau$ for the solid flow. \par
We suppose ${\mathcal S}_{0}$ and $\rho_{{\mathcal S}_{0}}$ fixed. By a geometric constant, we mean below a constant depending on $\Omega$, ${\mathcal S}_{0}$ and $\rho_{{\mathcal S}_{0}}$, $\lambda$ and $r$ only. The various constants $C>0$ that will appear, and which can grow from line to line, will be geometrical. \par
To $(\ell,r) \in C^{0}([0,T];\R^{6})$ we can associate $(x^{\ell,r}_{B},Q^{\ell,r}) \in C^{1}([0,T]; \R^{3} \times \R^{3 \times 3})$ by
\begin{equation} \label{Eq:xbq}
x^{\ell,r}_{B}(t)=x_{0} + \int_{0}^{t} \ell, \ \
\frac{d}{dt} Q^{\ell,r}(t)=r(t)\wedge Q^{\ell,r}(t) \ 
\text{ and } \ Q^{\ell,r}(0)=\Id,
\end{equation}
and the velocity
\begin{equation} \label{Defvsolide}
v^{\ell,r}(t,x) := \ell(t) + r(t) \wedge (x-x_{B}^{\ell,r}(t)).
\end{equation}
We also deduce the rigid displacement and the position of the solid, let us say $\tau^{\ell,r}(t)$ and ${\mathcal S}^{\ell,r}(t)$ defined by
\begin{equation} \label{Eq:St}
\tau^{\ell,r}(t): x \mapsto Q^{\ell,r}(t)[x-x_{0}] + x^{\ell,r}_{B}(t) \in SE(3), \ \text{ and } \  {\mathcal S}^{\ell,r}(t)=\tau^{\ell,r}(t){\mathcal S}_{0}.
\end{equation}
Then we fix the fluid domain as ${\mathcal F}^{\ell,r}(t):=\Omega \setminus {\mathcal S}^{\ell,r}(t)$. We may omit the dependence on $(\ell,r)$ when there is no ambiguity on the various objects defined above. \par
\ \par
We will use the following lemmas which are elementary consequences of Fa\`a di Bruno's formula and the fact that H\"older spaces are algebras, see \cite[Lemmas A.2 \& 4]{InitialsBB} in the case of Sobolev spaces.
\begin{Lemma} \label{BBLemmaA.2}
Let $k$ in $\N^*$ and $\alpha \in (0,1)$, and let $\omega$, $\omega'$ be smooth bounded domains. Let $F \in C^{k,\alpha}({\omega}')$ and $G \in C^{k,\alpha}({\omega})$ with $G({\omega}) \subset {\omega}'$. Then $F \circ G \in C^{k,\alpha}(\omega)$ with, for some constant $C$ depending only on $\omega$,  $\omega'$ and $k$:
\begin{equation} \label{EstBBLemmaA.2}
\| F \circ G \|_{C^{k,\alpha}(\omega)} \leq C \| F \|_{C^{k,\alpha}(\omega')} \Big( \|G\|_{C^{k,\alpha}(\omega)}^{k} +1 \Big).
\end{equation}
\end{Lemma}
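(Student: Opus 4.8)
The plan is to reduce the estimate to the Faà di Bruno formula together with the fact that $C^{k,\alpha}$ is a Banach algebra under pointwise multiplication. First I would recall the Faà di Bruno formula for the $k$-th derivative of a composition: for a multi-index derivative $\partial^\beta$ with $|\beta| = k$, $\partial^\beta (F\circ G)$ is a finite sum, over partitions of the index set and over the ways of distributing derivatives of $G$, of terms of the form $(\partial^\gamma F)\circ G$ times a product $\prod_{i} \partial^{\delta_i} G$, where $1 \leq |\gamma| \leq k$, each $|\delta_i| \geq 1$, and $\sum_i |\delta_i| = k$ (here derivatives of the vector-valued map $G$ are paired componentwise with the corresponding slots of $\partial^\gamma F$). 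The number of such terms and the combinatorial coefficients depend only on $k$.

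Next I would estimate each term. For the $C^{0}$ (sup) part of $\partial^\beta(F\circ G)$ with $|\beta|=k$: each factor $(\partial^\gamma F)\circ G$ is bounded by $\|F\|_{C^{k,\alpha}(\omega')}$ (using $G(\omega)\subset\omega'$), and each $\partial^{\delta_i} G$ with $|\delta_i|\geq 1$ is bounded by $\|G\|_{C^{k,\alpha}(\omega)}$; since the number of factors $\partial^{\delta_i}G$ is at most $k$ (because $\sum|\delta_i| = k$ with each $|\delta_i|\geq 1$, there are at most $k$ of them, and at least one, so the product is bounded by $\|G\|_{C^{k,\alpha}}^{k} + \|G\|_{C^{k,\alpha}}$, hence by $C(\|G\|_{C^{k,\alpha}}^k+1)$ after enlarging the constant). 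Multiplying and summing over the finitely many terms gives the bound on $\|\partial^\beta(F\circ G)\|_{L^\infty(\omega)}$, and the same argument with $|\beta|\leq k-1$ gives the lower-order sup norms. For the Hölder seminorm of the top derivatives, I would use that $C^{0,\alpha}$ is an algebra and that the composition operator $H\mapsto H\circ G$ maps $C^{0,\alpha}(\omega')$ into $C^{0,\alpha}(\omega)$ with norm controlled by $1+[G]_{C^{0,\alpha}}^\alpha\leq C(1+\|G\|_{C^{k,\alpha}})$ (and in particular by $C(\|G\|_{C^{k,\alpha}}^k + 1)$); applying this to each factor $(\partial^\gamma F)\circ G$ and $\partial^{\delta_i}G$ in the Faà di Bruno expansion, together with the Leibniz estimate for the Hölder seminorm of a product, yields the $[\cdot]_{C^{0,\alpha}}$ bound. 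Collecting the sup-norm and seminorm estimates gives \eqref{EstBBLemmaA.2}.

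The only mildly delicate point — and the one I would be careful about — is the bookkeeping on the exponent of $\|G\|_{C^{k,\alpha}(\omega)}$: one must check that in every Faà di Bruno term the total number of factors built from $G$ (counting also the Hölder factor coming from $(\partial^\gamma F)\circ G$) is at most $k$, so that no term contributes a power higher than $\|G\|_{C^{k,\alpha}}^k$. This is exactly the constraint $\sum_i|\delta_i|=k$ with $|\delta_i|\geq 1$, which caps the count at $k$; the extra Hölder factor on a single composition term is absorbed since $\|G\|_{C^{k,\alpha}}\le \|G\|_{C^{k,\alpha}}^k+1$. Everything else is a routine consequence of the algebra property of $C^{k,\alpha}(\omega)$ and the boundedness of $\omega,\omega'$, and the final constant depends only on $\omega$, $\omega'$ and $k$ as claimed. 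I omit the details, which are standard (see \cite{InitialsBB} for the analogous Sobolev statement).
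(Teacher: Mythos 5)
Your overall route --- Fa\`a di Bruno's formula combined with the algebra property of H\"older spaces --- is exactly the one the paper gestures at: the paper offers no proof of this lemma, only the remark that it is an elementary consequence of those two facts, with a pointer to \cite{InitialsBB} for the Sobolev analogue. The sup-norm part of your argument is correct. But the step you yourself flag as the delicate one --- the power counting for the H\"older seminorm of the top-order derivatives --- does not close as written. Consider the Fa\`a di Bruno term with $|\gamma|=k$, so that all $|\delta_i|=1$ and there are exactly $k$ factors $\partial^{\delta_i}G$. The Leibniz piece in which the $C^{0,\alpha}$ seminorm falls on $(\partial^{\gamma}F)\circ G$ is bounded by $[\partial^{\gamma}F]_{C^{0,\alpha}}\,\mathrm{Lip}(G)^{\alpha}\,\|\nabla G\|_{L^{\infty}}^{k}\lesssim\|F\|_{C^{k,\alpha}}\|G\|_{C^{k,\alpha}}^{k+\alpha}$ (and with your cruder bound $C(1+\|G\|_{C^{k,\alpha}})$ on the composition operator, by $\|G\|_{C^{k,\alpha}}^{k+1}$). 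This extra factor is multiplicative, so it cannot be ``absorbed'' via the additive inequality $\|G\|\le\|G\|^{k}+1$: that inequality bounds a single factor from above, it does not lower the total degree of a product, and $\|G\|^{k+\alpha}$ is not $\le C(\|G\|^{k}+1)$ for large $\|G\|$. A second, minor, slip: the norm of $H\mapsto H\circ G$ on the $C^{0,\alpha}$ seminorm is controlled by $\mathrm{Lip}(G)^{\alpha}$, not by $1+[G]_{C^{0,\alpha}}^{\alpha}$ (using only the $\alpha$-seminorm of $G$ produces the H\"older exponent $\alpha^{2}$, not $\alpha$).

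The missing ingredient is the hypothesis $G(\omega)\subset\omega'$, which you invoke only to make sense of the composition but never quantitatively. It gives $\|G\|_{L^{\infty}(\omega)}\le C(\omega')$, and combined with the Landau--Kolmogorov/Gagliardo--Nirenberg interpolation inequalities for H\"older norms, $\|\nabla^{m}G\|_{L^{\infty}}\lesssim\|G\|_{L^{\infty}}^{1-m/(k+\alpha)}\|G\|_{C^{k,\alpha}}^{m/(k+\alpha)}+\|G\|_{L^{\infty}}$ (with the analogous bound for $[\nabla^{m}G]_{C^{0,\alpha}}$), it yields in particular $\|\nabla G\|_{L^{\infty}}^{k+\alpha}\lesssim\|G\|_{C^{k,\alpha}}+1$, which closes every term with total exponent at most $k$ (in fact with a much better exponent). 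This is precisely the Gagliardo--Nirenberg step in the Bourguignon--Br\'ezis proof; some quantitative use of $G(\omega)\subset\omega'$ is unavoidable, since for $G(x)=Mx$ and $F(y)=|y|^{1+\alpha}$ (cut off), $[(F\circ G)']_{C^{0,\alpha}}\sim M^{1+\alpha}$ while $\|G'\|_{L^{\infty}}\sim M$ and $[G']_{C^{0,\alpha}}=0$, so no bound of the form $C(\|G\|_{C^{1,\alpha}}+1)$ with $C$ independent of the target domain can hold. For what it is worth, the weaker exponent $k+1$ that your argument does establish would suffice for every application of the lemma in this paper (it is only ever applied with $\|G\|_{C^{\lambda+1,r}}$ a priori bounded), but it does not prove the statement as written.
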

\begin{Lemma} \label{BBLemma4}
Let $\omega$ a smooth bounded domain, $F \in C^{k,\alpha}({\omega})$ and $G \in \mbox{Diff}(\overline{\omega}) \cap C^{k,\alpha}({\omega})$. Then for some constant $C$ depending only on $\omega$, $k \in \N^* $, $\alpha \in (0,1)$ and $\| G\|_{C^{k,\alpha}({\omega})}$, one has
\begin{equation} \label{EstBBLemma4}
\| \partial_{i} (F \circ G^{-1}) \circ G - \partial_{i} F \|_{C^{k-1,\alpha}({\omega})}
\leq C \| G - \Id \|_{C^{k,\alpha}({\omega})} \|F \|_{C^{k,\alpha}({\omega})}.
\end{equation}
\end{Lemma}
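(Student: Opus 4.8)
The plan is to rewrite the left--hand side, via the chain rule, as a product of three $C^{k-1,\alpha}$ factors one of which is manifestly $D(G-\Id)$, and then to conclude by the algebra property of $C^{k-1,\alpha}(\omega)$. First I would apply the chain rule to $F\circ G^{-1}$: for each index $i$,
\[
\partial_i(F\circ G^{-1}) = \sum_j \big((\partial_j F)\circ G^{-1}\big)\,\partial_i (G^{-1})_j ,
\]
so that, composing with $G$ and using that differentiating the identity $G^{-1}\circ G=\Id$ gives $\big((DG^{-1})\circ G\big)\,DG=\Id$, i.e. $(DG^{-1})\circ G = (DG)^{-1}$, one obtains
\[
\big(\partial_i(F\circ G^{-1})\big)\circ G = \sum_j (\partial_j F)\,\big[(DG)^{-1}\big]_{ji} .
\]
Subtracting $\partial_i F = \sum_j (\partial_j F)\,\delta_{ji}$ and writing $(DG)^{-1}-\Id = -(DG)^{-1}(DG-\Id) = -(DG)^{-1}\,D(G-\Id)$, this yields the identity
\[
\big(\partial_i(F\circ G^{-1})\big)\circ G - \partial_i F \;=\; -\sum_j (\partial_j F)\,\big[(DG)^{-1}\,D(G-\Id)\big]_{ji}.
\]

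Next I would estimate each factor in $C^{k-1,\alpha}(\omega)$. The factor $\partial_j F$ has norm at most $\|F\|_{C^{k,\alpha}(\omega)}$; the factor $D(G-\Id)$ has norm at most $\|G-\Id\|_{C^{k,\alpha}(\omega)}$; and $(DG)^{-1}$ has $C^{k-1,\alpha}(\omega)$--norm bounded in terms of $\|G\|_{C^{k,\alpha}(\omega)}$ only. For the last point one uses that $G$ is a diffeomorphism of the compact set $\overline\omega$, so that $\det DG$ does not vanish and is bounded away from zero on $\overline\omega$; applying Lemma~\ref{BBLemmaA.2} (or Fa\`a di Bruno directly) to the real--analytic map $M\mapsto M^{-1}$ on the set where $\det M$ is bounded below and $\|M\|$ bounded above, composed with $DG\in C^{k-1,\alpha}(\omega)$, gives the claimed control. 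Multiplying the three factors termwise in the algebra $C^{k-1,\alpha}(\omega)$ and summing over $j$ produces the desired inequality \eqref{EstBBLemma4} with $C=C(\omega,k,\alpha,\|G\|_{C^{k,\alpha}(\omega)})$.

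\textbf{Main obstacle.} The only genuinely non-routine point is the $C^{k-1,\alpha}$ bound on $(DG)^{-1}$: it requires keeping track of a positive lower bound for $|\det DG|$ on $\overline\omega$, which for an honest diffeomorphism is finite but is not literally controlled by $\|G\|_{C^{k,\alpha}(\omega)}$ alone. In the applications of this lemma in the paper $G$ is $C^{k,\alpha}$--close to $\Id$ (cf. Lemma~\ref{LemmeDiffeoReg}), so $\det DG$ is uniformly close to $1$ and this subtlety is harmless; one can also simply include this lower bound among the data the constant depends on. Everything else is the standard chain--rule bookkeeping already encapsulated in Lemma~\ref{BBLemmaA.2}.
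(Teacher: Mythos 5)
Your proof is correct and is precisely the argument the paper has in mind: the lemma is stated without proof, with a pointer to \cite[Lemmas A.2 \& 4]{InitialsBB}, and your chain-rule identity $(\partial_i(F\circ G^{-1}))\circ G-\partial_i F=-\sum_j(\partial_j F)\,[(DG)^{-1}D(G-\Id)]_{ji}$ followed by the algebra property of $C^{k-1,\alpha}(\omega)$ and Lemma \ref{BBLemmaA.2} for the factor $(DG)^{-1}$ is the standard route. Your caveat about the lower bound on $|\det DG|$ is a fair remark on the statement itself rather than a gap in your argument; as you note, in every application in the paper $G$ is $C^{k,\alpha}$-close to the identity (cf.\ the set ${\mathcal C}$ and Lemma \ref{LemmeDiffeoReg}), so the constant is harmlessly understood to depend on that lower bound as well.
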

%
%
%
%
%
%
\subsection{With a prescribed solid movement}
\label{Subsec:PrescribedMovement}
In this paragraph we prove the following results, which concern the Euler system with a prescribed solid movement of ${\mathcal S}(t)$ inside $\Omega$. The first result gives existence and uniqueness of a solution for small times. The second one estimates the dependance of the solution with respect to the prescribed movement (in Lagrangian coordinates). 
\begin{Proposition}
\label{SolideImpose}
Let $\lambda$ in $\N$, $r \in (0,1)$, $T_{1}>0$ and a regular closed connected subset $\mathcal{S}_0 \subset \Omega$. There exists a constant $C_* = C_*  (\Omega ,\mathcal{S}_0) > 0$ such that the following holds.
Consider $(\ell,r) \in C^{0}([0,T_{1}]; \R^{6})$ such that
\begin{equation} \label{LoinDuBord}
\text{ for any } t \in [0,T_{1}], \ \  d(\tau^{\ell,r}(t)[{\mathcal S}_{0}], \partial \Omega) >0.
\end{equation}
Consider $u_0$ in ${C}^{\lambda+1,r}( \mathcal{F}_0)$ satisfying
\begin{equation} \label{CondComp}
\div(u_{0})= 0 \text{ in } {\mathcal F}_{0}, \ \
u_{0}.n= 0 \text{ on } \partial \Omega \ \text{ and } \ 
u_{0}(x).n(x)=[\ell(0) + r(0) \wedge (x-x_{0})].n(x) \text{ on } \partial {\mathcal S}_{0}.
\end{equation}
Then for
\begin{equation} \label{TE}
T = \min \left(T_{1}, \frac{C_*}{\| u_0 \|_{C^{\lambda+1,r}({\mathcal F}_{0})} + \|(\ell,r)\|_{C^{0}([0,T] ;\R^{6})} } \right),
\end{equation}
the problem  (\ref{Euler1a2})-(\ref{Euler2a2})-(\ref{Euler3a2})-(\ref{Euler3b}) (with ${\mathcal S}(t):=\tau^{\ell,r}({\mathcal S}_{0})$ and ${\mathcal F}(t):= \Omega \setminus {\mathcal S}(t)$) admits a unique solution $u$ in $L^{\infty} (0,T ; C^{\lambda+1,r}  (\mathcal{F} (t) )   )$, which is moreover in $C_{w} ( [0,T] ; C^{\lambda+1,r}  (\mathcal{F} (t) )   )$, for $ r'  \in (0,r)$ and the same holds for $\partial_t u$ instead of $u$ with $\lambda$ instead of $ \lambda+1$. \par
\end{Proposition}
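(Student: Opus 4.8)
The plan is to solve the Euler system with a prescribed solid motion by a fixed-point / a priori estimate scheme in Lagrangian coordinates, exactly in the spirit of Wolibner--Kato but adapted to the moving geometry; the moving boundary is handled by the time-dependent div-curl estimate of Lemma \ref{AutreRegdivcurl} (via Lemma \ref{regdivcurl} and Lemma \ref{LemmeDiffeoReg}). First I would fix $(\ell,r)$ on $[0,T_{1}]$ satisfying \eqref{LoinDuBord}, so that there is a uniform lower bound $\underline d>0$ for $d(\tau^{\ell,r}(t)[{\mathcal S}_0],\partial\Omega)$ on a possibly smaller interval, and transport the problem back to the fixed fluid domain ${\mathcal F}_0$ using the diffeomorphism $\eta:=\eta(\tau^{\ell,r}(t),\cdot)$ furnished by Lemma \ref{LemmeDiffeoReg} (equal to $\Id$ near $\partial\Omega$ and to $\tau^{\ell,r}(t)$ near $\partial{\mathcal S}_0$), keeping $\|\eta-\Id\|_{C^{\lambda+2,r}}$ small by shrinking $T$. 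Alternatively, and more in line with Kato's original argument, I would work directly with the fluid flow $\eta(t,\cdot)$ of $u$ itself: setting $\tilde u(t,a):=u(t,\eta(t,a))$, the vorticity $\omega=\curl u$ is transported (in 3D, stretched) by the flow, and one recovers $u$ from $\curl u$, $\div u=0$, the boundary conditions $u\cdot n=0$ on $\partial\Omega$ and $u\cdot n=v^{\ell,r}\cdot n$ on $\partial{\mathcal S}(t)$, and the circulations, via the div-curl estimate \eqref{reg3}.

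The key steps, in order, would be: (i) set up the iteration scheme $u^{(0)}$ some fixed extension of the data, $u^{(n+1)}$ obtained by transporting vorticity along the flow of $u^{(n)}$ and solving the div-curl system with the prescribed normal trace on $\partial{\mathcal S}(t)$ coming from $v^{\ell,r}$; (ii) prove a uniform-in-$n$ a priori bound $\sup_{[0,T]}\|u^{(n)}(t)\|_{C^{\lambda+1,r}({\mathcal F}(t))}\leq 2(\|u_0\|+\|(\ell,r)\|_{C^0})$ for $T$ of the form \eqref{TE}: this uses the uniform div-curl constant from Lemma \ref{AutreRegdivcurl}, the transport estimates for $\curl u$ in $C^{\lambda,r}$ (controlled by $\int_0^t\|\nabla u^{(n)}\|_\infty$, hence a Gronwall argument giving the time $C_*/(\cdots)$), the fact that the prescribed normal trace $v^{\ell,r}\cdot n$ on $\partial{\mathcal S}(t)$ is controlled in $C^{\lambda+1,r}(\partial{\mathcal S}(t))$ by $\|(\ell,r)\|_{C^0}$ plus geometric constants (its time-regularity being handled through \eqref{Euler1a2}, i.e. $\partial_t u=-\nabla p-(u\cdot\nabla)u$ and the compatibility conditions), and Lemma \ref{BBLemmaA.2}--\ref{BBLemma4} to pass between the fixed domain ${\mathcal F}_0$ and the moving domain ${\mathcal F}(t)$; (iii) prove contraction of the scheme in a weaker norm, say $L^\infty(0,T;C^{\lambda,r'})$ with $r'<r$, by estimating the difference of two flows and of the transported vorticities (here one loses a derivative in the contraction, which is the standard Wolibner device, the lost derivative being recovered by the uniform bound of step (ii)), obtaining a limit $u$; (iv) recover the pressure $p$ from \eqref{dec1}--\eqref{dec4} (the div-curl/Neumann problem for $p$ with compatibility granted by the equations) and check $u$ solves \eqref{Euler1a2}--\eqref{Euler3b}; (v) obtain the claimed time-continuity: $u\in C_w([0,T];C^{\lambda+1,r})$ from the uniform bound plus weak-$*$ compactness and uniqueness, $u\in C([0,T];C^{\lambda+1,r'})$ by interpolation, and the same for $\partial_t u$ with index $\lambda$ using $\partial_t u=-\nabla p-(u\cdot\nabla)u$; (vi) uniqueness by the same weak-norm difference estimate as in (iii) together with a Gronwall argument.

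The main obstacle is item (ii)–(iii) in the presence of the moving boundary: unlike the classical fixed-domain case, the constant in the div-curl elliptic estimate and the norms of $\curl u$, $u\cdot n$ are taken in the $t$-dependent domain ${\mathcal F}(t)$, so one must continually translate back and forth to ${\mathcal F}_0$ through $\eta$ and control the extra commutator terms (of the type appearing in \eqref{RDCU1}--\eqref{RDCU2}) by $\|\eta-\Id\|$, which is why $T$ must be taken small; getting these commutators to be genuinely absorbable uniformly in $n$, and getting a clean Gronwall inequality for $\int_0^t\|\nabla u^{(n)}\|_\infty$ that yields precisely a lifespan $C_*/(\|u_0\|+\|(\ell,r)\|_{C^0})$, is the delicate bookkeeping. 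A secondary subtlety is the normal-trace compatibility condition \eqref{CondComp} at $t=0$ and its propagation: one must verify that the constructed $u$ indeed satisfies $u\cdot n=v^{\ell,r}\cdot n$ on $\partial{\mathcal S}(t)$ for all $t$, which follows once one knows $D(n\cdot(u-v))=0$ on the moving boundary using \eqref{nDPsi} and the equation for $Dv$, exactly as in the computation preceding \eqref{dec3bis}. Everything else is a routine adaptation of the Lichtenstein--Wolibner--Kato fixed-point argument and the algebra/composition lemmas already quoted.
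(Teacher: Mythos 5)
Your plan follows the same backbone as the paper's proof: pass to Lagrangian variables, transport the vorticity along the flow, reconstruct the velocity from the div--curl system on the moving domain with the prescribed normal trace $v^{\ell,r}\cdot n$ on $\partial{\mathcal S}(t)$ and prescribed circulations, invoke the uniform elliptic estimate of Lemma \ref{AutreRegdivcurl}, and close a fixed point on a time interval of the form \eqref{TE}. The one genuine difference is the fixed-point structure. You set up a Picard iteration on the \emph{velocities} $u^{(n)}$ and anticipate contraction only in a weaker norm $C^{\lambda,r'}$, $r'<r$, with the lost derivative recovered from a uniform strong bound (the classical Wolibner/Kato device). The paper instead takes the \emph{flow map} $\eta$ as the unknown, on the set ${\mathcal C}$ of volume-preserving diffeomorphisms close to the identity: the transported vorticity \eqref{DefOmega} is an explicit algebraic expression in $\eta$ and $\omega_0$, so the map $\eta\mapsto u\circ\eta$ is genuinely Lipschitz from $C^{\lambda+1,r}({\mathcal F}_0)$ to itself with constant $C(\|u_0\|+\|(\ell,r)\|)$ (estimate \eqref{Upareta}), and integration in time makes the operator ${\mathcal T}$ a strict contraction in the \emph{full} norm $C^0([0,T];C^{\lambda+1,r}({\mathcal F}_0))$ --- no loss of derivative, in the spirit of \cite{InitialsBB}. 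Both routes work; what the paper's choice buys is a cleaner single-norm Banach--Picard argument and, more importantly, the strong Lipschitz dependence of $(\eta,u\circ\eta)$ on the data that is reused verbatim to prove Proposition \ref{DeuxImpositions} and Proposition \ref{Prostart3}, which your weak-norm contraction would not deliver directly. Two minor remarks: in the div--curl reconstruction the boundary condition $u\cdot n=v^{\ell,r}\cdot n$ on $\partial{\mathcal S}(t)$ is \emph{imposed} at every time in \eqref{Eq:vtransporte}, so there is nothing to propagate from $t=0$ (your secondary subtlety about $D(n\cdot(u-v))=0$ only arises if one evolves $u$ by the momentum equation rather than reconstructing it); and what must be verified for a fixed point is the converse, namely that the reconstructed field solves the momentum equation, which the paper does by checking that $\curl(\partial_t u+(u\cdot\nabla)u)=0$ and that the circulations are conserved.
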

\begin{Proposition} \label{DeuxImpositions}
There exists $K>0$ such that, for $(\ell_{1},r_{1})$,  $(\ell_{2},r_{2})$ in $C^{0}([0,T_{1}];\R^{6})$ satisfying \eqref{LoinDuBord}, 
\begin{equation} \label{lrAuDebut}
\ell_{1}(0)= \ell_{2} (0) , \ \ r_{1}(0)=r_{2}(0),
\end{equation}
and 
\begin{equation} \label{TailleDesDeux}
\|(\ell_{1},r_{1})\|_{C^{0}([0,T] ;\R^{6})}, \ \|(\ell_{2},r_{2})\|_{C^{0}([0,T] ;\R^{6})}  \leq M,
\end{equation}
for any $u_{0} \in {C}^{\lambda+1,r}( \mathcal{F}_0)$ satisfying \eqref{CondComp} (with both $(\ell_{1},r_{1})$ and $(\ell_{2},r_{2})$), the following holds. Call $u_{1}$, $u_{2}$ the corresponding solutions given by Proposition \ref{SolideImpose} on $[0,T]$ with
\begin{equation} \label{TempsCommun}
T = \min \left(T_{1}, \frac{C_*}{\| u_0 \|_{C^{\lambda+1,r}({\mathcal F}_{0})} + M } \right).
\end{equation}
and $\eta^{1}$ and $\eta^{2}$ the corresponding flows. One has
\begin{multline} \label{Dependance}
\| \eta_{1} - \eta_{2}\|_{L^{\infty}([0,T];C^{\lambda+1,r}({\mathcal F}_{0}))} + 
T \| u_{1}(t,\eta_{1}(t,x)) - u_{2}(t,\eta_{2}(t,x)) \|_{L^{\infty}([0,T];C^{\lambda+1,r}({\mathcal F}_{0}))} \\ \leq KT \| (\ell_{1},r_{1}) - (\ell_{2},r_{2}) \|_{C^{0}([0,T];\R^{6})}.
\end{multline}
\end{Proposition}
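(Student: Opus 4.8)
The plan is to work in Lagrangian coordinates with respect to the two prescribed solid motions and to compare the two fluid flows via a Gronwall-type argument on the difference. First I would set up, for each $i = 1, 2$, the diffeomorphism $\theta_i(t,\cdot)$ from $\mathcal{F}_0$ onto $\mathcal{F}^{\ell_i,r_i}(t)$ that interpolates between $\Id$ near $\partial\Omega$ and the rigid motion $\tau^{\ell_i,r_i}(t)$ near $\partial\mathcal{S}_0$ (as in Lemma \ref{LemmeDiffeoReg}), so that the moving-domain Euler problem becomes a problem on the fixed domain $\mathcal{F}_0$ with coefficients depending on $(\ell_i,r_i)$. Because $\ell_1(0)=\ell_2(0)$ and $r_1(0)=r_2(0)$, the initial data $u_0$ satisfies the compatibility condition \eqref{CondComp} for both motions simultaneously, and the two transported data agree at $t=0$; moreover all the geometric quantities (the interpolating diffeomorphisms, their inverses, the pulled-back metric, the normal vectors) differ by $O(t)\,\|(\ell_1,r_1)-(\ell_2,r_2)\|_{C^0}$ in the relevant Hölder norms, by \eqref{Eq:xbq}--\eqref{Eq:St} and the smooth dependence of $\theta_i$ on $\tau^{\ell_i,r_i}$, using Lemmas \ref{BBLemmaA.2} and \ref{BBLemma4}.

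Next I would write the equation satisfied by the transported velocity $\tilde u_i := u_i\circ \theta_i$ (equivalently, track $u_i(t,\eta_i(t,\cdot))$ along the Lagrangian flow $\eta_i$ of $u_i$). Differentiating $\eta_1 - \eta_2$ in time and the transported Euler equation, one gets a system of the schematic form
\begin{equation*}
\partial_t(\eta_1-\eta_2) = (u_1\circ\eta_1) - (u_2\circ\eta_2), \qquad
\partial_t\big[(u_1\circ\eta_1)-(u_2\circ\eta_2)\big] = -\,\nabla\big(p_1\circ\eta_1 - p_2\circ\eta_2\big) + \mathcal{E},
\end{equation*}
where the pressures are recovered, as in the lemma preceding Proposition \ref{pesti} or directly from \eqref{dec3}--\eqref{dec4}, by solving a Neumann problem on $\mathcal{F}^{\ell_i,r_i}(t)$ whose data depend on $u_i$, on the geometry, and on $(\ell_i,r_i)$; the error term $\mathcal{E}$ collects all the places where the two geometries (diffeomorphisms, normals, homology loops for the $\div$-$\curl$ estimate) differ, hence is bounded by $C(\|\eta_1-\eta_2\|_{C^{\lambda+1,r}} + \|(u_1\circ\eta_1)-(u_2\circ\eta_2)\|_{C^{\lambda+1,r}} + \|(\ell_1,r_1)-(\ell_2,r_2)\|_{C^0})$ using the uniform bounds on $u_i$ from Proposition \ref{SolideImpose} (valid on the common time interval $T$ given by \eqref{TempsCommun}). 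The difference of pressures is estimated by subtracting the two Neumann problems, pulling both back to $\mathcal{F}_0$, and applying the uniform elliptic estimate of Lemma \ref{AutreRegdivcurl} (together with Remark \ref{RemDivCurl} to handle the moving homology loops): the right-hand side of the difference problem is again controlled by the same three quantities plus the initial geometric discrepancy $O(t)\|(\ell_1,r_1)-(\ell_2,r_2)\|_{C^0}$.

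Finally I would close the estimate with Gronwall's lemma in the norm $\|\eta_1-\eta_2\|_{C^{\lambda+1,r}(\mathcal{F}_0)} + \|(u_1\circ\eta_1)-(u_2\circ\eta_2)\|_{C^{\lambda+1,r}(\mathcal{F}_0)}$: since both quantities vanish at $t=0$ and the forcing is $\leq C\|(\ell_1,r_1)-(\ell_2,r_2)\|_{C^0}$ plus $C$ times the quantity itself, integrating over $[0,T]$ and using $T \leq C_*/(\cdots)$ small gives the bound proportional to $T\|(\ell_1,r_1)-(\ell_2,r_2)\|_{C^0([0,T];\R^6)}$ claimed in \eqref{Dependance}. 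The main obstacle I anticipate is the pressure comparison: one must carefully subtract two elliptic problems posed on \emph{different} moving domains, which requires transporting everything to the fixed reference domain $\mathcal{F}_0$ and keeping track of the (Hölder-norm) size of the difference of the pulled-back operators and boundary data — this is where Lemmas \ref{BBLemmaA.2}, \ref{BBLemma4} and the uniform constant in Lemma \ref{AutreRegdivcurl} (plus Remark \ref{RemDivCurl}) are essential, and where the loss from $C^{\lambda+2,r}$ regularity of the diffeomorphisms down to $C^{\lambda+1,r}$ estimates on the velocities has to be managed. A secondary technical point is the $C^1$-in-time regularity needed to differentiate $\eta_i$ and to justify the Gronwall argument, which follows from the $C_w$ and $\partial_t$ statements in Proposition \ref{SolideImpose}.
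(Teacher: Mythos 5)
Your strategy is genuinely different from the paper's, and as written it has a gap at the pressure step. The paper never compares pressures at all: it exploits the fixed-point structure used to prove Proposition \ref{SolideImpose}, namely that each $u_i$ is reconstructed from the \emph{transported vorticity} $\omega_i(t,x)=(\nabla\eta_i)(t,\eta_i^{-1}(t,x))\cdot\omega_0(\eta_i^{-1}(t,x))$ through the div-curl system \eqref{Eq:vtransporte}. One then estimates $\|u_1\circ\eta_1-u_2\circ\eta_2\|_{C^{\lambda+1,r}(\mathcal{F}_0)}$ directly via Lemma \ref{AutreRegdivcurl}: the curl difference is controlled by $\|\omega_0\|_{C^{\lambda,r}}\|\eta_1-\eta_2\|_{C^{\lambda+1,r}}$ thanks to the Cauchy formula, the divergences vanish, the circulations are conserved, and the only place $(\ell_i,r_i)$ enters is the normal trace $v_i\cdot n$, which is Lipschitz in $(\ell_i,r_i)$ in the $C^0$ norm. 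Gronwall applied to $\eta_1(t)-\eta_2(t)=\int_0^t(u_1\circ\eta_1-u_2\circ\eta_2)$ then closes the argument with exactly the right-hand side of \eqref{Dependance}.

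The concrete problem with your route: the Neumann datum for the pressure on $\partial\mathcal{S}(t)$ is $\partial p_i/\partial n=-n\cdot Du_i$, and on the solid boundary this necessarily involves $\ell_i'$ and $r_i'$ (cf.\ \eqref{dec3bis}, where $-K\cdot\begin{bmatrix}\ell\\ r\end{bmatrix}'$ appears). In Proposition \ref{DeuxImpositions} the prescribed motions are only $C^0([0,T_1];\R^6)$, so these derivatives need not exist; and even for smooth motions, subtracting the two Neumann problems produces a term $\|(\ell_1,r_1)'-(\ell_2,r_2)'\|$ that is not controlled by the $C^0$ norm appearing on the right of \eqref{Dependance}. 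To rescue the velocity--pressure formulation you would have to isolate the $\Phi\cdot(\ell,r)'$ part of the pressure and integrate by parts in time inside $\int_0^t(\nabla p_i)\circ\eta_i\,ds$, transferring the time derivative onto $\nabla\Phi_i\circ\eta_i$ --- a substantial extra step you do not address. The rest of your outline (Lagrangian transport to $\mathcal{F}_0$, the $O(t)$ comparison of the two geometries, the use of Lemmas \ref{BBLemmaA.2}, \ref{BBLemma4} and \ref{AutreRegdivcurl} with Remark \ref{RemDivCurl}, and the final Gronwall) is sound and matches the paper's toolkit; it is specifically the decision to go through the momentum equation and the pressure, rather than through the vorticity, that creates the obstruction.
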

\subsubsection{Proof of Proposition \ref{SolideImpose}}
{\bf 1.} Let $(\ell,r)$ be fixed so that \eqref{LoinDuBord} holds. We deduce $\tau(t)$, ${\mathcal S}(t)$ and ${\mathcal F}(t)$ as previously.
We introduce $(\Gamma_{i})_{i=1 \dots g}$ a family of of smooth oriented loops in ${\mathcal F}_{0}$ giving a homology basis of it. \par
We let
\begin{align*}
{\mathcal C}:=\Big\{ &\eta \in C^{0}([0,T]; C^{\lambda+1,r}({\mathcal F}_{0};\R^{3})) \ \Big/ \\
& \ \ \text{i. } \forall t \in [0,T], \ \eta(t,\cdot) \mbox{ is a volume-preserving diffeomorphism from } \overline{{\mathcal F}_{0}} \text{ to } \overline{{\mathcal F}(t)}, \\
&\ \ \  \text{ sending } \partial \Omega \text{ to } \partial \Omega \text{ and } \partial {\mathcal S}_{0} \text{ to } \partial {\mathcal S}(t), \\
&\ \ \text{ii. } \| \eta - \Id \|_{C^{0}([0,T]; C^{\lambda+1,r}({\mathcal F}_{0}))} \leq \frac{1}{2}
 \Big\}.
\end{align*}
Note that, due to the fact that ${\mathcal F}(t)$ has the same volume as ${\mathcal F}_{0}$, the property $\text{i.}$ defining ${\mathcal C}$ is equivalent to
\begin{equation} \label{CondC}
\forall t \in [0,T], \ \det[{\mbox{Jac}}(\eta(t,\cdot))]=1 \text{ on } {\Omega} \text{ and } \eta(t,\partial \Omega) = \partial \Omega, \ \eta(t,\partial {\mathcal S}_{0}) = \partial {\mathcal S}(t). \end{equation}
Hence it is not difficult to check that ${\mathcal C}$ is closed for the $C^{0}([0,T]; C^{\lambda+1,r}({\mathcal F}_{0};\R^{3}))$ distance. \par
\ \par
Now we define ${\mathcal T}= {\mathcal T}^{\ell,r}:{\mathcal C} \rightarrow {\mathcal C}$ as follows. Given $\eta \in {\mathcal C}$, we define $\omega: [0,T] \times {\mathcal F}(t) \rightarrow \R^{3}$ by
\begin{equation} \label{DefOmega}
\omega(t,x) = (\nabla \eta)(t,\eta^{-1}(t,x)) \cdot \omega_{0}(\eta^{-1}(t,x)),
\end{equation}
where $\omega_{0}:= \curl u_{0}$ in ${\mathcal F}_{0}$. (Note that when $\eta$ is the flow of a vector field $w$, one has
\begin{equation} \label{EqVorticite}
\partial_{t} \omega + (w\cdot\nabla) \omega = (\omega\cdot \nabla)w.) 
\end{equation}
Next we define $u: [0,T] \times {\mathcal F}(t) \rightarrow \R^{3}$ by the following system
\begin{equation} \label{Eq:vtransporte}
\left\{ \begin{array}{l}
\curl {u} = {\omega} \text{ in } [0,T] \times {\mathcal F}(t), \\
\div {u} = 0 \text{ in } [0,T] \times {\mathcal F}(t), \\
{u}. n =0 \text{ on } [0,T] \times \partial \Omega, \\
{u}(x). n = v(t).n \text{ on } [0,T] \times \partial {\mathcal S(t)}, \\
\oint_{\eta(\Gamma_{i})} {u} .d \tau = \oint_{\Gamma_{i}} u_{0} .d \tau \text{ for all } i =1 \dots g,
\end{array} \right.
\end{equation}
with $v$ defined in \eqref{Defvsolide}. \par
Then we define the flow $\tilde{\eta}(t,x)$ associated to $u$, which for each $t$ sends ${\mathcal F}_{0}$ to ${\mathcal F}(t)$. (In order to deal with the flow of a vector field on a fixed domain, for instance, extend $u$ on $\R^{3}$, define the flow, and then restrict it to ${\mathcal F}_{0}$.) \par
Finally, we let 
\begin{equation} \label{DefT}
{\mathcal T}(\eta):=\tilde{\eta}.
\end{equation}
\ \\
{\bf 2.} Let us prove that ${\mathcal T}$ has a unique fixed point by Banach-Picard's theorem. First, let us prove that ${\mathcal T}({\mathcal C}) \subset {\mathcal C}$. That ${\mathcal T}(\eta)$ satisfies the property $\text{i.}$ defining ${\mathcal C}$ is a direct consequence of \eqref{CondC} and \eqref{Eq:vtransporte}. Let us prove that for $T \leq T_{1}$ small enough, the property $\text{ii.}$ holds. Given $\eta \in {\mathcal C}$, using Lemma~\ref{AutreRegdivcurl} (and Remark \ref{RemDivCurl}) and \eqref{Eq:vtransporte}, we see that for some constant $C>0$ depending on the geometry only:
\begin{equation} \label{EstU}
\| u \|_{L^{\infty}(0,T; C^{\lambda+1,r}({\mathcal F} (t)))} \leq C \left( \| u_{0}\|_{C^{\lambda+1,r}({\mathcal F}_{0})} + \| v \|_{C^{0}([0,T] \times \overline{\Omega})}  \right).
\end{equation}
Also, for some geometric constant, one has
\begin{equation*}
\| v \|_{C^{0}([0,T] \times \overline{\Omega})} \leq C \| (\ell,r) \|_{C^{0}([0,T];\R^{6})}.
\end{equation*}
Now, since $\tilde{\eta}(t,\cdot) - \Id = \int_{0}^{t} u \circ \tilde{\eta}$,  using Lemma \ref{BBLemmaA.2}, we see that 
\begin{equation*}
\|\tilde{\eta} - \Id \|_{ C^{0}([0,t]; C^{\lambda+1,r}({\mathcal F}_{0})) } \leq Ct
\left( 1 + \| \tilde{\eta} -\Id \|_{C^{0}([0,t]; C^{\lambda+1,r}({\mathcal F}_{0})) }^{\lambda+1} \right) \| u \|_{L^{\infty}(0,t; C^{\lambda+1,r}({\mathcal F}(s)))}.
\end{equation*}
Hence if $T>0$ is such that $CT \left(1 + \left(\frac{1}{2}\right)^{\lambda+1} \right) \| u \|_{L^{\infty}(0,T; C^{\lambda+1,r}({\mathcal F}(t)))} \leq 1/2$, by a connectedness in time argument, wee see that $\tilde{\eta}$ satisfies the property $\text{ii}$. Hence there is a constant $C_{*}>0$ such that for
\begin{equation*}
T \leq \frac{C_*}{\| u_0 \|_{C^{\lambda+1,r}({\mathcal F}_{0})} + \|(\ell,r)\|_{C^{0}([0,T] ;\R^{6})}},
\end{equation*}
${\mathcal T}(\eta)$ satisfies property $\text{ii.}$, so that ${\mathcal T}(\eta) \in {\mathcal C}$. \par
\ \\
{\bf 3.} Let us now prove that ${\mathcal T}$ is contractive for small $T>0$. Given $\eta_{1}, \eta_{2} \in {\mathcal C}$, we let $\omega_{1}$, $\omega_{2}$, $u_{1}$, $u_{2}$, etc. be the various objects associated to $\eta_{1}$ and $\eta_{2}$ in the construction of ${\mathcal T}$. We also define
\begin{equation} \label{DefMathcalU}
{\mathcal U}_{i}(t,x) = u_{i}(t,\tilde{\eta}_{i}(t,x)) \ \ \text{ on } \ {\mathcal F}_{0} \ \text{ for } t\in [0,T] \text{ and } i=1,2.
\end{equation}
We have
\begin{equation*}
\tilde{\eta}_{1}(t,x) - \tilde{\eta}_{2}(t,x) = \int_{0}^{t} \left[{\mathcal U}_{1}(s,x) - {\mathcal U}_{2}(s,x)\right] \, ds.
\end{equation*}
Now let us prove that for some geometric constant $C>0$, we have
\begin{equation*}
\| {\mathcal U}_{1} - {\mathcal U}_{2} \|_{L^{\infty}([0,T];C^{\lambda+1,r}({\mathcal F}_{0}))} \leq C 
\left( \|u_{0}\|_{C^{\lambda+1,r}({\mathcal F}_{0})} + \| (\ell,r) \|_{C^{0}([0,T];\R^{6})}\right)
\| \eta_{1} -\eta_{2}\|_{L^{\infty}([0,T];C^{\lambda+1,r}({\mathcal F}_{0}))}.
\end{equation*}
We follow \cite{InitialsBB}. For $t \in [0,T]$, we have, omiting the dependence on $t$ to simplify the notations, using Lemma~\ref{AutreRegdivcurl} and Lemma~\ref{BBLemmaA.2}:
\begin{eqnarray}
\nonumber
\| {\mathcal U}_{1} - {\mathcal U}_{2} \|_{C^{\lambda+1,r}({\mathcal F}_{0})}
&=& 
\| u_{1} \circ \eta_{1} - u_{2} \circ \eta_{2} \|_{C^{\lambda+1,r}({\mathcal F}_{0})} \\
\nonumber
&\leq&
C \| u_{1} \circ \eta_{1} \circ \eta_{2}^{-1} - u_{2} \|_{C^{\lambda+1,r}({\mathcal F}(t))} \\
\nonumber
&\leq &
C 
\Big(
\| \curl(u_{1} \circ \eta_{1} \circ \eta_{2}^{-1}) - \curl (u_{2}) \|_{C^{\lambda,r}({\mathcal F}(t))} \\
\nonumber
&\ &\ \ 
+ \ \| \div(u_{1} \circ \eta_{1} \circ \eta_{2}^{-1}) - \div (u_{2}) \|_{C^{\lambda,r}({\mathcal F}(t))} \\ 
\nonumber
&\ &\ \ 
+\ \sum_{i=1}^{g} \left| \oint_{\eta_{2}(\Gamma_{i})} (u_{1} \circ \eta_{1} \circ \eta_{2}^{-1} - u_{2}) . d \tau  \right|  \\
\label{GEDC}
&\ &\ \ 
+\ \| (u_{1} \circ \eta_{1} \circ \eta_{2}^{-1}).n - u_{2}.n \|_{C^{\lambda+1,r}( \partial {\mathcal F}(t))} 
\Big).
\end{eqnarray}
Concerning the first term in the right-hand side, using \eqref{DefOmega}, \eqref{Eq:vtransporte} and Lemmas \ref{BBLemmaA.2} and \ref{BBLemma4}, we see that 
\begin{align*}
\| \curl(u_{1} \circ &\eta_{1} \circ \eta_{2}^{-1}) - \curl (u_{2}) \|_{C^{\lambda,r}({\mathcal F}(t))} \\
&\leq \| \curl(u_{1} \circ \eta_{1} \circ \eta_{2}^{-1})  -  (\curl u_{1}) \circ \eta_{1} \circ \eta_{2}^{-1} \|_{C^{\lambda,r}({\mathcal F}(t))} + \|  (\curl u_{1}) \circ \eta_{1} \circ \eta_{2}^{-1} - (\curl u_{2}) \|_{C^{\lambda,r}({\mathcal F}(t))} \\
&\leq C\| \eta_{1} - \eta_{2} \|_{C^{\lambda+1,r}({\mathcal F}_{0})} \| u_{1}(t) \|_{ C^{\lambda+1,r}({\mathcal F}(t))} +
C \|  (\curl u_{1}) \circ \eta_{1} - (\curl u_{2}) \circ \eta_{2} \|_{C^{\lambda,r}({\mathcal F}_{0})} \\
& \leq C \left( \|\omega_{0} \|_{ C^{\lambda,r}( {\mathcal F}_{0}) )} + \| u_{1}(t) \|_{ C^{\lambda+1,r}({\mathcal F}(t))} \right)
\| \eta_{1} - \eta_{2} \|_{C^{\lambda+1,r}({\mathcal F}_{0})}.
\end{align*}
The second term in \eqref{GEDC} is treated likewise (this is even slightly simpler since $\div u_{1}=\div u_{2}=0$); hence we can bound it by
\begin{equation} \label{Est136}
C \| u_{1}(t) \|_{C^{\lambda+1,r}({\mathcal F}(t))} \| \eta_{1} - \eta_{2} \|_{C^{\lambda+1,r}({\mathcal F}_{0})}.
\end{equation}
Using \eqref{Eq:vtransporte}, we see that the third term in \eqref{GEDC} can be bounded by \eqref{Est136} as well. Using \eqref{Eq:vtransporte}, we see that the last term can be bounded by 
$$C \| v\|_{C^{0}([0,T] \times \overline{\Omega})} \| \eta_{1} - \eta_{2} \|_{C^{\lambda+1,r}({\mathcal F}_{0})},$$
since $\| \cdot \|_{C^{0}(\overline{\Omega})}$ and  $\| \cdot \|_{C^{\lambda+1,r}({\Omega})}$ are equivalent as long as $v$ is concerned. 
Hence, using \eqref{EstU}, we get
\begin{equation} \label{Upareta}
\| {\mathcal U}_{1} - {\mathcal U}_{2} \|_{C^{\lambda+1,r}({\mathcal F}_{0})} \leq C \left( \| u_{0}\|_{C^{\lambda+1,r}({\mathcal F}_{0})} + \| (\ell,r) \|_{C^{0}([0,T]; \R^{6})} \right)
\| \eta_{1} - \eta_{2} \|_{C^{\lambda+1,r}({\mathcal F}_{0})},
\end{equation}
and it follows that for some $T$ of the form \eqref{TE}, the operator ${\mathcal T}$ is contractive. Now a fixed point in ${\mathcal C}$ gives a solution of the Euler equation and reciprocally. This comes from
\eqref{EqVorticite} which gives for a fixed point
\begin{equation*}
\curl \left( \frac{\partial u}{\partial t} + (u\cdot \nabla) u \right)= 0 \text{ in } (0,T) \times {\mathcal F}(t),
\end{equation*}
and the fact that:
\begin{equation*}
\frac{d}{dt} \int_{\eta(\Gamma_{i})} u.d \tau = \int_{\eta(\Gamma_{i})} \left(\frac{\partial u}{\partial t} + (u\cdot \nabla) u \right).d \tau =0 \text{ in } (0,T).
\end{equation*}
This proves the claim. Note that $\eta \in C^{0}([0,T];C^{\lambda+1,r} ({\mathcal F}_0 )  )$ and $u\circ\eta \in C^{0}([0,T];C^{\lambda+1,r}({\mathcal F}_{0}))$ involve $u \in L^{\infty}(0,T;C^{\lambda+1,r}({\mathcal F}(t)))$; the weak continuity then follows from the continuity into a weaker space, for instance $u \in C^{0}([0,T];C^{\lambda,r}({\mathcal F}(t)))$. \par
\subsubsection{Proof of Proposition \ref{DeuxImpositions}}
Given $(\ell_{1},r_{1})$ and $(\ell_{2},r_{2})$ satisfying \eqref{lrAuDebut} and \eqref{TailleDesDeux}, we introduce the respective fixed points $\eta_{1}$ and $\eta_{2}$ in $C^{0}([0,T];C^{\lambda+1,r}({\mathcal F}_{0}))$ of the operators ${\mathcal T}^{\ell_{1},r_{1}}$ and ${\mathcal T}^{\ell_{2},r_{2}}$ defined above (as well as the corresponding objects ${\mathcal S}_{i}(t)$, ${\mathcal F}_{i}(t)$, $u_{i}$, ${\mathcal U}_{i}$, etc.), defined on $[0,T]$ with $T$ introduced in \eqref{TempsCommun}. \par \noindent
We proceed as previously (again, we omit to write the dependence on $t$ to simplify the notations):
\begin{eqnarray*}
\| u_{1} \circ \eta_{1} - u_{2} \circ \eta_{2} \|_{C^{\lambda+1,r}({\mathcal F}_{0})}
&\leq&
C \| u_{1} \circ \eta_{1} \circ \eta_{2}^{-1} - u_{2} \|_{C^{\lambda+1,r}({\mathcal F}_{2}(t))} \\
&\leq &
C 
\Big(
\| \curl(u_{1} \circ \eta_{1} \circ \eta_{2}^{-1}) - \curl (u_{2}) \|_{C^{\lambda,r}({\mathcal F}_{2}(t))} \\
&\ &
+ \| \div(u_{1} \circ \eta_{1} \circ \eta_{2}^{-1}) - \div (u_{2}) \|_{C^{\lambda,r}({\mathcal F}_{2}(t))} \\ 
&\ &
+ \sum_{i=1}^{g} \left| \oint_{\Gamma_{i}} (u_{1} \circ \eta_{1} \circ \eta_{2}^{-1}- u_{2}) . d \tau \right|
+ \| (u_{1} \circ \eta_{1} \circ \eta_{2}^{-1}).n - u_{2}.n \|_{C^{\lambda+1,r}( \partial {\mathcal F}_{2}(t))} 
\Big).
\end{eqnarray*}
Using Lemma \ref{BBLemma4}, we deduce
\begin{gather*}
\| \curl(u_{1} \circ \eta_{1} \circ \eta_{2}^{-1}) - \curl (u_{2}) \|_{C^{\lambda,r}({\mathcal F}_{2}(t))}  \leq
C \left( \|u_{1}\|_{C^{\lambda+1,r}({\mathcal F}_{1}(t))} + \| \omega_{0} \|_{C^{\lambda,r}({\mathcal F}_{0})} \right) \| \eta_{1} - \eta_{2} \|_{C^{\lambda+1,r}({\mathcal F}_{0})}, \\
\| \div(u_{1} \circ \eta_{1} \circ \eta_{2}^{-1}) - \div (u_{2}) \|_{C^{\lambda,r}({\mathcal F}_{2}(t))}  \leq
C \|u_{1}\|_{C^{\lambda+1,r}({\mathcal F}_{1}(t))} \| \eta_{1} - \eta_{2} \|_{C^{\lambda+1,r}({\mathcal F}_{0})}.
\end{gather*}
And it is not difficult to see that
\begin{multline*}
\sum_{i=1}^{g} \left| \oint_{\Gamma_{i}} (u_{1} \circ \eta_{1} \circ \eta_{2}^{-1}- u_{2}) . d \tau \right| + 
\| (u_{1} \circ \eta_{1} \circ \eta_{2}^{-1}).n - u_{2}.n \|_{C^{\lambda+1,r}( \partial {\mathcal F}_{2}(t))} \\
\leq C \left( \|u_{1}\|_{C^{\lambda+1,r}({\mathcal F}_{1}(t))} \| \eta_{1} - \eta_{2} \|_{C^{\lambda+1,r}({\mathcal F}_{0})} + \| (\ell_{1},r_{1}) - (\ell_{2},r_{2}) \|_{\R^{6}} \right) .
\end{multline*}
Hence we have
\begin{equation*}
\| u_{1} \circ \eta_{1} - u_{2} \circ \eta_{2} \|_{C^{\lambda+1,r}({\mathcal F}_{0})}
\leq C \left( \|u_{1}\|_{C^{\lambda+1,r}({\mathcal F}_{1}(t))} + \| \omega_{0} \|_{C^{\lambda,r}({\mathcal F}_{0})} \right) \| \eta_{1} - \eta_{2} \|_{C^{\lambda+1,r}({\mathcal F}_{0})} + C  \| (\ell_{1},r_{1}) - (\ell_{2},r_{2}) \|_{\R^{6}} .
\end{equation*}
Since
\begin{equation*}
\eta_{1}(t) - \eta_{2}(t) = \int_{0}^{t} \left[u_{1} \circ \eta_{1} - u_{2} \circ \eta_{2}\right],
\end{equation*}
the conclusion easily follows from Gronwall's lemma. 
\begin{Remark} \label{AAssocA}
The operator ${\mathcal T}$ defined above can be defined for any initial datum $u_{0}$, with $u_{0}$ divergence-free, tangential to $\partial \Omega$, and satisfying the compatibility condition:
\begin{equation*}
u_{0}.n = (\ell(0) + r(0) \wedge (x-x_{0})) \text{ on } \partial {\mathcal S}_{0}.
\end{equation*}
Equivalently, we could associate an operator ${\mathcal A}$ to any initial data $(\omega_{0},\lambda_{0}^{1}, \dots, \lambda_{0}^{g})$ in $C^{\lambda,r}({\mathcal F}_{0}) \times \R^{g}$, and reconstruct $u_{0}$ satisfying the compatibility conditions by
\begin{equation} \label{OmegaAssocU}
\left\{ \begin{array}{l}
\curl {u}_{0} = {\omega}_{0} \text{ in } {\mathcal F}_{0}, \\
\div {u}_{0} = 0 \text{ in } {\mathcal F}_{0}, \\
{u}_{0}. n =0 \text{ on }  \partial \Omega, \\
{u}_{0}(x). n = (\ell(0) + r(0) \wedge (x-x_{0})).n \text{ on } \partial {\mathcal S}_{0}, \\
\oint_{\Gamma_{i}} {u}_{0} .d \tau = \lambda^{i}_{0} \text{ for all } i =1 \dots g.
\end{array} \right.
\end{equation}
Doing so, Proposition \ref{DeuxImpositions} extends to $(\ell_{1},r_{1})$ and $(\ell_{2},r_{2})$ which do no longer satisfy \eqref{lrAuDebut}. In that case, \eqref{Dependance} compares solutions with initial velocity fields $u^{1}_{0}$ and $u^{2}_{0}$ given by \eqref{OmegaAssocU} with $(\ell_{1}(0),r_{1}(0))$ and $(\ell_{2}(0),r_{2}(0))$, respectively.
\end{Remark}
%
%

%
%
%
%
%
%
\subsection{With a moving solid}
\subsubsection{Proof of Theorem \ref{start3}}
Here we prove Theorem \ref{start3}. Again we rely on a Banach-Picard fixed point strategy. \par
\ \\
{\bf 1.} We introduce
\begin{align*}
{\mathcal D}:= \Big\{ (\ell,r) &\in C^{0}([0,T];\R^{6}) \ \Big/ \\
&\ \ \text{i.} \  \tau^{\ell,r} \text{ satisfies }  d \left( \tau^{\ell,r}(t)({\mathcal S}_{0}), \partial \Omega \right)  \geq \frac{\underline{d}}{3}, \\
&\ \ \text{ii.} \ \| (\ell,r) - (\ell_{0},r_{0}) \|_{C^{0}([0,T];\R^{6})} \leq \| u_{0} \|_{C^{\lambda+1,r}} + \| (\ell_{0},r_{0}) \|_{\R^{6}}
\Big\}.
\end{align*}
\begin{Remark} \label{UneConstanteKEnplus}
As we follow from the proof, we could replace $\text{ii.}$ by

\begin{equation*}
\mathrm{\text{\rm ii.} \ \  \| (\ell,r) - (\ell_{0},r_{0}) \|_{C^{0}([0,T];\R^{6})} \leq C, }
\end{equation*}
for any positive constant $C>0$.
\end{Remark}
Now we construct an operator ${\mathcal A}$ on ${\mathcal D}$ in the following way. To $(\ell,r) \in {\mathcal D}$, we associate $Q(t)$, ${\mathcal S}(t)$ and ${\mathcal F}(t)$ defined from $(\ell,r)$. Next we associate the fixed point $\eta \in C^{0}([0,T];C^{\lambda+1,r}({\mathcal F}_{0}))$ of the operator ${\mathcal T}^{\ell,r}$ defined in Paragraph \ref{Subsec:PrescribedMovement} with $T$ of the form \eqref{TE}. Note that due to properties $\text{i.}$ and $\text{ii.}$ in the definition of ${\mathcal D}$ and Proposition \ref{SolideImpose}, there is a time
\begin{equation*}
T = \frac{C_*}{\| u_0 \|_{C^{\lambda+1,r}({\mathcal F}_{0})} + \|(\ell_{0},r_{0})\|_{\R^{6}}},
\end{equation*}
such that $\eta^{\ell,r}$ is defined on $[0,T_{*}]$, uniformly in $(\ell,r) \in {\mathcal D}$. Together with this flow $\eta$, we will consider the various functions $u$, ${\mathcal U}$, etc. defined on $[0,T]$. \par
Define ${\mathcal J}$, $(\Phi_{i})_{i=1\dots 6}$ and $\mu$ by
\begin{equation}\label{NCSylvester}
 \mathcal{J}(t) =Q(t) \mathcal{J}_0 Q^{*}(t) \ \text{ on } [0,T],
\end{equation}
\begin{equation} \label{NCDefPhii}
\left\{ \begin{array}{l}
-\Delta \Phi_i = 0 \quad   \text{for}  \ x\in \mathcal{F}(t), \\
\frac{\partial \Phi_i}{\partial n}=  0 \quad  \text{for}  \ x\in \partial \Omega, \\
\frac{\partial \Phi_i}{\partial n}=K_i \quad  \text{for}  \  x\in \partial \mathcal{S}(t), \\
\int_{\mathcal{F}(t)} \Phi_{i} \ dx =0,
\end{array} \right.
\end{equation}
where
\begin{equation} \label{NCt1.6}
K_i:=\left\{\begin{array}{ll} 
n_i & \text{if} \ i=1,2,3 ,\\ \relax
[(x- x_{B})\wedge n]_{i-3}& \text{if} \ i=4,5,6, 
\end{array}\right.
\end{equation}
and
\begin{equation} \label{NCEqMu}
\left\{ \begin{array}{l}
-\Delta \mu = \trace\{ \nabla u \cdot \nabla u \}  \quad   \text{for}  \ x\in \mathcal{F}(t), \\
\frac{\partial \mu}{\partial n} = - \nabla^{2} \rho(u,u) \quad   \text{for}  \ x\in \partial \Omega, \\
\frac{\partial \mu}{\partial n}= \nabla^{2} \rho \, \{ u- v , u- v \} - n \cdot \big(r\wedge \left(2u-v-\ell \right)\big),  \quad   \text{for}  \ x\in \partial \mathcal{S}(t), \\
\int_{\mathcal{F}(t)} \mu \ dx =0.
\end{array} \right.
\end{equation}
Introduce
\begin{equation} \label{NCEq:M}
\mathcal{M}(t) := \mathcal{M}_{1}(t) + \mathcal{M}_{2}(t)
:=\begin{bmatrix} m \Id_3 & 0 \\ 0 & \mathcal{J}\end{bmatrix}, \quad
+ \begin{bmatrix} \displaystyle\int_{\mathcal{F}(t)} \nabla \Phi_i \cdot \nabla \Phi_j \ dx \end{bmatrix}_{i,j \in \{1,\ldots,6\}},
\end{equation}
and then define ${\mathcal A}(\ell,r):=(\tilde{\ell},\tilde{r})$ as 
\begin{equation} \label{NCEM2}
 \begin{bmatrix} \tilde{\ell} \\[0.5cm] \tilde{r} \end{bmatrix} = 
\begin{bmatrix} \ell_{0} \\[0.5cm] r_{0} \end{bmatrix}
+ \int_{0}^{t} {\mathcal M}^{-1}(s) \left\{ \begin{bmatrix} 0 \\[0.5cm] \mathcal{J}(s)r(s) \wedge r(s) \end{bmatrix} + \begin{bmatrix}  \displaystyle\int_{ \mathcal{F}(t)} \nabla \mu \cdot \nabla \Phi_i \, dx   \end{bmatrix}_{i \in \{1,\ldots,6\}} \right\} \,ds.
\end{equation}
\ \\
{\bf 2.} We now show that for suitable $T$, the operator ${\mathcal A}$ maps ${\mathcal D}$ into itself. Then we will prove that it is contractive. \par
First, we see from \eqref{EstU} that we have the following bound on $u=u^{\ell,r}$, when $(\ell,r) \in {\mathcal D}$:
\begin{equation} \label{Estiu}
\| u \|_{L^{\infty}(0,T; C^{\lambda+1,r}({\mathcal F}(t)))} \leq C \left( 
\|u_{0} \|_{C^{\lambda+1,r}({\mathcal F}_{0})} + \| (\ell_{0},r_{0}) \|_{\R^{6}} \right).
\end{equation}
Also, the following bound is immediate from $\mathrm{ii}.$:
\begin{equation} \label{Estiv}
\| v\|_{C^{0}([0,T])} \leq C \left( \|u_{0} \|_{C^{\lambda+1,r}({\mathcal F}_{0})} + \| (\ell_{0},r_{0}) \|_{\R^{6}} \right).
\end{equation}
It follows easily using Lemma~\ref{AutreRegdivcurl} that for some geometric constant $C>0$:
\begin{equation} \label{EstiMu}
\| \nabla \mu\|_{L^{\infty}(0,T; C^{\lambda+1,r}({\mathcal F}(t)))} 
\leq C \left( \|u_{0} \|_{C^{\lambda+1,r}({\mathcal F}_{0})} + \| (\ell_{0},r_{0}) \|_{\R^{6}} \right)^{2}.
\end{equation}
Lemma~\ref{AutreRegdivcurl} also yields that
\begin{equation*}
\| \nabla \Phi_{i}\|_{L^{\infty}(0,T; C^{\lambda+1,r}({\mathcal F}(t)))} 
\leq C.
\end{equation*}
Next, the matrix $Q(t)$ is bounded since it is orthogonal, so ${\mathcal J}(t)$ is bounded as well by a geometric constant. Finally the matrix ${\mathcal M}_{2}(t)$ being always positive-definite, the matrix ${\mathcal M}^{-1}$ is also bounded by a geometric constant. We deduce that we have the following estimate uniformly on ${\mathcal D}$:
\begin{equation*}
\| (\tilde{\ell},\tilde{r}) -(\ell_{0},r_{0}) \|_{C^{0}([0,T];\R^{6})} \leq C T \left( \| u_{0} \|_{C^{\lambda+1,r}} + \| (\ell_{0},r_{0}) \|_{\R^{6}} \right)^{2}.
\end{equation*}
It follows easily that for some geometric constant $C_{*}>0$, one has ${\mathcal A}({\mathcal D}) \subset {\mathcal D}$ provided that
\begin{equation} \label{LeTemps}
T \leq \frac{C_*}{ \| u_0 \|_{C^{\lambda+1,r}({\mathcal F}_{0})} + \| (\ell_{0},r_{0}) \|_{\R^{6}} }.
\end{equation} \par
\ \\
{\bf 3.} Let us now prove that for $T$ of the form \eqref{LeTemps}, the operator ${\mathcal A}$ is contractive. Let $(\ell_{1},r_{1})$ and $(\ell_{2},r_{2})$ in ${\mathcal D}$. As previously we denote with an index $1$ or $2$ the objects associated to these couples above (except for $\Phi_{i}$ where $1$ and $2$ come as an exponent). \par
It is a straightforward consequence of Proposition \ref{DeuxImpositions} that for some constant $C>0$, one has
\begin{multline} \label{K1}
\| \eta_{1} - \eta_{2}\|_{L^{\infty}([0,T];C^{\lambda+1,r}({\mathcal F}_{0}))} + 
T\| u_{1}(t,\eta_{1}(t,x)) - u_{2}(t,\eta_{2}(t,x)) \|_{L^{\infty}([0,T];C^{\lambda+1,r}({\mathcal F}_{0}))} \\ \leq C T \| (\ell_{1},r_{1}) - (\ell_{2},r_{2}) \|_{C^{0}([0,T];\R^{6})}.
\end{multline}
Also, the following bound is immediate:
\begin{equation} \label{K2}
\| v_{1} - v_{2} \|_{C^{0}([0,T];\R^{6})} \leq C \| (\ell_{1},r_{1}) - (\ell_{2},r_{2}) \|_{C^{0}([0,T];\R^{6})}.
\end{equation}
Now proceeding as previously, using Lemma~\ref{AutreRegdivcurl}, we infer that for $t \in [0,T]$:
\begin{align} \nonumber
\| (\nabla \mu_{1}) \circ \eta_{1} - (\nabla \mu_{2}) \circ \eta_{2} \|_{C^{\lambda+1,r}({\mathcal F}_{0})} 
&\leq \| (\nabla \mu_{1}) \circ \eta_{1} \circ \eta_{2}^{-1} - (\nabla \mu_{2}) \|_{L^{\infty}(0,T;{\mathcal{F}_{2}(t)})} \\
\nonumber
&\leq C 
\Big(
\| \curl(\nabla \mu_{1} \circ \eta_{1} \circ \eta_{2}^{-1}) - \curl (\nabla \mu_{2}) \|_{C^{\lambda,r}({\mathcal F}_{2}(t))} \\
\nonumber
&\ 
+ \ \| \div(\nabla \mu_{1} \circ \eta_{1} \circ \eta_{2}^{-1}) - \div (\nabla \mu_{2}) \|_{C^{\lambda,r}({\mathcal F}_{2}(t))} \\ 
\nonumber
&\ 
+\ \sum_{i=1}^{g} \left| \oint_{\Gamma_{i}} (\nabla \mu_{1} \circ \eta_{1} \circ \eta_{2}^{-1}- \nabla \mu_{2}) . d \tau \right| \\
\label{GrosseEstimee}
&\ 
+\ \| (\nabla \mu_{1} \circ \eta_{1} \circ \eta_{2}^{-1}).n - \nabla \mu_{2}.n \|_{C^{\lambda+1,r}( \partial {\mathcal F}_{2}(t))} 
\Big).
\end{align}
For what concerns the second term in the right hand side, we have, using Lemma \ref{BBLemmaA.2}, Lemma \ref{BBLemma4} and \eqref{EstiMu},
\begin{align*}
\| \div(\nabla &\mu_{1} \circ \eta_{1} \circ \eta_{2}^{-1}) - \div \nabla \mu_{2} \|_{C^{\lambda,r}({\mathcal F}_{2}(t))} \\
& \leq 
\| \div(\nabla \mu_{1} \circ \eta_{1} \circ \eta_{2}^{-1}) - (\div \nabla \mu_{1}) \circ \eta_{1} \circ \eta_{2}^{-1}  \|_{C^{\lambda,r}({\mathcal F}_{2}(t))}
+ \| (\div \nabla \mu_{1}) \circ \eta_{1} \circ \eta_{2}^{-1} - \div \nabla \mu_{2} \|_{C^{\lambda,r}({\mathcal F}_{2}(t))} \\
& \leq 
C \left\{ \left[ \| u_0 \|_{C^{\lambda+1,r}({\mathcal F}_{0})} + \| (\ell_{0},r_{0}) \|_{\R^{6}}\right]^{2}
\| \eta_{1} - \eta_{2} \|_{C^{0}([0,T];C^{\lambda+1,r}({\mathcal F}_{0}))} 
+ \| \div(\nabla \mu_{1}) \circ \eta_{1} - (\div \nabla \mu_{2}) \circ \eta_{2} \|_{C^{\lambda,r}({\mathcal F}_{0})} \right\}.
\end{align*}
Now using \eqref{NCEqMu}, \eqref{Estiu} and $(\nabla u_{i}) \circ \eta_{i}=\nabla (u_{i} \circ \eta_{i}) \cdot (\nabla \eta_{i})^{-1}$, we see that
\begin{align} \nonumber
\| \div(\nabla \mu_{1}) \circ \eta_{1} &- \div (\nabla \mu_{2}) \circ \eta_{2} \|_{C^{\lambda,r}({\mathcal F}_{0})} \\
\nonumber
&= \| \tr \{ \nabla u_{1} \cdot \nabla u_{1} \} \circ \eta_{1} - \tr \{ \nabla u_{2} \cdot \nabla u_{2} \} \circ \eta_{2} \|_{C^{\lambda,r}({\mathcal F}_{0})} \\
\nonumber
&\leq C \left[ \| u_0 \|_{C^{\lambda+1,r}({\mathcal F}_{0})} + \| (\ell_{0},r_{0}) \|_{\R^{6}}\right]^{2}
\| \eta_{1} - \eta_{2} \|_{C^{0}([0,T];C^{\lambda+1,r}({\mathcal F}_{0}))} \\
\label{DLip}
& \ \ \ + \, C \left[ \| u_0 \|_{C^{\lambda+1,r}({\mathcal F}_{0})} + \| (\ell_{0},r_{0}) \|_{\R^{6}}\right]
\| u_{1} \circ \eta_{1} - u_{2} \circ \eta_{2} \|_{C^{0}([0,T];C^{\lambda+1,r}({\mathcal F}_{0}))}.
\end{align}
Using \eqref{K1}, we deduce that for $T$ of the form \eqref{LeTemps}, we have
\begin{equation} \label{TEL}
\| \div(\nabla \mu_{1}) \circ \eta_{1} - \div (\nabla \mu_{2}) \circ \eta_{2} \|_{C^{\lambda,r}({\mathcal F}_{0})} \leq C \| (\ell_{1},r_{1}) - (\ell_{2},r_{2}) \|_{C^{0}([0,T];\R^{6})}.
\end{equation}
The first term in \eqref{GrosseEstimee} can be also estimated by the right hand side of \eqref{TEL}, using Lemmas \ref{BBLemmaA.2} and \ref{BBLemma4} (it is simpler here since $\curl(\nabla \mu_{i})=0$). The third term in \eqref{GrosseEstimee} can also be estimated by the first term in the right hand side of \eqref{DLip}, using Lemma \ref{BBLemma4}.
The last term is estimated likewise, using \eqref{Estiu}, \eqref{Estiv} and \eqref{K2}. Hence we get that
\begin{equation*}
\| (\nabla \mu_{1}) \circ \eta_{1} - (\nabla \mu_{2}) \circ \eta_{2} \|_{C^{\lambda+1,r}({\mathcal F}_{0})} 
\leq C \| (\ell_{1},r_{1}) - (\ell_{2},r_{2}) \|_{C^{0}([0,T];\R^{6})}.
\end{equation*}
Also, it is again a consequence of Lemma \ref{BBLemma4} that
\begin{equation*}
\| (\nabla \Phi^{1}_{i}) \circ \eta_{1} - (\nabla \Phi^{2}_{i}) \circ \eta_{2} \|_{L^{\infty}(0,T;C^{\lambda+1,r}({\mathcal{F}_{0}})} \leq
C \left[ \| u_0 \|_{C^{\lambda+1,r}({\mathcal F}_{0})} + \| (\ell_{0},r_{0}) \|_{\R^{6}}\right]^{2}
\| \eta_{1} - \eta_{2} \|_{C^{0}([0,T];\R^{6})}.
\end{equation*}
This involves that the integrand in \eqref{NCEM2} is Lipschitz with respect to $(\ell,r)$: for instance, using that $\eta_{1}$ and $\eta_{2}$ are volume-preserving:
\begin{multline*}
\left|\, \int_{ \mathcal{F}_{1}(t)} \nabla \mu_{1} \cdot \nabla \Phi^{1}_i \, dx
- \int_{ \mathcal{F}_{2}(t)} \nabla \mu_{2} \cdot \nabla \Phi^{2}_i \, dx  \, \right| = \\
\left|\, \int_{ \mathcal{F}_{0}} (\nabla \mu_{1}) \circ \eta_{1} \cdot (\nabla \Phi^{1}_i) \circ \eta_{1} \, dx
- \int_{ \mathcal{F}_{0}} (\nabla \mu_{2}) \circ \eta_{2} \cdot (\nabla \Phi^{2}_i)\circ \eta_{2} \, dx  \, \right|,
\end{multline*}
and the claim follows. More precisely, we have
\begin{multline} 
\| (\tilde{\ell_{1}},\tilde{r}_{1}) - (\tilde{\ell_{2}},\tilde{r}_{2}) \|_{C^{0}([0,T])} \leq 
C T \Big\{
\left[ \| u_{0} \|_{C^{\lambda+1,r}({\mathcal F}_{0})} + \| (\ell_{0},r_{0}) \|_{\R^{6}} \right]^{2} \| \eta_{1} - \eta_{2} \|_{C^{0}([0,T];C^{\lambda+1,r}({\mathcal F}_{0}))} \\
\label{DiffLR}
+ \left[ \| u_0 \|_{C^{\lambda+1,r}({\mathcal F}_{0})} + \| (\ell_{0},r_{0}) \|_{\R^{6}}\right] 
\| (\ell_{1},r_{1}) - (\ell_{2},r_{2}) \|_{C^{0}([0,T];\R^{6})} \Big\}.
\end{multline}
%
Hence using Proposition \ref{DeuxImpositions}, we see that for some $T$ of the form \eqref{LeTemps} with a geometric constant $C_{*}$, the operator ${\mathcal A}$ is contractive. \par
\ \\
{\bf 4.} Hence, the operator ${\mathcal A}$ has a unique fixed point in ${\mathcal D}$, which proves the existence part of Theorem \ref{start3}. For what concerns uniqueness: if we are given a solution $(\ell,r,u)$ of the system, then is is easy to see that for $T$ sufficiently small, one has $(\ell,r) \in {\mathcal D}$, and the flow of $u$ belongs to ${\mathcal C}$. Then, because of the uniqueness in Proposition \ref{SolideImpose}, $(\ell,r)$ must be a fixed point of the operator ${\mathcal A}$, which proves that it must be equal to the one that we have constructed. \par
\ \\
{\bf 5.} It remains to prove that the velocity field in the solution $(\ell,r,u)$ that we constructed belongs to the space $C ( [0,T] ; C^{\lambda+1,r'} ({\mathcal F}(t)))$ in the sense of Remark \ref{RemEspTX}. Let $\rho>0$ such that
$\dist({\mathcal S}(t),\partial \Omega) \geq 3 \rho$ in $[0,T]$. Let
\begin{equation*}
{\mathcal G}_{\rho}:= \{ x \in \R^{3} \setminus {\mathcal S}_{0}, \ d(x,\partial {\mathcal S}_{0}) <\rho \}
\ \text{ and }
\ {\mathcal H}_{\rho}:= \{ x \in \overline{\Omega}, \ d(x,\partial \Omega) <\rho \}.
\end{equation*}
Let $\pi_{{\mathcal S}}$ (resp. $\pi_{\overline{\Omega}}$) be a continuous linear extension operator from functions defined in $\overline{{\mathcal G}_{\rho}}$ (resp. ${\mathcal H}_{\rho}$) to function defined in $\overline{{\mathcal G}_{\rho}} \cup {\mathcal S}_{0}$ (resp. ${\mathcal H}_{\rho} \cup (\R^{3} \setminus \Omega)$ and supported in some ball $\overline{B}(0,M)$), which sends $C^{\lambda+1,\alpha}(\overline{{\mathcal G}_{\rho}})$ to $C^{\lambda+1,\alpha}(\overline{{\mathcal G}_{\rho}} \cup {\mathcal S}_{0})$ (resp. $C^{\lambda+1,\alpha}(\overline{{\mathcal H}_{\rho}})$ to $C^{\lambda+1,\alpha}(\overline{{\mathcal H}_{\rho}} \cup (\R^{3} \setminus \Omega)$), for all $\alpha \in (0,1)$. (The construction of such an ``universal'' extension operator is classical, see \cite{Stein}.) 
For any $\tau \in D_{2\rho}$ (defined in \ref{DEpsilon}), define the extension operator $\pi_{{\mathcal S}}^{\tau}$:
\begin{equation*}
\pi_{{\mathcal S}}^{\tau}: C^{\lambda+1,\alpha}(\tau(\overline{{\mathcal G}_{\rho}})) \rightarrow C^{\lambda+1,\alpha}(\tau(\overline{{\mathcal G}_{\rho})} \cup \tau({\mathcal S}_{0})) \text{ by } \pi_{{\mathcal S}}^{\tau}:= \tau \circ \pi \circ \tau^{-1}.
\end{equation*}
Now we deduce the extension operator $\tilde{\pi}^{\tau}: C^{\lambda+1,\alpha}({\Omega} \setminus \tau({\mathcal S})) \to C^{\lambda+1,\alpha}(\R^{3})$ as follows. For $f \in C^{\lambda+1,\alpha}({\Omega} \setminus \tau({\mathcal S}))$, we let $\tilde{\pi}^{\tau}[f](x)$ equal to ${\pi}_{{\mathcal S}}^{\tau}[f](x)$ in $\tau({\mathcal S})$, to $f(x)$ in $\Omega \setminus \tau({\mathcal S})$ and to $\pi_{\overline{\Omega}}[f](x)$ in $\R^{3} \setminus {\Omega}$. Now we define
\begin{equation*}
\tilde{u}(t,\cdot) = \tilde{\pi}^{\tau(t)}[u(t,\cdot)] \ \text{ in }\  [0,T] \times \R^{3}.
\end{equation*}
Let us now check that the function $\tilde{u}$ is in $C([0,T];C^{\lambda+1,r'}(\R^{3}))$ for $r' \in (0,r)$. 
From the construction, we see that it suffices to prove that in $[0,T]$,
\begin{equation*}
\|u(t,x) - u(s,x) \|_{C^{\lambda+1,r'}({\mathcal H}_{\rho})}  + \|u(t,\tau(t)(x)) - u(s,\tau(s)(x)) \|_{C^{\lambda+1,r'}({\mathcal G}_{\rho})} 
\rightarrow 0 \text{ as } |t-s| \rightarrow 0.
\end{equation*}
Equivalently, it suffices that
\begin{multline*}
\|u(t,\eta(s,x)) - u(s,\eta(s,x)) \|_{C^{\lambda+1,r'}(\eta(s)^{-1}({\mathcal H}_{\rho}))} \\ + \|u(t,\tau(t)\circ \tau^{-1}(s)(\eta(s,x))) - u(s,\eta(s)(x)) \|_{C^{\lambda+1,r'}(\eta(s)^{-1}\circ \tau(s){\mathcal G}_{\rho})} 
\rightarrow 0.
\end{multline*}
But this follows from the facts that $u \in L^{\infty}(0,T;C^{\lambda+1,r}({\mathcal F}(t)))$, that $\eta \in C([0,T];C^{\lambda+1,r}({\mathcal F}_{0}))$, that $\tau \in C([0,T];SE(3))$ and that 
\begin{equation*}
\| u(t,\eta(t,x)) - u(s,\eta(s,x)) \|_{C^{\lambda+1,r}({\mathcal F}_{0})} \rightarrow 0 \text{ as } |t-s| \rightarrow 0.
\end{equation*}
Now,  that $\tilde{u}$ belongs to $C_{w}([0,T];C^{\lambda+1,r}(\R^{3}))$ is an automatic consequence of the fact that it belongs to $C([0,T];C^{\lambda+1,r'}(\R^{3}))$ and to $L^{\infty}(0,T;C^{\lambda+1,r}(\R^{3}))$. 
Finally, using the equations we infer that  $(x_{B},  r )  \in  C^2 ( (-T,T) ) \times  C^1 ( (-T,T) )$, $\partial_t u  \in C_w ( (-T,T) ; C^{\lambda ,r}  (\mathcal{F} (t) ) )$ and $\partial_t  u \in C ( (-T,T) ; C^{\lambda ,r'}  (\mathcal{F} (t) )   )$, for $ r'  \in (0,r)$. 

 \par
\subsubsection{Proof of Proposition \ref{Prostart3}}
Consider $(\ell^{1},r^{1},u^{1})$, $(\ell^{2},r^{2},u^{2})$, $\eta_{1}$ and $\eta_{2}$ as in the statement. Introduce $u^{m}$ as the solution given by Proposition \ref{SolideImpose}, with the solid movement given by $(\ell_{1},r_{1})$ and where the initial condition $u^{m}_{0}$ is given by \eqref{OmegaAssocU} associated to $(\ell^{1}_{0},r_{0}^{1},\omega^{2}_{0}, \oint_{\Gamma_{1}} {u}^{2}_{0} .d \tau, \dots, \oint_{\Gamma_{g}} {u}^{2}_{0} .d \tau)$. Call $\eta_{m}$ the corresponding fluid flow. \par
Consider the operator ${\mathcal A}_{2}$ (resp. ${\mathcal A}_{1}$) associated to the initial datum $(\ell^{2}_{0},r^{2}_{0},u_{0}^{2})$ (resp. $(\ell^{1}_{0},r^{1}_{0},u_{0}^{1})$). Since ${\mathcal A}_{2}$ is contractive and has $(\ell_{2},r_{2})$ as its fixed point, we have
\begin{equation*}
\| (\ell^{1},r^{1}) - (\ell^{2},r^{2}) \|_{C^{0}([0,T];\R^{6})} \lesssim \| (\ell^{1},r^{1}) - {\mathcal A}_{2}(\ell^{1},r^{1}) \|_{C^{0}([0,T];\R^{6})} .
\end{equation*}
(For instance, we use Remark \ref{UneConstanteKEnplus} with $C$ large enough, depending on $R$, so that both $(\ell^{1},r^{1})$ and $(\ell^{2},r^{2})$ belong to ${\mathcal D}$.) \par
Note that when computing ${\mathcal A}_{2}(\ell_{1},r_{1})$ by the formulas \eqref{NCSylvester}-\eqref{NCEM2}, the fluid domain is exactly ${\mathcal F}_{1}(t)$. Consequently when computing \eqref{NCEM2} corresponding to ${\mathcal A}_{2}(\ell_{1},r_{1})$ and comparing with \eqref{NCEM2} corresponding to ${\mathcal A}_{1}(\ell_{1},r_{1})=(\ell_{1},r_{1})$, the only differences concern the term $\nabla \mu$ and the initial data $(\ell_{0},r_{0})$. 
Hence proceeding as for \eqref{DiffLR}, one deduces that
\begin{multline} \label{32et2}
\| (\ell^{1},r^{1}) - (\ell^{2},r^{2}) \|_{C^{0}([0,T];\R^{6})}  \leq C \Big( \| (\ell_{0}^{1},r_{0}^{1}) - (\ell_{0}^{2},r_{0}^{2}) \|_{\R^{6}}
+ \| \eta_{m} - \eta_{1}\|_{L^{\infty}([0,T];C^{\lambda+1,r}({\mathcal F}_{0}))} \\ 
+ \| u_{m}(t,\eta_{m}(t,x)) - u_{1}(t,\eta_{1}(t,x)) \|_{L^{\infty}([0,T];C^{\lambda+1,r}({\mathcal F}_{0}))} \Big).
\end{multline}%
From Proposition \ref{DeuxImpositions} and Remark \ref{AAssocA}, we deduce 
\begin{multline}  \label{1et32}
\| \eta_{m} - \eta_{2}\|_{L^{\infty}([0,T];C^{\lambda+1,r}({\mathcal F}_{0}))} + 
T \| u_{m}(t,\eta_{m}(t,x)) - u_{2}(t,\eta_{2}(t,x)) \|_{L^{\infty}([0,T];C^{\lambda+1,r}({\mathcal F}_{0}))} \\ \leq KT \| (\ell_{1},r_{1}) - (\ell_{2},r_{2}) \|_{C^{0}([0,T];\R^{6})}.
\end{multline}
Since ${\mathcal T}^{\ell^{1},r^{1}}$ (associated to initial data $(\omega^{1}_{0}, \oint_{\Gamma_{1}} {u}^{1}_{0} .d \tau, \dots, \oint_{\Gamma_{g}} {u}^{1}_{0} .d \tau)$) is contractive, and using \eqref{Upareta}, we see that for some $C>0$,
\begin{multline} \label{AutreEstC}
\| \eta_{m} - \eta_{1}\|_{L^{\infty}([0,T];C^{\lambda+1,r}({\mathcal F}_{0}))} + 
 \| u_{m}(t,\eta_{m}(t,x)) - u_{1}(t,\eta_{1}(t,x)) \|_{L^{\infty}([0,T];C^{\lambda+1,r}({\mathcal F}_{0}))} \\
\leq C \| {\mathcal T}^{\ell^{1},r^{1}}(\eta_{m}) - \eta_{m} \|_{L^{\infty}([0,T];C^{\lambda+1,r}({\mathcal F}_{0}))}.
\end{multline}
We proceed as for \eqref{GEDC} (it is, in fact, simpler here). Calling ${\mathcal U}_{m}$ the function ${\mathcal U}$ constructed when computing ${\mathcal T}^{\ell^{1},r^{1}}(\eta_{m})$, we see that at each $t$:
\begin{eqnarray*}
\nonumber
\| {\mathcal U}_{m} - u_{m}\circ \eta_{m} \|_{C^{\lambda+1,r}({\mathcal F}_{0})}
\nonumber
&\leq &
C 
\Big(
\| \curl( {\mathcal U}_{m}\circ \eta_{m}^{-1} ) - \curl ( u_{m}) \|_{C^{\lambda,r}({\mathcal F}(t))} \\
\nonumber
&\ &\ \ 
+ \ \| \div({\mathcal U}_{m}\circ \eta_{m}^{-1}) - \div (u_{m}) \|_{C^{\lambda,r}({\mathcal F}(t))} \\ 
\nonumber
&\ &\ \ 
+\ \sum_{i=1}^{g} \left| \oint_{\eta_{2}(\Gamma_{i})} ({\mathcal U}_{m} \circ \eta_{m}^{-1} - u_{m}) . d \tau  \right|  \\
&\ &\ \ 
+\ \| ({\mathcal U}_{m} \circ \eta_{m}^{-1}).n - u_{m}.n \|_{C^{\lambda+1,r}( \partial {\mathcal F}(t))} 
\Big) \\
&\leq & C \Big( \| \omega_{0}^{1} -\omega_{0}^{2} \|_{C^{\lambda,r}({\mathcal F}_{0})} + \sum_{i=1}^{g} \left|\oint_{\eta(\Gamma_{i})} {u}^{1}_{0} .d \tau - \oint_{\Gamma_{i}} u^{2}_{0} .d \tau \right| \Big).
\end{eqnarray*}
Recalling that ${\mathcal T}^{\ell^{1},r^{1}}(\eta_{m}(t,\cdot)) - \eta_{m}(t,\cdot) = \int_{0}^{t} {\mathcal U}_{m} - u_{m}\circ \eta_{m}$, with  \eqref{32et2}, \eqref{1et32} and \eqref{AutreEstC}, we deduce the claim.
%
%
%
%
%
%


\end{document}